\DeclareRobustCommand{\gobblefive}[5]{}
\providecommand*{\cupdot}{%
  \mathbin{%
    \mathpalette\@cupdot{}%
  }%
}
\newcommand*{\@cupdot}[2]{%
  \ooalign{%
    $\m@th#1\cup$\cr
    \sbox0{$#1\cup$}%
    \dimen@=\ht0 %
    \sbox0{$\m@th#1\cdot$}%
    \advance\dimen@ by -\ht0 %
    \dimen@=.5\dimen@
    \hidewidth\raise\dimen@\box0\hidewidth
  }%
}
\providecommand*{\bigcupdot}{%
  \mathop{%
    \vphantom{\bigcup}%
    \mathpalette\@bigcupdot{}%
  }%
}
\newcommand*{\@bigcupdot}[2]{%
  \ooalign{%
    $\m@th#1\bigcup$\cr
    \sbox0{$#1\bigcup$}%
    \dimen@=\ht0 %
    \advance\dimen@ by -\dp0 %
    \sbox0{\scalebox{2}{$\m@th#1\cdot$}}%
    \advance\dimen@ by -\ht0 %
    \dimen@=.5\dimen@
    \hidewidth\raise\dimen@\box0\hidewidth
  }%
}
\patchcmd{\ttlh@hang}{\parindent\z@}{\parindent\z@\leavevmode}{}{}
\patchcmd{\ttlh@hang}{\noindent}{}{}{}
\renewcommand*{\theequation}{%
  \ifnum\value{chapter}=0 %
  \arabic{equation}%
  \else
    \ifnum\value{section}=0 %
      \thechapter
    \else
      \thesection
    \fi
    .\arabic{equation}%
  \fi
}
\newcommand\numberthis{\addtocounter{equation}{1}\tag{\theequation}}
\theoremstyle{plain}
\newtheorem{theorem}{Theorem}[chapter]
\newtheorem{lemma}[theorem]{Lemma}
\newtheorem{proposition}[theorem]{Proposition}
\newtheorem{corollary}[theorem]{Corollary}
\theoremstyle{definition}
\newtheorem{definition}[theorem]{Definition}
\newtheorem{assumption}[theorem]{Assumption}
\newenvironment{example}
  {\pushQED{\qed}\examplex}
  {\popQED\endexamplex}
\theoremstyle{remark}
\newtheorem{remark}[theorem]{Remark}
\newtheorem*{remark*}{Remark}
\numberwithin{equation}{chapter}
\numberwithin{section}{chapter}
\let\emptyset\varnothing
\DeclareMathOperator*{\dom}{dom}
\DeclareMathOperator*{\loc}{loc}
\DeclareMathOperator{\Co}{Co}
\DeclareMathOperator*{\rel}{Rel}
\DeclareMathOperator*{\Span}{span}
\DeclareMathOperator*{\supp}{supp}
\DeclareMathOperator*{\esssup}{ess\,sup}
\newcommand{\R}{\mathbb{R}}
\newcommand{\N}{\mathbb{N}}
\newcommand{\T}{\mathbb{T}}
\newcommand{\CC}{\mathbb{C}}
\newcommand{\gramian}{\mathscr{G}}
\newcommand{\synthesis}{\mathscr{D}}
\newcommand{\frameop}{\mathscr{S}}
\newcommand{\analysis}{\mathscr{C}}
\newcommand{\identity}{\mathrm{id}}
\newcommand{\goodMatricesUnweighted}{\mathcal{C}}
\newcommand{\goodMatrices}{\goodMatricesUnweighted^p_w}
\newcommand{\dominated}{\prec}
\newcommand{\CalH}{\mathcal{H}}
\newcommand{\CalU}{\mathcal{U}}
\newcommand{\CalK}{\mathcal{K}}
\newcommand{\Hpi}{\mathcal{H}_{\pi}}
\newcommand{\Aw}{\mathcal{A}_{w}}
\newcommand{\Bwp}{\mathcal{B}_{w}^p}
\newcommand{\Bw}{\mathcal{B}_{w}}
\newcommand{\Hw}{\mathcal{H}^1_{w}}
\newcommand{\indicator}{\mathds{1}}
\newcommand{\eps}{\varepsilon}
\newcommand{\normm}[1]{{\left\vert\kern-0.25ex\left\vert\kern-0.25ex\left\vert #1
    \right\vert\kern-0.25ex\right\vert\kern-0.25ex\right\vert}}
\newcommand{\group}{G}
\newcommand{\haarMeasure}{\mu_G}
\newcommand{\dd}[1]{d \haarMeasure(#1)}
\newcommand{\hilbert}{\mathcal{H}_\pi}
\newcommand{\maxR}{M^R}
\newcommand{\maxL}{M^L}
\newcommand{\maxSt}{M}
\newcommand{\wienerLNeutral}{W^L}
\newcommand{\wienerL}[1]{\wienerLNeutral(#1)}
\newcommand{\wienerR}[1]{W^R(#1)}
\newcommand{\wienerSt}[1]{W(#1)}
\newcommand{\wienerStC}[1]{W_C(#1)}
\newcommand{\diff}{d}
\newcommand{\Reservoir}{\mathcal{R}}
\newcommand{\cardinality}[1]{\# #1}
\newcommand{\translationL}[1]{L_{#1}}
\newcommand{\translationR}[1]{R_{#1}}
\newcommand{\twisttranslationL}[1]{L^{\sigma}_{#1}}
\newcommand{\twisttranslationR}[1]{R^{\sigma}_{#1}}
\begin{document}
\frontmatter

\title[Coorbit spaces associated to quasi-Banach function spaces]{Coorbit spaces associated to quasi-Banach function spaces and their molecular decomposition}
\author{Jordy Timo van Velthoven}

\address{Delft University of Technology,
Mekelweg 4, Building 36,
2628 CD Delft, The Netherlands.}

\address{Faculty of Mathematics, University of Vienna, Oskar-Morgenstern-Platz 1, 1090 Vienna, Austria}
\email{jordy.timo.van.velthoven@univie.ac.at}

\author{Felix Voigtlaender}

\address{Department of Mathematics,
Technical University of Munich, 
Boltzmann-str. 3,
85748 Garching bei München,
Germany
}

\address{Mathematical Institute for Machine Learning and Data Science (MIDS),
  Catholic University of Eichst\"att–Ingolstadt (KU), Auf der Schanz 49, 85049 Ingolstadt, Germany
}

\email{felix.voigtlaender@ku.de, felix@voigtlaender.xyz}

\thanks{The authors sincerely thank the Mathematisches Forschungsinstitut Oberwolfach (MFO)
for the support through the ``Research in Pairs'' program.
For Jordy Timo van Velthoven, this research was funded in whole or in part by the Austrian
Science Fund (FWF): 10.55776/J4555.
F.\ Voigtlaender acknowledges support by the German Research Foundation (DFG)
in the context of the Emmy Noether junior research group VO 2594/1--1.}

\subjclass[2020]{22A10, 42C15, 42B35, 43A15, 46B15, 46E22}


\keywords{Atomic decompositions, convolution relations,
coorbit spaces, frames, group representations, molecules, Riesz sequences, Wiener amalgam spaces.}

\begin{abstract}
\pagestyle{empty}
This paper provides a self-contained exposition of coorbit spaces
associated to integrable group representations and quasi-Banach function spaces,
and at the same time extends and simplifies previous work.
The main results provide an extension of the theory in [Studia Math., 180(3):237–253, 2007]
from groups admitting a compact, conjugation-invariant unit neighborhood to arbitrary
(possibly nonunimodular) locally compact groups.
In addition, the present paper establishes the existence of molecular dual frames
and Riesz sequences as in [J.\ Funct.\ Anal., 280(10):56, 2021]
for the full scale of quasi-Banach function spaces. 
The theory is developed for possibly projective and reducible unitary representations
in order to be easily applicable to well-studied function spaces not satisfying the classical assumptions of coorbit theory.
Compared to the existing literature on quasi-Banach coorbit spaces,
all our results apply under significantly weaker integrability conditions on the analyzing vectors,
which allows for obtaining sharp results in concrete settings. 
\end{abstract}

\maketitle

{\hypersetup{linkcolor=black}\tableofcontents}

\mainmatter

\addtocontents{toc}{\protect\setcounter{tocdepth}{0}}

\chapter{Introduction}
\label{sec:Introduction}

For a continuous representation $(\pi, \mathcal{A})$ of a locally compact group $G$
on a Banach or Fr\'echet space $\mathcal{A}$ and a vector space $Y$ of measurable functions on $G$,
the coorbit method provides a procedure for constructing an associated distribution space
$\Co(Y)$, namely
\begin{align} \label{eq:coorbit_intro}
  \Co (Y)
  := \big\{
      f \in \mathcal{A}^* \; : \; V_g f \in Y
     \big\}
  \quad \text{with} \quad
  V_g f (x) = \langle f, \pi(x) g \rangle,
\end{align}
where $g \in \mathcal{A} \setminus \{0\}$ is a fixed vector and where $\mathcal{A}^*$
denotes the associated (anti)-linear dual space.
Common choices for $\mathcal{A}$ are the space of integrable vectors (resp.\ smooth vectors)
whenever $\pi$ is an integrable representation (resp.\ $G$ is a Lie group).

The influential series of papers
\cite{feichtinger1988unified,feichtinger1989banach1,feichtinger1989banach2,groechenig1991describing}
introduced the spaces $\Co (Y)$ for an irreducible, integrable unitary representation $\pi$
and a Banach function space $Y$ and established general properties of these spaces.
Among others, it was shown that the spaces \eqref{eq:coorbit_intro} are independent
of the defining vector $g$, that each space $\Co (Y)$ admits an atomic decomposition
in terms of a suitable subsystem $\big( \pi(x_i) g \big)_{i \in I}$ of the orbit $\pi(G) g$,
and that properties such as inclusions, embeddings and minimality/maximality
can be completely characterized by the corresponding properties of associated sequence spaces.
In addition, the papers
\cite{feichtinger1988unified,feichtinger1989banach1,feichtinger1989banach2,groechenig1991describing}
revealed that many classical function spaces in complex and harmonic analysis---%
such as Bergman spaces, Fock spaces, Hardy spaces, and (homogeneous) Besov and Sobolev spaces---%
can be realized as a coorbit space.

The purpose of the present paper is to provide a self-contained exposition
of coorbit spaces $\Co (Y)$ associated with integrable representations $\pi$
and \emph{quasi}-Banach function spaces $Y$, i.e., spaces where the triangle inequality
of a norm is replaced by $\| F_1 + F_2 \|_{Y} \leq C  (\| F_1 \|_Y + \|F_2 \|_{Y})$.
To some extent this has already been done in \cite{rauhut2007coorbit},
but only for the restrictive setting of groups
with a conjugation-invariant compact unit neighborhood (IN groups).
The present paper removes this assumption and, in addition, establishes the existence of dual coorbit molecules
as in \cite{MoleculePaper} for the full scale of \emph{quasi}-Banach spaces.
It also simplifies and unifies several results obtained in previous papers, even for the case of genuine Banach spaces.

\section*{Motivation}

Our motivation for considering quasi-Banach coorbit spaces is two-fold.

The first motivation stems from the fact that all of the above mentioned classical function spaces
have aside their Banach range also a natural range of parameters yielding quasi-Banach spaces.
For example, the Hardy spaces $H^p (\mathbb{R}^d)$, where $p\in (0,\infty]$,
are Banach spaces for $p \in [1,\infty]$, but merely quasi-Banach spaces for $p \in (0,1)$.
Therefore, in order to treat these examples for the full range of parameters
in the setting of coorbit theory, it is essential for the latter theory
to also apply to quasi-Banach spaces.
It should be mentioned that the Hardy spaces can be identified with coorbit spaces
associated with the (nonunimodular) affine group, and hence they cannot be treated
in the setting of \cite{rauhut2007coorbit} as it only applies to IN groups.

A second motivation stems from applications of coorbit theory to \emph{nonlinear approximation}.
Here, given a family $(f_i)_{i \in I} \subseteq \Hpi \vphantom{\sum_{j_i}}$ of elements of a Hilbert (or Banach) space $\Hpi$,
the objective is to seek for given $f \in \Hpi$ a good approximation
$\vphantom{\sum^{T^i}} \widetilde{f} = \sum_{i \in {I_f}} c_i \, f_i$ to $f$
under the restriction $|I_f| \leq K$, i.e., using only a fixed number of elements.
The associated map $f \mapsto \widetilde{f}$ is not necessarily linear.
In the context of the finite-dimensional space $\Hpi = \mathbb{C}^N$
and with $(f_i)_i = (e_i)_{i = 1}^N$ being the standard basis,
the best approximation (with respect to any $\ell^p$-norm on $\CC^N$)
to $v \in \CC^N$ is obtained by $\widetilde{v} = v \cdot \indicator_{I_v}$,
with $\indicator_{I_v}$ being the indicator of the set $I_v$ containing the indices
of the $K$ largest entries of $v$ (in absolute value).
The associated minimal approximation error is denoted by
\[
  \sigma_K (v)_p
  = \min
    \big\{
      \| v - u \|_{\ell^p}
      \; : \;
      u \in \mathbb{C}^N \;\; \text{is $K$-sparse}
    \big\},
\]
where $u \in \mathbb{C}^N$ is called \emph{$K$-sparse} if at most $K$ entries of $u$ are nonzero.
This error obeys the following bound, sometimes referred to as \emph{Stechkin's inequality}:
\begin{align} \label{eq:error_bound}
  \sigma_K (v)_q
  \leq K^{\frac{1}{q} - \frac{1}{p}}  \| v \|_{\ell^p}
  \quad \text{for} \quad
  0 < p \leq q \leq \infty,
\end{align}
see, e.g., \cite[Proposition~2.3]{foucart2013mathematical}.
Hence, $\sigma_K (v)_q$ decays fast for those $v \in \CC^N$ that have $\|v\|_{\ell^p}$ of reasonable size
with a \emph{small} value of $p > 0$.
In the setting of coorbit spaces, one can similarly show (based on the discretization theory of coorbit spaces)
that elements of $\Co(L^p)$ with small $p > 0$ can be well approximated by ``sparse vectors''
(i.e., by linear combinations of the family $\big( \pi(x_i) g \big)_{i \in I}$ with at most $K$ terms)
as elements of $\Co(L^q)$ for $q \gg p$, where it is important to note $\Co(L^2) = \hilbert$.
Hence, the \emph{quasi}-Banach spaces $\Co(L^p)$ with $p \in (0,1)$ play an important role
in nonlinear approximation.
See also the motivating discussions in \cite{fuehr2016vanishing, rauhut2007coorbit} for considering coorbit spaces $\Co(L^p)$ with $p < 1$
for the purpose of nonlinear approximation.

\section*{Related work}

The theory of coorbit spaces $\Co (Y)$ with a Banach function space $Y$
as developed in \cite{feichtinger1988unified,feichtinger1989banach1,feichtinger1989banach2,groechenig1991describing}
crucially relies on convolution relations of the form
\begin{align}\label{eq:convolution_intro}
  Y \ast L^1_w (G) \hookrightarrow Y
  \quad \text{and} \quad
  L^1_w (G) \ast Y \hookrightarrow Y
\end{align}
for a suitable weight function $w : G \to [1,\infty)$, often called a \emph{control weight} for $Y$.
If $Y$ is merely a \emph{quasi}-Banach space,
then relations such as \eqref{eq:convolution_intro} cannot be expected to hold,
e.g., for $Y = L^p (G)$ with $p \in (0,1)$.

The paper \cite{rauhut2007coorbit} considered coorbit spaces $\Co (Y)$
associated with quasi-Banach spaces $Y$ and used
instead of the embeddings \eqref{eq:convolution_intro} certain convolution relations
for so-called (left) \emph{Wiener amalgam spaces} $\wienerL{Y}$
(see, e.g., \cite{feichtinger1983banach,busby1981product,holland1975harmonic,fournier1985amalgams}
for such Banach spaces), as proven in \cite{rauhut2007wiener}.
In \cite{rauhut2007coorbit}, the coorbit theory is developed
only for invariant neighborhood (IN) groups, i.e., groups $G$ admitting a compact unit neighborhood
$U \subseteq G$ satisfying $x U x^{-1} = U$ for all $x \in G$.
In the setting of IN groups, several of the convolution relations in \cite{rauhut2007wiener}
for general locally compact groups possess simpler versions, but this setting is very restrictive.
For example, it excludes simply connected nilpotent Lie groups (e.g.,
the nonreduced Heisenberg group) and connected locally compact groups
with exponential volume growth (e.g., the affine group), see \cite{iwasawa1951topological,palmer1978classes}.

It should be pointed out that the preprint version \cite{rauhutcoorbitpreprint}
of \cite{rauhut2007coorbit} also proposed a coorbit theory valid on general
(possibly, non-IN) locally compact groups.
However, as observed in the PhD thesis \cite{FelixPhD} of the second named author,
an essential convolution relation asserted in \cite{rauhut2007wiener}
(which is used in \cite{rauhut2007coorbit, rauhutcoorbitpreprint})
is incorrect and fails for general groups.
More precisely, two examples presented in Section~\ref{sec:counterexamples} below
(both taken from \cite{FelixPhD}) show that
\[
  \wienerL{L^p_v} \ast \bigl[\wienerL{Y^{\vee}}\bigr]^{\vee}
  \nsubseteq \wienerL{Y}
\]
with weights $v (x) = \|L_{x^{-1}} \|_{\wienerL{Y} \to \wienerL{Y}}$
and $v(x) = \| L_{x} \|_{\wienerL{Y} \to \wienerL{Y}}$,
for the left-trans\-la\-tion operator $L_x F (y) = F(x^{-1} y)$ and involution $F^{\vee} (x) = F(x^{-1})$.
These examples show that the asserted \cite[Theorem~5.2]{rauhut2007wiener} fails in general.

A corrected and modified version of the theory proposed
in \cite{rauhut2007coorbit,rauhutcoorbitpreprint}, valid for general locally compact groups,
is contained in the thesis \cite{FelixPhD}.
The theory in \cite{FelixPhD} shows that essentially all the basic properties
of coorbit spaces known in the Banach space setting
\cite{feichtinger1988unified,feichtinger1989banach1,feichtinger1989banach2,groechenig1991describing}
remain valid for quasi-Banach spaces, if essential modifications are made at appropriate places,
both in the definition of coorbit spaces and in the proofs of their basic properties.

Lastly, a very general theory of \emph{quasi}-Banach coorbit spaces has been developed in \cite{kempka2017general},
where also coorbit spaces associated with general reproducing formulae
not necessarily arising from group representations can be treated,
see also \cite{fornasier2005continuous, rauhut2011generalized}.
Some of the basic properties of coorbit spaces for the group case can be extracted
from the general theory \cite{kempka2017general}, but this can be a rather daunting task
(especially for interested nonexperts) due to the many technical assumptions
required in \cite{kempka2017general}.
In addition, as the present article shows, many of these technicalities
can simply be avoided or considerably simplified in the group case considered here.
There are also significant parts of the present article that are currently not available
outside the group setting, most notably the theory of coorbit molecules.

\section*{Contributions}

The present paper provides a self-contained, greatly simplified and generalized,
exposition of coorbit spaces associated with integrable representations and quasi-Banach function spaces.
The exposition is similar to \cite{FelixPhD}, but contains important further simplifications
and improvements, which will be commented on at the relevant places throughout the text.
As one of the biggest simplifications, it is not assumed here
(in contrast to \cite{rauhutcoorbitpreprint,rauhut2007wiener,FelixPhD})
that the so-called control weight $w$ satisfies $w(x) \geq \| R_x \|_{\wienerL{Y} \to \wienerL{Y}}$;
instead, it is only required that $w(x) \geq \| R_x \|_{Y \to Y}$.
Although it is true that the Wiener amalgam space $\wienerL{Y}$
is right-invariant whenever $Y$ is (cf.\ \cite[Corollary~4.2]{rauhut2007wiener}),
it does not seem possible to \emph{readily} estimate $\|R_x \|_{\wienerL{Y} \to \wienerL{Y}}$
in terms of $\|R_x \|_{Y \to Y}$ whenever $G$ is a non-IN group, see, e.g., \cite[Lemma~2.3.18]{FelixPhD}.
This simplification of using $\| R_x \|_{Y \to Y}$ instead of $\| R_x \|_{\wienerL{Y} \to \wienerL{Y}}$
is important, since having a control weight of only moderate growth
is essential for obtaining sharp explicit conditions on atoms yielding atomic decompositions.
To see that just estimating $\|R_x \|_{Y \to Y}$ instead of $\|R_x \|_{\wienerL{Y} \to \wienerL{Y}}$
is advantageous, we mention that large parts of the recent PhD thesis \cite{AchimPhD}
are concerned with deriving usable bounds for the operator norm $\| R_x \|_{\wienerL{Y} \to \wienerL{Y}}$
for settings in which $\| R_x \|_{Y \to Y}$ can be readily estimated.

The biggest improvements in the present article compared to \cite{rauhut2007coorbit,FelixPhD}
are the results on atomic decompositions.
Instead of adapting the classical sampling techniques from
\cite{feichtinger1989banach1,groechenig1991describing} as done in \cite{rauhut2007coorbit,FelixPhD},
the present article provides an extension of the recent results on dual molecules
as in \cite{MoleculePaper} to the setting of quasi-Banach spaces.
This approach is more easily accessible and at the same time yields much stronger conclusions
on the localization of the dual system by showing that it also forms a family of molecules
satisfying similar envelope conditions as the basic atoms.
In particular, the obtained results on molecules close a gap
between what was known for general (quasi-Banach) coorbit spaces \cite{FelixPhD}
and the concrete setting of Besov-Triebel-Lizorkin spaces on $\mathbb{R}^d$, \cite{frazier1990discrete, memoir}.

Lastly, several arguments presented in this article are different
and often simpler than the classical arguments
in \cite{feichtinger1988unified,feichtinger1989banach1,feichtinger1989banach2,groechenig1991describing},
even for the setting of Banach spaces.
For example, the present article avoids the use of an atomic decomposition
to prove that the space of integrable vectors (so-called analyzing vectors) is dense,
and instead provides a direct proof using Bochner integration, cf.\ Section~\ref{sec:integrable};
in particular, \Cref{lem:basic_Hw}.

\section*{Extensions}

To ensure a wide applicability, all the results in the present article are proven
for possibly \emph{reducible} and/or \emph{projective} group representations.
This flexibility is highly beneficial for treating several key examples.
For example, reducible representations occur naturally in high-dimensional wavelet theory
\cite{fuehr2019coorbit,fuehr2015coorbit,koppensteiner2023anisotropic, koppensteiner2023anisotropic2},
whereas the use of (possibly noncontinuous) projective representations
is convenient for treating weighted Bergman spaces
\cite{christensen2019coorbits, christensen2017new, christensen2021new}
or representations of nilpotent Lie groups that are only square-integrable
\emph{modulo a central subgroup} \cite{grochenig2021new, fischerpreprint}.

As possible extensions of the theory
\cite{feichtinger1988unified,feichtinger1989banach1,feichtinger1989banach2,groechenig1991describing},
it was already mentioned in \cite{feichtinger1988unified} that the case of quasi-Banach spaces,
reducible and/or projective representations would be desirable for treating several key examples.
For coorbit spaces associated with Banach spaces, extensions to reducible and projective representations
can be found in \cite{christensen2011coorbit, dahlke2017coorbit}
and \cite{christensen2019coorbits, christensen1996atomic}, respectively.
For \emph{quasi}-Banach spaces, the present article seems to be the first to develop these extensions.

\section*{Overview}

\Cref{sec:prelim} introduces general notation used throughout the paper
and provides background on quasi-Banach function spaces and local maximal functions.
Convolution relations for Wiener amalgam spaces are proved in \Cref{sec:WienerAmalgam}.
In addition, \Cref{sec:counterexamples} contains two counterexamples
to convolution relations asserted in \cite{rauhut2007wiener}.
\Cref{sec:coorbit} is devoted to the definition of coorbit spaces
and to studying their basic properties as quasi-Banach spaces.
Convolution-dominated integral operators and matrices form the subject of \Cref{sec:CD}.
Among others, it will be shown that these classes of operators and matrices
form algebras and possess a local holomorphic functional calculus.
The results on convolution-dominated operators will be exploited in \Cref{sec:molecules}
to prove the existence of dual coorbit molecules of frames and Riesz sequences
and to derive associated decompositions of coorbit spaces.
\Cref{sec:applications} presents several applications of the obtained results
to the boundedness of operators.
Simplified statements of the main results for irreducible, square-integrable representations are given in \Cref{sec:discrete_series}.
The proofs of several miscellaneous and technical results are postponed to the appendix.

\addtocontents{toc}{\protect\setcounter{tocdepth}{1}}

\chapter{Function spaces and local maximal functions}
\label{sec:prelim}

This chapter provides background on solid function spaces,
twisted convolution and quasi-Banach spaces.

\section{General notation}
\label{sec:notation}

Throughout this article, $G$ denotes a second countable locally compact group
and $Q \subseteq G$ denotes a fixed open, relatively compact symmetric neighborhood
of the identity element $e_G \in G$.
In addition, a left Haar measure $\mu_G$ on $G$ is fixed and the associated modular function
is denoted by $\Delta : \group \to (0,\infty)$.

We write $\mathbb{T} := \{ z \in \CC \colon |z| = 1 \}$.
For $x \in \R^d$ or $x \in \CC$, we denote by $B_r (x)$ the open ball
(with respect to the usual Euclidean norm) of radius $r > 0$ around $x$.
For a subset $V$ of a ``base set'' $X$ that is usually implied by the context,
the notation $\indicator_V$ denotes the indicator function of $V$, i.e.,
$\indicator_V (x) = 1$ if $x \in V$ and $\indicator_V (x) = 0$ otherwise.
A pairwise disjoint union of sets will be denoted by the symbol $\bigcupdot$.

For functions $F_1, F_2 : X \to [0,\infty)$ on a set $X$
(which in most cases will be implied by the context), the notation $F_1 \lesssim F_2$
means that there exists a constant $C > 0$ such that $F_1 (x) \leq C  F_2 (x)$ for all $x \in X$.
The notation $F_1 \asymp F_2$ is used if $F_1 \lesssim F_2$ and $F_2 \lesssim F_1$.
For a function $F : G \to \mathbb{C}$, its involution $F^{\vee} : G \to \mathbb{C}$
is defined by $F^{\vee} (x) = F(x^{-1})$ for $x \in G$.
A function $w : G \to (0,\infty)$ will be called \emph{submultiplicative}
if $w(x y) \leq w(x) w(y)$ for all $x,y \in G$.

The Lebesgue space of $r$-integrable functions is denoted by $L^r (\group)$ for $0 < r \leq \infty$
and defined relative to $\mu_G$.
For a (measurable) function $w : G \to (0,\infty)$,
the associated weighted spaces $L^r_w (G)$ are equipped
with the norm $\| F \|_{L^r_w} := \| F \cdot w \|_{L^r}$.
For $r \in [1, \infty)$, the bi-linear dual pairing between $L^r_w (G)$
and its dual space $L^{r'}_{1/w} (G)$ is denoted $(\cdot , \cdot)_{L^r_w, L^{r'}_{1/w}}$,
while the sesquilinear pairing (which is anti-linear in the second component)
is denoted by $\langle \cdot , \cdot \rangle_{L^r_w, L^{r'}_{1/w}}$.
If the involved spaces are clear from the context, these pairings will also
sometimes simply be denoted by $(\cdot, \cdot)$ and $\langle \cdot , \cdot \rangle$.

\section{Function spaces}
\label{sec:QBF}

A vector space $Y$ is called a \emph{quasi-normed space}
if it is equipped with a map $\| \cdot \| : Y \!\to\! [0,\infty)$ that is positive definite
(i.e., $\| f \| > 0$ for $f \neq 0$),
absolute homogeneous (i.e., $\| \alpha \, f \| = |\alpha| \, \| f \|$)
and such that there exists a constant $C > 0$ satisfying
$\|f + g \| \leq C  (\| f \| + \|g\|)$ for all $f, g \in Y$;
such a map $\| \cdot \|$ is called a \emph{quasi-norm} on $Y$
with \emph{triangle constant} $C > 0$.
For $p \in (0,1]$, a map $\|\cdot\| : Y \to [0,\infty)$
is a \emph{$p$-norm} if $\| f + g \|^p \leq \| f \|^p + \|g \|^p$.
If $\|\cdot\| : Y \to [0,\infty)$ satisfies the $p$-norm property,
then it is a quasi-norm with triangle constant $C = 2^{\frac{1}{p} - 1}$;
see, e.g.,\, \cite[Exercise~1.1.4]{GrafakosClassicalFourier}.

By the Aoki-Rolewicz theorem (see, e.g., the proof of \cite[Chapter~2, Theorem~1.1]{devore1993constructive}),
given a quasi-norm $\| \cdot \| : Y \to [0,\infty)$, there exists some $p \in (0,1]$ such that
\begin{equation}
  \normm{f}
  := \inf
       \bigg\{
         \bigg(
           \sum_{i = 1}^n \| f_i \|^p
         \bigg)^{\frac{1}{p}}
         \; : \;
         n \in \mathbb{N},
         f = \sum_{i = 1}^n f_i, \;
         f_1, ..., f_n \in Y
       \bigg\}
  \label{eq:AokiRolewiczNorm}
\end{equation}
defines a $p$-norm on $Y$ which is equivalent to $\| \cdot \|$,
i.e., $\| \cdot \| \asymp \normm{\cdot}$.
A vector space $Y$ with quasi-norm $\| \cdot \|$ is a \emph{quasi-Banach space} if it is complete
with respect to the metric $d(f,g) = \normm{f-g}^p$, where $\normm{\cdot}$ is any $p$-norm
(for some $p \in (0,1]$) equivalent to $\| \cdot \|$.

Let $L^0 (G)$ be the space of ($\mu_G$-equivalence classes of) measurable functions $f : G \to \mathbb{C}$.
A \emph{quasi-Banach function space} $(Y, \| \cdot \|_Y)$ is a quasi-Banach space
satisfying $Y \subseteq L^0 (G)$.%
\footnote{The definition of a quasi-Banach function space is not consistent throughout the literature;
see \cite{lorist2023banach} for a discussion.
The definition used in the present paper is taken from \cite{FelixPhD}
and is inspired by the definition of a Banach function space in \cite{ZaanenIntegration}.
This definition contains the minimal assumptions needed for the present paper.}
It is called \emph{solid} if for each measurable $f : G \to \mathbb{C}$
satisfying $|f| \leq |g|$ $\mu_G$-a.e.\ for some $g \in Y$,
it follows that $f \in Y$ with $\| f \|_Y \leq \|g \|_Y$.

It is not difficult to see (cf. \cite[Corollary~2.2.12]{FelixPhD} for a proof)
that if $(Y, \| \cdot \|_Y)$ is a solid quasi-Banach function space,
then the $p$-norm $\normm{\cdot}$ defined in \Cref{eq:AokiRolewiczNorm} is solid as well.
Hence, we can (and will) always assume that solid quasi-Banach function spaces
are equipped with a solid $p$-norm, for some $p \in (0,1]$ depending on $Y$.

The space $Y^{\vee} = \{ F^{\vee} \colon F \in Y \}$ associated with a quasi-normed space $Y \subseteq L^0(G)$
is equipped with the quasi-norm $\| F \|_{Y^{\vee}} = \| F^{\vee} \|_Y$.

\section{Discrete sets}
\label{sec:discrete}

Let $\Lambda = (\lambda_i)_{i \in I}$ be a countable family in $G$.
Then $\Lambda$ is called \emph{relatively separated} if
\[
  \rel(\Lambda)
  := \sup_{x \in G}
       \cardinality{ \{ i \in I \colon \lambda_i \in x Q \}}
   = \sup_{x \in G}
       \sum_{i \in I}
         \indicator_{\lambda_i Q} (x)
  < \infty.
\]
Let $U \subseteq G$ be a relatively compact unit neighborhood.
The family $\Lambda$ is said to be \emph{$U$-dense} if $G = \bigcup_{i \in I} \lambda_i U$,
and is called \emph{$U$-separated} if $\lambda_i U \cap \lambda_{i'} U = \emptyset$
for all $i, i' \in I$ with $i \neq i'$.
If $\Lambda$ is separated, then it is also relatively separated.
A family $\Lambda$ is called \emph{relatively dense} if it is $V$-dense for some unit neighborhood $V$.
A relatively separated and $U$-dense family exists for any chosen $U$, cf.\ \cite{AnkerWellSpreadExistence}.

For a measurable unit neighborhood $U$, and any countable, $U$-dense family
$\Lambda = (\lambda_i)_{i \in I}$ in $G$, there exists a family $(U_i)_{i \in I}$
of measurable sets $U_i \subseteq \lambda_i U$ satisfying $G = \bigcupdot_{i \in I} U_i$;
see, e.g., \cite[Lemma~2.1]{MoleculePaper}.
Any such family $(U_i)_{i \in I}$ is called a \emph{disjoint cover} associated with $\Lambda$ and $U$.

\section{Sequence spaces}
\label{sub:SequenceSpaces}

Let $Y$ be a solid quasi-Banach function space on $G$ with $p$-norm $\| \cdot \|_Y$.
For a relatively separated family $\Lambda$ in $G$,
the \emph{discrete sequence space} associated with $Y$ and $\Lambda$ is defined as
\[
  Y_d (\Lambda)
  := Y_d (\Lambda, Q)
  := \bigg\{
       c = (c_{i})_{i \in I} \in \mathbb{C}^{I}
       :
       \sum_{i \in I} |c_{i}| \indicator_{\lambda_i Q} \in Y
     \bigg\}
\]
and equipped with the $p$-norm
\begin{equation}
  \| c \|_{Y_d (\Lambda)}
  := \bigg\|
       \sum_{i \in I}
         |c_{i} | \indicator_{\lambda_i Q}
     \bigg\|_{Y},
  \quad c = (c_{i})_{i \in I} \in Y_d (\Lambda).
  \label{eq:SequenceSpaceNormDefinition}
\end{equation}
The space $Y_d (\Lambda)$ is a quasi-Banach space.
If $Y$ is right-invariant, then $Y_d (\Lambda, Q)$ is independent
of the choice of the neighborhood $Q$, with equivalent quasi-norms for different choices.
In fact, the implied constant of the norm equivalence for different choice of $Q$
is independent of $\Lambda$.
To see this, note that if $Q'$ is a relatively compact unit neighborhood, then
$Q \subseteq \bigcup_{j=1}^n Q' x_j^{-1}$ for a certain $n \in \N$ and $x_1,\dots,x_n \in G$.
Hence, for any family $\Lambda = (\lambda_i)_{i \in I} \subseteq G$,
\begin{equation}
\begin{split}
  \bigg\|
    \sum_{i \in I}
      |c_{i} | \indicator_{\lambda_i Q}
  \bigg\|_{Y}
  & \leq \bigg\|
           \sum_{i \in I}
             |c_{i} | \sum_{j=1}^n \indicator_{\lambda_i Q' x_j^{-1}}
         \bigg\|_{Y}
    \lesssim_{n,Y} \sum_{j=1}^n
                     \bigg\|
                       R_{x_j}
                       \sum_{i \in I}
                         |c_{i} | \indicator_{\lambda_i Q'}
                     \bigg\|_{Y} \\
  & \lesssim_{n,Y} \bigg(
                     \sum_{j=1}^n
                       \| R_{x_j} \|_{Y \to Y}
                   \bigg)
                   \bigg\|
                     \sum_{i \in I}
                       |c_i| \, \indicator_{\lambda_i Q'}
                   \bigg\|_Y
    ,
\end{split}
\label{eq:SequenceSpaceIndependentOfBaseSet}
\end{equation}
where the implied constants only depend on $Q,Q',Y$ and are independent of $\Lambda$.

If $Y = L^p_v (G)$ for $p \in (0, \infty]$ and a (measurable) weight $v : G \to (0,\infty)$
satisfying $v(xy) \leq w(x) v(y)$ and $v(x y) \leq v(x) w(y)$ for a submultiplicative,
measurable $w : G \to (0, \infty)$, then
$Y_d (\Lambda) = \ell^p_u (I)$, where $u(i) := v(\lambda_i)$ for $i \in I$.
The space of finite sequences on $\Lambda$ will be denoted by $c_{00} (\Lambda)$.

For further properties and proofs, cf.\ \cite[Section~2]{rauhut2007wiener}
and \cite[Section~2.3.2]{FelixPhD}.

\section{Cocycles and twisted convolution}
\label{sec:cocycle_twisted}

A \emph{cocycle} or \emph{multiplier} on $G$ is a Borel measurable function
$\sigma : G \times G \to \mathbb{T}$ satisfying the properties
\begin{enumerate}[label=(\arabic*)]
 \item For all $x,y, z \in G$, it holds $\sigma(x,yz) \sigma(y,z) = \sigma(xy, z) \sigma(x,y)$;
 \item For any $x \in G$, it holds $\sigma(x, e_G) = \sigma(e_G, x) = 1$.
\end{enumerate}

Given a cocycle $\sigma$, the associated \emph{twisted translation operators}
$\twisttranslationL{x}$ and $\twisttranslationR{x}$, where $x \in G$,
act on a measurable function $F : G \to \mathbb{C}$ as
\begin{align} \label{eq:twisted_translation}
 (\twisttranslationL{x} F)(y) = \sigma(x, x^{-1} y) F(x^{-1} y)
 \quad \text{and} \quad
 (\twisttranslationR{x} F)(y) = \overline{\sigma(y, x)} F(y x),
 \quad y \in G.
\end{align}
If $\sigma \equiv 1$, then $\twisttranslationL{x}$ (resp.\ $\twisttranslationR{x}$)
will simply be denoted by $\translationL{x}$ (resp.\ $\translationR{x}$)
and called a translation operator.
The \emph{twisted convolution} of two measurable functions $F_1, F_2 : G \to \mathbb{C}$ is defined by
\begin{align} \label{eq:twisted_convolution}
  F_1 \ast_{\sigma} F_2 (x)
  = \int_G
      F_1(y) [\twisttranslationL{y} F_2] (x)
    \; d\mu_G (y)
  = \int_G
      F_1(y) \sigma(y, y^{-1} x) F_2 (y^{-1} x)
    \; d\mu_G (y)
\end{align}
whenever the integral converges.
As for twisted translation, if $\sigma \equiv 1$,
the ordinary convolution product will be denoted by $\ast$.
Note that
\[
  |F_1 \ast_{\sigma} F_2 (x)| \leq (|F_1| \ast |F_2|) (x)
\]
for all $x \in G$.

\section{Local maximal functions}
\label{sec:AmalgamPrelims}

For a measurable (resp.\ continuous) function  $F : G \to \mathbb{C}$,
the \emph{left} and \emph{right (local) maximal functions} defined by
\begin{align} \label{eq:maximal}
  \maxL F (x) = \esssup_{y \in xQ} | F(y)|
  \quad \text{resp.} \quad
  \maxR F(x) = \esssup_{y \in Qx} |F(y)|,
\end{align}
are measurable (resp.\ continuous) on $G$. The maximal functions satisfy the commutation relations
\begin{align} \label{max_commute}
  \maxL [\translationL{x} F] = \translationL{x} [\maxL F]
  \qquad \text{and} \qquad
  \maxR [\translationR{x} F] = \translationR{x} [\maxR F]
\end{align}
for arbitrary $x \in G$.

The notation $\maxL_{Q'}$ (resp.\ $\maxR_{Q'}$) will be used if maximal functions
are defined as in \eqref{eq:maximal}, but relative to a (relatively compact) unit neighborhood $Q'$
that is possibly different from the canonical neighborhood $Q$ (cf.\ Section~\ref{sec:notation}).

By symmetry of $Q$, the left and right maximal functions are related
by $(\maxL F)^{\vee} = \maxR F^{\vee}$.
For any measurable functions $F_1,F_2 : G \to \CC$ for which $F_1 \ast_\sigma F_2$ is
(almost everywhere) well-defined, the estimates
\begin{equation}
  \maxL (F_1 \ast_\sigma F_2) (x) \leq (|F_1| \ast \maxL F_2 ) (x)
  \quad \text{and} \quad
  \maxR (F_1 \ast_\sigma F_2) (x) \leq (\maxR F_1 \ast |F_2|)(x)
  \label{eq:ConvolutionMaximalFunctionEstimate}
\end{equation}
hold for all $x \in G$.

For a solid quasi-Banach function space $Y$ that is invariant under left- and right translation
and with $p$-norm $\| \cdot \|_Y$, the associated left and right \emph{Wiener amalgam spaces}
$\wienerL{Y}$ and $\wienerR{Y}$ are defined by
\begin{align*}
  \wienerL{Y}  = \big\{ F \in L^{\infty}_{\loc} (G) : \maxL F \in Y \}
  \quad \text{and} \quad
  \wienerR{Y}  = \big\{ F \in L^{\infty}_{\loc} (G) : \maxR F \in Y \},
\end{align*}
respectively.
These spaces are equipped with the canonical $p$-norms $\| F \|_{\wienerL{Y}} = \| \maxL F \|_Y$
and $\| F \|_{\wienerR{Y}} = \| \maxR F \|_{Y}$.
Note that $\wienerR{Y} = [\wienerL{Y^{\vee}}]^{\vee}$,
as can be deduced from the identity $(\maxL F)^{\vee} = \maxR (F^{\vee})$.
The spaces $\wienerL{Y}$ and $\wienerR{Y}$ are complete and continuously embedded into $Y$;
in notation, $\wienerL{Y}, \wienerR{Y} \hookrightarrow Y$.
In addition, the spaces $\wienerL{Y}$ and $\wienerR{Y}$ are independent
of the choice of defining neighborhood $Q$, with equivalent norms for different choices.
Furthermore, $\wienerL{Y}, \wienerR{Y}$ are invariant under left- and right-translations,
and satisfy the following estimates
\begin{equation}
  \| \translationL{x} \|_{\wienerL{Y} \to \wienerL{Y}} \leq \| \translationL{x} \|_{Y \to Y}
  \qquad \text{and} \qquad
  \| \translationR{x} \|_{\wienerR{Y} \to \wienerR{Y}} \leq \| \translationR{x} \|_{Y \to Y}
  .
  \label{eq:WienerAmalgamTranslationNormEstimate}
\end{equation}
These estimates are easy consequences of \Cref{max_commute}.

The following simple pointwise estimates for continuous functions will be used repeatedly.
For a proof, see, e.g., \cite[Lemma~2.4]{MoleculePaper}.

\begin{lemma}\label{lem:StandardShiftedSeriesEstimates}
Let $\Lambda = (\lambda_i)_{i \in I} \subseteq \group$ be relatively separated.
  If $F_1, F_2 : \group \to [0,\infty)$ are continuous functions, then
  \begin{align}
    \sum_{i \in I}
      F_1 (\lambda_i^{-1} x) F_2 (y^{-1} \lambda_i)
    & \leq \frac{\rel(\Lambda)}{\haarMeasure(Q)}
            \bigl(\maxL F_2 \ast \maxR F_1 \bigr) (y^{-1} x)
    \label{eq:ShiftedSeriesEstimateTwoTerms}
    \\
    \sum_{i \in I}
      F_1 (y^{-1} \lambda_i)
    & \leq \frac{\rel(\Lambda)}{\haarMeasure(Q)} \| F_1 \|_{\wienerL{L^1}}
    \label{eq:ShiftedSeriesEstimateOneTerm}
  \end{align}
  for all $x,y \in G$.

  In addition, if $F_1 \in L^1(G)$ is continuous and satisfies $F_1^{\vee} \in \wienerL{L^1}$,
  then the mapping
  \[
    D_{F_1, \Lambda} : \quad
    \ell^2 (\Lambda) \to L^2 (G), \quad
    (c_i)_{i \in I} \mapsto \sum_{i \in I} c_i L_{\lambda_i} F_1
  \]
  is well-defined and bounded, with absolute convergence of the defining series a.e.\ on $G$.
  Its operator norm satisfies
  \(
    \|D_{F_1, \Lambda} \|^2_{\ell^2 \to L^2}
    \leq \frac{\rel(\Lambda)}{\mu_G (Q)} \| F_1 \|_{L^1} \|F_1^{\vee} \|_{\wienerL{L^1}}
    .
  \)
\end{lemma}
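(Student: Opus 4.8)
The plan is to derive all three assertions from a single elementary principle: for a relatively separated family the ``fattened'' points $\lambda_i Q$ overlap at most $\rel(\Lambda)$ times, so a sum over $\Lambda$ whose terms are pointwise dominated by values of a local maximal function can be compared to an integral over $G$, at the cost of the factor $\rel(\Lambda)/\mu_G(Q)$.

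For \eqref{eq:ShiftedSeriesEstimateTwoTerms} and \eqref{eq:ShiftedSeriesEstimateOneTerm} I would first isolate the basic domination step. If $z \in \lambda_i Q$, write $z = \lambda_i q$ with $q \in Q$; then $\lambda_i^{-1} x = q\, z^{-1} x \in Q\, z^{-1} x$, and, using the symmetry of $Q$, $y^{-1} \lambda_i = (y^{-1} z)\, q^{-1} \in (y^{-1} z)\, Q$. Since $F_1, F_2$ are continuous and $Q$ is open, the definitions of $\maxR$ and $\maxL$ give $F_1(\lambda_i^{-1} x) \le \maxR F_1(z^{-1} x)$ and $F_2(y^{-1} \lambda_i) \le \maxL F_2(y^{-1} z)$ for every such $z$. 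Averaging the product of these two bounds over $z \in \lambda_i Q$ against the left Haar measure and using $\mu_G(\lambda_i Q) = \mu_G(Q)$ gives
\[
  F_1(\lambda_i^{-1} x)\, F_2(y^{-1} \lambda_i)
  \le \frac{1}{\mu_G(Q)} \int_G \indicator_{\lambda_i Q}(z)\, \maxR F_1(z^{-1} x)\, \maxL F_2(y^{-1} z)\, d\mu_G(z) .
\]
Summing over $i$, interchanging the sum and the integral (Tonelli), bounding $\sum_{i} \indicator_{\lambda_i Q} \le \rel(\Lambda)$, and substituting $z = y w$ to recognize the resulting integral as $(\maxL F_2 \ast \maxR F_1)(y^{-1} x)$ then yields \eqref{eq:ShiftedSeriesEstimateTwoTerms}. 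Estimate \eqref{eq:ShiftedSeriesEstimateOneTerm} is the same argument with the second factor omitted: for $z \in \lambda_i Q$ one has $F_1(y^{-1} \lambda_i) \le \maxL F_1(y^{-1} z)$, and after averaging, summing, and using left invariance, $\int_G \maxL F_1(y^{-1} z)\, d\mu_G(z) = \|\maxL F_1\|_{L^1} = \|F_1\|_{\wienerL{L^1}}$.

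For the synthesis operator I would work first with finite $c \in c_{00}(\Lambda)$. Splitting $|c_i|\, |F_1(\lambda_i^{-1} x)| = |c_i|\, |F_1(\lambda_i^{-1} x)|^{1/2} \cdot |F_1(\lambda_i^{-1} x)|^{1/2}$ and applying the Cauchy--Schwarz inequality in $i$ gives the pointwise bound
\[
  \Bigl| \sum_{i \in I} c_i\, F_1(\lambda_i^{-1} x) \Bigr|^2
  \le \Bigl( \sum_{i \in I} |c_i|^2\, |F_1(\lambda_i^{-1} x)| \Bigr) \Bigl( \sum_{i \in I} |F_1(\lambda_i^{-1} x)| \Bigr) .
\]
The second factor is bounded uniformly in $x$: since $|F_1(\lambda_i^{-1} x)| = |F_1|^{\vee}(x^{-1} \lambda_i)$, that $|F_1|^{\vee} = |F_1^{\vee}|$ is continuous, and $\maxL |F_1^{\vee}| = \maxL F_1^{\vee}$, applying \eqref{eq:ShiftedSeriesEstimateOneTerm} to $|F_1^{\vee}|$ with $y = x$ bounds it by $\frac{\rel(\Lambda)}{\mu_G(Q)}\, \|F_1^{\vee}\|_{\wienerL{L^1}}$, which is finite by hypothesis. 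Integrating the displayed inequality over $x$ and using left invariance to evaluate $\int_G |F_1(\lambda_i^{-1} x)|\, d\mu_G(x) = \|F_1\|_{L^1}$ yields
\[
  \Bigl\| \sum_{i \in I} c_i\, L_{\lambda_i} F_1 \Bigr\|_{L^2}^2
  \le \frac{\rel(\Lambda)}{\mu_G(Q)}\, \|F_1\|_{L^1}\, \|F_1^{\vee}\|_{\wienerL{L^1}}\, \|c\|_{\ell^2}^2 .
\]
Since this bound is stable under passing to finite subsums, the partial sums of $\sum_{i} c_i L_{\lambda_i} F_1$ form a Cauchy net in $L^2(G)$ for every $c \in \ell^2(\Lambda)$, so the series converges in $L^2$ and the bound persists. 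Absolute a.e.\ convergence follows because $\sum_{i} |c_i|^2\, |F_1(\lambda_i^{-1} \cdot)|$ has finite integral over $G$ (again by left invariance and Tonelli), hence is finite a.e., and the Cauchy--Schwarz split above then dominates $\sum_{i} |c_i|\, |F_1(\lambda_i^{-1} x)|$ by the square root of that quantity.

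I do not expect a deep obstacle; the points that need care are (i) keeping the left versus right local maximal functions and the orientation of the convolution consistent --- this is precisely where the symmetry of $Q$ enters --- and (ii) confirming that no modular-function factor intervenes, which is the case because the integration variable is only ever left-translated, never inverted. Finally, the passage from essential suprema to genuine pointwise values is legitimate since $F_1$ and $F_2$ are continuous and $Q$ is open.
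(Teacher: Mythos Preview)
Your proof is correct; the paper itself does not supply a proof of this lemma but refers to \cite[Lemma~2.4]{MoleculePaper}, whose argument is precisely the one you have given (average over $\lambda_i Q$, use $\sum_i \indicator_{\lambda_i Q} \le \rel(\Lambda)$, then recognize a convolution and for the synthesis bound split via Cauchy--Schwarz). Your treatment of the left/right maximal functions, the symmetry of $Q$, and the passage from $c_{00}$ to $\ell^2$ is accurate.
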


The \emph{(two-sided) maximal function} is defined by $\maxSt := \maxL \maxR = \maxR \maxL$.
The associated two-sided Wiener amalgam space is the subspace of $\wienerL{Y} \cap \wienerR{Y}$ defined by
\[
  \wienerSt{Y} := \big\{ F \in L_{\loc}^{\infty} (G) : \maxSt F \in Y \big\}.
\]
The (closed) subspace of $\wienerSt{Y}$ consisting of continuous functions is denoted by $\wienerStC{Y}$.
Both $\wienerSt{Y}$ and $\wienerStC{Y}$ are quasi-Banach function spaces
when equipped with the $p$-norm $\| F \|_{\wienerSt{Y}} = \| \maxSt F\|_{Y}$.
These properties follow from $\wienerSt{Y} = \wienerR {\wienerL {Y}}$. In addition, note that
\[
  [\wienerSt{Y}]^{\vee}
  = \wienerL{[\wienerL{Y}]^{\vee}}
  = \wienerL{\wienerR{Y^{\vee}}}
  = \wienerSt{Y^{\vee}}
  .
\]
For further properties and proofs, see \cite[Sections 2 and 3]{rauhut2007wiener}
and \cite[Section~2.3]{FelixPhD}.

\chapter{Convolution relations for Wiener amalgam spaces}
\label{sec:WienerAmalgam}

The purpose of this chapter is to provide several convolution relations and simple embeddings
of Wiener amalgam spaces into Lebesgue spaces.
These results will play an essential role in the development
of coorbit theory in the subsequent sections.

\section{Embeddings into Lebesgue spaces}
The following conditions on a weight function will often be assumed in the sequel.

\begin{definition}\label{def:PWeight}
  Let $p \in (0,1]$.
  A function $w : \group \to (0,\infty)$ will be called a \emph{$p$-weight} if it satisfies
  the following conditions:
  \begin{enumerate}[label=(w\arabic*)]
    \item \label{enu:WeightBoundedBelow}
          $w$ is measurable and satisfies $w \geq 1$,
    \item \label{enu:WeightSubmultiplicative}
          $w$ is \emph{submultiplicative}, i.e.,
          $w(x y) \leq w(x) w(y)$ for all $x,y \in \group$,
    \item \label{enu:WeightPSymmetric}
          $w(x) = w(x^{-1})  \Delta^{1/p}(x^{-1})$ for all $x \in \group$.
  \end{enumerate}
  A $p$-weight is called a \emph{control weight} for a quasi-Banach function space $(Y, \| \cdot \|_Y)$
  with $p$-norm $\| \cdot \|_Y$ if $Y$ is \emph{translation-invariant}
  (i.e., invariant under left- and right translations) and $w$ satisfies
  \begin{enumerate}[resume,label=(w\arabic*)]
      \item \label{enu:ControlWeightCondition}
            $\|R_y \|_{Y \to Y} \leq w(y)$ for all $y \in G$.
  \end{enumerate}
  A control weight $w$ for $Y$ satisfying additionally
  \begin{enumerate}[resume,label=(w\arabic*)]
    \item \label{enu:StrongControlWeightCondition}
          $\|L_{y^{-1}} \|_{Y \to Y} \leq w(y)$ for all $y \in G$
  \end{enumerate}
  is called a \emph{strong control weight} for $Y$.
  \end{definition}

\begin{remark}\label{rem:SubmultiplicativeWeightsLocallyBounded}
  A measurable, submultiplicative weight is automatically locally bounded;
  see, e.g., \cite[Lemma~1.3.3]{kaniuth2009course}.
\end{remark}

\begin{lemma}\label{lem:AmalgamWeightedLInftyEmbedding}
  Let $r \in (0,\infty]$, let $w : \group \to (0,\infty)$ be measurable and submultiplicative,
  and let $Y$ be a solid, translation-invariant quasi-Banach function space on $\group$.
  Then the following hold:
  \begin{enumerate}[label=(\roman*)]
    \item If $v : \group \to (0,\infty)$ is measurable and satisfies
          $\| \translationL{y^{-1}} \|_{Y \to Y} \leq v(y)$ for all $y \in \group$,
          then $\wienerL{Y} \hookrightarrow L_{1/v}^\infty$.

    \item $\wienerL{L_w^r} \hookrightarrow L_{w}^s$ for all $s \in [r,\infty]$,

    \item If $p \in (0,1]$ and $w$ is a $p$-weight, then
          $\wienerR{L_w^p} \hookrightarrow [L_{w}^s]^{\vee}$ for all $s \in [p,\infty]$.
  \end{enumerate}
\end{lemma}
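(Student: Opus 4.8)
The plan is to prove the three embeddings in order, extracting (ii) and (iii) as consequences of (i) applied to suitable Lebesgue spaces. For part (i), I would start from the definition $\|F\|_{\wienerL{Y}} = \|\maxL F\|_Y < \infty$ and observe that for a.e.\ $x \in G$ one has $|F(x)| \leq \maxL F(y)$ whenever $x \in yQ$, i.e.\ whenever $y \in xQ^{-1} = xQ$ (using symmetry of $Q$). Fixing any $y_0 \in Q$, this gives $|F(x)| \leq \maxL F(x y_0)$, hence $|F(x)| = |(\translationL{(x y_0)^{-1}} [\maxL F])(e_G) \cdot \chi|$… more cleanly: $|F(x)| \leq \esssup_{z \in Q}\maxL F(xz)$, and then I would test against the solidity of $Y$ together with the translation bound. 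The efficient route is: for any $g \in Y$ with $|g|$ bounded below by a positive constant on a neighborhood, compare; but the cleanest is to use that $\maxL F \in Y$ implies, by $\translationL{y^{-1}}$-invariance, that $\translationL{y^{-1}} \maxL F \in Y$ with norm $\leq v(y)\|\maxL F\|_Y$, and that pointwise $|F(x)|\,\indicator_{Q}(y^{-1}x)$ is dominated in a way that lets solidity conclude $\|F \cdot \frac{1}{v}\|_{L^\infty} \lesssim \|\maxL F\|_Y$. I would phrase this by integrating/averaging: pick $\phi \in Y$, $\phi \geq 0$, not identically zero, supported near $e_G$ with $\|\phi\|_Y = 1$ (such $\phi$ exists as $Y \neq \{0\}$ and $Y$ is solid, so we may take $\phi = \indicator_K$ for suitable compact $K$, assuming $\indicator_K \in Y$; if not, rescale an arbitrary element). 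Then $|F(x)| \|\translationL{x}\phi'\|_Y \leq \|\maxL F\|_Y$ for an appropriate translate, and $\|\translationL{x}\phi'\|_Y \geq 1/\|\translationL{x^{-1}}\|_{Y\to Y} \cdot \|\phi'\|_Y \gtrsim 1/v(x)$. Rearranging yields $|F(x)| \lesssim v(x) \|F\|_{\wienerL{Y}}$, which is precisely $\wienerL{Y} \hookrightarrow L^\infty_{1/v}$.

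For part (ii), I would apply the same circle of ideas directly to $Y = L^r_w$. First, the case $s = \infty$: the weight $w$ is submultiplicative, so $\|\translationL{y^{-1}}\|_{L^r_w \to L^r_w} \leq \esssup_x \frac{w(x)}{w(yx)} \cdot (\text{Jacobian factor})$; since $w(x) \leq w(y^{-1})w(yx)$ gives $w(x)/w(yx) \leq w(y^{-1})$, combined with the $\Delta$-factor from left translation on $L^r$, one gets a bound of the form $\|\translationL{y^{-1}}\|_{L^r_w \to L^r_w} \leq w(y^{-1}) \Delta(y^{-1})^{1/r}$ (or similar), which is locally bounded and submultiplicative in $y$; call it $v(y)$. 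Then part (i) gives $\wienerL{L^r_w} \hookrightarrow L^\infty_{1/v}$. But this is weaker than what is claimed; to get $L^\infty_w$ I instead argue more carefully that the pointwise bound $|F(x)| \lesssim \maxL F$ localized near $x$ gives $|F(x)| w(x) \lesssim \|\maxL F \cdot w\|_{L^\infty(xQ)} \lesssim \|\maxL F\|_{L^r_w}$ via Hölder on the relatively compact set $xQ$ together with local boundedness of $w$ and left-invariance of $\mu_G$ up to the modular function — here the submultiplicativity $w(z) \leq w(x)w(x^{-1}z) \leq w(x)\,C_Q$ for $z \in xQ$ is what lets $w(x)$ come out. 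For the intermediate range $s \in (r,\infty)$, I would interpolate trivially: $\|F\|^s_{L^s_w} = \int |F w|^{s-r} |Fw|^r \leq \|Fw\|_{L^\infty}^{s-r}\|Fw\|_{L^r}^r \lesssim \|F\|_{\wienerL{L^r_w}}^s$, using the already-established $s=\infty$ and $s=r$ (the latter being the trivial embedding $\wienerL{L^r_w}\hookrightarrow L^r_w$).

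Part (iii) I would deduce from (ii) by involution. Since $w$ is a $p$-weight, condition \ref{enu:WeightPSymmetric} reads $w(x) = w(x^{-1})\Delta^{1/p}(x^{-1})$, and a short computation shows that the map $F \mapsto F^\vee$ sends $L^p_w$ to $L^p_{w'}$ isometrically (up to the modular Jacobian) for $w'(x) := w(x^{-1})\Delta(x)^{-1/p}$, and condition \ref{enu:WeightPSymmetric} is exactly engineered so that $w' = w$, i.e.\ $L^p_w$ is involution-invariant. Combined with $\wienerR{Z} = [\wienerL{Z^\vee}]^\vee$ and $[L^s_w]^\vee = L^s_{w^\vee}$, part (ii) applied to $Y = L^p_w$ (or its involute) yields $\wienerR{L^p_w} = [\wienerL{L^p_{w}}]^\vee \hookrightarrow [L^s_w]^\vee$ for $s \in [p,\infty]$, noting $w = w^\vee$ up to the modular factor that is absorbed precisely by \ref{enu:WeightPSymmetric}.

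The main obstacle I anticipate is the bookkeeping of modular-function factors in parts (ii) and (iii): left translation is not measure-preserving, so every translation-norm estimate on $L^r_w$ or $L^p_w$ carries a $\Delta$-power, and getting these to cancel against condition \ref{enu:WeightPSymmetric} — which is stated with the specific exponent $1/p$ tied to the $p$-norm, not $1/r$ — requires care about which exponent appears where. In part (iii) in particular one must check that the $p$ in the $p$-weight condition matches the $L^p$ in the statement (it does, by hypothesis), so that the involution genuinely preserves $L^p_w$; for the larger exponents $s > p$ in (iii), the involution of $L^s_w$ lands in $[L^s_{w^\vee}]^\vee$ with $w^\vee(x) = w(x^{-1})$, and one uses $w(x^{-1}) = w(x)\Delta^{1/p}(x) \geq$ (or $\asymp$, locally) the relevant comparison, which suffices since the target $[L^s_w]^\vee$ only needs a one-sided bound. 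Everything else is routine solidity, Hölder on relatively compact sets, and the elementary interpolation inequality.
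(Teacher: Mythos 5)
Your overall strategy coincides with the paper's: establish the a.e.\ pointwise domination $\indicator_{xQ^{-1}}\,|F(x)| \le \maxL F$, test it against the solid quasi-norm of $Y$ (resp.\ of $L_w^r$) to get (i) and (ii), and deduce (iii) from (ii) via the involution invariance $[L_w^p]^{\vee} = L_w^p$ forced by condition \ref{enu:WeightPSymmetric}, together with $\wienerR{Z} = [\wienerL{Z^{\vee}}]^{\vee}$. Two steps need repair, however. First, the domination $|F(x)| \le \maxL F(y)$ for $y \in xQ^{-1}$ is not automatic: $\maxL F(y)$ is an \emph{essential} supremum over $yQ$, so the value of $F$ at the particular point $x \in yQ$ may well exceed it. You assert the inequality ``for a.e.\ $x$'' without justification; the paper proves it by a Tonelli argument on the exceptional set $\Omega = \{(x,z) : \indicator_{xQ^{-1}}(z)\,|F(x)| > \maxL F(z)\}$, showing each slice $\Omega^{z}$ is null and hence almost every slice $\Omega_x$ is null. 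This is the one genuinely nontrivial measure-theoretic ingredient of the lemma and cannot be waved through.

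Second, in part (ii) the link $\|\maxL F \cdot w\|_{L^\infty(xQ)} \lesssim \|\maxL F\|_{L_w^r}$ ``via H\"older'' is false as written: on a set of finite measure, H\"older bounds the $L^r$ norm by the $L^\infty$ norm, not the reverse, and $\maxL F$ could spike on a tiny subset of $xQ$. What actually saves the argument is that the function being compared to $\maxL F$ on $xQ^{-1}$ is the \emph{constant} $|F(x)|$: solidity of $L_w^r$ applied to $|F(x)|\,\indicator_{xQ^{-1}} \le \maxL F$ gives $|F(x)|\cdot\|\indicator_{xQ^{-1}}\|_{L_w^r} \le \|F\|_{\wienerL{L_w^r}}$, and $\|\indicator_{xQ^{-1}}\|_{L_w^r}^r = \int_{Q^{-1}} [w(xz)]^r \, d\mu_G(z) \gtrsim [w(x)]^r\,\mu_G(Q^{-1})$, using $w(x) \le w(xz)\,w(z^{-1}) \lesssim w(xz)$ for $z \in Q^{-1}$. (Note you also stated the local comparison in the direction $w(z) \lesssim w(x)$, whereas it is the lower bound $w(xz) \gtrsim w(x)$ that is needed here.) With these two repairs, parts (i), (ii) (including the trivial $L^r$--$L^\infty$ interpolation for intermediate $s$, which the paper leaves implicit) and (iii) go through exactly as you outline and match the paper's proof.
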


\begin{remark}
  In fact, the lemma implies something slightly stronger:
  If $F \in \wienerL{L_w^r}$, then $\maxL F \in \wienerL{L_w^r} \hookrightarrow L_w^s$
  and hence $F \in \wienerL{L_w^s}$.
  This argument easily implies that $\wienerL{L_w^r} \hookrightarrow \wienerL{L_w^s}$
  for $s \in [r,\infty]$.
\end{remark}

\begin{proof}
  For proving assertions (i)-(iii), it will be used that
  for a measurable $F : \group \to \CC$, there exists a null-set $N \subseteq \group$
  such that for every $x \in \group \setminus N$, it holds that
  \begin{equation}
    \indicator_{x Q^{-1}}  |F(x)|
    \leq \maxL F
    \quad \text{a.e.}
    \label{eq:MaximalFunctionPointwiseDomination}
  \end{equation}

  To show this, let
  \(
    \Omega
    := \big\{
         (x,z) \in \group \times \group
         \colon
         \indicator_{x Q^{-1}} (z)  |F(x)| > \maxL F (z)
       \big\} .
  \)
  Since $\maxL F$ is measurable (cf.\ Section~\ref{sec:AmalgamPrelims}), it is easy to see
  that $\Omega$ belongs to the Borel $\sigma$-algebra on $\group \times \group$.
  Since $G$ is second countable, the Borel $\sigma$-algebra on $G \times G$ coincides
  with the product $\sigma$-algebra on $G \times G$; see e.g.\ \cite[Theorem~7.20]{FollandRA}.
  Let $\Omega_x = \{ z \in \group \colon (x,z) \in \Omega \}$
  and $\Omega^{z} = \{ x \in \group \colon (x,z) \in \Omega \}$.
  Then Tonelli's theorem shows that
  \[
    \int_{\group} \haarMeasure(\Omega_x) \; \dd{x}
    = \int_{\group} \haarMeasure(\Omega^z) \; \dd{z}.
  \]
  Therefore, it suffices to show that $\Omega^z$ is a null-set for arbitrary $z \in \group$.
  For this, let $z \in \group$.
  By definition of $\maxL$, there exists a null-set $N_z \subseteq Q$ such that
  $\maxL F(z) \geq |F(z q)|$ for all $q \in Q \setminus N_z$.
  For $x \in z (\group \setminus N_z)$ there are two cases.
  First, if $x \notin z Q$, then $\indicator_{x Q^{-1}}(z) = 0$ and hence trivially
  $\indicator_{x Q^{-1}}(z)  |F(x)| \leq \maxL F (z)$.
  Second, if $x \in z Q$, then $q := z^{-1} x \in Q \setminus N_z$
  and hence $\maxL F (z) \geq |F(z q)| = |F(x)| \geq |F(x)| \indicator_{x Q^{-1}}(z)$.
  Overall, this shows that $\Omega^z \subseteq z N_z$ is a null-set,
  and establishes inequality \eqref{eq:MaximalFunctionPointwiseDomination}.

  (i)
  Define $C := 1 / \| \indicator_{Q^{-1}} \|_Y$, with the understanding that $C = 0$
  if $\indicator_{Q^{-1}} \notin Y$.
  Let $F \in \wienerL{Y}$.
  It will be shown that $\| F \|_{L_{1/v}^\infty} \leq C \, \| F \|_{\wienerL{Y}}$.
  For this, let $N = N(F) \subseteq \group$ be a null set
  such that \eqref{eq:MaximalFunctionPointwiseDomination} holds for all $x \in \group \setminus N$.
  For $x \in \group \setminus N$, it follows then by the solidity of $Y$ that
  \[
    |F(x)|  \| \indicator_{x Q^{-1}} \|_Y
    \leq \| \maxL F \|_Y
    =    \| F \|_{\wienerL{Y}}
    <    \infty .
  \]
  If $|F(x)| \neq 0$, then this implies $\indicator_{x Q^{-1}} \in Y$, and
  \[
    \| \indicator_{Q^{-1}} \|_{Y}
    = \| \translationL{x^{-1}} \indicator_{x Q^{-1}} \|_Y
    \leq v(x)  \| \indicator_{x Q^{-1}} \|_Y,
  \]
  so that
  \(
    |F(x)| / v(x)
    \leq C  \| F \|_{\wienerL{Y}} .
  \)
  If $|F(x)| = 0$, then this trivially holds.
  In conclusion, it follows that $|F(x)| / v(x) \leq C  \| F \|_{\wienerL{Y}}$
  for all $x \in \group \setminus N$.

  (ii)
  It will be shown that $\wienerL{L_w^r} \hookrightarrow L_w^\infty$.
  Since $\wienerL{L_w^r} \hookrightarrow L_w^r$ (cf.\ Section~\ref{sec:AmalgamPrelims}),
  this will then imply that $\wienerL{L_w^r} \hookrightarrow L_w^s$ for all $s \in [r,\infty]$.
  Let $F \in \wienerL{L_w^r}$.
  Choose a null-set $N = N(F) \subseteq \group$ such that
  \Cref{eq:MaximalFunctionPointwiseDomination} holds for all $x \in \group \setminus N$.
  Since $L_w^r$ is solid, it follows that, for all $x \in \group \setminus N$,
  \[
    |F(x)|  \| \indicator_{x Q^{-1}} \|_{L_w^r}
    \leq \| \maxL F \|_{L_w^r}
    = \| F \|_{\wienerL{L_w^r}}
    < \infty
    .
  \]
  There exists  $C = C(Q,w) > 0$ such that  $w(q) \leq C$
  for all $q \in Q$ (cf.\ \Cref{rem:SubmultiplicativeWeightsLocallyBounded}).
  Hence, if $z \in Q^{-1}$, then $w(x) \leq w(x z) w(z^{-1}) \leq C  w(x z)$ for all $x \in G$.
  Therefore,
  \[
    \| \indicator_{x Q^{-1}} \|_{L_w^r}^r
    = \int_{x Q^{-1}}
        [w(y)]^r
      \dd{y}
    = \int_{Q^{-1}}
        [w(x z)]^r
      \dd{z}
    \geq C^{-r}  [w(x)]^r  \mu_{\group} (Q^{-1})
  \]
  for all $x \in \group$.
  Overall, this shows  $|F| \cdot  w \lesssim \| F \|_{\wienerL{L_w^r}}$
  a.e.\ on $G$, as required.

 (iii)
  Condition \ref{enu:WeightPSymmetric} combined with the identity
  \[
    \int_{\group} F(x) \, d \mu_{\group}(x)
    = \int_{\group} F(x^{-1}) \Delta(x^{-1}) \, d \mu_{\group}(x)
  \]
  easily implies that $[L_w^p]^{\vee} = L_w^p$,
  with identical quasi-norms.
  Hence,
  \[
    \wienerR{L_w^p}
    = [\wienerL{(L_w^p)^{\vee}}]^{\vee}
    = [\wienerL{L_w^p}]^{\vee}
    ,
  \]
  with identical quasi-norms (cf.\ Section~\ref{sec:AmalgamPrelims}).
  Therefore, Part (ii) implies for all $s \in [p,\infty]$ that
  $\wienerR{L_w^p} = [\wienerL{L_w^p}]^{\vee} \hookrightarrow [L_w^s]^{\vee}$.
\end{proof}

\section{Convolution relations} \label{sec:convolution_relations}

This subsection is devoted to convolution relations for Wiener amalgam spaces.
The following compatibility condition between a quasi-Banach function space
and a weight function will play an important role in the sequel.

\begin{definition} \label{def:compatible}
  Let $w : G \to [1,\infty)$ be a $p$-weight for some $p \in (0,1]$
  and let $Y$ be a solid quasi-Banach function space $Y$ on $\group$ with $p$-norm $\| \cdot \|_Y$.
  The space $Y$ is said to be \emph{$L_w^p$-compatible} if
  \begin{enumerate}[label=(c\arabic*)]
      \item \label{enu:CompatibleTranslationInvariant}
            $Y$ is translation-invariant.

      \item \label{enu:CompatibleConvolution}
            The convolution relation
            $\wienerL{Y} \ast \wienerSt{L^p_w} \hookrightarrow \wienerL{Y}$ holds.

      \item \label{enu:CompatibleLInfty}
            The embedding $\wienerL{Y} \hookrightarrow L_{1/w}^{\infty} (G)$ holds.
  \end{enumerate}
\end{definition}

\begin{remark}
  In \Cref{cor:UserFriendlyConvolutionBounds}, it will be shown that $Y$ is $L_w^p$-compatible
  if $w$ is a strong control weight for $Y$.
  The reason for introducing the concept of $L_w^p$-compatibility is that there are some
  (important) cases in which $L_w^p$-compatibility holds even if $w$ is not
  a strong control weight for $Y$.
  A case in point are the spaces $Y = L_w^p$, $Y = \wienerL{L_w^p}$, and $Y = \wienerR{L_w^p}$,
  as well as $Y = \wienerSt{L_w^p}$, which are always $L_w^p$-compatible if $w$ is a $p$-weight;
  see \Cref{cor:Lpw_nostrong}.
\end{remark}

The following result provides a general convolution relation for Amalgam spaces.
The most important consequences of this result
are stated in \Cref{cor:UserFriendlyConvolutionBounds}.

\begin{theorem}\label{thm:AmalgamMainConvolutionRelation}
  Let $p \in (0,1]$ and let $w : \group \to (0,\infty)$ be a $p$-weight.
  Let $Y$ be a solid, translation-invariant quasi-Banach function space
  and assume that $w$ is a control weight for $Y$.
  Suppose there exists a submultiplicative
  measurable weight $v : \group \to [1,\infty)$ satisfying
  $\wienerL{Y} \hookrightarrow L_{1/v}^\infty$ and $\wienerR{L_w^p} \hookrightarrow [L_v^1]^{\vee}$.
  Then the convolution relation
  \[
    \wienerL{Y} \ast \wienerR{L_w^p} \hookrightarrow Y
  \]
  holds.
  More precisely, there exists a constant $C = C(p,w,Q) > 0$ such that
  \[
    \big\| F_1 \ast F_2 \big\|_{Y}
    \leq \big\| |F_1| \ast |F_2| \big\|_{Y}
    \leq C  \| F_1 \|_{\wienerL{Y}}  \| F_2 \|_{\wienerR{L_w^p}}
  \]
  for all $F_1 \in \wienerL{Y}$ and $F_2 \in \wienerR{L_w^p}$.
  In addition, $|F_1| \ast |F_2| (x) < \infty$ for \emph{all} $x \in \group$
  and $F_1 \ast F_2 : \group \to \CC$ is continuous.
\end{theorem}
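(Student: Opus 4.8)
The plan is to estimate $|F_1| \ast |F_2|$ pointwise by passing from $F_1, F_2$ to their (left, resp.\ right) maximal functions, and then to use the control-weight hypothesis together with solidity of $Y$ to absorb the right-translation operators that arise. First I would reduce to the case $F_1, F_2 \geq 0$, so that $F_1 \ast F_2 = |F_1| \ast |F_2|$. The core idea is to discretize the convolution integral over a relatively separated, $Q$-dense family $\Lambda = (\lambda_i)_{i \in I}$: writing $G = \bigcupdot_{i} U_i$ with $U_i \subseteq \lambda_i Q$ a disjoint cover associated with $\Lambda$ and $Q$, one has
\[
  (F_1 \ast F_2)(x)
  = \sum_{i \in I} \int_{U_i} F_1(y)\, F_2(y^{-1} x)\, \dd{y}
  \leq \sum_{i \in I} \mu_G(\lambda_i Q)\, \maxR F_1(\lambda_i)\, \sup_{y \in \lambda_i Q} F_2(y^{-1} x),
\]
since $y \in \lambda_i Q$ forces $y \in Q^{-1}\lambda_i \cdot(\text{stuff})$—more precisely $\maxR F_1(\lambda_i) = \esssup_{y \in Q\lambda_i}F_1(y)$ controls $F_1(y)$ for $y \in \lambda_i Q$ only after adjusting $Q$, so I would instead use $\maxL F_1(\lambda_i) \geq F_1(y)$ for $y \in \lambda_i Q$ and similarly bound the $F_2$-term. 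The upshot is a pointwise bound of the form
\[
  (F_1 \ast F_2)(x)
  \lesssim_{Q} \sum_{i \in I} \maxL F_1(\lambda_i)\, \bigl[\maxL F_2\bigr](\lambda_i^{-1} x)\cdot(\text{modular factor}),
\]
where the modular factor $\Delta$ appears because $\mu_G(\lambda_i Q) = \Delta(\lambda_i)\mu_G(Q)$ up to the non-invariance of right translation of the Haar measure; this is exactly where condition \ref{enu:WeightPSymmetric} on the $p$-weight $w$ enters.

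Second, I would rewrite the sum $\sum_i \maxL F_1(\lambda_i)\,[\maxL F_2](\lambda_i^{-1}x)$ using \Cref{lem:StandardShiftedSeriesEstimates}: with $G_1 := \maxL F_1$ and $G_2 := \maxL F_2$ (both continuous after the standard mollification reduction, or one uses the a.e.\ version), inequality \eqref{eq:ShiftedSeriesEstimateTwoTerms} gives
\[
  \sum_{i \in I} G_1(\lambda_i)\, G_2(\lambda_i^{-1} x)
  \leq \frac{\rel(\Lambda)}{\mu_G(Q)}\,\bigl(\maxL[\widetilde{G_1}] \ast \maxR G_2\bigr)(x)
\]
(after reindexing $\lambda_i \mapsto \lambda_i^{-1}$ and applying the symmetry of $Q$), so that the double-maximal-function structure collapses into a single ordinary convolution of $\maxSt$-type quantities. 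The net effect should be a bound
\[
  \bigl\||F_1| \ast |F_2|\bigr\|_Y
  \lesssim_{p,w,Q}\ \bigl\|\, \widetilde{\maxL F_1} \ast \bigl(\maxR F_2 \cdot w\bigr)\bigr\|_{Y}
  \quad\text{or similar},
\]
and then I would invoke the assumed embeddings $\wienerL{Y} \hookrightarrow L^\infty_{1/v}$ and $\wienerR{L^p_w} \hookrightarrow [L^1_v]^\vee$ to place one factor in a weighted $L^\infty$-space and the other in a weighted $L^1$-space: a weighted Young-type inequality $L^\infty_{1/v} \cdot (L^1_v)^\vee$-convolution then lands the product in $L^\infty$, and solidity of $Y$ together with $\|R_y\|_{Y \to Y} \leq w(y)$ upgrades this to membership in $Y$ with the stated norm bound. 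The continuity and everywhere-finiteness of $F_1 \ast F_2$ then follow from the fact that $\maxL F_1$ is locally bounded (being in $Y \hookrightarrow L^\infty_{1/v}$, $v$ locally bounded by \Cref{rem:SubmultiplicativeWeightsLocallyBounded}) and $F_2 \in \wienerR{L^p_w} \hookrightarrow L^1$, so the integral defining $F_1 \ast F_2$ converges absolutely and uniformly on compact sets; a dominated-convergence argument gives continuity.

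The main obstacle, I expect, is keeping the modular function $\Delta$ bookkeeping correct throughout the discretization. The asymmetry between left and right Haar measure means that translating the convolution integral $\int F_1(y) F_2(y^{-1}x)\,\dd{y}$ and chopping it over $\lambda_i Q$ introduces factors of $\Delta(\lambda_i^{\pm 1})$ that must be matched against the defining property \ref{enu:WeightPSymmetric} $w(x) = w(x^{-1})\Delta^{1/p}(x^{-1})$ of a $p$-weight and against the identity $[L^p_w]^\vee = L^p_w$ established in the proof of \Cref{lem:AmalgamWeightedLInftyEmbedding}(iii); getting the exponent $1/p$ versus $1$ right (the weight sits inside an $L^p$-norm, but the convolution estimate is a genuine $L^1$-type estimate) is the delicate point. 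A secondary, more routine, difficulty is the reduction to continuous functions (or, alternatively, carrying the a.e.-defined maximal functions through \Cref{lem:StandardShiftedSeriesEstimates}, whose continuity hypothesis one must either verify or circumvent by a mollification/approximation argument). Once the $\Delta$-accounting is pinned down, the rest is an assembly of \Cref{lem:StandardShiftedSeriesEstimates}, the two hypothesized embeddings, weighted Young's inequality, and solidity.
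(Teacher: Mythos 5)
There is a genuine gap at the final step of your plan, and it is the step that carries all the weight. After discretizing the convolution integral over $y$ and recombining via \Cref{lem:StandardShiftedSeriesEstimates}, you arrive back at a bound of the form $|F_1| \ast |F_2| \lesssim \maxL_{Q'} F_1 \ast \maxR_{Q'} F_2$ --- an object of exactly the same type you started with --- and you then propose to finish by placing one factor in $L^\infty_{1/v}$ and the other in $[L^1_v]^{\vee}$, obtaining an $L^\infty$-type bound on the convolution, and ``upgrading'' this to membership in $Y$ via solidity and $\|\translationR{y}\|_{Y \to Y} \leq w(y)$. No such upgrade exists: a (weighted) $L^\infty$ bound on a function gives no information about membership in a general solid quasi-Banach function space $Y$ (consider $Y = L^p(\R)$). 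In the paper, the two hypothesized embeddings $\wienerL{Y} \hookrightarrow L^\infty_{1/v}$ and $\wienerR{L_w^p} \hookrightarrow [L_v^1]^{\vee}$ are used \emph{only} to establish everywhere-finiteness and continuity of $F_1 \ast F_2$ (see the remark following the theorem); they play no role in the $Y$-norm estimate. Also, your claimed modular factor is spurious: $\mu_G(\lambda_i Q) = \mu_G(Q)$ by left-invariance of the Haar measure.

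The missing idea is to decompose the \emph{second factor} and to keep that decomposition rather than collapsing it back into a convolution. Writing $F_2 = \sum_{i} F_2^{(i)}$ with $F_2^{(i)} = F_2 \cdot \psi_i$ supported in $Q^{-1} x_i^{-1}$, one obtains for each $i$ the pointwise domination $|F_1| \ast |F_2^{(i)}| \leq \mu_G(Q) \, \|F_2^{(i)}\|_{L^\infty} \, \translationR{x_i}[\maxL F_1]$, whence solidity of $Y$ and the control-weight condition give $\bigl\| \, |F_1| \ast |F_2^{(i)}| \, \bigr\|_Y \lesssim w(x_i) \, \|F_2^{(i)}\|_{L^\infty} \, \|F_1\|_{\wienerL{Y}}$. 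The theorem then follows from the $p$-triangle inequality in $Y$ (\Cref{lem:QuasiBanachSolidAbsoluteConvergence}) combined with the discrete characterization $\bigl\|\bigl(w(x_i)\|F_2^{(i)}\|_{L^\infty}\bigr)_i\bigr\|_{\ell^p} \lesssim \|F_2\|_{\wienerR{L_w^p}}$, which is Step~2 of the paper's proof and is where condition \ref{enu:WeightPSymmetric} (via $[L_w^p]^{\vee} = L_w^p$) actually enters. Without the decomposition of $F_2$ and the resulting $\ell^p$-summation of right-translates of $\maxL F_1$, there is no route from the hypotheses to a bound in the $Y$-norm.
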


\begin{proof} The proof will be split into three steps.

  \medskip{}

  \textbf{Step~1.}
  Since $F_1 \ast F_2 (x) = \big( F_1, \translationL{x} [F_2^{\vee}] \big)_{L_{1/v}^\infty, L_v^1}$,
  it follows that $|F_1| \ast |F_2| (x) < \infty$ for all $x \in \group$.
  Furthermore, the map $x \mapsto \translationL{x} [F_2^{\vee}]$ is continuous from $G$
  into $L_v^1$, see, e.g., \cite[Lemma~1.3.6]{kaniuth2009course}
  or \mbox{\cite[Proposition~3.7.6]{ReiterClassicalHarmonicAnalysis}}.
  Hence, $F_1 \ast F_2 : \group \to \CC$ is continuous as well.

  \medskip{}

  \textbf{Step~2.}
  In this step, it will be shown that there exists a countable family $(x_i)_{i \in I} \subseteq \group$
  and a measurable partition of unity $(\varphi_i)_{i \in I}$ on $\group$
  with $ \varphi_i^{-1} (\CC \setminus \{0\}) \subseteq x_i Q$ and a constant $C = C(p,Q,w) > 0$ such that
  \begin{equation}
    \Big\|
      \Big(
        w(x_i)  \| F \cdot \varphi_i \|_{L^\infty}
      \Big)_{i \in I}
    \Big\|_{\ell^p}
    \leq C  \| F \|_{\wienerL{L_w^p}}
    \label{eq:WienerAmalgamDiscreteDescription}
  \end{equation}
  for all $F \in \wienerL{L_w^p}$.

  For constructing $(\varphi_i)_{i \in I}$, an application of \cite[Lemma~1]{AnkerWellSpreadExistence} yields a subset
  $X \subseteq \group$ and some $N \in \N$ such that $\group = \bigcup_{x \in X} x Q$
  and such that each $x \in \group$ belongs to at most $N$ of the sets $x Q$ for $x \in X$.
  Since $\group$ is second-countable and hence $\sigma$-compact,
  it is possible to extract a countable subset of $X$
  that still satisfies these conditions, so that it may be assumed that $X$ is countable.
  Thus, for a suitable $M \in \N \cup \{ \infty \}$ and for $I = \{ i \in \N \colon i < M \}$,
  we can write $X = \{ x_i \colon i \in I \}$ with $x_i \neq x_j$ for $i \neq j$.
  Define $\Omega_i := x_i Q \setminus \bigcup_{\ell = 1}^{i-1} x_\ell Q$ inductively
  and set $\varphi_i := \indicator_{\Omega_i}$.
  Then $ \varphi_i^{-1} (\CC \setminus \{0\}) = \Omega_i \subseteq x_i Q$ and $\sum_{i \in I} \varphi_i \equiv 1$
  on $\group$.

  For showing \eqref{eq:WienerAmalgamDiscreteDescription},
  define $Q' := Q^{-1} Q$ and let $F \in \wienerL{L_w^p}$.
  Note that if $x \in x_i Q$, then  $\varphi_i^{-1} (\CC \setminus \{0\}) \subseteq x_i Q \subseteq x Q^{-1} Q = x Q'$.
  This implies that
  $\| F \cdot \varphi_i \|_{L^\infty} \leq  \maxL_{Q'} F (x). $
  On the other hand, we have $w \lesssim 1$ everywhere on $Q$
  (cf.\ \Cref{rem:SubmultiplicativeWeightsLocallyBounded}),
  and thus there exists $C_1 > 0$ such that $w(x_i)  \leq w(x) w(x^{-1} x_i) \leq C_1  w(x)$.
  Overall, these considerations show that
  \[
    \big[ w(x_i)  \| F \cdot \varphi_i \|_{L^\infty} \big]^p \cdot \indicator_{x_i Q}
    \lesssim \big[ w \cdot \maxL_{Q'} F  \big]^p \quad \text{for all} \quad i \in I.
  \]
  By the choice of $\{ x_i \colon i \in I \} = X$,
  it holds that $\sum_{i \in I} \indicator_{x_i Q} \leq N$, and hence
  \begin{align*}
    \sum_{i \in I}
      \big[ w(x_i)  \| F \cdot \varphi_i \|_{L^\infty} \big]^p \cdot \indicator_{x_i Q}
     \lesssim \big[ w \cdot \maxL_{Q'} F  \big]^p
            \sum_{i \in I}
                   \indicator_{x_i Q}
     \lesssim
           \ \big[ w \cdot \maxL_{Q'} F  \big]^p .
  \end{align*}
  Integrating this estimate over $\group$ gives
  \begin{align*}
    \sum_{i \in I}
      \big[ w(x_i)  \| F \cdot \varphi_i \|_{L^\infty} \big]^p
    & =    \frac{1}{\mu_G(Q)}
           \sum_{i \in I}
             \int_{\group}
               \big[ w(x_i)  \| F \cdot \varphi_i \|_{L^\infty} \big]^p
                \indicator_{x_i Q} (x)
             \dd{x} \\
    & \lesssim  \| \maxL_{Q'} F \|_{L_w^p}^p
      \lesssim \| F \|_{\wienerL{L_w^p}}^p,
  \end{align*}
  where the last estimate follows from the independence of
  $\wienerL{L_w^p} = W^L_Q (L_w^p)$ from the choice of the set $Q$ (cf.\ \Cref{sec:AmalgamPrelims}.)
  The preceding estimate easily shows that \Cref{eq:WienerAmalgamDiscreteDescription} holds.

  \medskip{}

  \textbf{Step~3.}
  With notation as in Step~2, set $\psi_i := \varphi_i^{\vee}$
  and $F_2^{(i)} := F_2 \cdot \psi_i$ for $i \in I$.
  Note that $(F_2^{(i)})^{\vee} = F_2^{\vee} \cdot \varphi_i$ and $\psi_i^{-1} (\CC \setminus \{0\}) \subseteq Q^{-1} x_i^{-1}$.
  Furthermore, note that $L_w^p = [L_w^p]^{\vee}$,
  which follows from condition \ref{enu:WeightPSymmetric} of \Cref{def:PWeight}.
  Hence, applying the estimate \eqref{eq:WienerAmalgamDiscreteDescription}
  to the function $F_2^{\vee} \in \wienerL{ [L_w^p]^{\vee} } = \wienerL{L_w^p}$ shows that
  \begin{equation}
    \begin{split}
      \Big\|
        \Big(
          w(x_i)  \| F_2^{(i)} \|_{L^\infty}
        \Big)_{i \in I}
      \Big\|_{\ell^p}
       \lesssim  \| F_2^{\vee} \|_{\wienerL{L_w^p}}
        =     \| F_2 \|_{[\wienerL{L_w^p}]^{\vee}}
        =    \| F_2 \|_{\wienerR{L_w^p}} .
    \end{split}
    \label{eq:ConvolutionProofDiscretePartsEstimate}
  \end{equation}

  If $i \in I$ and $F_2^{(i)}(y^{-1} x) \neq 0$, then
  $y^{-1} x \in (F_2^{(i)})^{-1}(\CC \setminus \{0\}) \subseteq Q^{-1} x_i^{-1}$ and hence $\indicator_{x x_i Q} (y) = 1$.
  Thus, $F_2^{(i)}(y^{-1} x) = F_2^{(i)}(y^{-1} x) \indicator_{x x_i Q} (y)$, and
  \begin{align*}
    \big(
      |F_1| \ast |F_2^{(i)}|
    \big) (x)
    & = \int_{\group}
          |F_1(y)|  \bigl|F_2^{(i)} (y^{-1} x)\bigr|
        \dd{y} \\
    & \leq \big\| F_2^{(i)} \big\|_{L^\infty}
           \| F_1 \cdot \indicator_{x x_i Q} \|_{L^\infty}
            \mu_G (x x_i Q) \\
    & =    \mu_G(Q)
            \big\| F_2^{(i)} \big\|_{L^\infty}
            (\maxL F_1) (x x_i) \\
    &  =    \mu_G(Q)
            \big\| F_2^{(i)} \big\|_{L^\infty}
            \translationR{x_i} \bigl[\maxL F_1\bigr] (x)
  \end{align*}
  for all $x \in \group$.
  By the solidity of $Y$ and since $\| \translationR{x_i} \|_{Y \to Y} \leq w(x_i)$
  by assumption (cf.\ condition \ref{enu:ControlWeightCondition} of \Cref{def:PWeight}), this implies
  \[
    \big\|\,
      |F_1| \ast \bigl|F_2^{(i)}\bigr|
    \,\big\|_{Y}
    \lesssim
          \big\| F_2^{(i)} \big\|_{L^\infty}
          \big\| \translationR{x_i} \big[ \maxL F_1 \big] \big\|_{Y}
    \leq  w(x_i)
          \big\| F_2^{(i)} \big\|_{L^\infty}
          \| F_1 \|_{\wienerL{Y}}, \quad i \in I.
  \]
  Using the estimate \eqref{eq:ConvolutionProofDiscretePartsEstimate}
  and \Cref{lem:QuasiBanachSolidAbsoluteConvergence}, it follows that
  $F := \sum_{i \in I} |F_1| \ast \bigl|F_2^{(i)}\bigr| \in Y$ and
  \[
    \| F \|_Y
    \lesssim
          \| F_1 \|_{\wienerL{Y}}
          \Big\|
                 \Big(
                   w(x_i)  \big\| F_2^{(i)} \big\|_{L^\infty}
                 \Big)_{i \in I}
               \Big\|_{\ell^p}
    \lesssim
          \| F_1 \|_{\wienerL{Y}}
          \| F_2 \|_{\wienerR{L_w^p}} .
  \]
  Since $F_2 = \sum_{i \in I} F_2^{(i)}$, an application of the monotone convergence theorem
  yields
  \[
    |F_1 \ast F_2|
    \leq |F_1| \ast |F_2|
    \leq \sum_{i \in I}
           |F_1| \ast \bigl|F_2^{(i)}\bigr|
    = F,
  \]
  which easily implies the claim.
\end{proof}

\begin{remark}
  The assumptions $\wienerL{Y} \hookrightarrow L_{1/v}^\infty$
  and $\wienerR{L_w^p} \hookrightarrow [L_v^1]^{\vee}$ in \Cref{thm:AmalgamMainConvolutionRelation}
  are not actually needed for deriving the convolution relation
  $\wienerL{Y} \ast \wienerR{L_w^p} \hookrightarrow Y$;
  these assumptions are only used to ensure that $F_1 \ast F_2$ is well-defined everywhere
  and continuous.
\end{remark}

As a consequence of Theorem~\ref{thm:AmalgamMainConvolutionRelation}, it follows,
in particular, that $Y$ is $L^p_w$-compatible whenever $w$ is a strong control weight for $Y$.
This sufficient condition for compatibility is particularly convenient for applications.
The precise statement is as follows.

\begin{corollary}\label{cor:UserFriendlyConvolutionBounds}
   Let $w : \group \to [1,\infty)$ be a $p$-weight for some $p \in (0,1]$.
  \begin{enumerate}[label=(\roman*)]
  \item The following convolution relations hold:
        \[
          \wienerL{L_w^p} \ast \wienerR{L_w^p} \hookrightarrow L_w^p
          \qquad \text{and} \qquad
          \wienerSt{L_w^p} \ast \wienerSt{L_w^p} \hookrightarrow \wienerStC{L_w^p}.
        \]

 \item If $w$ is a strong control weight for a solid, translation-invariant quasi-Banach
       function space $Y$ in the sense of \Cref{def:PWeight},
       then the following convolution relations hold
       \[
         \wienerL{Y} \ast \wienerR{L_w^p} \hookrightarrow Y
         \qquad \text{and} \qquad
         \wienerL{Y} \ast \wienerSt{L_w^p} \hookrightarrow \wienerL{Y} .
       \]
       In particular, $Y$ is $L^p_w$-compatible.
  \end{enumerate}
\end{corollary}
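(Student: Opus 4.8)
The plan is to deduce everything from \Cref{thm:AmalgamMainConvolutionRelation} by choosing the auxiliary weight $v$ appropriately in each case, together with the embedding lemma \Cref{lem:AmalgamWeightedLInftyEmbedding}. For part (i), the first relation $\wienerL{L_w^p} \ast \wienerR{L_w^p} \hookrightarrow L_w^p$ is the instance of the theorem with $Y = L_w^p$: here $w$ is trivially a control weight for $Y$ since $\|R_y\|_{L_w^p \to L_w^p} \leq w(y)$ (using submultiplicativity and $w \geq 1$), and translation-invariance of $L_w^p$ holds for the same reason. For the hypothesis on $v$, I would simply take $v := w$. Then $\wienerL{L_w^p} \hookrightarrow L_{1/w}^\infty$ follows from \Cref{lem:AmalgamWeightedLInftyEmbedding}(ii) (with $r = p$, $s = \infty$) — more precisely, since $w$ controls both left and right translations of $L_w^p$ by $w$, the hypothesis $\|L_{y^{-1}}\|_{L_w^p \to L_w^p} \leq w(y)$ in part~(i) of that lemma is met; alternatively one uses $\wienerL{L_w^p} \hookrightarrow L_w^\infty \hookrightarrow L_{1/w}^\infty$ since $w \geq 1$. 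Likewise $\wienerR{L_w^p} \hookrightarrow [L_w^1]^{\vee} = [L_v^1]^{\vee}$ follows from \Cref{lem:AmalgamWeightedLInftyEmbedding}(iii) with $s = p = 1$ once $p$ is allowed to be $1$; for general $p \in (0,1]$ one notes $\wienerR{L_w^p} \hookrightarrow \wienerR{L_w^1}$ is false in general, so instead I would invoke part~(iii) directly in the form $\wienerR{L_w^p} \hookrightarrow [L_w^s]^{\vee}$ and observe that what the theorem actually needs, upon re-inspection of its proof, is covered by the conclusion; cleanly, one just takes $v = w$ and checks $\wienerR{L_w^p} = [\wienerL{L_w^p}]^{\vee} \hookrightarrow [L_w^1]^{\vee}$ via \Cref{lem:AmalgamWeightedLInftyEmbedding}(ii) applied to $\wienerL{L_w^p} \hookrightarrow L_w^1$, which holds as $\wienerL{L_w^p} \hookrightarrow L_w^p$ and then — this is the one spot needing care — one upgrades $L_w^p$ to $L_w^1$ only after noting $\wienerL{L_w^p} \hookrightarrow L_w^\infty$, so $\maxL F \in L_w^p \cap L_w^\infty \subseteq L_w^1$ locally; since $\maxL F$ may not be globally $L_w^1$, the correct route is the Remark after \Cref{lem:AmalgamWeightedLInftyEmbedding}, giving $\wienerL{L_w^p} \hookrightarrow \wienerL{L_w^1}$, whence $\wienerR{L_w^p} \hookrightarrow \wienerR{L_w^1} \hookrightarrow [L_w^1]^{\vee}$. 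This yields the first relation.

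For the second relation in (i), $\wienerSt{L_w^p} \ast \wienerSt{L_w^p} \hookrightarrow \wienerStC{L_w^p}$, I would take $Y := \wienerL{L_w^p}$, which is solid, translation-invariant, and has $w$ as a control weight because of the operator-norm estimate \eqref{eq:WienerAmalgamTranslationNormEstimate} combined with $\|R_y\|_{L_w^p \to L_w^p} \leq w(y)$; wait — \eqref{eq:WienerAmalgamTranslationNormEstimate} bounds $\|R_x\|_{\wienerR{Y} \to \wienerR{Y}}$, not $\|R_x\|_{\wienerL{Y} \to \wienerL{Y}}$, so here I would instead invoke \cite[Corollary~4.2]{rauhut2007wiener} (cited in the introduction) for right-invariance of $\wienerL{L_w^p}$ with the bound $w$, or argue directly. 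Granting this, \Cref{thm:AmalgamMainConvolutionRelation} gives $\wienerL{\wienerL{L_w^p}} \ast \wienerR{L_w^p} \hookrightarrow \wienerL{L_w^p}$, and then applying the theorem once more (or using $\wienerR{\wienerL{L_w^p}} = \wienerSt{L_w^p}$ and the associativity/Fubini for convolution) converts this into $\wienerSt{L_w^p} \ast \wienerSt{L_w^p} \hookrightarrow \wienerR{\wienerL{L_w^p}} = \wienerSt{L_w^p}$; the continuity of the convolution product, asserted in the last sentence of \Cref{thm:AmalgamMainConvolutionRelation}, upgrades the target to $\wienerStC{L_w^p}$. For part (ii), the first relation $\wienerL{Y} \ast \wienerR{L_w^p} \hookrightarrow Y$ is \Cref{thm:AmalgamMainConvolutionRelation} applied verbatim, where the auxiliary weight is $v(y) := \max\{\|L_{y^{-1}}\|_{Y \to Y}, w(y)\}$: this $v$ is submultiplicative and $\geq 1$, the strong-control-weight condition \ref{enu:StrongControlWeightCondition} gives $\|L_{y^{-1}}\|_{Y \to Y} \leq w(y)$ so in fact $v = w$ works, and then $\wienerL{Y} \hookrightarrow L_{1/w}^\infty$ is exactly \ref{enu:CompatibleLInfty} obtained from \Cref{lem:AmalgamWeightedLInftyEmbedding}(i) with that $v$, while $\wienerR{L_w^p} \hookrightarrow [L_w^1]^{\vee}$ is as in part (i).

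The second relation in (ii), $\wienerL{Y} \ast \wienerSt{L_w^p} \hookrightarrow \wienerL{Y}$, I would obtain by applying \Cref{thm:AmalgamMainConvolutionRelation} with $Y$ replaced by $\wienerL{Y}$: one must check that $w$ is still a control weight for $\wienerL{Y}$, i.e.\ $\|R_y\|_{\wienerL{Y} \to \wienerL{Y}} \leq w(y)$, which is precisely the point flagged in the Contributions section as nonobvious for non-IN groups; here, however, I can sidestep it, because the convolution estimate $\maxL(F_1 \ast F_2) \leq |F_1| \ast \maxL F_2$ from \eqref{eq:ConvolutionMaximalFunctionEstimate} reduces $\wienerL{Y} \ast \wienerSt{L_w^p} \hookrightarrow \wienerL{Y}$ directly to $\wienerL{Y} \ast \wienerR{L_w^p} \hookrightarrow Y$ (already proved in (ii)), since $\maxL$ of the second factor lands in $\wienerR{L_w^p}$ when the factor is in $\wienerSt{L_w^p} = \wienerR{\wienerL{L_w^p}}$ — indeed $\maxL F_2 \in \wienerR{L_w^p}$ because $\maxR(\maxL F_2) = \maxSt F_2 \in L_w^p$. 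Thus $\|F_1 \ast F_2\|_{\wienerL{Y}} = \|\maxL(F_1 \ast F_2)\|_Y \leq \||F_1| \ast \maxL F_2\|_Y \lesssim \|F_1\|_{\wienerL{Y}} \|\maxL F_2\|_{\wienerR{L_w^p}} = \|F_1\|_{\wienerL{Y}} \|F_2\|_{\wienerSt{L_w^p}}$. The final clause, that $Y$ is $L_w^p$-compatible, then follows by assembling \ref{enu:CompatibleTranslationInvariant} (hypothesis), \ref{enu:CompatibleConvolution} (just proved), and \ref{enu:CompatibleLInfty} (the embedding $\wienerL{Y} \hookrightarrow L_{1/w}^\infty$ established above). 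The main obstacle is bookkeeping: making sure that in each invocation of \Cref{thm:AmalgamMainConvolutionRelation} the ambient space genuinely satisfies the control-weight hypothesis with the \emph{same} $w$, and routing around the delicate right-invariance bound for $\wienerL{Y}$ by using the maximal-function convolution estimate \eqref{eq:ConvolutionMaximalFunctionEstimate} rather than applying the theorem to $\wienerL{Y}$ as an abstract space.
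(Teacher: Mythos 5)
Your treatment of the first relation in (i) and of all of part (ii) is essentially the paper's argument: apply \Cref{thm:AmalgamMainConvolutionRelation} to $Y = L_w^p$ (respectively to the given $Y$), verifying the hypotheses via \Cref{lem:AmalgamWeightedLInftyEmbedding}, and then upgrade $\wienerL{Y} \ast \wienerR{L_w^p} \hookrightarrow Y$ to $\wienerL{Y} \ast \wienerSt{L_w^p} \hookrightarrow \wienerL{Y}$ by the pointwise estimate $\maxL(F_1 \ast F_2) \leq |F_1| \ast \maxL F_2$ from \eqref{eq:ConvolutionMaximalFunctionEstimate}. (The paper takes $v \equiv 1$ where you take $v = w$; both work.) Two small repairs: the bound $\| R_y \|_{L_w^p \to L_w^p} \leq w(y)$ does not follow from submultiplicativity and $w \geq 1$ alone — the right-translation norm is $w(y^{-1}) \, \Delta^{1/p}(y^{-1})$, and it is precisely condition \ref{enu:WeightPSymmetric} that identifies this with $w(y)$. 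And the embedding $\wienerR{L_w^p} \hookrightarrow [L_w^1]^{\vee}$ needs no detour: it is \Cref{lem:AmalgamWeightedLInftyEmbedding}(iii) with $s = 1 \in [p,\infty]$ verbatim (your ``clean'' route via $\wienerR{L_w^p} = [\wienerL{L_w^p}]^{\vee}$ and $\wienerL{L_w^p} \hookrightarrow L_w^1$ is also fine, but the final variant through $\wienerR{L_w^1} \hookrightarrow [L_w^1]^{\vee}$ would require $w$ to be a $1$-weight).

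The genuine gap is in your route to the second relation of (i). Applying \Cref{thm:AmalgamMainConvolutionRelation} with the ambient space taken to be $\wienerL{L_w^p}$ requires $w$ to be a control weight for $\wienerL{L_w^p}$, i.e.\ $\| R_x \|_{\wienerL{L_w^p} \to \wienerL{L_w^p}} \leq w(x)$. The reference you invoke only yields that $\wienerL{Y}$ is right-invariant when $Y$ is; it does not give the quantitative bound by $w(x)$, and the paper explicitly states (in the Contributions section, with \cite[Lemma~2.3.18]{FelixPhD} as a pointer) that no such estimate in terms of $\| R_x \|_{Y \to Y}$ is available on non-IN groups — indeed $\maxL(R_x F)(y) = \esssup_{z \in yQx} |F(z)|$ involves the set $Qx$ rather than $xQ$, so $\maxL$ and $R_x$ do not commute. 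Avoiding exactly this is one of the design goals of the corollary. The correct argument is the two-sided analogue of the trick you already use in (ii): by \eqref{eq:ConvolutionMaximalFunctionEstimate},
\[
  \maxSt (F_1 \ast F_2)
  = \maxL \maxR (F_1 \ast F_2)
  \leq (\maxR F_1) \ast (\maxL F_2),
\]
and for $F_1, F_2 \in \wienerSt{L_w^p}$ one has $\maxR F_1 \in \wienerL{L_w^p}$ and $\maxL F_2 \in \wienerR{L_w^p}$ with $\| \maxR F_1 \|_{\wienerL{L_w^p}} = \| F_1 \|_{\wienerSt{L_w^p}}$ and $\| \maxL F_2 \|_{\wienerR{L_w^p}} = \| F_2 \|_{\wienerSt{L_w^p}}$, so the first relation of (i) finishes the estimate; the continuity statement in \Cref{thm:AmalgamMainConvolutionRelation} then places $F_1 \ast F_2$ in $\wienerStC{L_w^p}$.
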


\begin{proof}
  (i)
  We apply \Cref{thm:AmalgamMainConvolutionRelation} with $Y = L_w^p$ and $v \equiv 1$.
  For this, note that applying the identity
  $\int \translationR{y} F \, d \mu_\group = \Delta(y^{-1}) \int F \, d \mu_\group$
  for measurable $F : G \to [0,\infty]$ implies for $f \in L_w^p$ that
  \begin{align*}
    \| \translationR{y} f \|_{L_w^p}^p
    & = \int_{\group}
          \bigl|w(x y y^{-1}) f(x y)\bigr|^p
        \dd {x}
    \leq \bigl[w(y^{-1})\bigr]^p
         \int_{\group}
           R_y \big[ |w \cdot f|^p \big]
         \, d \mu_{\group} \\
    & =    \bigl[
             w(y^{-1}) \, \Delta^{1/p}(y^{-1})
           \bigr]^p
            \| f \|_{L_w^p}^p ,
  \end{align*}
  and hence $\| \translationR{y} \|_{Y \to Y} \leq w(y^{-1})  \Delta^{1/p}(y^{-1})$.
  Since $w(y^{-1})  \Delta^{1/p}(y^{-1}) = w(y)$ by condition \ref{enu:WeightPSymmetric} of \Cref{def:PWeight},
  this shows that $w$ is a control weight for $Y$.
  Furthermore, \Cref{lem:AmalgamWeightedLInftyEmbedding} shows
  because of $w \geq 1 \equiv v = \frac{1}{v}$ and $p \leq 1$
  that $\wienerL{L_w^p} \hookrightarrow L_w^\infty \hookrightarrow L_v^\infty = L_{1/v}^\infty$.
  Similarly, \Cref{lem:AmalgamWeightedLInftyEmbedding}
  shows that $\wienerR{L_w^p} \hookrightarrow [L_w^1]^{\vee} \hookrightarrow [L_{v}^1]^{\vee}$.
  This shows that all assumptions of \Cref{thm:AmalgamMainConvolutionRelation} are satisfied,
  and thus $\wienerL{L_w^p} \ast \wienerR{L_w^p} \hookrightarrow L_w^p$.

  Finally, for $F_1,F_2 \in \wienerSt{L_w^p}$,
  an application of \Cref{eq:ConvolutionMaximalFunctionEstimate} yields
  \begin{align*}
    \big\| F_1 \ast F_2 \big\|_{\wienerSt{L_w^p}}
    & = \big\| \maxL \maxR (F_1 \ast F_2) \big\|_{L_w^p}
      \leq \big\| (\maxR F_1) \ast (\maxL F_2) \big\|_{L_w^p} \\
    & \lesssim \big\| \maxR F_1 \big\|_{\wienerL{L_w^p}}  \big\| \maxL F_2 \big\|_{\wienerR{L_w^p}}
      \leq \| F_1 \|_{\wienerSt{L_w^p}}  \| F_2 \|_{\wienerSt{L_w^p}} ,
  \end{align*}
  as asserted, where \Cref{thm:AmalgamMainConvolutionRelation} shows that $F_1 \ast F_2$
  is continuous for $F_1 \in \wienerSt{L_w^p} \hookrightarrow \wienerL{L_w^p}$
  and $F_2 \in \wienerSt{L_w^p} \hookrightarrow \wienerR{L_w^p}$.

  (ii)
  By assumption, $\| \translationL{y^{-1}} \|_{Y \to Y} \leq w(y)$ for all $y \in G$,
  and thus \Cref{lem:AmalgamWeightedLInftyEmbedding}
  shows that $\wienerL{Y} \hookrightarrow L_{1/w}^\infty$.
  Furthermore, \Cref{lem:AmalgamWeightedLInftyEmbedding} also shows
  that $\wienerR{L_w^p} \hookrightarrow [L_w^1]^{\vee}$.
  Thus, \Cref{thm:AmalgamMainConvolutionRelation} is applicable and shows
  $\wienerL{Y} \ast \wienerR{L_w^p} \hookrightarrow Y$.
  Combining this convolution relation with \Cref{eq:ConvolutionMaximalFunctionEstimate}
  and the solidity of $Y$, we see for $F_1 \in \wienerL{Y}$ and $F_2 \in \wienerSt{L_w^p}$ that
  \begin{align*}
    \| F_1 \ast F_2 \|_{\wienerL{Y}}
    & = \| \maxL (F_1 \ast F_2) \|_Y
      \leq \| |F_1| \ast \maxL F_2 \|_Y \\
    & \lesssim \| F_1 \|_{\wienerL{Y}}  \| \maxL F_2 \|_{\wienerR{L_w^p}}
      =        \| F_1 \|_{\wienerL{Y}}  \| F_2 \|_{\wienerSt{L_w^p}} ,
  \end{align*}
  completing the proof.
\end{proof}

\begin{corollary} \label{cor:Lpw_nostrong}
  Let $p \in (0,1]$ and let $w : G \to (0, \infty)$ be a $p$-weight.
  Then each of the spaces $L_w^p$, $\wienerL{L^p_w}, \wienerR{L^p_w}$,
  and $\wienerSt{L^p_w}$ is $L^p_w$-compatible.
\end{corollary}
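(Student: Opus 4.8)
The plan is to verify the three defining conditions of \Cref{def:compatible} --- translation-invariance of $Y$, the embedding $\wienerL{Y} \hookrightarrow L_{1/w}^\infty$, and the convolution relation $\wienerL{Y} \ast \wienerSt{L_w^p} \hookrightarrow \wienerL{Y}$ --- for each of the four choices $Y \in \{L_w^p, \wienerL{L_w^p}, \wienerR{L_w^p}, \wienerSt{L_w^p}\}$, first reducing all four cases to statements about only two ``base'' spaces. First I would record the standard amalgam identities $\wienerL{\wienerL{Z}} = \wienerL{Z}$ and $\wienerL{\wienerR{Z}} = \wienerSt{Z} = \wienerR{\wienerL{Z}}$ (with equivalent $p$-norms) for any solid, translation-invariant quasi-Banach function space $Z$; these follow from $\maxL\maxL F = \maxL_{QQ}F$, from $\maxL\maxR = \maxSt$, and from the independence of Wiener amalgam spaces of the defining neighborhood (cf.\ \Cref{sec:AmalgamPrelims}). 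Writing $\wienerSt{L_w^p} = \wienerR{\wienerL{L_w^p}}$, these identities give $\wienerL{Y} = \wienerL{L_w^p}$ for $Y \in \{L_w^p, \wienerL{L_w^p}\}$ and $\wienerL{Y} = \wienerSt{L_w^p}$ for $Y \in \{\wienerR{L_w^p}, \wienerSt{L_w^p}\}$. So it suffices to establish translation-invariance of each of the four spaces, together with, for $Z \in \{\wienerL{L_w^p}, \wienerSt{L_w^p}\}$, the embedding $Z \hookrightarrow L_{1/w}^\infty$ and the convolution relation $Z \ast \wienerSt{L_w^p} \hookrightarrow Z$.

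Two of these are quick. For translation-invariance, I would first check by a change of variables (using only submultiplicativity of $w$) that $L_w^p$ is left- and right-translation-invariant --- the estimate $\|R_y\|_{L_w^p \to L_w^p} \le w(y)$ is already contained in the proof of \Cref{cor:UserFriendlyConvolutionBounds}, and $\|L_{y^{-1}}\|_{L_w^p \to L_w^p} \le w(y^{-1})$ is entirely analogous --- whence $\wienerL{L_w^p}$, $\wienerR{L_w^p}$, and $\wienerSt{L_w^p}$ are translation-invariant by the general facts recalled in \Cref{sec:AmalgamPrelims}. For the $L^\infty$-embedding, \Cref{lem:AmalgamWeightedLInftyEmbedding}(ii) gives $\wienerL{L_w^p} \hookrightarrow L_w^\infty$, and $w \ge 1$ forces $1/w \le w$, hence $L_w^\infty \hookrightarrow L_{1/w}^\infty$; the case $Z = \wienerSt{L_w^p}$ then follows from $\wienerSt{L_w^p} \hookrightarrow \wienerL{L_w^p}$.

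The substance lies in the convolution relation, where I need $\wienerL{L_w^p} \ast \wienerSt{L_w^p} \hookrightarrow \wienerL{L_w^p}$ and $\wienerSt{L_w^p} \ast \wienerSt{L_w^p} \hookrightarrow \wienerSt{L_w^p}$. The second is immediate from \Cref{cor:UserFriendlyConvolutionBounds}(i), which even gives $\wienerSt{L_w^p} \ast \wienerSt{L_w^p} \hookrightarrow \wienerStC{L_w^p} \hookrightarrow \wienerSt{L_w^p}$. For the first, I would take $F_1 \in \wienerL{L_w^p}$ and $F_2 \in \wienerSt{L_w^p} \hookrightarrow \wienerR{L_w^p}$; by \Cref{cor:UserFriendlyConvolutionBounds}(i) the convolution $F_1 \ast F_2$ is everywhere defined and continuous, and the pointwise bound \eqref{eq:ConvolutionMaximalFunctionEstimate} gives $\maxL(F_1 \ast F_2) \le |F_1| \ast \maxL F_2$. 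Since $\maxR\maxL F_2 = \maxSt F_2 \in L_w^p$, one has $\maxL F_2 \in \wienerR{L_w^p}$ with $\|\maxL F_2\|_{\wienerR{L_w^p}} = \|F_2\|_{\wienerSt{L_w^p}}$, so the relation $\wienerL{L_w^p} \ast \wienerR{L_w^p} \hookrightarrow L_w^p$ of \Cref{cor:UserFriendlyConvolutionBounds}(i) yields
\[
  \|F_1 \ast F_2\|_{\wienerL{L_w^p}}
  = \|\maxL(F_1 \ast F_2)\|_{L_w^p}
  \le \big\| \, |F_1| \ast \maxL F_2 \, \big\|_{L_w^p}
  \lesssim \|F_1\|_{\wienerL{L_w^p}} \, \|F_2\|_{\wienerSt{L_w^p}} .
\]

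I expect the only genuinely delicate point to be keeping track of \emph{why} this is not already a special case of \Cref{cor:UserFriendlyConvolutionBounds}(ii): the weight $w$ need not be a \emph{strong} control weight for any of the four spaces --- in fact, not even for $L_w^p$ itself, since $\|L_{y^{-1}}\|_{L_w^p \to L_w^p}$ is only controlled by $w(y^{-1}) = w(y)\Delta^{1/p}(y)$ (using condition \ref{enu:WeightPSymmetric} of \Cref{def:PWeight}), which strictly exceeds $w(y)$ when $G$ is nonunimodular. So one cannot invoke the strong-control-weight hypothesis of \Cref{cor:UserFriendlyConvolutionBounds}(ii), and the work consists precisely in threading everything --- via the amalgam reduction of the first paragraph and the pointwise estimate \eqref{eq:ConvolutionMaximalFunctionEstimate} --- back to the $L_w^p$-level convolution relations of \Cref{cor:UserFriendlyConvolutionBounds}(i), which remain available because there $w$ is required only to be an ordinary control weight for $L_w^p$.
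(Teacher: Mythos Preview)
Your proof is correct and follows essentially the same route as the paper, which simply says the result follows by combining \Cref{cor:UserFriendlyConvolutionBounds} and \Cref{lem:AmalgamWeightedLInftyEmbedding}. Your version spells out the details the paper leaves implicit---in particular the reduction of the four cases to the two base spaces $\wienerL{L_w^p}$ and $\wienerSt{L_w^p}$ via the amalgam identities, and the explanation of why $w$ fails to be a \emph{strong} control weight for $L_w^p$ on nonunimodular groups (so that \Cref{cor:UserFriendlyConvolutionBounds}(ii) is genuinely unavailable and one must route through part~(i) instead)---which is helpful context but not a different argument.
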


\begin{proof}
  This easily follows by combining \Cref{cor:UserFriendlyConvolutionBounds}
  and \Cref{lem:AmalgamWeightedLInftyEmbedding}.
\end{proof}

The last result of this subsection concerns genuine \emph{Banach} function spaces
satisfying the so-called \emph{Fatou property}, see, e.g., \cite[Chapter~15, §~65]{ZaanenIntegration}.
Among others, it applies to weighted Lebesgue spaces.

\begin{lemma}\label{lem:SolidBanachSpaceConvolutionRelation}
  Let $Y$ be a solid Banach function space that is right-translation-invariant and satisfies
  the \emph{weak Fatou property}, meaning that there exists a constant $C > 0$ such that
  \[
    \Big\| \liminf_{n \to \infty} F_n \Big\|_{Y}
    \leq C  \liminf_{n \to \infty} \| F_n \|_Y
  \]
  for every sequence $(F_n)_{n \in \N}$ of nonnegative functions $F_n \in Y$
  with $\liminf_{n \to \infty} \| F_n \|_Y < \infty$.

  If $w : G \to (0,\infty)$ is measurable with $w(x) \geq \| \translationR{x} \|_{Y \to Y}$
  for all $x \in \group$, then $F_1 \ast F_2$ is almost-everywhere well-defined for every
  $F_1 \in Y$ and $F_2 : G \to \CC$ with $F_2^{\vee} \in L_w^1$, and it holds
  \[
    \| F_1 \ast F_2 \|_{Y}
    \lesssim \| F_1 \|_Y  \| F_2^{\vee} \|_{L_w^1}
    .
  \]
  In particular, if $w$ is a $1$-weight, then the following convolution relations hold:
  \[
    Y \ast L_w^1 \hookrightarrow Y
    \qquad \text{and} \qquad
    \wienerL{Y} \ast \wienerSt{L_w^1}
    \hookrightarrow Y \ast \wienerL{L_w^1}
    \hookrightarrow \wienerL{Y}
    .
  \]
\end{lemma}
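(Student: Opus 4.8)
The plan is to first prove the pointwise/normwise bound $\| F_1 \ast F_2 \|_Y \lesssim \| F_1 \|_Y \, \| F_2^\vee \|_{L^1_w}$ and then extract the two convolution relations as easy corollaries. For the main bound I would argue as follows. Observe that for each fixed $y \in G$ we have the pointwise identity $|F_1 \ast F_2|(x) \leq \int_G |F_1(xz^{-1})| \, |F_2(z)| \, \Delta(z^{-1}) \, d\mu_G(z)$ after substituting $z = y^{-1}x$; rewriting in terms of $F_2^\vee$, the inner integrand is $|F_1(xz^{-1})| \, |F_2^\vee(z^{-1})| \, \Delta(z^{-1}) \, d\mu_G(z)$, and a further substitution $u = z^{-1}$ turns this into $\int_G |F_1(xu)| \, |F_2^\vee(u)| \, d\mu_G(u) = \int_G \bigl(\translationR{u} |F_1|\bigr)(x) \, |F_2^\vee(u)| \, d\mu_G(u)$. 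Thus $|F_1| \ast |F_2|$ is a ``continuous superposition'' $\int_G g_u \, d\nu(u)$ with $g_u := \translationR{u}|F_1| \in Y$ and $d\nu(u) := |F_2^\vee(u)| \, d\mu_G(u)$. The idea is then to pull the $Y$-norm inside this integral using the weak Fatou property: approximate the integral by finite Riemann-type sums $\sum_k \nu(E_k) \, g_{u_k}$ over a measurable partition, use solidity and the triangle inequality of the genuine \emph{Banach} norm to bound each sum by $\sum_k \nu(E_k) \, \| \translationR{u_k} F_1 \|_Y \leq \sum_k \nu(E_k) \, w(u_k) \, \| F_1 \|_Y$, which is a Riemann sum for $\| F_1 \|_Y \int_G w \, d\nu = \| F_1 \|_Y \, \| F_2^\vee \|_{L^1_w}$, and pass to the limit with the weak Fatou property controlling the liminf of the norms. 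Along the way this also shows $|F_1| \ast |F_2| \in Y$, hence in particular it is finite a.e., so $F_1 \ast F_2$ is a.e.\ well-defined.

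A cleaner way to organize the same argument, avoiding explicit Riemann sums, is to reduce to the case where $F_2^\vee \geq 0$ is a finite sum $\sum_k c_k \indicator_{E_k}$ of indicators of disjoint sets of finite measure (such simple functions being dense in $L^1_w$ from below in the pointwise-monotone sense), establish the bound there directly by the triangle inequality and $\| \translationR{x} \|_{Y\to Y} \leq w(x)$, and then handle a general $0 \leq F_2^\vee \in L^1_w$ by choosing simple functions $s_n \uparrow F_2^\vee$ pointwise, noting $|F_1| \ast s_n^\vee \uparrow |F_1| \ast |F_2|$ pointwise by monotone convergence, and applying the weak Fatou property to $(|F_1| \ast s_n^\vee)_n$. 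The general complex $F_1$ reduces to $|F_1|$ by solidity and the pointwise estimate $|F_1 \ast F_2| \leq |F_1| \ast |F_2|$. This keeps the measure-theoretic bookkeeping to a minimum.

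For the two displayed consequences: taking $w$ to be a $1$-weight, the symmetry relation \ref{enu:WeightPSymmetric} with $p = 1$ gives $[L^1_w]^\vee = L^1_w$ with equal norms (as already recorded in the proof of \Cref{lem:AmalgamWeightedLInftyEmbedding}(iii)), so $F_2 \in L^1_w$ forces $F_2^\vee \in L^1_w$ with the same norm, and the main bound yields $Y \ast L^1_w \hookrightarrow Y$. For the amalgam chain: given $F_1 \in \wienerL{Y}$ and $F_2 \in \wienerSt{L^1_w}$, \Cref{eq:ConvolutionMaximalFunctionEstimate} gives $\maxL(F_1 \ast F_2) \leq |F_1| \ast \maxL F_2$, and since $\maxL F_2 \in \wienerR{L^1_w} = [\wienerL{L^1_w}]^\vee$ with $(\maxL F_2)^\vee \in \wienerL{L^1_w} \hookrightarrow L^1_w$ (using $\wienerL{L^1_w} \hookrightarrow L^1_w$ and $[L^1_w]^\vee = L^1_w$), the main bound applied to $F_1 \in \wienerL{Y} \hookrightarrow Y$ and $\maxL F_2$ shows $|F_1| \ast \maxL F_2 \in Y$ with $\| \maxL(F_1 \ast F_2) \|_Y \lesssim \| F_1 \|_{\wienerL{Y}} \| F_2 \|_{\wienerSt{L^1_w}}$, i.e.\ $F_1 \ast F_2 \in \wienerL{Y}$; the factorization through $Y \ast \wienerL{L^1_w}$ is read off from the intermediate estimate $\maxL(F_1 \ast F_2) \leq |\maxL F_1| \ast \maxL F_2$ with $\maxL F_2 \in \wienerL{L^1_w}$... more precisely one uses $\wienerSt{L^1_w} \hookrightarrow \wienerL{L^1_w}$ and $\wienerL{Y} \hookrightarrow Y$ together with the already-proven $Y \ast \wienerL{L^1_w} \hookrightarrow \wienerL{Y}$ (which itself follows from the main bound and \Cref{eq:ConvolutionMaximalFunctionEstimate}). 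The main obstacle is the first paragraph: rigorously justifying that the $Y$-norm of the ``vector-valued integral'' $\int_G \translationR{u}|F_1| \, d\nu(u)$ is bounded by $\int_G \| \translationR{u} F_1 \|_Y \, d\nu(u)$ when $Y$ is only a solid Banach function space with the weak Fatou property (rather than, say, a Bochner-integrable setting), which is exactly why the hypotheses single out genuine Banach spaces and the weak Fatou property — the $p < 1$ quasi-Banach case genuinely fails here, and the monotone-approximation route via simple functions is what makes the weak Fatou property do its job.
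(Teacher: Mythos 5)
Your reduction of the two displayed convolution relations to the main estimate $\| F_1 \ast F_2 \|_Y \lesssim \| F_1 \|_Y \, \| F_2^{\vee} \|_{L_w^1}$ is correct and matches the paper. The problem is the main estimate itself: both of your proposed routes leave the essential step open. In the simple-function version, after writing $s_n = \sum_k c_k \indicator_{E_k}$ and applying the triangle inequality, you still have to bound $\bigl\| \, |F_1| \ast \indicator_{E_k^{-1}} \bigr\|_Y$, and $|F_1| \ast \indicator_{E^{-1}}(x) = \int_E (\translationR{u} |F_1|)(x) \, d\mu_G(u)$ is itself a \emph{continuous} superposition of right-translates — the finite triangle inequality never applies, so nothing has been gained over the general case. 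In the Riemann-sum version, the sums $\sum_k \nu(E_k) \, \translationR{u_k}|F_1|$ need not converge pointwise a.e.\ to $|F_1| \ast |F_2|$ (for a.e.\ fixed $x$ the map $u \mapsto F_1(xu)$ is merely measurable, and Riemann sums of a measurable function do not converge to its integral), so the weak Fatou property has nothing to latch onto; trying to repair this by dominating $|F_1(xu)|$ for $u$ near $u_k$ by a local maximal function would require $\maxL F_1 \in Y$, which is exactly what one cannot assume here. You correctly flag this as ``the main obstacle,'' but neither organization of the argument actually overcomes it: the weak Fatou property by itself is not a Minkowski-type integral inequality for the function norm.

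The paper closes this gap by duality, not by approximation: it extends $\| \cdot \|_Y$ to a function norm on all nonnegative measurable functions, forms the associated (K\"othe dual) seminorms $\| \cdot \|_Y^{(1)}$ and $\| \cdot \|_Y^{(2)}$, and invokes Zaanen's theorem that the weak Fatou property forces $\| \cdot \|_Y \asymp \| \cdot \|_Y^{(2)}$. One then tests $F_1 \ast F_2$ against an arbitrary $H \geq 0$ with $\| H \|_Y^{(1)} \leq 1$, uses Tonelli and the identity $F_1 \ast F_2 (x)= \int_G (\translationR{z} F_1)(x) \, F_2(z^{-1}) \, d\mu_G(z)$ to reduce to $\int_G (\translationR{z}F_1) H \, d\mu_G \leq w(z) \| F_1 \|_Y$, and integrates in $z$. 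This is where the weak Fatou hypothesis and the restriction to genuine Banach spaces do their work; if you want to complete your proof, you need this duality mechanism (or an equivalent generalized Minkowski inequality for function norms), since monotone approximation alone does not produce it.
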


\begin{proof}
  Let $M^+$ denote the set of all nonnegative measurable functions $F : \group \to [0,\infty]$,
  where functions are identified whenever they agree almost everywhere.
  The norm $\| \cdot \|_Y$ will be extended to a map $M^+ \to [0,\infty]$
  by setting $\| F \|_Y := \infty$ if $F \notin Y$.
  It is then straightforward to verify that $\| \cdot \|_Y : M^+ \to [0,\infty]$ is a function norm
  in the sense of \cite[Section~63]{ZaanenIntegration}.
  By our assumptions, it also follows that $\| \cdot \|_Y$ satisfies the weak Fatou property
  as defined in \cite[Section~65]{ZaanenIntegration};
  see \cite[Theorem~3 in Section~65]{ZaanenIntegration}.

  In accordance with \cite[Section~68]{ZaanenIntegration}, the \emph{associated seminorms}
  $\| \cdot \|^{(n)}_Y : M^+ \to [0,\infty]$ are inductively defined
  by $\| \cdot \|^{(0)}_Y := \| \cdot \|_Y$ and
  \[
    \| F \|_{Y}^{(n)}
    := \sup
       \bigg\{
         \int_{\group}
           F \cdot H
         \, d \haarMeasure
         \,\,\colon\,\,
         H \in M^+ \text{ with } \| H \|_{Y}^{(n-1)} \leq 1
       \bigg\}
    \quad \text{for } n \geq 1.
  \]
  Since $\| \cdot \|_Y$ satisfies the weak Fatou property, an application of
  \cite[Theorem~3 in Section~71]{ZaanenIntegration} shows that
  $\| \cdot \|_Y \asymp \| \cdot \|_{Y}^{(2)}$.

  Let $F_1,F_2,H \in M^+$ with $F_1 \in Y$, $F_2^{\vee} \in L_w^1$, and $\| H \|_{Y}^{(1)} \leq 1$.
  Tonelli's theorem shows $F_1 \ast F_2 \in M^+$.
  Further,
  \(
    F_1 \ast F_2 (x)
    = \int_{\group}
        F_1(y) F_2(y^{-1} x)
      \dd{y}
    = \int_{\group}
        (R_z F_1)(x) \, F_2 (z^{-1})
      \dd{z}
    .
  \)
  Based on this identity, another application of Tonelli's theorem yields that
  \[
    \int_{\group}
      (F_1 \ast F_2)(x)
      H(x) \;
    \dd{x}
    = \int_{\group}
        F_2(z^{-1})
        \int_{\group}
          (R_z F_1)(x) \, H(x) \;
        \dd{x}
      \dd{z}
    .
  \]
  By definition of $\| \cdot \|_Y^{(1)}$, it follows
  \(
    \int_{\group}
      (R_z F_1)(x) \, H(x)
    \dd{x}
    \leq \| H \|_Y^{(1)} \cdot \| R_z F_1 \|_Y
    \leq w(z) \cdot \| F_1 \|_Y
    .
  \)
  Hence,
  \[
    \int_{\group}
      (F_1 \ast F_2)(x)
      H(x)
    \dd{x}
    \leq \| F_1 \|_Y \!
         \int_{\group} w(z)  F_2^{\vee}(z) \dd{z}
    =    \| F_1 \|_Y \| F_2^{\vee} \|_{L_w^1}
    .
  \]
  Since this holds for every $H \in M^+$ with $\| H \|_Y^{(1)} \leq 1$, this implies
  \[
    \| F_1 \ast F_2 \|_Y
    \lesssim \| F_1 \ast F_2 \|_Y^{(2)}
    \leq \| F_1 \|_Y  \| F_2^{\vee} \|_{L_w^1}.
  \]
  Thus, the convolution relation $Y \ast [L_w^1]^{\vee} \hookrightarrow Y$ holds for nonnegative
  functions; by solidity, this easily implies the general case.

  Finally, if $w$ is a $1$-weight, it follows that $L_w^1 = [L_w^1]^{\vee}$ with identical norms,
  so that the preceding convolution relation implies that $Y \ast L_w^1 \hookrightarrow Y$.
  Moreover, using the estimate \eqref{eq:ConvolutionMaximalFunctionEstimate},
  it follows for $F_1 \in Y$ and $F_2 \in \wienerL{L_w^1}$ that
  \begin{align*}
    \| F_1 \ast F_2 \|_{\wienerL{Y}}
    & = \| \maxL (F_1 \ast F_2) \|_Y
      \leq \big\| |F_1| \ast \maxL F_2 \big\|_Y \\
    & \lesssim \big\| |F_1| \big\|_Y  \| \maxL F_2 \|_{L_w^1}
      =  \| F_1 \|_Y  \| F_2 \|_{\wienerL{L_w^1}},
  \end{align*}
  as asserted.
\end{proof}

\section{Counterexamples}
\label{sec:counterexamples}

In this section two examples (both taken from \cite[Section~2.3]{FelixPhD}) are provided,
showing that in general
\[
 \wienerL{L^p_v} \ast [\wienerL{Y^{\vee}}]^{\vee}
 \nsubseteq \wienerL{Y}
\]
with $v (x) = \| L_{x^{-1}} \|_{\wienerL{Y} \to \wienerL{L}}$
and/or $v(x) = \| L_{x}\|_{\wienerL{Y} \to \wienerL{Y}}$.
These examples show that the convolution relation asserted in \cite[Theorem~5.2]{rauhut2007wiener}
fails in general.
This (incorrect) convolution relation is used several times in the coorbit theory
developed in \cite{rauhutcoorbitpreprint, rauhut2007coorbit}.

The first example shows that one cannot use the weight
$v(x) = \| L_{x^{-1}} \|_{\wienerL{Y} \to \wienerL{Y}}$.

\begin{example}\label{exa:RauhutFirstCounterexample}
Let $G = \R$ and consider the weight $w : \R \to (0,\infty), \; x \mapsto e^{x}$.
Let $Y := L_w^1 (\R)$.
For $f \in Y$ and $x \in \R$, it follows that
\[
  \left\Vert L_{x}f\right\Vert_{Y}
  = \int_{\R}
      |f(y - x)| \, e^y
    \, d y
  = \int_{\R}
      |f(z)| \, e^{z+x}
    \, d z
  = e^{x}  \left\Vert f \right\Vert_{Y},
\]
and hence $\| L_{x} \|_{Y\to Y} = e^{x}$.
As seen in \Cref{eq:WienerAmalgamTranslationNormEstimate}, this implies
\[
  v(x)
  := \| L_{x^{-1}}\|_{\wienerL{Y} \to \wienerL{Y}}
  \leq \| L_{x^{-1}}\|_{Y\to Y}
  = e^{-x},
\]
where we have written $G = \R$ multiplicatively, as well as additively, i.e., $x^{-1} = -x$.
Moreover note, for any measurable $f : \R \to \mathbb{C}$ that
\[
  \left\Vert f^{\vee} \right\Vert_{Y}
  = \int_{\R}
      |f(y)| \, e^{-y}
    \, d y
  = \left\Vert f \right\Vert_{L_{u}^{1}}
\]
for $u : \R \to (0,\infty), \;x \mapsto e^{-x}$.
Thus, $Y^{\vee} = L_{u}^{1}(\R)$.

Let $T > 0$ be arbitrary and define $f := \indicator_{(T, T+1)}$
as well as $g := \indicator_{(-T-1, -T)}$.
Using the (open, relatively compact, symmetric) unit neighborhood $Q := (-1,1)$, it follows that
\[
  \left(\maxL \indicator_{A}\right)(x)
  = \esssup_{y\in x+Q}
      \indicator_{A}(y)
  \leq \indicator_{A+Q}(x)
\]
for each measurable $A \subseteq \R$, since $\indicator_{A}(y) \neq 0$
for some $y \in x + Q$ implies $x + q = y\in A$ for a suitable $q \in Q$,
which in turn yields $x = y - q \in A-Q = A + Q$.
Note that $\| \cdot \|_{L_w^1} = \| \cdot \|_Y$ is a $p$-norm with $p = 1$.
Thus,
\begin{align*}
  \left\Vert f\right\Vert_{\wienerL{ L_{v}^{p}}}
  = \big\| \maxL \, \indicator_{(T, T+1)} \big\|_{L_v^1}
  \leq \left\Vert \indicator_{(T-1,T+2)} \right\Vert_{L_{v}^{1}}
  \leq \int_{T-1}^{T+2}
          e^{-x}
        \, d x
  = e^{-T}  (e^1 - e^{-2})
  \leq e  e^{-T}
\end{align*}
and
\(
  \| g \|_{[\wienerL{Y^{\vee}}]^{\vee}}
  = \| g^{\vee} \|_{\wienerL{L_u^1}}
  = \| \maxL \indicator_{(T, T+1)} \|_{L_u^1}
  \leq \| \indicator_{(T-1, T+2)} \|_{L_u^1}
  \leq e  e^{-T} .
\)

Since $f,g\in L^{2}(\R)$, it is easy to see that $f\ast g$ is continuous with
\[
  f\ast g(0)
  = \int_{\R}
      \indicator_{(T,T+1)}(x)  \indicator_{(-T-1, -T)} (0 - x) \;
    \, d x
  = 1
  .
\]
Hence, for arbitrary $x \in (-1,1)$, we have $\maxL[ f\ast g](x) \geq f\ast g(0) = 1$,
which yields that
\(
  \left\Vert f\ast g \right\Vert_{\wienerL{Y}}
  \geq \int_{(-1,1)} e^{x} \; \diff x
  \geq 1.
\)

Assume towards a contradiction that $\wienerL{L_v^p} \ast [\wienerL{Y^{\vee}}]^{\vee} \subseteq \wienerL{Y}$.
As an easy consequence of the closed graph theorem, this implies existence of
a constant $C = C(w,Q) > 0$ satisfying
\(
  \left\Vert f\ast g \right\Vert_{\wienerL{Y}}
  \leq C
       \left\Vert f \right\Vert_{\wienerL{L_{v}^{p}}}
       \left\Vert g \right\Vert_{[\wienerL{Y^{\vee}}]^{\vee}}
  .
\)
Then the above estimates yield that
\begin{align*}
  1
  & \leq \left\Vert f\ast g \right\Vert_{\wienerL{Y}}
    \leq C
         \left\Vert f \right\Vert_{\wienerL{L_{v}^{p}}}
         \left\Vert g \right\Vert_{[\wienerL{Y^{\vee}}]^{\vee}}
  \leq C  e^{2}
          e^{-2 T}
  \rightarrow 0
  \quad \text{as} \quad T \to \infty,
\end{align*}
a contradiction.
Thus, $\wienerL{L^p_v} \ast [\wienerL{Y^{\vee}}]^{\vee}  \nsubseteq \wienerL{Y}$.
\end{example}

The next example shows that \cite[Theorem~5.2]{rauhut2007wiener} also fails%
\footnote{The weight $v(x) = \| L_{x} \|_{\wienerL{Y} \to \wienerL{Y}}$
does not occur in the statement of \cite[Theorem~5.2]{rauhut2007wiener},
but it is this weight that is used in its proposed proof.}
for $v(x) = \| L_{x} \|_{\wienerL{Y} \to \wienerL{Y}}$.

\begin{example}\label{exa:RauhutCounterexample}
Let $G$ denote the affine group, i.e., $G := \R \times (0,\infty)$ with multiplication given by
\[
  (x,a) \cdot (y,b)
  = (x+ay, ab).
\]
Neutral element and inverse in $G$ are given by $e_{G} = (0,1) \in G$
and $(x,a)^{-1} =\left(-\frac{x}{a},a^{-1}\right)$, respectively.
The left Haar integral on $G$ is given by
\[
  \int_{G}
    f(g)
  \, d \mu_G (g)
  = \int_{0}^{\infty}
      \int_{\mathbb{R}}
        f(x,a)
      \diff x
    \,\frac{\diff a}{a^{2}}
\]
with modular function $\Delta \bigl( (x,a) \bigr) = a^{-1}$.

Define the weight
\[
  w : \quad
  G\to(1,\infty), \quad
  (x,a) \mapsto 1 + a = 1 + \Delta\left((x,a)^{-1}\right).
\]
Then $w$ is clearly continuous and submultiplicative.
Set $Y := L_{w}^{1}(G)$.
Since $L^{1}(G)$ is right invariant and isometrically left invariant,
it follows that $\wienerL{Y}$ is left and right invariant
(cf.\ Section~\ref{sec:AmalgamPrelims})
with
\begin{align}
  v(x,a)
   := \| {L_{\left(x,a\right)}} \|_{\wienerL{Y} \to \wienerL{Y}}
       \nonumber
    \leq \| L_{(x,a)} \|_{Y\to Y}
     \leq w(x,a)
     =    1 + a .
  \label{eq:RauhutCounterexampleVDominatesW}
\end{align}
Moreover, as seen in \Cref{sec:AmalgamPrelims}, $\wienerLNeutral_{Q'}(Y)$ is independent of the
choice of the (open, relatively compact) unit neighborhood $Q' \subseteq \group$.
In addition, a direct calculation using the identity
$\int_{\group} \! F(z) \; d \mu_G (z) \!=\! \int_{\group} F(z^{-1}) \Delta(z^{-1}) d \mu_G(z)$
shows that
\begin{align*}
  \left\Vert f^{\vee} \right\Vert_{L_{w}^{1}}
  & = \int_{G}
        \left|f^{\vee}(z)\right|
         w(z)
      \, d \mu_G (z)
    = \int_{G}
         \left|f(z^{-1})\right|
      \, d \mu_G (z)
      + \int_{G}
          \left|f(z^{-1})\right|
           \Delta(z^{-1})
      \, d \mu_G (z) \\
   & = \int_{G}
         |f(z)|
          \Delta(z^{-1})
       \, d \mu_G (z)
       + \int_{G}
           |f(z)|
         \, d \mu_G(z)
     = \left\Vert f \right\Vert_{L_{w}^{1}},
\end{align*}
so that $Y = Y^{\vee}$.

Let $\alpha \in (1,\infty)$ and $\beta \in (0,1)$
and set $\delta := \max\left\{ \alpha,\beta\right\} = \alpha$.
Define $f : G \to (0, \infty)$
by $f\bigl( (x,a) \bigr) = e^{-|x|}  \min\left\{ a^{\alpha}, a^{-\beta} \right\}$.
Then $f$ is clearly continuous and is
easily seen to satisfy $\left\Vert f \right\Vert_{\sup} \leq 1$.
In the following, we use the (open, relatively compact) unit neighborhoods
$Q_0 := (-1,1) \times (\frac{1}{2}, 2)$ and $Q' := Q_0^{-1}$.
Then the substitutions $(z,c) = (y,b)^{-1}$ and $(\mu,\nu) = (z,c) \cdot (x,a)$ yield
\begin{align*}
  \left( \maxL_{Q'} f^{\vee} \right)(x,a)
  & \leq \sup_{(y,b) \in (x,a) Q'}
           \left|f^{\vee}(y,b)\right|
    =    \sup_{(z,c) \in Q_0 (x,a)^{-1}}
           \left|f(z,c)\right| \\
  & = \sup_{(\mu,\nu) \in Q_0}
         \left|
           f\left((\mu,\nu) \cdot (x,a)^{-1} \right)
         \right|
   = \sup_{(\mu,\nu) \in Q_0}
        \left|
          f \left( (\mu,\nu) \cdot\left(-\tfrac{x}{a}, a^{-1}\right) \right)
        \right| \\
  & = \sup_{(\mu,\nu) \in Q_0}
        \left|
          f \left( \mu - \tfrac{\nu}{a}x,\,  \tfrac{\nu}{a} \right)
        \right|
   = \sup_{(\mu,\nu) \in Q_0}
        e^{-\left|\mu-\frac{\nu}{a}x\right|}
         \min \left\{ \left(\nu/a\right)^{\alpha},  \left(\nu/a\right)^{-\beta} \right\} .
\end{align*}
Using that $(\mu,\nu) \in Q_0 = (-1,1) \times (\frac{1}{2}, 2)$, we see
\[
  \left| \mu - \frac{\nu}{a}x \right|
  \geq \frac{\nu}{a}|x| - |\mu|
  \geq \frac{\nu}{a}|x| - 1
  \geq \frac{|x|}{2a} - 1
\]
and thus
\(
  e^{-\left|\mu-\frac{\nu}{a}x\right|}
  \leq e^{-\left(\frac{\left|x\right|}{2a}-1\right)}
  =    e e^{-\left|x\right|/2a}.
\)
Furthermore, one has $\frac{1}{2a} \leq \frac{\nu}{a} \leq \frac{2}{a}$, which entails
\(
  (\frac{\nu}{a})^{\alpha}
  \leq (\frac{2}{a})^{\alpha}
  \leq 2^{\delta} a^{-\alpha}
\)
and
\(
  \left(\frac{\nu}{a}\right)^{-\beta}
  = \left(\frac{a}{\nu}\right)^{\beta}
  \leq 2^{\beta} a^{\beta}
  \leq 2^{\delta} a^{\beta}.
\)
Combining these estimates yields
\begin{equation}
  \left(\maxL_{Q'} f^{\vee}\right)(x,a)
  \leq 2^{\delta} e
        e^{-\left|x\right|/2a}
        \min \left\{ a^{-\alpha},  a^{\beta} \right\}
  \quad \text{for all } (x,a) \in G .
  \label{eq:RauhutCounterexampleMaximalFunctionUpperEstimate}
\end{equation}
For $a \in (0,\infty)$, using the identity
\(
  C_{1}
  := \int_{\R}
       e^{-\left|y\right|}
     \, d y
  =  \int_{\R}
       e^{-|x| / 2a}
     \frac{d x}{2a},
\)
it follows that
\begin{align*}
  \left\Vert f^{\vee} \right\Vert_{W^L_{Q'} (Y)}
  & \leq 2^{\delta} e
         \int_{G}
           e^{-|x| / 2a}
            \min \left\{ a^{-\alpha},  a^{\beta} \right\}
            w(x,a)
         \, d \mu_G (x,a) \\
  & = 2^{\delta} e
      \int_{0}^{\infty}
          \frac{\min\left\{ a^{-\alpha},  a^{\beta} \right\}  (1+a)}
               {a^{2}}  2a \int_{\R} e^{-|x| / 2a}
        \frac{\diff x}{2a}
      \diff a \\
  & = 2^{\delta + 1} e C_{1}
      \int_{0}^{\infty}
        \frac{\min \left\{ a^{-\alpha},  a^{\beta} \right\}  (1+a)}
             {a}
      \diff a \\
  & \leq 2^{\delta + 1} e C_{1}
          \left[
                 \int_{0}^{1}
                   a^{\beta}  (1 + a)
                 \frac{\diff a}{a}
                 + \int_{1}^{\infty}
                     a^{-\alpha}  (1 + a)
                   \frac{\diff a}{a}
               \right]
  .
\end{align*}
Using $1 + a \leq 2$ for $a \in (0,1)$ and $1 + a \leq a + a = 2a$ for $a \in [1,\infty)$,
this gives
\[
  \left\Vert f^{\vee} \right\Vert_{W^L_{Q'} (Y)}
  \leq 2^{\delta + 2} e C_{1}
        \left[
               \int_{0}^{1}
                 a^{\beta}
               \frac{\diff a}{a}
               + \int_{1}^{\infty}
                   a^{1-\alpha}
                 \frac{\diff a}{a}
             \right]
  < \infty,
\]
because of $\beta > 0$ and $\alpha > 1$.
This implies $f^{\vee}\in \wienerL{Y} = \wienerL{Y^{\vee}}$
and thus $f \in \bigl[\wienerL{Y^{\vee}}\bigr]^{\vee}$, since
\(
  f^{\vee} \in \wienerL{Y}
  = \wienerL{L_{w}^{1}}
  \hookrightarrow \wienerL{L_{v}^{1}},
\)
since $v\leq w$.

Note that $Y = L_w^1(G)$ is a Banach space, so that $\| \cdot \|_Y$ is a $p$-norm for $p = 1$.
Now, if the convolution relation $\wienerL{L^p_v} \ast [\wienerL{Y^{\vee}}]^{\vee} \subseteq \wienerL{Y}$
would hold, then the above would yield $f^{\vee} \ast f \in \wienerL{Y}$.
We will now show that in fact $f^{\vee} \ast f \notin \wienerL{Y}$.
For this, note first that $f^{\vee}\in \wienerL{L_{w}^{1}} \hookrightarrow L^{1}$
since $w \geq 1$.
Furthermore, $f$ is bounded, so that $f^{\vee} \ast f : G \to \mathbb{C}$
is a well-defined, continuous, bounded function.
A direct calculation gives
\begin{align*}
    \left(f^{\vee}\ast f\right)(x,a)
  & = \int_{G}
        f^{\vee}(y,b)
         f\left((y,b)^{-1}  (x,a)\right)
      \, d \mu_G (y,b) \\
  & = \int_{0}^{\infty}
        \int_{\R}
          f\left(-\frac{y}{b},\frac{1}{b}\right)
           f\left(\frac{x-y}{b},\frac{a}{b}\right)
        \diff y
      \frac{\diff b}{b^{2}} \\
  & = \int_{0}^{\infty}\!\!
         \min \left\{ b^{-\alpha},b^{\beta}\right\}
         \min \left\{ \left(\frac{a}{b}\right)^{\alpha},\left(\frac{a}{b}\right)^{-\beta}\right\}
          \frac{b}{b^{2}}
         \int_{\R}
           e^{-\left|-\frac{y}{b}\right|}
           e^{-\left|\frac{x-y}{b}\right|}
         \frac{\diff y}{b}
       \diff b \\
  & = \int_{0}^{\infty}\!\!
         \min\left\{ b^{-\alpha},b^{\beta}\right\}
          \min
               \left\{
                 \left(\frac{a}{b}\right)^{\alpha},
                 \left(\frac{b}{a}\right)^{\beta}
               \right\}
          \int_{\R}
                  e^{-\left|z\right|}
                  e^{-\left|z-\frac{x}{b}\right|}
                \diff z
      \frac{\diff b}{b}.
\end{align*}
Let $x = 0$ and $a \geq 1$.
Then, for each $b \in (0,1)$, it follows that $b^{\beta} \leq 1 \leq b^{-\alpha}$
as well as $b < 1 \leq a$,
and thus $\left(b/a\right)^{\beta} \leq 1^{\beta} = 1 \leq \left(a/b\right)^{\alpha}$.
With the abbreviation
\(
  C_{2}
  := \int_{\mathbb{R}}
       ( e^{-|z|} )^{2}
     \diff z
  \in (0,\infty)
\),
these considerations and the above calculation imply
\begin{align*}
  \left( f^{\vee} \ast f \right)(0,a)
  & \geq C_{2}
          \int_{0}^{1}
                 b^{\beta}
                  \left(b/a\right)^{\beta}
               \frac{\diff b}{b}
  = C_{2}
       a^{-\beta}
       \int_{0}^{1} b^{2\beta-1} \diff b
      = C_{2}
       a^{-\beta}
       \frac{b^{2\beta}}{2\beta}\bigg|_{0}^{1} \\
  & = \frac{C_{2}}{2\beta}
       a^{-\beta}
 \numberthis \label{eq:RauhutCounterexampleConvolutionLowerEstimate}
\end{align*}
for all $a \geq 1$. For using this to obtain a lower bound on $\maxL (f^{\vee}\ast f)$,
first note for $(y,b) \in G$ that
\begin{align*}
  (y,b) Q_0
   =  \left[y + (-b,\, b)\right]
      \times \left( \frac{b}{2} , 2b \right)
   =: B_{b}(y) \times \left(\frac{b}{2} , 2b \right).
\end{align*}
For $b \in (1,\infty)$ and $y \in(-b,b)$, we thus have $(0,b) \in (y,b) Q_0$.
Since $f^{\vee} \ast f$ is continuous and $(y,b) Q_0$ is open, it follows that,
for $b \in (1,\infty)$ and $y \in (-b,b)$,
\begin{align*}
  \maxL_{Q_0} [f^{\vee}\ast f](y,b)
   \geq ( f^{\vee} \ast f )(0,b)
   \geq \frac{C_{2}}{2\beta}  b^{-\beta},
\end{align*}
where the estimate \eqref{eq:RauhutCounterexampleConvolutionLowerEstimate} was used in the last step.
Combining the obtained estimates gives
\begin{align*}
  \left\Vert f^{\vee} \ast f \right\Vert_{\wienerLNeutral_{Q_0}(Y)}
  & \geq \frac{C_{2}}{2\beta}
          \int_{G}
                 \indicator_{(1,\infty)}(b)
                  \indicator_{(-b,b)}(y)
                  b^{-\beta}
                  w(y,b)
               \, d \mu_G (y,b) \\
  & =    \frac{C_{2}}{2\beta}
          \int_{1}^{\infty}
                 \frac{b^{-\beta}}{b^{2}}
                  (1+b)
                  \int_{-b}^{b} \, d y
               \, d b \\
  & \geq \frac{C_{2}}{2\beta}
          \int_{1}^{\infty}
                 b^{-\beta}
               \, d b
    =    \infty ,
\end{align*}
because of $\beta \in (0,1)$.
Thus, $f^{\vee} \ast f \notin \wienerL{Y}$.
\end{example}

\begin{remark}
  The above example also shows that
  \begin{equation}
    \wienerL{L_{w}^{p}}
    \ast \bigl[\wienerL{L_{w^{\ast}}^{p}} \bigr]^{\vee}
    \nsubseteq \wienerL{L_{w}^{p}}
    \label{eq:RauhutInvalidLpConvolutionRelation}
  \end{equation}
  for the weight $w^{\ast}(x) = \Delta(x^{-1})  w(x^{-1})$.
  Hence, the convolution relation that is
  stated in \cite[Corollary~5.4]{rauhut2007wiener} is also false in general (even for $p = 1$).
  To see that the above example indeed implies \Cref{eq:RauhutInvalidLpConvolutionRelation},
  note that the weight $w$ in the example satisfies $w^{\ast} = w$
  and that $f^{\vee} \in \wienerL{L_{w}^{1}}$
  as well as $f \in \bigl[\wienerL{L_{w^{\ast}}^{1}}\bigr]^{\vee}$,
  but $f^{\vee} \ast f \notin \wienerL{Y} = \wienerL{L_{w}^{1}}$.
\end{remark}

\chapter{Coorbit spaces associated with integrable group representations}
\label{sec:coorbit}

This chapters develops the basic theory of coorbit spaces
associated with (possibly projective) integrable representations
and quasi-Banach function spaces.

\section{Admissible vectors and reproducing formulae}
\label{sec:admissible}

Let $(\pi, \Hpi)$ be a unitary $\sigma$-re\-pre\-sen\-ta\-tion of $G$
on the separable Hilbert space $\Hpi \neq \{ 0 \}$, i.e., a strongly measurable%
\footnote{This means that for each $f \in \Hpi$,
the map $G \to \Hpi, x \mapsto \pi(x) f$ is measurable.}
map $\pi : G \to \mathcal{U}(\Hpi)$ into the set $\CalU(\Hpi)$ of unitary operators on $\Hpi$
satisfying $\pi(e_G) = I_{\Hpi}$ and
\begin{equation}
  \pi(x) \pi(y)
  = \sigma(x,y) \pi(xy),
  \quad \text{for all } x,y \in G,
  \label{eq:SigmaProjectionDefinition}
\end{equation}
for some function $\sigma : G \times G \to \mathbb{T}$.
Any such function $\sigma$ is a cocycle in the sense of Section~\ref{sec:cocycle_twisted}.
One also says that $(\pi, \Hpi)$ is a unitary \emph{projective representation}.
The adjoint operator $\pi(x)^*$ of $\pi(x)$ for $x \in G$ is given by
\begin{equation}
  [\pi(x)]^*
  = [\pi(x)]^{-1}
  = \overline{\sigma(x,x^{-1})} \, \pi(x^{-1})
  = \overline{\sigma(x^{-1}, x)} \, \pi(x^{-1})
  .
  \label{eq:ProjectiveRepresentationInverse}
\end{equation}
In particular, $\sigma(x, x^{-1}) = \sigma(x^{-1}, x)$.
The map $\pi$ is called \emph{irreducible} if $\{0\}$ and $\Hpi$
are the only closed $\pi$-invariant subspaces of $\Hpi$, i.e.,
if $V \subseteq \Hpi$ and $\pi(x) v \in V$
for all $v \in V$ and $x \in G$, then $V = \{0\}$ or $V = \Hpi$.

For $g \in \Hpi$, define its associated \emph{coefficient transform} $V_g : \Hpi \to L^{\infty} (G)$ by
\[
  V_g f (x)
  = \langle f, \pi(x) g \rangle,
  \quad x \in G.
\]
By \cite[Lemma~7.1, Theorem~7.5]{varadarajan1985geometry},
it follows that
\begin{equation}
  |V_g f| : G \to \mathbb{C} 
  \label{eq:VoiceTransformAbsoluteValueContinuous}
\end{equation}
is continuous for all $f, g \in \Hpi$. A vector $g \in \Hpi$ is called \emph{admissible}
if $V_g : \Hpi \to L^2 (G)$ is well-defined and an isometry.

The following simple lemma collects several identities that will be used repeatedly
throughout this article.
Here, the twisted translation operators $L_x^{\sigma}$ and $R_x^{\sigma}$
and twisted convolution operator $\ast_{\sigma}$
are as defined in Section~\ref{sec:cocycle_twisted}.

\begin{lemma}\label{lem:basic_cocycle}
  Let $f, g, h \in \Hpi$.
  \begin{enumerate}[label=(\roman*)]
   \item For $x \in G$, the following \emph{intertwining property} holds:
         \[
           V_g f (x^{-1})
           = \overline{\sigma(x, x^{-1})} \, \overline{V_f g (x)},
           \quad
           V_g [\pi(x) f] = \twisttranslationL{x} [V_g f]
           \quad \text{and} \quad
           V_{\pi (x) g} f = \twisttranslationR{x} [V_g f].
         \]

   \item If $h \in \Hpi$ is admissible, then the following \emph{reproducing formula} holds:
         \begin{align} \label{eq:repro}
           V_g f  = V_h f \ast_{\sigma} V_g h.
         \end{align}
  \end{enumerate}
\end{lemma}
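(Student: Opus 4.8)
The plan is to prove both identities in part~(i) directly from the definitions of $V_g$, the projective representation law~\eqref{eq:SigmaProjectionDefinition}, the adjoint formula~\eqref{eq:ProjectiveRepresentationInverse}, and the definitions of the twisted translations $\twisttranslationL{x}$, $\twisttranslationR{x}$ in~\eqref{eq:twisted_translation}, and then to obtain part~(ii) by unfolding the definition of twisted convolution $\ast_\sigma$ in~\eqref{eq:twisted_convolution} and invoking the isometry property of an admissible vector to rewrite an inner product as an integral against a coefficient function.

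First I would treat the intertwining identities. For $V_g[\pi(x)f]$, I would compute $V_g[\pi(x)f](y) = \langle \pi(x) f, \pi(y) g\rangle = \langle f, \pi(x)^* \pi(y) g\rangle$ and then use~\eqref{eq:ProjectiveRepresentationInverse} together with the cocycle relation to rewrite $\pi(x)^*\pi(y) = \overline{\sigma(x,x^{-1})}\,\pi(x^{-1})\pi(y) = \overline{\sigma(x,x^{-1})}\,\sigma(x^{-1},y)\,\pi(x^{-1}y)$; matching the resulting scalar with $\sigma(x, x^{-1}y)$ via the cocycle identity~(1) (applied with the triple $(x, x^{-1}, y)$, using $\sigma(x,e_G)=1$) should yield exactly $\twisttranslationL{x}[V_g f](y) = \sigma(x, x^{-1}y) V_g f(x^{-1}y)$. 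For $V_{\pi(x)g}f$, I would similarly write $V_{\pi(x)g}f(y) = \langle f, \pi(y)\pi(x)g\rangle = \langle f, \sigma(y,x)\pi(yx)g\rangle = \overline{\sigma(y,x)}\,V_g f(yx) = \twisttranslationR{x}[V_g f](y)$, which is immediate. The first displayed identity $V_g f(x^{-1}) = \overline{\sigma(x,x^{-1})}\,\overline{V_f g(x)}$ follows from $V_g f(x^{-1}) = \langle f, \pi(x^{-1})g\rangle = \overline{\langle \pi(x^{-1})g, f\rangle}$ combined with $\pi(x^{-1}) = \sigma(x,x^{-1})\,\pi(x)^*$ (read off from~\eqref{eq:ProjectiveRepresentationInverse}), so that $\langle \pi(x^{-1})g,f\rangle = \sigma(x,x^{-1})\langle g,\pi(x)f\rangle = \sigma(x,x^{-1}) V_f g(x)$; taking the conjugate and using $|\sigma|=1$ gives the claim.

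For part~(ii), I would fix an admissible $h$, so $V_h : \Hpi \to L^2(G)$ is an isometry, hence $\langle u, v\rangle_{\Hpi} = \langle V_h u, V_h v\rangle_{L^2(G)}$ for all $u,v\in\Hpi$. Applying this with $u = f$ and $v = \pi(x)g$ gives $V_g f(x) = \langle f, \pi(x)g\rangle = \int_G V_h f(y)\,\overline{V_h[\pi(x)g](y)}\,\dd{y}$. Now I would use the already-proved relation $V_h[\pi(x)g] = \twisttranslationL{x}[V_h g]$, so that $\overline{V_h[\pi(x)g](y)} = \overline{\sigma(x, x^{-1}y)}\,\overline{V_h g(x^{-1}y)}$; substituting $y \mapsto xy$ (Haar left-invariance) and then applying the first identity of part~(i) to rewrite $\overline{\sigma(x, \cdot)}\,\overline{V_h g(\cdot)}$ as a value of $V_g h$ at the inverse point should collapse the integral to $\int_G V_h f(xy)\cdot[\,\cdot\,]\,\dd{y}$, and after a further change of variables this takes the form $\int_G V_h f(y)\,\sigma(y, y^{-1}x)\,V_g h(y^{-1}x)\,\dd{y} = (V_h f \ast_\sigma V_g h)(x)$. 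Alternatively — and perhaps more cleanly — I would instead apply the isometry identity as $\langle f, \pi(x)g\rangle = \langle V_h f, V_h[\pi(x)g]\rangle$ and directly recognize this as a twisted convolution by carefully tracking cocycle factors; the two routes are equivalent and it is a matter of bookkeeping which is shorter.

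The main obstacle is purely the cocycle bookkeeping: making sure every scalar $\sigma(\cdot,\cdot)$ and its conjugate is tracked correctly through the substitutions, and that each application of the cocycle identity~(1) uses the right triple and the normalization $\sigma(\cdot,e_G) = \sigma(e_G,\cdot) = 1$. There is no analytic difficulty — all integrals are legitimate because $V_h f, V_h g \in L^2(G)$ and the isometry identity is exactly an $L^2$ inner product — so the entire content is algebraic manipulation of $\mathbb{T}$-valued multipliers, which I would organize by first establishing~(i) and then feeding it into~(ii) so that the convolution identity never has to be verified from scratch.
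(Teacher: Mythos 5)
Your proposal is correct and follows essentially the same route as the paper: part (i) is the same direct computation with \eqref{eq:SigmaProjectionDefinition}, \eqref{eq:ProjectiveRepresentationInverse} and the cocycle identity applied to the triple $(x,x^{-1},y)$, and part (ii) rests on the same isometry argument $\langle f,\pi(x)g\rangle = \langle V_h f, V_h[\pi(x)g]\rangle_{L^2}$. For (ii) the paper takes what you call the ``cleaner alternative'': it recognizes $\overline{V_h[\pi(x)g](y)} = \langle \pi(y)h,\pi(x)g\rangle = \twisttranslationL{y}[V_g h](x)$ directly, so the integral is the twisted convolution by definition and no change of variables is needed; your primary route via $V_h[\pi(x)g]=\twisttranslationL{x}[V_h g]$ and the substitution $y\mapsto xy$ also closes, since the required cocycle identity $\overline{\sigma(x,x^{-1}w)}\,\sigma(w^{-1}x,x^{-1}w)=\sigma(w,w^{-1}x)$ follows from axiom (1) applied to $(w,w^{-1}x,x^{-1}w)$.
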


\begin{proof}
(i)
If $x, y \in G$, then
\begin{align*}
   V_g [\pi(x) f] (y)
   &= \langle f, \pi(x)^{\ast} \pi(y) g \rangle
    = \sigma(x, x^{-1})
      \langle f, \pi(x^{-1}) \pi( y)g \rangle \\
   &= \sigma(x, x^{-1})
      \overline{\sigma(x^{-1}, y)}
      V_g f (x^{-1} y).
\end{align*}
Using that
\(
  \sigma(x, x^{-1} y)
  = \overline{\sigma(x^{-1}, y)} \,
    \sigma(x x^{-1}, y) \,
    \sigma(x, x^{-1})
  = \sigma(x, x^{-1}) \,
    \overline{\sigma(x^{-1}, y)}
  ,
\)
it follows that $V_g [\pi(x) f] = \twisttranslationL{x} [V_g f]$.
The other two identities of part (i) are immediate consequences of the definitions
and of \Cref{eq:SigmaProjectionDefinition,eq:ProjectiveRepresentationInverse}.

(ii)
If $x \in G$, then using that $V_h : \Hpi \to L^2 (G)$ is an isometry gives
\begin{align*}
  V_g f (x)
  &= \langle f, \pi(x) g \rangle
   = \langle V_h f, V_h [\pi(x) g] \rangle_{L^2}
   = \int_G
       V_h f (y) \,
       \langle \pi(y) h, \pi(x) g \rangle_{\Hpi}
     \; d\mu_G (y) \\
  &=
     \int_G
       V_h f (y) \twisttranslationL{y} [V_g h] (x)
     \; d\mu_G (y)
   = (V_h f \ast_{\sigma} V_g h) (x),
\end{align*}
where the penultimate equality marked follows from part (i).
\end{proof}

If $g \in \Hpi$ is admissible, then the image space $\CalK_g := V_g (\Hpi)$
is a closed subspace of $L^2 (G)$, since it is the isometric image of the Hilbert space $\Hpi$.
For arbitrary $F \in \CalK_g$, say $F = V_g f$, it follows from \Cref{lem:basic_cocycle}
that for any $x \in G$,
\begin{align} \label{eq:RKHS}
  F(x)
  = \langle f, \pi(x) g \rangle
  = \langle V_g f, V_g [\pi(x) g] \rangle
  = \langle V_g f, \twisttranslationL{x} [V_g g] \rangle
  = \int_{\group}
      F(y) \overline{\twisttranslationL{x} [V_g g](y)}
    \; d\mu_G (y),
\end{align}
which shows that $\CalK_g$ is a \emph{reproducing kernel Hilbert space}, i.e.,
for any $x \in G$, the point evaluation $\CalK_g \ni F \mapsto F(x) \in \mathbb{C}$
is a well-defined, continuous linear functional.
The \emph{reproducing kernel} of $\CalK_g$ is the function
\begin{align}\label{eq:RK}
  K : \quad
  G \times G \to \mathbb{C}, \quad
  (x,y) \mapsto V_g [\pi(y) g](x)
                = \overline{\twisttranslationL{x} [V_g g] (y)}
\end{align}
satisfying $F(x) = \langle F, K(\cdot, x) \rangle_{L^2}$ for any $F \in \CalK_g$ and $x \in G$.

For an \emph{irreducible} $\sigma$-representation $(\pi, \Hpi)$,
a convenient criterion for the existence
of admissible vectors is provided by the \emph{orthogonality relations}
for square-integrable representations,
cf.\ \cite[Theorem~3]{duflo1976on} or \cite[Theorem~4.3]{carey1976square}
for genuine representations,
and \cite[Theorem~3]{aniello2006square} for projective representations.

\begin{theorem}[\cite{duflo1976on, carey1976square, aniello2006square}]\label{thm:ortho}
  Let $(\pi, \Hpi)$ be an irreducible $\sigma$-representation of $G$.
  Suppose there exists $g \in \Hpi \setminus \{0\}$ satisfying $V_g g \in L^2 (G)$.
  Then there exists a unique, self-adjoint, positive operator $C_{\pi} : \dom (C_{\pi}) \to \Hpi$
  such that
  \begin{align} \label{eq:ortho}
    \big \langle V_{g_1} f_1, V_{g_2} f_2 \big \rangle_{L^2}
    = \langle C_{\pi} g_2, C_{\pi} g_1 \rangle_{\Hpi} \langle f_1 , f_2 \rangle_{\Hpi}
  \end{align}
  for all $f_1, f_2 \in \Hpi$ and $g_1, g_2 \in \dom(C_{\pi})$,
  with $\dom(C_{\pi}) = \{ g \in \Hpi : V_g g \in L^2 (G) \}$.
\end{theorem}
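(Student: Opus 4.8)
The plan is to follow the classical Duflo--Moore argument, carried out so as to accommodate the cocycle $\sigma$. Write $D := \{ g \in \Hpi : V_g g \in L^2(G) \}$; since the asserted domain of $C_\pi$ is $D$, the first — and technically most delicate — task is to show that $D$ is a \emph{linear, $\pi$-invariant, dense} subspace of $\Hpi$ on which $V_g$ acts boundedly into $L^2(G)$. I would start from the observation that on the dense $\pi$-cyclic subspace $\mathrm{span}\,\pi(G)g$ one has $V_g\big[\sum_i c_i \pi(x_i) g\big] = \sum_i c_i \twisttranslationL{x_i}[V_g g]$ by \Cref{lem:basic_cocycle}, which lies in $L^2(G)$ whenever $V_g g$ does, since each $\twisttranslationL{x_i}$ is unitary on $L^2(G)$. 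Using irreducibility of $\pi$ together with the fact that $|V_g f|$ is continuous and genuinely pointwise-defined (cf.\ \eqref{eq:VoiceTransformAbsoluteValueContinuous}), which lets one compare $L^2$-limits with pointwise limits, one promotes this to the statement that $V_g : \Hpi \to L^2(G)$ is a bounded operator, a scalar multiple of an isometry. Consequently $D$ coincides with $\{ g \in \Hpi : V_g : \Hpi \to L^2(G) \text{ is bounded}\}$; it is then clearly a linear subspace, it is $\pi$-invariant because $V_{\pi(x)g} = \twisttranslationR{x} \circ V_g$ and $\twisttranslationR{x}$ is bounded with bounded inverse on $L^2(G)$, and, being nonzero, its closure is a nonzero closed $\pi$-invariant subspace, hence all of $\Hpi$.

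Next I would use Schur's lemma to extract the constants. For $g_1, g_2 \in D$ the operator $V_{g_2}^\ast V_{g_1} \in \mathcal{B}(\Hpi)$ commutes with $\pi$: for $x \in G$ and $f_1, f_2 \in \Hpi$, using $(\twisttranslationL{x})^{-1} V_{g_2} h = V_{g_2}[\pi(x)^{-1} h]$ — which follows from the intertwining identity $V_{g_2}[\pi(x)\cdot] = \twisttranslationL{x} V_{g_2}$ of \Cref{lem:basic_cocycle} and $\pi(x)\pi(x)^{-1} = I_{\Hpi}$, so that the cocycle phases cancel exactly — one computes $\langle V_{g_2}^\ast V_{g_1} \pi(x) f_1, f_2 \rangle = \langle V_{g_1} f_1, V_{g_2}[\pi(x)^{-1}f_2]\rangle = \langle \pi(x) V_{g_2}^\ast V_{g_1} f_1, f_2\rangle$. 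Since the commutant of an irreducible projective representation is $\CC I_{\Hpi}$ (the cocycle being scalar, the classical Schur argument with spectral projections applies verbatim), there is $\beta(g_1, g_2) \in \CC$ with $V_{g_2}^\ast V_{g_1} = \beta(g_1, g_2) I_{\Hpi}$, equivalently $\langle V_{g_1} f_1, V_{g_2} f_2\rangle_{L^2} = \beta(g_1, g_2) \langle f_1, f_2\rangle_{\Hpi}$ for all $f_1, f_2 \in \Hpi$. The resulting $\beta$ is sesquilinear on $D \times D$, satisfies $\overline{\beta(g_1, g_2)} = \beta(g_2, g_1)$ (adjoint), obeys $\beta(g, g)\,\|g\|_{\Hpi}^2 = \|V_g g\|_{L^2}^2 > 0$ for $g \neq 0$, and $|\beta(g_1, g_2)|^2 \leq \beta(g_1, g_1)\,\beta(g_2, g_2)$ by Cauchy--Schwarz in $L^2(G)$.

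Finally I would realize $\beta$ by a single operator. Put $\mathfrak{b}(g_1, g_2) := \beta(g_2, g_1)$; this is a positive Hermitian sesquilinear form with form domain $D$ (positivity from $\mathfrak{b}(g,g) = \beta(g,g) > 0$), and it is \emph{closed}: if $(g_n) \subseteq D$ is Cauchy for $\|g\|^2 := \beta(g,g) + \|g\|_{\Hpi}^2$, then $g_n \to g$ in $\Hpi$ while $(V_{g_n})$ is Cauchy in operator norm, because $\|V_{g_n} - V_{g_m}\|^2 = \|V_{g_n - g_m}\|^2 = \beta(g_n - g_m, g_n - g_m)$; hence $V_{g_n} \to S$ in $\mathcal{B}(\Hpi, L^2(G))$, and comparing $S$ with the pointwise limit $V_{g_n}f(x) = \langle f, \pi(x)g_n\rangle \to \langle f, \pi(x)g\rangle$ forces $S = V_g$, so $g \in D$ and $\beta(g_n - g, g_n - g) \to 0$. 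By the first representation theorem for closed positive forms there is a unique positive self-adjoint operator $B$ on $\Hpi$ with $\dom(B^{1/2}) = D$ and $\mathfrak{b}(g_1, g_2) = \langle B^{1/2} g_1, B^{1/2} g_2\rangle_{\Hpi}$; set $C_\pi := B^{1/2}$. Then $\dom(C_\pi) = D = \{g : V_g g \in L^2(G)\}$, $C_\pi$ is positive and self-adjoint, and unwinding the (anti)linearity conventions one gets $\langle C_\pi g_2, C_\pi g_1\rangle_{\Hpi} = \beta(g_1, g_2)$, which combined with the previous paragraph is exactly \eqref{eq:ortho}. For uniqueness, any positive self-adjoint $C_\pi'$ with domain $D$ satisfying \eqref{eq:ortho} makes $(C_\pi')^2$ the positive self-adjoint operator associated with the same closed form $\mathfrak{b}$, so $(C_\pi')^2 = B = C_\pi^2$, whence $C_\pi' = C_\pi$ by uniqueness of the positive square root.

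The main obstacle is the very first step: passing from the hypothesis that $V_g g \in L^2(G)$ for a single vector $g$ to the boundedness of $V_g$ on all of $\Hpi$. This is where irreducibility and the pointwise continuity \eqref{eq:VoiceTransformAbsoluteValueContinuous} are used in an essential way, and without it one cannot even make sense of the operators $V_{g_2}^\ast V_{g_1}$ that drive the later steps. A secondary point requiring care is the closedness of the form $\mathfrak{b}$ on the domain $D$, which is in general not closed as a subspace of $\Hpi$ — this non-closedness being precisely the non-unimodular phenomenon responsible for $C_\pi$ being unbounded. The projectivity of $\pi$, by contrast, enters only through routine bookkeeping with the (scalar) cocycle in the intertwining identities and in Schur's lemma, and presents no real difficulty.
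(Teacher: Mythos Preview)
The paper does not prove this theorem; it is quoted from the cited references and used as a black box. Your outline reproduces the classical Duflo--Moore argument (adapted to the projective case as in Aniello): boundedness of each $V_g$ via a closedness-plus-Schur step, Schur again for $V_{g_2}^\ast V_{g_1}$ to extract the sesquilinear form $\beta$, and the first representation theorem for the closed positive form $\mathfrak{b}$ to produce $C_{\pi} = B^{1/2}$. The cocycle bookkeeping, the sesquilinearity conventions, and the closedness argument for $\mathfrak{b}$ are all handled correctly.

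The only underspecified step is the one you yourself flag: passing from $V_g g \in L^2(G)$ to boundedness of $V_g$ on all of $\Hpi$. The standard filling is that $V_g$ is \emph{closed} on its natural domain $\{f \in \Hpi : V_g f \in L^2(G)\}$ (pointwise convergence of matrix coefficients, which you invoke via \eqref{eq:VoiceTransformAbsoluteValueContinuous}, identifies any $L^2$-limit with $V_g f$), so $V_g^\ast V_g$ is positive self-adjoint with $\pi$-intertwining spectral projections, and irreducibility forces it to be a positive scalar. You have all the ingredients but compress this into a single sentence.
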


\begin{remark*}
  The positivity of $C_\pi$ implies, in particular, that $C_\pi$ is injective.
  This can also be derived from \Cref{eq:ortho} by noting that
  \begin{equation}
    \| C_\pi h \|_{\Hpi}^2
    = \| V_h h \|_{L^2}^2 / \| h \|_{\Hpi}^2
    > 0
    \quad \text{for} \quad
    h \in \dom(C_\pi) \setminus \{ 0 \}
    .
    \label{eq:DufloMoorePositivity}
  \end{equation}
  Here, the norm $\| V_h h \|_{L^2}$ is positive since $|V_h h|$ is continuous
  (see \Cref{eq:VoiceTransformAbsoluteValueContinuous})
  and satisfies $|V_h h (e_G)| = \| h \|_{\Hpi}^2 > 0$.
\end{remark*}

Theorem~\ref{thm:ortho} yields that if $\pi$ is irreducible and $\dom (C_{\pi}) \neq \{0\}$,
then any $g \in \dom (C_{\pi})$ is (a scalar multiple of) an admissible vector.
In addition, the orthogonality relations \eqref{eq:ortho} yield that
\begin{align}\label{eq:repro_ortho}
  \langle C_{\pi} g_1, C_{\pi} g_2 \rangle_{\Hpi} V_{f_2} f_1
  =  V_{g_2} f_1 \ast_{\sigma} V_{f_2} g_1
\end{align}
for $f_1, f_2 \in \Hpi$ and $g_1, g_2 \in \dom(C_{\pi})$;
the argument is similar to the proof of Part~(ii) of Lemma~\ref{lem:basic_cocycle}.

\section{Integrable vectors}
\label{sec:integrable}

Let $w : G \to [1, \infty)$ be a measurable submultiplicative weight,
i.e., $w(x y) \leq w(x) w(y)$ for all $x,y \in \group$.
Henceforth, the $\sigma$-representation $\pi$ is assumed to be $w$-integrable
in the sense that the set
\[
  \Aw
  := \big\{ g \in \Hpi : V_g g \in L^1_w (G) \big\}
\]
is nontrivial; that is, $\Aw \neq \{0\}$.
In addition, it is assumed that there exists an \emph{admissible} vector $g \in \Aw$.
For any such admissible $g \in \Aw$, define the space
\[
  \Hw
  := \Hw(g)
  := \big\{ f \in \Hpi : V_g f \in L^1_w (G) \big\}
\]
and equip it with the norm $\| f \|_{\Hw} := \| f \|_{\Hw(g)} := \| V_g f \|_{L^1_w}$.

The following lemma collects several basic properties of the space $\Hw$.

\begin{lemma}\label{lem:basic_Hw}
  Fix an admissible vector $g \in \Aw$ and write $\Hw := \Hw(g)$.
  \begin{enumerate}[label=(\roman*)]
    \item If $h \in \Aw$ is an admissible vector satisfying $V_g h, V_h g \in L^1_w (G)$,
          then $\Hw (g) = \Hw(h)$ with norm equivalence
          $\| \cdot \|_{\Hw(g)} \asymp \| \cdot \|_{\Hw(h)}$.

    \item The pair $(\Hw, \| \cdot \|_{\Hw})$ is a $\pi$-invariant separable Banach space
          satisfying $\Hw \hookrightarrow \Hpi$ and $\pi(x) g \in \Hw$ for all $x \in \group$.
          Moreover, $\Hw$ is norm dense in $\Hpi$ and $\pi$ acts on $\Hw$, with operator norm
          bounded by
          \[
            \| \pi(x) \|_{\Hw \to \Hw} \leq w(x), \quad x \in G.
          \]

    \item For each $f \in \Hw$, the vector-valued maps
          \begin{alignat*}{5}
            & \Xi_f : \quad
            &&  \group \to \Hw, \quad
            &&  x \mapsto \pi(x) f \\
            \qquad \text{and} \qquad
            & F_{f,g} : \quad
            &&  G \to \Hw, \quad
            &&  x \mapsto V_g f (x)  \pi (x) g
          \end{alignat*}
          are Bochner measurable.
          Furthermore, $F_{f,g}$ is Bochner integrable with \[f = \int_G F_{f,g} d\mu_G.\]

    \item The orbit $\pi(G) g$ is complete in $\Hw$,
          i.e., $\overline{\Span \{ \pi(x) g : x \in G \}} = \Hw$.
  \end{enumerate}
\end{lemma}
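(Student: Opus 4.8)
For part~(i), the plan is to play the reproducing formula of \Cref{lem:basic_cocycle}(ii) off against the fact that $L^1_w(G)$ is a convolution algebra. Since $g$ is admissible, the reproducing formula gives $V_h f = V_g f \ast_{\sigma} V_h g$, hence $|V_h f| \le |V_g f| \ast |V_h g|$ pointwise; submultiplicativity of $w$ yields the weighted Young inequality $\|F_1 \ast F_2\|_{L^1_w} \le \|F_1\|_{L^1_w}\,\|F_2\|_{L^1_w}$, so $\|f\|_{\Hw(h)} \le \|V_h g\|_{L^1_w}\,\|f\|_{\Hw(g)}$. Interchanging the roles of $g$ and $h$ (legitimate since $h$ is admissible with $V_g h \in L^1_w$) gives the reverse estimate, whence $\Hw(g) = \Hw(h)$ with equivalent norms.

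For part~(ii) I would assemble the claim from several short arguments. The embedding $\Hw \hookrightarrow \Hpi$ comes from the reproducing formula $V_g f = V_g f \ast_{\sigma} V_g g$ together with the trivial bound $\|V_g g\|_{L^\infty} \le \|g\|_{\Hpi}^2$: this gives $\|V_g f\|_{L^\infty} \lesssim \|V_g f\|_{L^1_w}$ and therefore $\|f\|_{\Hpi}^2 = \|V_g f\|_{L^2}^2 \le \|V_g f\|_{L^\infty}\,\|V_g f\|_{L^1} \lesssim \|f\|_{\Hw}^2$. The intertwining identity $V_g[\pi(x) f] = \twisttranslationL{x}[V_g f]$ and the elementary estimate $\|\twisttranslationL{x} F\|_{L^1_w} \le w(x)\,\|F\|_{L^1_w}$ (again submultiplicativity) simultaneously give $\pi(x) g \in \Hw$ and $\|\pi(x)\|_{\Hw \to \Hw} \le w(x)$. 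Completeness follows because a Cauchy sequence in $\Hw$ is Cauchy both in $L^1_w(G)$ (via $V_g$) and in $\Hpi$ (by the embedding just proved), converges in $\Hpi$ to some $f$, and matching the pointwise limit $V_g f_n(x) \to V_g f(x)$ with the $L^1_w$-limit along a subsequence identifies $f \in \Hw$ as the $\Hw$-limit. Separability is immediate since $V_g$ embeds $\Hw$ isometrically into the separable space $L^1_w(G)$ ($G$ being second countable). Finally, $\Hw$ is dense in $\Hpi$: any $v \in \Hpi$ orthogonal to all $\pi(x) g$ satisfies $V_g v \equiv 0$, hence $v = 0$ by admissibility, so $\overline{\Span\{\pi(x) g : x \in G\}} = \Hpi$, and $\Span\{\pi(x) g : x \in G\} \subseteq \Hw$.

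For part~(iii), the Bochner measurability of $\Xi_f$ I would reduce to a statement in $L^1_w(G)$: since $V_g$ is an isometry of $\Hw$ onto a closed subspace of $L^1_w(G)$ and $V_g \circ \Xi_f(x) = \twisttranslationL{x}[V_g f]$, it suffices to show $x \mapsto \twisttranslationL{x}[V_g f]$ is Bochner measurable from $G$ into $L^1_w(G)$. For this I would use that the kernel $(x,y) \mapsto \sigma(x, x^{-1} y)\, V_g f(x^{-1} y)\, w(y)$ is jointly Borel measurable on $G \times G$ --- here second countability of $G$, which makes the Borel and product $\sigma$-algebras coincide, is essential --- and that on every compact $K \subseteq G$ this kernel is integrable over $K \times G$ because $w$ is locally bounded (\Cref{rem:SubmultiplicativeWeightsLocallyBounded}); the standard fact that a jointly measurable, locally integrable kernel induces a Bochner measurable $L^1$-valued map, combined with $\sigma$-compactness of $G$, then gives the claim. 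Bochner measurability of $F_{f,g}$ follows since it is the pointwise product of the scalar measurable function $V_g f$ with the Bochner measurable map $\Xi_g$ (note $g \in \Hw$ because $g \in \Aw$). Bochner integrability is the bound $\int_G \|F_{f,g}(x)\|_{\Hw}\,d\mu_G(x) \le \|g\|_{\Hw}\int_G |V_g f(x)|\, w(x)\,d\mu_G(x) = \|g\|_{\Hw}\,\|f\|_{\Hw}$. To identify the integral I would pass through the continuous embedding $\Hw \hookrightarrow \Hpi$ and test against an arbitrary $h \in \Hpi$, using that Bochner integration commutes with bounded functionals: $\langle \int_G F_{f,g}\,d\mu_G,\, h\rangle_{\Hpi} = \int_G V_g f(x)\,\overline{V_g h(x)}\,d\mu_G(x) = \langle V_g f, V_g h\rangle_{L^2} = \langle f, h\rangle_{\Hpi}$, where the last step uses that $V_g$ is an isometry; hence $\int_G F_{f,g}\,d\mu_G = f$.

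Part~(iv) is then immediate: for every $x \in G$ the vector $F_{f,g}(x) = V_g f(x)\, \pi(x) g$ lies in the closed subspace $C := \overline{\Span\{\pi(x) g : x \in G\}}$ of $\Hw$, so by the Hahn--Banach characterization of closed subspaces the Bochner integral $f = \int_G F_{f,g}\,d\mu_G$ lies in $C$ as well; since $f \in \Hw$ was arbitrary, $C = \Hw$. The step I expect to be the main obstacle is the Bochner measurability assertion in part~(iii): since the cocycle $\sigma$ is merely Borel, the map $x \mapsto \twisttranslationL{x}[V_g f]$ need not be norm-continuous, so one cannot appeal to the usual strong continuity of translation, and the reduction to joint measurability of a kernel on $G \times G$ (exploiting second countability and local boundedness of $w$) is what makes the argument go through.
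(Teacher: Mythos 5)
Your proposal is correct and follows essentially the same route as the paper: the reproducing formula together with $L^1_w \ast L^1_w \hookrightarrow L^1_w$ for (i), the intertwining relation and a Cauchy-sequence argument for (ii), a Pettis/Fubini measurability argument plus Bochner integration for (iii), and closedness of $\overline{\Span}\,\pi(G)g$ under Bochner integrals for (iv). The only deviations are cosmetic and harmless: you identify $\int_G F_{f,g}\,d\mu_G$ with $f$ by pairing against arbitrary $h \in \Hpi$ and using that $V_g$ is an isometry on $\Hpi$, where the paper composes with the embedding $L^1_w \hookrightarrow L^1$ and invokes $V_g f \ast_\sigma V_g g = V_g f$; and you obtain strong measurability of $\Xi_f$ by transporting the problem to $L^1_w$ through the isometry $V_g$ and a jointly measurable kernel, rather than applying Pettis and Hahn--Banach on $\Hw$ directly.
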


\begin{proof}
(i)
If $f \in \Hw(g)$, then \eqref{eq:repro} yields that
$V_h f = V_g f \ast_{\sigma} V_h g$.
Since $V_g f, V_h g \in L^1_w$ by assumption and since $f \in \Hw(g)$,
and because of $L^1_w \ast L^1_w \hookrightarrow L^1_w$
(see, e.g., \cite[\S 3.7]{ReiterClassicalHarmonicAnalysis}),
this implies that $V_h f \in L^1_w(G)$ (and hence $f \in \Hw(h)$), with
\[
  \| f \|_{\Hw(h)}
  \leq \big\| |V_g f| \ast |V_h g| \big\|_{L^1_w}
  \lesssim \|V_g f \|_{L^1_w}
  = \| f \|_{\Hw(g)}.
\]
By symmetry, this yields that $\| \cdot \|_{\Hw(h)} \asymp \| \cdot \|_{\Hw (g)}$.
In the sequel, simply set $\Hw := \Hw(g)$.

(ii)
If $f \!\in \Hw$ and $x \!\in\! G$, then
\[
  \| \pi (x) f \|_{\Hw} \!
  = \| \twisttranslationL{x} [V_g f] \|_{L^1_w}
  = \| L_x [V_g f] \|_{L_w^1}
  \leq w(x)  \| V_g f \|_{L_w^1}
  \leq w(x) \| f \|_{\Hw}
\]
since $\| L_x \|_{L^1_w \to L_w^1} \leq w(x)$; see, e.g.,
\cite[Proposition~3.7.6]{ReiterClassicalHarmonicAnalysis}.
Thus, $\Hw$ is $\pi$-invariant and $\| \pi(x) \|_{\Hw \to \Hw} \leq w(x)$.
The orbit $\pi(G) g$ is complete in $\Hpi$ if and only if $V_g : \Hpi \to L^2 (G)$ is injective.
Therefore, since $V_g$ is an isometry and since $g \in \Aw$ and thus $g \in \Hw$,
it follows that $\Hw \supseteq \pi(\group) g$ is norm dense in $\Hpi$.
The reproducing formula \eqref{eq:repro}, combined with the convolution relation
$L^1 \ast L^2 \hookrightarrow L^2$ (see e.g.\ \cite[Proposition~2.39]{FollandAHA})
and with $w \geq 1$ yields that
\begin{align} \label{eq:Hw_embedding}
  \| f \|_{\Hpi}^2
  = \| V_g f \|^2_{L^2}
  \leq \big\| |V_g f| \ast |V_g g| \big\|^2_{L^2}
  \leq \| V_g g \|_{L^2}^2 \| V_g f \|_{L^1}^2
  \lesssim \| f \|^2_{\Hw}.
\end{align}
This shows that $\Hw \hookrightarrow \Hpi$ and that $\| \cdot \|_{\Hw}$ is positive definite,
hence defines a norm.
In addition, if $(f_n)_{n \in \mathbb{N}}$ is a Cauchy sequence in $\Hw$,
then \eqref{eq:Hw_embedding} yields that $(f_n)_{n \in \mathbb{N}}$ is Cauchy in $\Hpi$,
and hence converges to some $f' \in \Hpi$.
The sequence $F_n := V_g f_n$ being Cauchy in $L^1_w (G)$
yields convergence to some $F \in L^1_w(G)$.
Since $F_n (x) = V_g f_n (x) \to V_g f' (x)$ as $n \to \infty$ for all $x \in G$,
it follows that $V_g f' = F$, and thus $f' \in \Hw$
with $\| f' - f_n \|_{\Hw} = \| F_n - F \|_{L^1_w} \to 0$.
This shows that $(\Hw, \| \cdot \|_{\Hw})$ is a Banach space.

(iii)
Part (ii) shows that $\Xi_f$ and $F_{f,g}$ are well-defined for $f \in \Hw$.
We first show that $\Xi_f$ is Bochner measurable.
Since $V_g f$ is measurable and $g \in \Hw$,
this then easily implies the measurability of $F_{f,g} = V_g f \cdot \Xi_g$.

To show that $\Xi_f$ is measurable, first note that the group $G$ is second countable,
so that the space $L^1_w (G)$ is separable;
see, e.g., \cite[Proposition~3.4.5]{cohn2013measure}.
As $V_g : \Hw \to L^1_w (G)$ is an isometry, also $\Hw$ is separable,
hence Pettis' measurability theorem (cf.\ \cite[Theorem~E.9]{cohn2013measure})
implies that $\Xi_f : G \to \Hw$ is strongly measurable
whenever $\varphi \circ \Xi_{f}$ is Borel measurable
for each continuous linear functional $\varphi \in (\Hw)'$.
To show the latter, given $\varphi \in (\Hw)'$, define $\psi_0 : V_g (\Hw) \to \mathbb{C}$
by $\psi_0 (V_g h) = \varphi(h)$ for $h \in \Hw$.
Since $V_g : \Hw \to L^1_w(G)$ is an isometry,
the functional $\psi_0$ is well-defined, linear, and bounded with respect to $\| \cdot \|_{L_w^1}$.
Hence, by the Hahn-Banach theorem, $\psi_0$ extends to a linear functional $\psi \in (L^1_w(G))'$,
which is then given by integration against some $H \in L^{\infty}_{1/w} (G)$.
Thus,
\begin{align*}
  \varphi(\Xi_{f} (x))
  &= \psi_0 \bigl(V_g [\pi(x) f] \bigr)
   = \int_G H(y)  V_g [\pi(x) f] (y) \; d\mu_G (y) \\
  &= \int_G H(y)  \twisttranslationL{x} [V_g f] (y) \; d\mu_G (y),
\end{align*}
and hence Fubini's theorem
implies the Borel measurability of $x \mapsto \varphi(\Xi_{f} (x))$.
Thus, by Pettis' measurability theorem, the map $\Xi_{f}$ is strongly measurable,
and hence so is the map $F_{f,g}$.
In addition, the estimate $\| \pi (x) g \|_{\Hw} \leq w(x) \| g \|_{\Hw}$
of part (ii) implies directly that
\[
  \| F_{f,g} (\cdot) \|_{\Hw}
  \leq w(\cdot) \| g \|_{\Hw} |V_g f (\cdot)| \in L^1 (G),
\]
whence $F_{f,g}$ is Bochner integrable.
Thus $f' := \int_G F_{f,g} \; d\mu_G \in \Hw \hookrightarrow \Hpi$ is well-defined.
For showing $f = f'$, it suffices to show
that $V_g f' = V_g f$, since $V_g : \Hpi \to L^2(\group)$ is an isometry.
Let $\iota : L^1_w (G) \hookrightarrow L^1 (G)$ be the canonical embedding.
Since the Bochner integral commutes with bounded linear operators
(cf.\ \cite[VI, Theorem~4.1]{LangRealFunctional}),
a direct calculation using \Cref{eq:repro} entails
\begin{align*}
  \iota (V_g f')
  &= \int_G
       [(\iota \circ V_g) F_{f,g}] (x)
     \; d\mu_G (x)
   = \int_G V_g f (x) \twisttranslationL{x} [V_g  g] (\cdot) \; d\mu_G (x) \\
  &= V_g f \ast_{\sigma} V_g g
   = V_g f
   = \iota ( V_g  f),
\end{align*}
which yields that $V_g f' = V_g f$.

(iv)
If $f \in \Hw$, then clearly $V_g f (x)  \pi (x) g \in \overline{\Span \pi(G) g} \leq \Hw$
for each $x \in G$.
Hence, by part (iii), also
\(
  f
  = \int_G
      V_g f (x) \pi (x) g
    \; d\mu_G (x)
  \in \overline{\Span \pi(G) g}.
\)
Thus, $\Hw = \overline{\Span \pi(G) g}$.
\end{proof}

The proofs of parts (i) and (ii) of Lemma~\ref{lem:basic_Hw}
closely follow \cite[Lemma~4.2]{feichtinger1988unified},
but the direct proof of part (iv) using the Bochner integral (see part (iii)) appears to be new.

\begin{remark} \label{rem:reducible_vgh}
  In Part~(i) of Lemma~\ref{lem:basic_Hw}, the assumption
  that the admissible vectors $g, h \in \Aw$ satisfy $V_g h, V_h g \in L^1_w (G)$
  is essential for the independence claim---at least, when $\pi$ is reducible.
  Indeed, in \cite[\S 2.1]{fuehr2015coorbit}, an example is given of a reducible representation
  $\pi$ admitting admissible vectors $g, h \in \Aw$, but for which $V_g h \notin L^1_w (G)$,
  showing that $h \in \Hw (h)$ but $h \notin \Hw (g)$.
\end{remark}

The following result shows that the behavior discussed in the above remark
cannot occur for \emph{irreducible} representations;
in particular, it shows that the additional assumption $V_g h, V_h g \in L^1_w (G)$
in Part~(i) of Lemma~\ref{lem:basic_Hw} is automatic in the irreducible case.

\begin{lemma} \label{lem:cross_irreducible}
  Let $(\pi, \Hpi)$ be an \emph{irreducible} unitary $\sigma$-representation.
  Let $Y \hookrightarrow L^1 (G)$ be a solid quasi-Banach function space
  that is left- and right translation invariant and also satisfies $Y \ast Y \hookrightarrow Y$
  (or, $Y \ast Y^{\vee} \hookrightarrow Y$).
  Then the space
  \[
    \mathcal{C}_Y := \big\{ g \in \Hpi : V_g g \in Y \big\}
  \]
  is a $\pi$-invariant vector space with $V_h g \in Y$ for all $g, h \in \mathcal{C}_Y$
  and with $\mathcal{C}_Y \subseteq \dom (C_{\pi}^2)$, where $C_{\pi} : \dom (C_{\pi}) \to \Hpi$
  is the operator given by Theorem~\ref{thm:ortho}.
  If $\mathcal{C}_Y$ is nontrivial, then $\mathcal{C}_Y$ is norm dense in $\Hpi$.
\end{lemma}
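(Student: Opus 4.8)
The plan is to reduce everything to the orthogonality relations of Theorem~\ref{thm:ortho}. First I would handle the inclusion $\mathcal{C}_Y \subseteq \dom(C_\pi^2)$: for $g \in \mathcal{C}_Y$ we have $V_g g \in Y \hookrightarrow L^1(G)$, and since $|V_g g|$ is continuous with $|V_g g(e_G)| = \|g\|_{\Hpi}^2$, the function $V_g g$ is nonzero; in particular $V_g g \in L^2(G)$ is impossible to rule out directly, so instead I would argue as follows. Pick any $h_0 \in \mathcal{C}_Y \setminus \{0\}$ (assuming $\mathcal{C}_Y \neq \{0\}$); by the reproducing-type identity \eqref{eq:repro_ortho} (or rather its underlying computation) together with the fact that $V_{h_0} h_0 \in Y \hookrightarrow L^1(G)$, one sees $h_0 \in \dom(C_\pi)$, and then, writing $V_g g = V_{h_0} g \ast_\sigma V_g h_0$ via \eqref{eq:repro}, one gets that $V_g g \in Y$ forces suitable integrability. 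The cleaner route: since $h_0 \in \dom(C_\pi)$ is (a multiple of) an admissible vector, apply \eqref{eq:ortho} with $g_1 = g_2 = h_0$ to see $\|C_\pi h_0\|^2 = \|V_{h_0} h_0\|_{L^2}^2 / \|h_0\|^2$; this does not immediately give $g \in \dom(C_\pi)$, so the actual mechanism must be: $Y \hookrightarrow L^1$ and $Y \ast Y \hookrightarrow Y \hookrightarrow L^1$, hence by \eqref{eq:repro} $V_g h_0 = V_{h_0} g \ast \cdots$ lands in $L^1 \hookrightarrow$ (something), and combined with continuity of $|V_g h_0|$ one bootstraps to $V_g h_0 \in L^2$, i.e.\ $g \in \dom(C_\pi)$; iterating the argument once more (now using $g \in \dom(C_\pi)$ admissible) gives $g \in \dom(C_\pi^2)$.

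Next I would establish that $V_h g \in Y$ for all $g, h \in \mathcal{C}_Y$. The key is the reproducing formula \eqref{eq:repro}: fixing any admissible $u \in \mathcal{C}_Y$ (which exists once $\mathcal{C}_Y \neq \{0\}$, after the previous step), write $V_h g = V_u g \ast_\sigma V_h u$. Both factors are controlled: $V_u u \in Y$ by assumption, and by a first application of \eqref{eq:repro} in the form $V_u g = V_g u \ast_\sigma V_u g$-type manipulations together with the orthogonality relations, one shows $V_u g, V_h u \in Y$ (this is where irreducibility enters, via Theorem~\ref{thm:ortho}: the cross-coefficients $V_{g_1} f_1$ are tied to $V_g g$-type quantities through \eqref{eq:ortho} and \eqref{eq:repro_ortho}). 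Then $V_h g = V_u g \ast_\sigma V_h u$ with $|V_h g| \le |V_u g| \ast |V_h u|$, and the convolution hypothesis $Y \ast Y \hookrightarrow Y$ (or $Y \ast Y^\vee \hookrightarrow Y$, using the intertwining identity $V_u g(x^{-1}) = \overline{\sigma(x,x^{-1})}\,\overline{V_g u(x)}$ from Lemma~\ref{lem:basic_cocycle}(i) to swap a factor for its involution) together with solidity of $Y$ gives $V_h g \in Y$.

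With $V_h g \in Y$ for all $g, h \in \mathcal{C}_Y$ in hand, $\pi$-invariance and the vector-space property follow quickly: for $g \in \mathcal{C}_Y$ and $x \in G$, Lemma~\ref{lem:basic_cocycle}(i) gives $V_{\pi(x)g}\,\pi(x)g = \twisttranslationR{x}\twisttranslationL{x}[V_g g]$, and since $Y$ is left- and right translation invariant this lies in $Y$, so $\pi(x) g \in \mathcal{C}_Y$. For closure under addition, $V_{g_1 + g_2}(g_1 + g_2)$ expands into a sum of four cross-coefficients $V_{g_i} g_j$, each in $Y$ by the previous paragraph, and $Y$ is a vector space; closure under scalars is clear. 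Finally, density when $\mathcal{C}_Y \neq \{0\}$: pick $0 \neq g \in \mathcal{C}_Y$; then $\overline{\Span\,\pi(G)g}$ is a closed $\pi$-invariant subspace of $\Hpi$ containing $g \neq 0$, hence equals $\Hpi$ by irreducibility, and since $\pi(G)g \subseteq \mathcal{C}_Y$ by $\pi$-invariance, $\mathcal{C}_Y$ is dense. The main obstacle I expect is the bootstrapping in the first step --- getting from "$V_g g \in Y \hookrightarrow L^1$" to "$g \in \dom(C_\pi^2)$" --- since Theorem~\ref{thm:ortho} requires an $L^2$-coefficient as input; the resolution is to first locate one admissible vector in $\mathcal{C}_Y$ via the continuity of $|V_g g|$ and the convolution relations, and only then invoke the orthogonality relations to upgrade the remaining vectors.
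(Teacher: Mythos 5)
There is a genuine gap, concentrated in the two steps that carry the real content of the lemma. For the inclusion $\mathcal{C}_Y \subseteq \dom(C_\pi^2)$, your "bootstrapping" is both unnecessary and not correct as stated: the membership $g \in \dom(C_\pi)$ is immediate, since $V_g g$ is always bounded (by $\|g\|_{\Hpi}^2$) and lies in $Y \hookrightarrow L^1(G)$, hence $V_g g \in L^1 \cap L^\infty \subseteq L^2(G)$; no auxiliary admissible vector is needed. Your claim that "$V_g h_0 \in L^2$, i.e.\ $g \in \dom(C_\pi)$" conflates a cross-coefficient with the defining condition $V_g g \in L^2$, and "iterating the argument once more" does not produce $g \in \dom(C_\pi^2)$. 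The actual mechanism in the paper is a duality argument: one shows $C_\pi g \in \dom(C_\pi^\ast) = \dom(C_\pi)$ by bounding the functional $f \mapsto \langle C_\pi f, C_\pi g\rangle = \|g\|^{-2}\langle V_g g, V_f g\rangle_{L^2}$ via $\|V_g g\|_{L^1}\|V_f g\|_{L^\infty} \lesssim \|f\|_{\Hpi}\|V_g g\|_Y$.

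More seriously, your argument for $V_h g \in Y$ is circular. You write $V_h g = V_u g \ast_\sigma V_h u$ and assert that "one shows $V_u g, V_h u \in Y$" via the orthogonality relations --- but those cross-coefficients being in $Y$ is precisely the statement to be proved, and no mechanism is given. The decisive idea in the paper is to express $V_h g$ entirely in terms of \emph{diagonal} coefficients, which are in $Y$ by definition of $\mathcal{C}_Y$: a double application of \eqref{eq:repro_ortho} gives
\[
  V_h g
  = \frac{1}{\langle C_\pi h, C_\pi f\rangle \, \langle C_\pi f, C_\pi g\rangle}\,
    V_g g \ast_\sigma V_f f \ast_\sigma V_h h ,
\]
provided one can produce $f \in \mathcal{C}_Y$ with $\langle C_\pi g, C_\pi f\rangle \neq 0 \neq \langle C_\pi h, C_\pi f\rangle$. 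Constructing such an $f$ is where irreducibility genuinely enters: if $\langle C_\pi g, C_\pi h\rangle \neq 0$ take $f = g$; otherwise, completeness of the orbit $\pi(G)g$ yields $x \in G$ with $\langle C_\pi[\pi(x)g], C_\pi h\rangle \neq 0$, and $f = g + \varepsilon\,\pi(x)g$ works for small $\varepsilon$, with $f \in \mathcal{C}_Y$ checked via the intertwining identities and translation invariance of $Y$. Then $Y \ast Y \hookrightarrow Y$ (or the $Y \ast Y^{\vee}$ variant, using that the diagonal coefficients lie in $Y \cap Y^{\vee}$) finishes the argument. Your concluding steps ($\pi$-invariance, the four-term expansion of $V_{g+h}(g+h)$, and density via irreducibility) match the paper and are fine, but they rely on the cross-coefficient claim you have not established.
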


\begin{proof}
If $g \in \mathcal{C}_Y \subseteq \Hpi$, then $V_g g \in L^\infty(G)$
and $V_g g \in Y \hookrightarrow L^1(G)$ and thus $V_g g \in L^2(G)$.
Therefore, $g \in \dom(C_{\pi})$, and $C_{\pi} g \in \Hpi$ is well-defined.
We next show that $C_{\pi} g \in \dom(C_{\pi}^*)$,
for which we can clearly assume that $g \neq 0$.
For $f \in \dom(C_{\pi})$, the orthogonality relations \eqref{eq:ortho} gives
\begin{align*}
  |\langle C_{\pi} f, C_{\pi} g \rangle_{\Hpi} |
  = |\langle V_g g, V_f g \rangle_{L^2} |
     \| g \|_{\Hpi}^{-2}
  \leq \| g \|_{\Hpi}^{-2} \|V_g g \|_{L^1} \| V_f g \|_{L^{\infty}}
  \lesssim \| g \|_{\Hpi}^{-1} \| f \|_{\Hpi} \|V_g g \|_Y.
\end{align*}
In particular, this shows that the linear functional
$f \mapsto \langle C_{\pi} f, C_{\pi} g \rangle_{\Hpi}$
is bounded from $\dom(C_{\pi})$ into $\mathbb{C}$.
Thus $C_{\pi} g \in \dom(C_{\pi}^*) = \dom (C_\pi)$ and therefore $g \in \dom(C_\pi^2)$.

Let $g, h \in \mathcal{C}_Y \setminus \{0\}$ be arbitrary.
For showing that $V_h g \in Y$, it will first be shown
that there exists $f \in \mathcal{C}_{Y}$ with
\(
  \langle C_{\pi} g, C_{\pi} f \rangle_{\Hpi}
  \neq 0
  \neq \langle C_{\pi} h , C_{\pi} f \rangle_{\Hpi}.
\)
Note that if $\langle C_{\pi} g, C_{\pi} h \rangle_{\Hpi} \neq 0$,
then choosing $f = g$ gives
$\langle C_\pi h, C_\pi f \rangle_{\Hpi} = \langle C_\pi h, C_{\pi} g \rangle_{\Hpi} \neq 0$
and $\langle C_{\pi} g, C_{\pi} f \rangle_{\Hpi} = \| C_\pi g \|_{\Hpi}^2 > 0$;
see \Cref{eq:DufloMoorePositivity}.
Therefore, it remains to consider the case $\langle C_{\pi} g , C_{\pi} h \rangle_{\Hpi} = 0$.
Since $h \neq 0$ and $C_{\pi}$ is injective, it follows that $C_{\pi} h\neq 0 \neq C_{\pi}^2 h$.
Since $\pi$ is irreducible, the orbit $\pi (G) g$ is complete in $\Hpi$,
and hence
\(
  0
  \neq \langle \pi(x) g , C_{\pi}^2 h \rangle_{\Hpi}
  = \langle C_{\pi} [\pi(x) g] , C_{\pi} h \rangle_{\Hpi}
\)
for a suitable $x \in G$.
For $\varepsilon > 0$, define $f_{\varepsilon} := g + \varepsilon  \pi (x) g$.
Then
\[
  \langle C_{\pi} f_{\varepsilon} , C_{\pi} h \rangle_{\Hpi}
  = \langle C_{\pi} g, C_{\pi} h \rangle_{\Hpi}
    + \varepsilon \langle C_{\pi} [\pi(x) g] , C_{\pi} h \rangle_{\Hpi}
  = \varepsilon  \langle C_{\pi} [\pi(x) g], C_{\pi} h \rangle_{\Hpi}
  \neq 0
\]
and
\[
  \langle C_{\pi} f_{\varepsilon} , C_{\pi} g \rangle_{\Hpi}
  = \|  C_{\pi} g \|^2_{\Hpi}
    + \varepsilon  \langle C_{\pi} [\pi(x) g] , C_{\pi} g \rangle_{\Hpi}
  \neq 0,
\]
whenever $\varepsilon > 0$ is chosen sufficiently small.
For such $\varepsilon$, it remains to show that $f := f_{\varepsilon} \in \mathcal{C}_Y$.
Using the intertwining properties
(Part~(i) of \Cref{lem:basic_cocycle})
and the left- and right invariance of $Y$, it follows that
\begin{align*}
  V_f f
  &= V_g g
     + \varepsilon V_{\pi (x) g} g
     + \varepsilon V_g [\pi(x) g]
     + \varepsilon^2 V_{\pi(x) g} [\pi(x) g] \\
  &= V_g g
     + \varepsilon \twisttranslationR{x} [V_g g]
     + \varepsilon \twisttranslationL{x} [V_g g]
     + \varepsilon^2 \twisttranslationL{x} \twisttranslationR{x} [V_g g]
   \in Y,
\end{align*}
so $f \in \mathcal{C}_Y$.
Applying the reproducing formula \eqref{eq:repro_ortho} yields
\begin{align*}
 V_h g
 = \frac{1}{\langle C_{\pi} h, C_{\pi} f \rangle_{\Hpi}} V_f g \ast_{\sigma} V_h h
 = \frac{1}{\langle C_{\pi} h, C_{\pi} f \rangle_{\Hpi} \langle C_{\pi} f, C_{\pi} g \rangle_{\Hpi}}
   V_g g \ast_{\sigma} V_f f \ast_{\sigma} V_h h.
\end{align*}
Therefore, if $Y \ast Y \hookrightarrow Y$, then $V_h g \in Y$.
On the other hand, if $Y \ast Y^{\vee} \hookrightarrow Y$ holds,
then using that $V_g g, V_f f, V_h h \in Y \cap Y^{\vee}$,
also easily yields $V_h g \in Y$.

Next, we show that $\mathcal{C}_Y$ is a $\pi$-invariant vector space.
The invariance follows directly from the identity
$V_{\pi(x) f} [\pi(x) f] = \twisttranslationL{x} \twisttranslationR{x} [V_f f]$,
where $x \in G$,
combined with the left- and right invariance of $Y$.
For $g, h \in \mathcal{C}_Y$, the identity
\[
  V_{g+h} (g+h)
  = V_g g + V_g h + V_h g + V_h h,
\]
and $V_g h, V_h g \in Y$, imply that $\mathcal{C}_Y$ is a vector space.

Finally, if $\mathcal{C}_Y \neq \{0\}$, then it is norm dense in $\Hpi$
by irreducibility of $\pi$ and since $\mathcal{C}_Y$ is $\pi$-invariant.
\end{proof}

Lemma~\ref{lem:basic_Hw} implies, in particular, that the \emph{anti}-dual space
\[
  \Reservoir_w
  := \Reservoir_w (g)
  := \bigl(\Hw (g)\bigr)^*
\]
is a well-defined Banach space.
The associated conjugate-linear pairing will be denoted by
\[
  \langle f, h \rangle
  := f(h), \quad f \in \Reservoir_w, \; h \in \Hw.
\]
The representation $\pi$ on $\Hpi$ can be extended to act on $\Reservoir_w$ via
\begin{equation}
  \pi(x) f : \,\,\,\,
  \Hw \to \mathbb{C}, \,\,\,\,
  h \mapsto f\bigl( [\pi(x)]^\ast h\bigr)
          = \sigma(x^{-1},x) f\bigl(\pi(x^{-1}) h\bigr),
  \label{eq:RepresentationOnReservoir}
\end{equation}
for $x \in G$ and $f \in \Reservoir_w$.
This is well-defined, since $\pi(x^{-1}) : \Hw \to \Hw$ is well-defined, linear, and bounded.
The associated (extended) matrix coefficients are defined by
\[
  V_h f (x)
  = \langle f, \pi(x) h \rangle,
  \quad f \in \Reservoir_w, h \in \Hw
\]
for $x \in G$.

The next lemma collects the most important properties of these objects
for the purposes of this article.

\begin{lemma}\label{lem:basic_Rw}
  Let $g \in \Aw$ be admissible and write $\Hw = \Hw(g)$.
  Then the following hold:
  \begin{enumerate}[label=(\roman*)]
    \item The pairing $\langle \cdot , \cdot \rangle : \Reservoir_w \times \Hw \to \mathbb{C}$
          is an extension of the inner product $\langle \cdot , \cdot \rangle_{\Hpi}$;
          that is, $\Hpi \hookrightarrow \Reservoir_w$
          and $\langle f,h \rangle = \langle f,h \rangle_{\Hpi}$ for $f \in \Hpi$
          and $h \in \Hw$.
          Moreover, if $f \in \Hpi \subseteq \Reservoir_w$, then the definition
          of $\pi(x) f$ in \Cref{eq:RepresentationOnReservoir}
          agrees with the original definition.
          Finally, for any $f \in \Reservoir_w$, the extended mapping
          \[
            V_h : \quad
            \Reservoir_w \to L^{\infty}_{1/w} (G)
          \]
          is well-defined, linear, and bounded.

    \item If $f \in \Reservoir_w$ and $h \in \Hw$, then
          \[
            V_h [\pi(x) f]
            = \twisttranslationL{x} [V_h f],
          \]
          for $x \in G$.

    \item The map $V_g : \Reservoir_w \to L^{\infty}_{1/w} (G)$ is injective.
          In addition,
          \begin{align} \label{eq:repro_extended}
            V_h f = V_g f \ast_{\sigma} V_h g
          \end{align}
          for any $f \in \Reservoir_w$ and $h \in \Hw$.
          Moreover,
          $\langle f, h \rangle =  \langle V_g f, V_g h \rangle_{L^{\infty}_{1/w}, L^1_{w}}$.

    \item There exists a bounded linear operator
          \[
            V_g^* : \quad
            L^{\infty}_{1/w} (G) \to \Reservoir_w
          \]
          satisfying $V_g (V_g^* F) = F$ for every $F \in L^{\infty}_{1/w} (G)$
          that satisfies $F = F \ast_{\sigma} V_g g$.

    \item The function $f \mapsto \| V_g f \|_{L^{\infty}_{1/w}}$ defines
          an equivalent norm on $\Reservoir_w$.
  \end{enumerate}
\end{lemma}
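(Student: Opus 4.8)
The plan is to treat the five parts essentially in the listed order, with the extended reproducing formula in part~(iii) serving as the technical core from which parts~(iv) and~(v) will follow with little extra work. For part~(i): since $\Hw \hookrightarrow \Hpi$ by \Cref{lem:basic_Hw}(ii), every $f \in \Hpi$ induces via $h \mapsto \langle f, h \rangle_{\Hpi}$ a bounded conjugate-linear functional on $\Hw$, and this assignment is injective because $\Hw$ is dense in $\Hpi$; hence $\Hpi \hookrightarrow \Reservoir_w$, extending $\langle \cdot, \cdot \rangle_{\Hpi}$. For $f \in \Hpi$ the action \eqref{eq:RepresentationOnReservoir} sends $h$ to $f\bigl([\pi(x)]^\ast h\bigr) = \langle \pi(x) f, h \rangle_{\Hpi}$ (legitimate since $[\pi(x)]^\ast h \in \Hw$ by $\pi$-invariance of $\Hw$), so it agrees with the original $\pi(x) f$. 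For the last assertion I would estimate $|V_h f(x)| = |f(\pi(x) h)| \leq \|f\|_{\Reservoir_w}\, \|\pi(x) h\|_{\Hw} \leq w(x)\, \|h\|_{\Hw}\, \|f\|_{\Reservoir_w}$ using $\|\pi(x)\|_{\Hw \to \Hw} \leq w(x)$ from \Cref{lem:basic_Hw}(ii); linearity in $f$ is clear and $V_h f = f \circ \Xi_h$ is measurable because $\Xi_h \colon x \mapsto \pi(x) h$ is Bochner measurable into $\Hw$ (\Cref{lem:basic_Hw}(iii)) and $f$ is continuous. Hence $V_h \colon \Reservoir_w \to L^\infty_{1/w}(G)$ is well-defined, linear and bounded.

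Part~(ii) repeats the computation of \Cref{lem:basic_cocycle}(i): unwinding $V_h[\pi(x) f](y) = (\pi(x) f)(\pi(y) h) = \sigma(x^{-1}, x)\, f\bigl(\pi(x^{-1}) \pi(y) h\bigr)$ and using the conjugate-linearity of $f$, the relation $\pi(x^{-1}) \pi(y) = \sigma(x^{-1}, y)\, \pi(x^{-1} y)$, and the cocycle identity gives $V_h[\pi(x) f] = \twisttranslationL{x}[V_h f]$. For part~(iii) the key step is to apply \Cref{lem:basic_Hw}(iii) to $\pi(x) h \in \Hw$: writing $\pi(x) h = \int_G \twisttranslationL{x}[V_g h](y)\, \pi(y) g \, d\mu_G(y)$ (via $V_g[\pi(x) h] = \twisttranslationL{x}[V_g h]$) and pushing the bounded conjugate-linear $f$ through the Bochner integral---exactly as in the proof of \Cref{lem:basic_Hw}(iii)---yields $V_h f(x) = \int_G \overline{\twisttranslationL{x}[V_g h](y)}\, V_g f(y)\, d\mu_G(y)$ for \emph{every} $x \in G$. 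Combining the intertwining identity $V_g h(z^{-1}) = \overline{\sigma(z, z^{-1})}\, \overline{V_h g(z)}$ of \Cref{lem:basic_cocycle}(i) with the cocycle relation shows $\overline{\twisttranslationL{x}[V_g h](y)} = \sigma(y, y^{-1} x)\, V_h g(y^{-1} x)$, which turns the previous line into $V_h f = V_g f \ast_\sigma V_h g$, i.e.\ \eqref{eq:repro_extended}; specializing to $x = e_G$ then gives $\langle f, h \rangle = \langle V_g f, V_g h \rangle_{L^\infty_{1/w}, L^1_w}$. Injectivity of $V_g$ on $\Reservoir_w$ follows immediately: if $V_g f = 0$ in $L^\infty_{1/w}$, so $V_g f = 0$ $\mu_G$-a.e., then the everywhere-defined convolution $V_g f \ast_\sigma V_h g$ vanishes identically, whence $f(h) = V_h f(e_G) = 0$ for all $h \in \Hw$.

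For part~(iv) I would set $(V_g^\ast F)(h) := \langle F, V_g h \rangle_{L^\infty_{1/w}, L^1_w}$ for $F \in L^\infty_{1/w}(G)$ and $h \in \Hw$; since $\|V_g h\|_{L^1_w} = \|h\|_{\Hw}$, this is a bounded conjugate-linear functional with $\|V_g^\ast F\|_{\Reservoir_w} \leq \|F\|_{L^\infty_{1/w}}$, so $V_g^\ast$ is bounded and linear. Then $V_g(V_g^\ast F)(x) = (V_g^\ast F)(\pi(x) g) = \langle F, \twisttranslationL{x}[V_g g] \rangle = \int_G F(y)\, \overline{\twisttranslationL{x}[V_g g](y)}\, d\mu_G(y) = (F \ast_\sigma V_g g)(x)$ by the conjugation identity above with $h = g$, so $V_g(V_g^\ast F) = F$ whenever $F = F \ast_\sigma V_g g$. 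Finally, part~(v) is immediate from~(i) and the pairing identity: $\|V_g f\|_{L^\infty_{1/w}} \leq \|g\|_{\Hw}\, \|f\|_{\Reservoir_w}$ by~(i) with $h = g$, and conversely $\|f\|_{\Reservoir_w} = \sup_{\|h\|_{\Hw} \leq 1} |\langle V_g f, V_g h \rangle_{L^\infty_{1/w}, L^1_w}| \leq \|V_g f\|_{L^\infty_{1/w}} \sup_{\|h\|_{\Hw} \leq 1} \|V_g h\|_{L^1_w} = \|V_g f\|_{L^\infty_{1/w}}$.

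The main obstacle is concentrated in part~(iii): passing the conjugate-linear functional through the Bochner integral correctly and, above all, carrying out the cocycle bookkeeping so that the extended reproducing formula is a genuine \emph{pointwise} identity on all of $G$ rather than merely a $\mu_G$-a.e.\ one---this pointwise validity is exactly what legitimizes evaluating at $e_G$ in both the pairing identity and the injectivity argument. Everything else is routine once part~(iii) is in hand.
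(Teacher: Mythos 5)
Your proposal is correct and follows essentially the same route as the paper: embedding $\Hpi$ into $\Reservoir_w$ via density of $\Hw$, measurability of $V_h f$ through $\Xi_h$, and the Bochner-integral representation pushed through the (conjugate-linear) functional to obtain the pointwise reproducing formula, from which the pairing identity, $V_g^\ast$, and the norm equivalence all follow. The only genuine deviations --- applying the Bochner representation to $\pi(x)h$ directly rather than to $h$ with an auxiliary conjugated functional $f'(k) = \overline{f(\pi(x)k)}$, deducing injectivity of $V_g$ from the reproducing formula rather than directly from completeness of $\pi(G)g$ in $\Hw$, and proving (v) by the direct duality estimate $\|f\|_{\Reservoir_w} = \sup_{\|h\|_{\Hw}\leq 1}|\langle V_g f, V_g h\rangle| \leq \|V_g f\|_{L^\infty_{1/w}}$ instead of routing through $V_g^\ast$ and injectivity --- are all valid, and the cocycle identity $\overline{\sigma(x,x^{-1}y)}\,\sigma(y^{-1}x,x^{-1}y) = \sigma(y,y^{-1}x)$ you rely on does check out.
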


\begin{proof}
(i)
 Using the embedding $\Hw \hookrightarrow \Hpi$ of Lemma~\ref{lem:basic_Hw},
 it follows that  $\iota : \Hpi \to \Reservoir_w$
 given by $(\iota f) (h) = \langle f, h \rangle_{\Hpi}$ for $h \in \Hw$
 is well-defined, and
 \[
   |(\iota f) (h) |
   = |\langle f, h \rangle_{\Hpi} |
   \lesssim \| f \|_{\Hpi} \| h \|_{\Hw}
 \]
 for $f \in \Hpi$ and $h \in \Hw$.
 Thus $\iota f \in \Reservoir_w$ and $\iota : \Hpi \to \Reservoir_w$ is bounded.
 Since $\Hw$ is norm dense in $\Hpi$ by Lemma~\ref{lem:basic_Hw},
 the map $\iota$ is injective, and thus a continuous embedding.
 In particular, $\langle \iota f, h \rangle = (\iota f)(h) = \langle f, h \rangle_{\Hpi}$,
 so that $\langle \cdot , \cdot \rangle$ extends $\langle \cdot , \cdot \rangle_{\Hpi}$.

 Next, if $f \in \Hpi$ and $h \in \Hw \subseteq \Hpi$,
 then we have with $\pi(x) f \in \Hpi$ as defined before
 and $\pi(x) \iota f \in \Reservoir_w$ as defined in \Cref{eq:RepresentationOnReservoir} that
 \[
   \iota [\pi(x) f] (h)
   = \langle \pi(x) f, h \rangle_{\Hpi}
   = \langle f, [\pi(x)]^\ast h \rangle_{\Hpi}
   = \iota(f) \big( [\pi(x)]^\ast h \big)
   = \pi(x) [\iota(f)] (h)
 \]
 and hence $\iota[\pi(x) f] = \pi(x) [\iota (f)]$, showing that
 the definition of $\pi(x) f$ in \Cref{eq:RepresentationOnReservoir}
 agrees with the action of $\pi$ on $\Hpi \subseteq \Reservoir_w$.

 Next, let $f \in \Reservoir_w = (\Hw)^{\ast}$ and $h \in \Hw$ and recall
 from Part~(iii) of \Cref{lem:basic_Hw} that the map $\Xi_h : G \to \Hw, \; x \mapsto \pi(x) h$
 is measurable.
 Since $f : \Hw \to \CC$ is continuous, this implies that
 $V_h f : G \to \CC, x \mapsto V_h f(x) = f (\pi(x) h)$ is Borel measurable.
 In addition, the norm estimate of Part~\emph{(ii)} of Lemma~\ref{lem:basic_Hw} shows that
 \[
   |V_h f (x) |
   \leq  \| f \|_{\Reservoir_w} \|\pi(x) h \|_{\Hw}
   \leq w(x) \| f \|_{\Reservoir_w}  \| h \|_{\Hw},
 \]
 whence $V_h f \in L^{\infty}_{1/w} (G)$.

(ii)
 If $x, y \in G$, then
 \begin{align*}
   (V_h \pi(x) f)(y)
   = \sigma(x, x^{-1}) \langle f, \pi(x^{-1}) \pi(y) h \rangle
   = \sigma(x, x^{-1}) \overline{\sigma(x^{-1}, y)} V_h f(x^{-1} y)
   = \twisttranslationL{x} [V_h f] (y),
 \end{align*}
 since
  \(
    \sigma(x, x^{-1} y)
    = \overline{\sigma(x^{-1}, y)} \sigma(x x^{-1}, y) \sigma(x, x^{-1})
    = \sigma(x, x^{-1}) \overline{\sigma(x^{-1}, y)}
    .
  \)

(iii)
 By \Cref{lem:basic_Hw}, $\pi(G) g$ is complete in $\Hw = \Hw(g)$;
 thus, the map $V_g : \Reservoir_w \to L^{\infty}_{1/w}$ is injective.
 Let $f \in \Reservoir_w$, $h \in \Hw$ and $x \in G$.
 Let $(\Hw)'$ denote the dual space of $\Hw$ and
 define $f' \in (\Hw)'$ by $f' (k) = \overline{f(\pi(x) k)}$ for $k \in \Hw$.
 Using the Bochner integrable map $F_{h, g} : G \to \Hw$ defined in Lemma~\ref{lem:basic_Hw},
 it follows that $h = \int_G F_{h,g} \; d\mu_G$ with convergence in $\Hw$.
 Since Bochner integration commutes with bounded linear functionals
 (cf.\ \cite[Proposition E.11]{cohn2013measure}), a direct calculation using \Cref{lem:basic_cocycle} entails
 \begin{align*}
   V_h f (x)
   &= \overline{f'(h)} = \overline{\int_G f' (F_{h,g} (y)) \; d\mu_G (y)}
    = \overline{\int_G V_g h (y) f'( \pi(y) g) \; d\mu_G (y)} \\
   &= \int_G \overline{V_g h (y)} f (\pi(x) \pi(y) g ) \; d\mu_G(y)
    =
      \int_G  \sigma(y, y^{-1}) V_h g (y^{-1}) \overline{\sigma(x,y)} f(\pi(x y) g) \; d\mu_G (y) \\
   &=
      \int_G \sigma(xy, y^{-1}) V_h g(y^{-1}) V_g f (xy) \; d\mu_G (y)
    = \int_G \sigma(z, z^{-1} x) V_h g(z^{-1} x) V_g f (z) \; d\mu_G (z) \\
   &= V_g f \ast_\sigma V_h g (x),
 \end{align*}
where the sixth step used that
\(
  \sigma(x y, y^{-1})
  = \overline{\sigma(x,y)} \sigma(x, y y^{-1}) \sigma(y, y^{-1})
  = \sigma(y, y^{-1}) \overline{\sigma(x,y)}
\)
for $x,y \in G$.
Setting $x = e_G$, the above calculations also show that
$\langle f, h \rangle  = \langle V_g f, V_g h \rangle_{L^{\infty}_{1/w}, L^1_w}$.

(iv)
For $F \in L^{\infty}_{1/w} (G)$, define
\[
  V_g^* F :
  \Reservoir_w \to \mathbb{C}, \quad
  h \mapsto \int_G F(y) \langle \pi(y) g, h \rangle_{\Hpi} \; d\mu_G (y),
\]
which is well-defined since
\begin{align*}
  \int_G |F(y)| |\langle \pi (y) g, h \rangle_{\Hpi} | \; d\mu_G (y)
  &\leq \int_G \frac{1}{w(y)} |F(y)| w(y) |V_g h (y)| \; d\mu_G (y) \\
  &\leq \| F \|_{L^{\infty}_{1/w}} \| V_g h \|_{L^1_w}
      = \| F \|_{L^{\infty}_{1/w}} \| h \|_{\Hw}
   <    \infty.
\end{align*}
In particular, this shows that $V^*_g : L^{\infty}_{1/w} (G) \to \Reservoir_w$
is a bounded linear map.

If $F \in L^{\infty}_{1/w} (G)$ satisfies $F = F \ast_{\sigma} V_g g$,
then \Cref{lem:basic_cocycle} shows
\[
  \langle \pi(y) g, \pi(x) g \rangle_{\Hpi}
  = V_g [\pi(y) g](x)
  = \twisttranslationL{y} [V_g g] (x)
\]
and hence
\begin{align*}
  V_g (V^*_g F) (x)
  &= \int_G F(y) \langle \pi(y) g, \pi(x) g \rangle_{\Hpi} \; d\mu_G (y) \\
  &= \int_G F(y) \twisttranslationL{y} [V_g g] (x) \; d\mu_G (y) \\
  &= F \ast_{\sigma} V_g g (x)
   = F(x)
\end{align*}
for any $x \in G$.

(v)
Let $f \in \Reservoir_w$.
The estimate $\| V_g f \|_{L^{\infty}_{1/w}} \lesssim \| f \|_{\Reservoir_w}$
follows from Part~(i).
For the converse, set $F := V_g f \in L^{\infty}_{1/w} (G)$,
so that $F = F \ast_{\sigma} V_g g$ by Part~(ii).
Then, by Part~(iv), $h := V_g^* F \in \Reservoir_w$
with $\| h \|_{\Reservoir_w} \lesssim \| F \|_{L^{\infty}_{1/w}}$, and $V_g h = F = V_g f$.
Since $V_g : \Reservoir_w \to L^{\infty}_{1/w} (G)$ is injective by Part~(iii),
it follows that $f = h$, and thus
$\| f \|_{\Reservoir_w} \lesssim \| F \|_{L^{\infty}_{1/w}} = \| V_g f \|_{L^{\infty}_{1/w}}$,
where the implied constant is independent of $f \in \Reservoir_w$.
\end{proof}

\section{Coorbit spaces}
\label{sec:def_coorbit}

Throughout this section, $Y \subseteq L^0 (G)$ will be a solid quasi-Banach function space on $G$
and $w$ will be a $p$-weight for some $p \in (0,1]$.
We also assume that $Y$ is translation-invariant
and that $Y$ is $L^p_w$-compatible in the sense of Definition~\ref{def:compatible}.

The $\sigma$-representation $(\pi, \Hpi)$ will be assumed to be $p$-integrable
in the sense that for
\[
  \Bwp
  = \big\{ g \in \Hpi : V_g g \in \wienerSt{L^p_w} \big\},
\]
there exists an \emph{admissible} $g \in \Bwp$.
Note that $\Bwp \subseteq \Aw$,
since $\wienerSt{L^p_w} \hookrightarrow \wienerL{L_w^p} \hookrightarrow L^1_w$
by Part~(ii) of Lemma~\ref{lem:AmalgamWeightedLInftyEmbedding}.

The central object of this paper are the coorbit spaces defined next.

\begin{definition}\label{def:coorbit}
Let $Y$ be $L^p_w$-compatible and let $g \in \Bwp$ be an admissible vector.
The associated coorbit space $\Co(Y) = \Co_g (Y)$ is defined as the collection
\begin{equation}
  \Co_{g} (Y)
  := \big\{ f \in \Reservoir_w(g) : V_g f \in \wienerL{Y} \big\}
  \label{eq:CoorbitDefinition}
\end{equation}
and equipped with the quasi-norm
$\| f \|_{\Co_g(Y)} := \| V_g f \|_{\wienerL{Y}}$.
\end{definition}

The following result provides basic properties of the coorbit spaces just defined.
In particular, it provides a sufficient condition under which the coorbit space is independent of the choice of defining vector.

\begin{proposition}\label{prop:coorbit_basic}
  Let $Y$ be $L^p_w$-compatible and let $g \in \Bwp$ be an admissible vector.
  \begin{enumerate}[label=(\roman*)]
    \item If $h \in \Bwp$ is an admissible vector such that $V_h g \in \wienerSt{L^p_w}$,
          then $\Hw(g) = \Hw(h)$ and $\Reservoir_w(g) = \Reservoir_w(h)$, as well as
          $\Co_g (Y) = \Co_h (Y)$
          with $\| \cdot \|_{\Co_g(Y)} \asymp \| \cdot \|_{\Co_h (Y)}$.

    \item The space $\Co_g(Y)$ is a $\pi$-invariant quasi-Banach space
          with $\Co_g(Y) \hookrightarrow \Reservoir_w$,
          and $\| \cdot \|_{\Co_g(Y)}$ is a $p$-norm.
  \end{enumerate}
\end{proposition}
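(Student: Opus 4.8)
The strategy is to reduce everything to the already-established machinery: the reproducing formula $V_h f = V_g f \ast_\sigma V_h g$ from \Cref{lem:basic_Rw}(iii), the convolution relations for $L^p_w$-compatible spaces (conditions \ref{enu:CompatibleConvolution} and \ref{enu:CompatibleLInfty} of \Cref{def:compatible}), and the properties of $\Reservoir_w$ and $\Hw$ from \Cref{lem:basic_Hw,lem:basic_Rw}. For part (i), the hypothesis $V_h g \in \wienerSt{L^p_w}$ is the crucial input: since $\wienerSt{L^p_w}\hookrightarrow \wienerL{L^p_w}\hookrightarrow L^1_w$, also $V_h g \in L^1_w$, and by symmetry (using $V_g h = \overline{\sigma(\cdot,\cdot^{-1})}\,(V_h g)^\vee$ from \Cref{lem:basic_cocycle}(i) together with \ref{enu:WeightPSymmetric} and the identity $[L^p_w]^\vee = L^p_w$, $[\wienerSt{L^p_w}]^\vee = \wienerSt{L^p_w^\vee}$) one gets $V_g h \in L^1_w$ as well, so the hypotheses of \Cref{lem:basic_Hw}(i) are met and $\Hw(g) = \Hw(h)$ with equivalent norms; dualizing gives $\Reservoir_w(g) = \Reservoir_w(h)$. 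Then for $f \in \Co_g(Y)$ one writes $V_h f = V_g f \ast_\sigma V_h g$, notes $|V_h f|\le |V_g f|\ast|V_h g|$, and applies \ref{enu:CompatibleConvolution}: $V_g f \in \wienerL{Y}$ and $V_h g \in \wienerSt{L^p_w}$ give $V_h f \in \wienerL{Y}$ with $\|V_h f\|_{\wienerL{Y}}\lesssim \|V_g f\|_{\wienerL{Y}}\|V_h g\|_{\wienerSt{L^p_w}}$, hence $f\in\Co_h(Y)$ and $\|f\|_{\Co_h(Y)}\lesssim\|f\|_{\Co_g(Y)}$. Symmetry (the role of $g,h$ being interchangeable, since $h$ is admissible and $g \in \Bwp$ satisfies $V_g h \in \wienerSt{L^p_w}$ by the involution argument above) finishes the equivalence.

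For part (ii), $\pi$-invariance follows from \Cref{lem:basic_Rw}(ii): $V_g[\pi(x)f] = \twisttranslationL{x}[V_g f]$, and since $\wienerL{Y}$ is left-translation invariant (as $Y$ is, see \Cref{sec:AmalgamPrelims}), $\pi(x)f \in \Co_g(Y)$ whenever $f$ is, with $\|\pi(x)f\|_{\Co_g(Y)} = \|\twisttranslationL{x}[V_g f]\|_{\wienerL{Y}} = \|L_x[V_g f]\|_{\wienerL{Y}} \le \|L_x\|_{\wienerL{Y}\to\wienerL{Y}}\|f\|_{\Co_g(Y)}$. The embedding $\Co_g(Y)\hookrightarrow\Reservoir_w$ comes from \ref{enu:CompatibleLInfty}: $\wienerL{Y}\hookrightarrow L^\infty_{1/w}$, so $\|V_g f\|_{L^\infty_{1/w}}\lesssim\|V_g f\|_{\wienerL{Y}}$, and then \Cref{lem:basic_Rw}(v) gives $\|f\|_{\Reservoir_w}\asymp\|V_g f\|_{L^\infty_{1/w}}\lesssim\|f\|_{\Co_g(Y)}$. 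That $\|\cdot\|_{\Co_g(Y)}$ is a $p$-norm is immediate: $V_g$ is linear and $\|\cdot\|_{\wienerL{Y}} = \|\maxL(\cdot)\|_Y$ inherits the $p$-norm property from $\|\cdot\|_Y$ because $\maxL(F_1+F_2)\le\maxL F_1 + \maxL F_2$ pointwise and $Y$ is solid with a solid $p$-norm. Positive-definiteness of $\|\cdot\|_{\Co_g(Y)}$ is exactly the injectivity of $V_g$ on $\Reservoir_w$ (\Cref{lem:basic_Rw}(iii)) combined with the embedding just shown.

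The one genuinely nonroutine step is \textbf{completeness} of $\Co_g(Y)$. The plan is: given a Cauchy sequence $(f_n)$ in $\Co_g(Y)$, the embedding into $\Reservoir_w$ produces a limit $f\in\Reservoir_w$; meanwhile $(V_g f_n)$ is Cauchy in $\wienerL{Y}$, which is complete (see \Cref{sec:AmalgamPrelims}), so $V_g f_n \to F$ in $\wienerL{Y}$ for some $F\in\wienerL{Y}$. One must identify $F = V_g f$. Convergence $f_n\to f$ in $\Reservoir_w$ gives $V_g f_n \to V_g f$ in $L^\infty_{1/w}$ (by \Cref{lem:basic_Rw}(i) or (v)); convergence in $\wienerL{Y}\hookrightarrow L^\infty_{1/w}$ gives $V_g f_n \to F$ in $L^\infty_{1/w}$; hence $F = V_g f$ a.e., so $V_g f \in \wienerL{Y}$, i.e.\ $f\in\Co_g(Y)$, and $\|f - f_n\|_{\Co_g(Y)} = \|V_g f - V_g f_n\|_{\wienerL{Y}}\to 0$. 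I expect the completeness argument, and the bookkeeping needed to transfer the $p$-norm and solidity from $Y$ to $\wienerL{Y}$, to be where most of the (still modest) care is required; everything else is a direct substitution into the quoted lemmas.
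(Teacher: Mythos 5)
Your proposal is correct and follows essentially the same route as the paper's proof: the involution argument giving $V_g h \in \wienerSt{L^p_w}$, the reduction of part (i) to \Cref{lem:basic_Hw} and the convolution relation \ref{enu:CompatibleConvolution}, and the completeness argument via the two limits in $\Reservoir_w$ and $\wienerL{Y}$ identified through the common embedding into $L^\infty_{1/w}$ are all exactly the steps the paper takes. The only cosmetic slip is the notation $\wienerSt{L^p_{w^\vee}}$ for what should be $\wienerSt{(L^p_w)^{\vee}} = \wienerSt{L^p_w}$; this does not affect the argument.
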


\begin{proof}
(i) By assumption, $V_h g \in \wienerSt{L^p_w}$.
Since $|V_g h| = |V_h g|^{\vee}$ and $w$ is a $p$-weight, it follows that also $V_g h \in \wienerSt{L^p_w}$.
By Part~(ii) of Lemma~\ref{lem:AmalgamWeightedLInftyEmbedding}, this implies that $V_g h, V_h g \in L^1_w$.
Therefore, an application of \Cref{lem:basic_Hw} yields that $\Hw(g) = \Hw(h)$,
which implies that $\Reservoir_w(g) = \Reservoir_w(h)$,
and thus it remains to show that $\| \cdot \|_{\Co_g(Y)} \asymp \| \cdot \|_{\Co_h (Y)}$.

For $f \in \Co_g (Y)$, the reproducing formula \eqref{eq:repro_extended} gives
$V_h f = V_g f \ast_{\sigma} V_h g$.
The space $Y$ is assumed to be $L^p_w$-compatible, and therefore
$\wienerL{Y} \ast \wienerSt{L^p_w} \hookrightarrow \wienerL{Y}$, which yields that
\[
  \| f \|_{\Co_h (Y)}
  \leq  \big\| |V_g f | \ast |V_h g| \big\|_{\wienerL{Y}}
  \lesssim \| V_g f \|_{\wienerL{Y}} \| V_h g \|_{\wienerSt{L^p_w}}
  \lesssim \| f \|_{\Co_g (Y)}.
\]
Since also $V_g h \in \wienerSt{L^p_w}$, interchanging the role of $g$ and $h$  yields
$\| \cdot \|_{\Co_g (Y)} \lesssim \| \cdot \|_{\Co_h (Y)}$.

(ii)
The $\pi$-invariance follows directly from Part~(ii) of Lemma~\ref{lem:basic_Rw}
and the translation-invariance of $Y$, which implies the 
translation-invariance of $\wienerL{Y}$.
To show that the embedding $\Co_g (Y) \hookrightarrow \Reservoir_w$, let $f \in  \Co_g (Y)$.
Since $Y$ is assumed to be $L^p_w$-compatible, we have,
in particular, $\wienerL{Y} \hookrightarrow L^{\infty}_{1/w}$, and thus
$\| V_g f \|_{L^{\infty}_{1/w}} \lesssim \| V_g f \|_{\wienerL{Y}} = \| f \|_{\Co(Y)}$.
Since the mapping $f \mapsto \| V_g f \|_{L^{\infty}_{1/w}}$ defines an equivalent norm
on $\Reservoir_w$ by Lemma~\ref{lem:basic_Rw}, it follows immediately
that $\| f \|_{\Reservoir_w} \lesssim \| f \|_{\Co_g (Y)}$.

To prove the completeness of $\Co_g(Y)$, let $(f_n)_{n \in \mathbb{N}}$ be a Cauchy sequence
in $\Co_g(Y)$.
Then the embedding $\Co_g(Y) \hookrightarrow \Reservoir_w$ yields
that $(f_n)_{n \in \mathbb{N}}$ is Cauchy in $\Reservoir_w$,
whence convergent to some $f \in \Reservoir_w$.
In particular, this implies $V_g f_n (x) \to V_g f (x)$ for $x \in G$ as $n \to \infty$.
On the other hand, the sequence $(V_g f_n)_{n \in \mathbb{N}}$ is Cauchy in $\wienerL{Y}$,
hence converging to some $F \in \wienerL{Y}$.
Since $\wienerL{Y} \hookrightarrow L^{\infty}_{1/w}(G)$ as $Y$ is $L^p_w$-compatible,
it also follows that $V_g f_n \to F$ in $L^{\infty}_{1/w}(G)$.
Thus $F = V_g f$.
Since $F \in \wienerL{Y}$, this means that $f \in \Co_g(Y)$, and
\[
  \| f - f_n \|_{\Co_g(Y)}
  = \| V_g f_n - F \|_{\wienerL{Y}}
  \to 0,
  \quad \text{as} \quad n \to \infty.
\]
The $p$-norm properties of $\| \cdot \|_{\Co_g(Y)}$ follow easily
from those of $\| \cdot \|_{\wienerL{Y}}$.
\end{proof}

\begin{remark} Let $g, h \in \Bwp$ be admissible vectors.
\begin{enumerate}[label=(\alph*)]
  \item In part (i) of \Cref{prop:coorbit_basic}, the assumption $V_h g \in \wienerSt{L^p_w}$
        is essential for the validity of the claim  $\Co_g (Y) = \Co_h (Y)$
        in the case of reducible representations.
        More precisely, as mentioned in \Cref{rem:reducible_vgh},
        the paper \cite{fuehr2015coorbit} provides an example of a reducible representation
        $\pi$ admitting admissible vectors $g, h \in \Bw$ satisfying $V_h g \notin L^1_w (G)$,
        cf.\ \cite[\S 2.1]{fuehr2015coorbit}.
        Since $\Co_g (L^1_w(G)) = \Hw(g)$ and $\Co_h (L^1_w(G)) = \Hw(h)$ by \Cref{lem:coincidence},
        this shows that $g \in \Co_g (L^1_w(G))$, but $g \notin \Co_h (L^1_w(G))$.
        As such, one could have $\Co_g (L^1_w(G)) \neq \Co_h (L^1_w(G))$ for reducible representations.
        For \emph{irreducible} representations, on the other hand, the assumption $V_h g \in  \wienerSt{L^p_w}$
        is not needed to guarantee the coincidence $\Co_g (Y) = \Co_h (Y)$;
        see \Cref{prop:independence_irreducible}.

  \item In what follows, we will often simply write $\Co(Y)$ for $\Co_g(Y)$
        whenever the precise choice of defining vector $g \in \Bwp$ does not play an important role,
        or if the choice of $g$ is clear from the context.
        The reader should remember, however, that the space $\Co(Y)$ in general
        does depend on the choice of $g$.
\end{enumerate}
\end{remark}

The last result of this subsection is the following simple consequence of Lemma~\ref{lem:basic_Rw}, which is often helpful
in concrete settings.

\begin{corollary}\label{cor:independence_reservoir}
  Let $Y$ be $L^p_w$-compatible,
  let $\mathcal{S} \hookrightarrow \Hpi$ be a $\pi$-invariant topological vector space
  and let $g \in \mathcal{S} \cap \Bwp$ be admissible.
  Suppose that $\mathcal{S} \hookrightarrow \Hw(g)$ and that the reproducing formula
  \begin{align} \label{eq:repro_S}
   \langle f, h \rangle
   = \int_{G}
       \langle f, \pi(x) g \rangle_{\mathcal{S}^\ast, \mathcal{S}}
        \langle \pi(x) g, h \rangle_{\Hpi}
     \; d\mu_G (x)
  \end{align}
  holds for all $f \in \mathcal{S}^{\ast}$ (the \emph{anti}-dual space of $\mathcal{S}$)
  and $h \in \mathcal{S}$, where
  \(
    \langle \cdot, \cdot \rangle_{\mathcal{S}^\ast, \mathcal{S}}
    = \langle \cdot , \cdot \rangle :
    \mathcal{S}^{\ast} \times \mathcal{S} \to \mathbb{C}
  \)
  denotes the anti-dual pairing.

  Then
  \[
    \Co_g (Y)
    = \big\{ f \in \Reservoir_w : V_g f \in \wienerL{Y} \big\}
    = \big\{ f \in \mathcal{S}^{\ast} : V_g f \in \wienerL{Y} \big\}.
  \]
  in the sense that the restriction map
  \[
    \Co_g (Y) \to \bigl\{ f \in \mathcal{S}^\ast \colon V_g f \in \wienerL{Y} \bigr\}, \quad
    f \mapsto f|_{\mathcal{S}}
  \]
  is a bijection.
  Here, the coefficient transform $V_g f$ for $f \in \mathcal{S}^\ast$ is defined by
  $V_g f (x) = \langle f, \pi(x) g \rangle$.
\end{corollary}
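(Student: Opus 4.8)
The plan is to show that the restriction map $f \mapsto f|_{\mathcal{S}}$ is simultaneously well-defined on $\Co_g(Y)$, injective, and surjective onto $\bigl\{ f \in \mathcal{S}^{\ast} \colon V_g f \in \wienerL{Y} \bigr\}$; the key tool throughout is \Cref{lem:basic_Rw}, in particular the injectivity of $V_g$ on $\Reservoir_w$ and the existence of the adjoint-type operator $V_g^{\ast}$.

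First I would check well-definedness and injectivity, which are essentially formal. Since $\mathcal{S} \hookrightarrow \Hw(g)$, every $f \in \Reservoir_w = (\Hw(g))^{\ast}$ restricts to a bounded conjugate-linear functional on $\mathcal{S}$, so that $f|_{\mathcal{S}} \in \mathcal{S}^{\ast}$. Because $g \in \mathcal{S}$ and $\mathcal{S}$ is $\pi$-invariant, we have $\pi(x) g \in \mathcal{S}$ for all $x \in G$, and hence the coefficient transform of $f|_{\mathcal{S}}$ computed with the anti-dual pairing $\langle \cdot, \cdot \rangle_{\mathcal{S}^{\ast}, \mathcal{S}}$ agrees pointwise with $V_g f$ computed with the pairing $\Reservoir_w \times \Hw$; in particular $V_g(f|_{\mathcal{S}}) = V_g f \in \wienerL{Y}$, so the restriction map indeed lands in the asserted set. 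Injectivity is then immediate from the injectivity of $V_g$ on $\Reservoir_w$ (Part~(iii) of \Cref{lem:basic_Rw}): if $f_1|_{\mathcal{S}} = f_2|_{\mathcal{S}}$, evaluating both sides on the vectors $\pi(x) g \in \mathcal{S}$ gives $V_g f_1 = V_g f_2$, whence $f_1 = f_2$.

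The substance of the argument is surjectivity. Given $f \in \mathcal{S}^{\ast}$ with $F := V_g f \in \wienerL{Y}$, the $L^p_w$-compatibility of $Y$ (condition \ref{enu:CompatibleLInfty} of \Cref{def:compatible}) yields $\wienerL{Y} \hookrightarrow L^{\infty}_{1/w}(G)$, so $F \in L^{\infty}_{1/w}(G)$ and the operator $V_g^{\ast}$ of Part~(iv) of \Cref{lem:basic_Rw} produces a well-defined element $\widetilde{f} := V_g^{\ast} F \in \Reservoir_w$. To conclude $V_g \widetilde{f} = F$, one needs the fixed-point identity $F = F \ast_{\sigma} V_g g$, and this is exactly where the reproducing formula \eqref{eq:repro_S} enters: inserting $h = \pi(y) g \in \mathcal{S}$ into \eqref{eq:repro_S} and using $\langle \pi(x) g, \pi(y) g \rangle_{\Hpi} = V_g[\pi(x) g](y) = \twisttranslationL{x}[V_g g](y)$ from Part~(i) of \Cref{lem:basic_cocycle} together with the definition \eqref{eq:twisted_convolution} of twisted convolution, the right-hand side of \eqref{eq:repro_S} becomes precisely $(F \ast_{\sigma} V_g g)(y)$ while the left-hand side equals $F(y)$. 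Hence Part~(iv) of \Cref{lem:basic_Rw} gives $V_g \widetilde{f} = F = V_g f \in \wienerL{Y}$, so $\widetilde{f} \in \Co_g(Y)$. Finally, unwinding the definition of $V_g^{\ast}$ and invoking \eqref{eq:repro_S} once more shows $\langle \widetilde{f}, h \rangle = \int_G F(y) \, \langle \pi(y) g, h \rangle_{\Hpi} \, d\mu_G(y) = \langle f, h \rangle$ for every $h \in \mathcal{S}$, that is, $\widetilde{f}|_{\mathcal{S}} = f$, which completes the proof of surjectivity.

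The only point requiring any care is the bookkeeping between the two a priori distinct coefficient transforms (one defined on $\Reservoir_w$, the other on $\mathcal{S}^{\ast}$), and the observation that \eqref{eq:repro_S}, evaluated along the orbit $\{\pi(y) g : y \in G\}$, is exactly the hypothesis $F = F \ast_{\sigma} V_g g$ needed to apply Part~(iv) of \Cref{lem:basic_Rw}. Everything else is a direct consequence of the mapping properties of $V_g$ and $V_g^{\ast}$ already established in \Cref{lem:basic_Rw}, so I do not expect a genuine obstacle here.
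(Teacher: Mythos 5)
Your proposal is correct and follows essentially the same route as the paper's proof: well-definedness from $\mathcal{S} \hookrightarrow \Hw(g)$, injectivity via the orbit $\pi(G)g \subseteq \mathcal{S}$ together with the injectivity of $V_g$ on $\Reservoir_w$, and surjectivity by extracting the fixed-point identity $F = F \ast_{\sigma} V_g g$ from \eqref{eq:repro_S} evaluated at $h = \pi(y) g$ and then applying $V_g^{\ast}$ from Part~(iv) of \Cref{lem:basic_Rw}. The only cosmetic difference is that you unwind the definition of $V_g^{\ast}$ directly to verify $\widetilde{f}|_{\mathcal{S}} = f$, whereas the paper routes this through Part~(iii) of \Cref{lem:basic_Rw}; the two are equivalent.
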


\begin{proof}
Since $\mathcal{S} \hookrightarrow \Hw$, it follows
that the restriction map $f \mapsto f|_{\mathcal{S}}$ is well-defined
from $\Reservoir_w$ into $\mathcal{S}^{\ast}$ and hence also from $\Co_g(Y)$
into $\bigl\{ f \in \mathcal{S}^\ast \colon V_g f \in \wienerL{Y} \bigr\}$.
Furthermore, since we have $\pi(G) g \subseteq \mathcal{S}$ and since $\pi(G) g \subseteq \Hw$
is norm-dense by \Cref{lem:basic_Hw}, this restriction map is injective.

To show surjectivity, let $f \in \mathcal{S}^{\ast}$
be such that $F := V_g f \in \wienerL{Y}$.
Then also $F \in L^{\infty}_{1/w} (G)$ since $Y$ is $L^p_w$-compatible
and hence $\wienerL{Y} \hookrightarrow L_{1/w}^\infty (G)$.
The assumption \eqref{eq:repro_S} yields, in particular,
that $F = F \ast_{\sigma} V_g g$.
Hence, by Part~(iv) of Lemma~\ref{lem:basic_Rw},
there exists $\widetilde{f} := V_g^* F \in \Reservoir_w$
such that $V_g \widetilde{f} = F = V_g f \in \wienerL{Y}$.
Therefore, $\widetilde{f} \in \Co_g (Y)$,
and $\widetilde{f}|_{\mathcal{S}} = f$ by \eqref{eq:repro_S}
and Part~(iii) of \Cref{lem:basic_Rw}.
\end{proof}

\section{Analyzing and better vectors}

This section provides (somewhat more) explicit descriptions of the coorbit spaces
$\Co(Y)$ for the specific choices $Y$ being $L_w^1(G), L^2(G)$ or $L_{1/w}^\infty(G)$.
In particular, this section completely resolves the question of the
relation between the sets of so-called ``analyzing vectors''  and ``better vectors'';
see \Cref{prop:AnalyzingVectorsAreBetterOnlyIfIN}.
To the best of our knowledge, this question was an open problem in the literature even for Banach spaces;
see for instance \cite[Section 6.3]{groechenig1991describing}.

As defined in \Cref{eq:CoorbitDefinition}, the coorbit space $\Co_g(Y)$
consists of those $f \in \Reservoir_w$ for which $V_g f \in \wienerL{Y}$.
However, in the literature considering coorbit spaces associated with genuine
Banach function spaces $Y$, one usually defines
$\Co_g (Y) = \{ f \in \Reservoir_w \colon V_g f \in Y \}$.
The following proposition identifies a sufficient condition regarding $Y$
under which both definitions coincide; in particular, it is applicable to all Banach function spaces.

\begin{proposition}\label{prop:selfimproving}
  Let $Y$ be a solid, translation-invariant quasi-Banach function space
  with $p$-norm $\| \cdot \|_Y$ (for some $p \in (0,1]$) and
  such that $Y \ast \wienerSt{L^p_w} \hookrightarrow Y$
  and $\wienerL{Y} \hookrightarrow L_{1/w}^\infty (G)$
  for a $p$-weight $w$.
  Then $Y$ is $L_w^p$-compatible, and if $g \in \Bw^p$ is an admissible vector, then
  \begin{equation}
    \Co_g (Y)
    := \big\{
         f \in \Reservoir_w : V_g f \in \wienerL{Y}
       \big\}
     = \big\{ f \in \Reservoir_w : V_g f \in Y \big\},
    \label{eq:CoorbitWithoutWiener}
  \end{equation}
  with $\| f \|_{\Co_g (Y)} \asymp \|V_g f \|_{Y}$.

  In addition, if $\wienerL{Y} \hookrightarrow L^2(G)$, then (up to canonical identifications),
  it holds that
  \begin{equation}
    \Co_g (Y)
    = \big\{
        f \in \Hpi
        \colon
        V_g f \in \wienerL{Y}
      \big\}
    = \big\{
        f \in \Hpi
        \colon
        V_g f \in Y
      \big\}
    .
    \label{eq:CoorbitWithHilbertAsReservoir}
  \end{equation}
\end{proposition}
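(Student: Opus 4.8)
The plan is to verify first that $Y$ is $L_w^p$-compatible and then prove the two displayed identities \eqref{eq:CoorbitWithoutWiener} and \eqref{eq:CoorbitWithHilbertAsReservoir}. For compatibility, condition \ref{enu:CompatibleLInfty} ($\wienerL{Y}\hookrightarrow L^\infty_{1/w}$) and condition \ref{enu:CompatibleTranslationInvariant} (translation invariance) are assumed directly, so only condition \ref{enu:CompatibleConvolution}, namely $\wienerL{Y}\ast\wienerSt{L^p_w}\hookrightarrow\wienerL{Y}$, needs proof. This follows from the hypothesis $Y\ast\wienerSt{L^p_w}\hookrightarrow Y$ together with the maximal function estimate \eqref{eq:ConvolutionMaximalFunctionEstimate}: for $F_1\in\wienerL{Y}$ and $F_2\in\wienerSt{L^p_w}$ one has $\maxL(F_1\ast F_2)\le |F_1|\ast\maxL F_2$, and since $\maxL F_2\in\wienerR{L^p_w}$ (indeed $\maxR\maxL F_2 = \maxSt F_2\in L^p_w$), the solidity of $Y$ gives $\|F_1\ast F_2\|_{\wienerL{Y}} = \|\maxL(F_1\ast F_2)\|_Y \le \||F_1|\ast\maxL F_2\|_Y \lesssim \|\,|F_1|\,\|_Y\,\|\maxL F_2\|_{\wienerSt{L^p_w}}$; but one must be a bit careful to land in $\wienerL{Y}$ rather than just $Y$, so the cleaner route is to note $\maxL F_1\in Y$, hence $\maxL F_1\ast\maxR(\maxL F_2) = \maxL F_1\ast\maxSt F_2\in Y$ by $Y\ast\wienerSt{L^p_w}\hookrightarrow Y$, and then $\maxL(F_1\ast F_2)\le\maxL F_1\ast\maxL F_2$ is dominated by a function in $\wienerR{Y}$, giving the claim. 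Once compatibility is established, \Cref{def:coorbit} and \Cref{prop:coorbit_basic} apply.

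For the identity \eqref{eq:CoorbitWithoutWiener}, the inclusion $\subseteq$ is immediate from $\wienerL{Y}\hookrightarrow Y$ (stated in Section~\ref{sec:AmalgamPrelims}). For the reverse inclusion, suppose $f\in\Reservoir_w$ with $V_g f\in Y$; I want to conclude $V_g f\in\wienerL{Y}$. The key is the reproducing formula \eqref{eq:repro_extended}, $V_g f = V_g f\ast_\sigma V_g g$, which gives the pointwise bound $|V_g f|\le |V_g f|\ast |V_g g|$, and hence, using \eqref{eq:ConvolutionMaximalFunctionEstimate} together with $V_g g\in\wienerSt{L^p_w}$ and therefore $\maxL V_g g\in\wienerR{L^p_w}$, we get
\[
  \maxL(V_g f) = \maxL(V_g f\ast_\sigma V_g g) \le |V_g f|\ast\maxL(V_g g).
\]
Since $V_g f\in Y$ and $\maxL(V_g g)\in\wienerR{L^p_w}$, the hypothesis $Y\ast\wienerSt{L^p_w}\hookrightarrow Y$ — or more precisely the convolution relation $Y\ast\wienerR{L^p_w}\hookrightarrow Y$ that follows from it (by applying $Y\ast\wienerSt{L^p_w}\hookrightarrow Y$ to $F_2$ replaced by a function whose $\maxR$ dominates it, using solidity) — shows $\maxL(V_g f)\in Y$, i.e.\ $V_g f\in\wienerL{Y}$. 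This also yields the norm equivalence $\|V_g f\|_{\wienerL{Y}}\asymp\|V_g f\|_Y$, since $\|V_g f\|_Y\le\|\maxL V_g f\|_Y = \|V_g f\|_{\wienerL{Y}}$ trivially.

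For \eqref{eq:CoorbitWithHilbertAsReservoir}, the additional hypothesis $\wienerL{Y}\hookrightarrow L^2(G)$ is used to identify the reservoir functionals with honest Hilbert space elements. If $f\in\Co_g(Y)$, then $V_g f\in\wienerL{Y}\hookrightarrow L^2(G)$, and since $V_g f$ reproduces itself under $\ast_\sigma V_g g$, it lies in the reproducing kernel space $\CalK_g = V_g(\Hpi)$ (cf.\ \eqref{eq:RKHS}); hence $V_g f = V_g f_0$ for a unique $f_0\in\Hpi$, and by injectivity of $V_g$ on $\Reservoir_w$ (Part~(iii) of \Cref{lem:basic_Rw}, using $\Hpi\hookrightarrow\Reservoir_w$ from Part~(i)) the functional $f$ is the canonical image of $f_0$. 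Conversely, any $f\in\Hpi$ with $V_g f\in\wienerL{Y}$ is already in $\Reservoir_w$ and in $\Co_g(Y)$. Combined with \eqref{eq:CoorbitWithoutWiener} (whose proof only used structural facts available for $f\in\Reservoir_w$, hence a fortiori for $f\in\Hpi$) this gives the full chain of equalities. The main obstacle I anticipate is the bookkeeping around moving between $\wienerL{L^p_w}$, $\wienerR{L^p_w}$, $\wienerSt{L^p_w}$ and their involutions when invoking $Y\ast\wienerSt{L^p_w}\hookrightarrow Y$ — i.e.\ making sure the maximal function that appears (a $\maxL$ on the left factor $V_g f$, matched against $\maxL V_g g$ which needs its own $\maxR$) is handled correctly using the symmetry $(\maxL F)^\vee = \maxR F^\vee$ and the fact that $L^p_w = [L^p_w]^\vee$ for a $p$-weight $w$; everything else is a routine application of the reproducing formula and solidity.
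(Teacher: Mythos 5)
Your proposal follows essentially the same route as the paper's proof: compatibility via the estimate $\maxL(F_1\ast F_2)\le |F_1|\ast\maxL F_2$ combined with the hypothesis $Y\ast\wienerSt{L_w^p}\hookrightarrow Y$; the identity \eqref{eq:CoorbitWithoutWiener} via the reproducing formula $V_g f = V_g f\ast_\sigma V_g g$; and \eqref{eq:CoorbitWithHilbertAsReservoir} via $\wienerL{Y}\hookrightarrow L^2(G)$ together with the observation that $F\mapsto F\ast_\sigma V_g g$ maps $L^2(G)$ into $\CalK_g$ (the paper phrases this through the Hilbert-space adjoint $V_g^\ast$, but it is the same computation). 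One correction: your auxiliary claim that $Y\ast\wienerR{L_w^p}\hookrightarrow Y$ ``follows'' from $Y\ast\wienerSt{L_w^p}\hookrightarrow Y$ by dominating an element of $\wienerR{L_w^p}$ by one of $\wienerSt{L_w^p}$ is false in general (such a dominating function need not exist) and is also unnecessary: what you actually need, and what the paper uses, is that $H\in\wienerSt{L_w^p}$ implies $\maxL H\in\wienerSt{L_w^p}$ with $\|\maxL H\|_{\wienerSt{L_w^p}}\lesssim\|H\|_{\wienerSt{L_w^p}}$ (a consequence of the independence of the amalgam norm from the choice of $Q$), after which the hypothesis applies directly to the pair $(|V_g f|,\maxL[V_g g])$. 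Likewise your ``cleaner route'' for compatibility is superfluous, since $\|F_1\ast F_2\|_{\wienerL{Y}}=\|\maxL(F_1\ast F_2)\|_Y$ by definition, so the first estimate already lands in $\wienerL{Y}$.
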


\begin{proof}
  Using the estimate \eqref{eq:ConvolutionMaximalFunctionEstimate},
  it holds for $F \in \wienerL{Y}$ and $H \in \wienerSt{L_w^p}$ that
  \begin{align*}
    \| F \ast H \|_{\wienerL{Y}}
    & = \big\| \maxL [F \ast H] \big\|_{\wienerL{Y}}
      \leq \big\| |F| \ast \maxL H \big\|_{Y} \\
    & \lesssim \big\| |F| \big\|_Y  \| \maxL H \|_{\wienerSt{L_w^p}}
      \lesssim \big\| F \big\|_{\wienerL{Y}}  \| H \|_{\wienerSt{L_w^p}}
    .
  \end{align*}
  Hence, $Y$ is $L_w^p$-compatible.

  Assume $g \in \Hpi$ is admissible and $V_g g \in \wienerSt{L^p_w} \subseteq L^1_w$. An application of
  Lemma~\ref{lem:basic_Rw} yields that $V_g f = V_g f \ast_{\sigma} V_g g$
  for $f \in \Reservoir_w$.
  Therefore, if $V_g f \in Y$, then the estimate \eqref{eq:ConvolutionMaximalFunctionEstimate}
  shows that
  \begin{align*}
    \| V_g f \|_{\wienerL{Y}}
    \leq \| \maxL (|V_g f| \ast |V_g g|) \|_{Y}
    \lesssim \| V_g f \|_Y \| \maxL [V_g g] \|_{\wienerSt{L_w^p}}
    \lesssim \|V_g f \|_Y \|V_g g\|_{\wienerSt{L_w^p}},
  \end{align*}
  wand hence $\| f \|_{\Co_g (Y)} \lesssim \| V_g f \|_{Y}$.
  The estimate $\|V_g f \|_{Y} \leq \| f \|_{\Co_g (Y)}$ is immediate.

  \medskip{}

  To prove \Cref{eq:CoorbitWithHilbertAsReservoir},
  it suffices because of $\Hpi \hookrightarrow \Reservoir_w$
  to show that each $f \in \Co_g (Y)$ satisfies $f \in \iota(\Hpi)$, with the canonical embedding $\iota : \Hpi \to \Reservoir_w$.
  For clarity, we denote by $V_g^e$ (resp. $V_g$) the coefficient transform on $\Reservoir_w$ (resp. $\Hpi$).
  Let $f \in \Co_g (Y)$ be arbitrary and note
  that $F := V_g^e f \in \wienerL{Y} \hookrightarrow L^2(G)$
  satisfies $F = F \ast_\sigma V_g g$; see \Cref{lem:basic_Rw}.
  With the usual Hilbert-space adjoint $V_g^\ast : L^2(G) \to \Hpi$ of $V_g : \Hpi \to L^2(G)$,
  let $\widetilde{f} := V_g^\ast F \in \Hpi$.
  Then, for any $x \in G$,
  \begin{align*}
    V_g^e [\iota \widetilde{f}] (x)
    & = \langle \iota \widetilde{f}, \pi(x) g \rangle_{\Reservoir_w, \Hw}
      = \langle \widetilde{f}, \pi(x) g \rangle_{\Hpi}
      = \langle V_g^\ast F, \pi(x) g \rangle_{\Hpi}
      = \langle F, V_g [\pi(x) g] \rangle_{L^2} \\
    & = \int_{G}
          F(y)
          \overline{
            \langle
              \pi(x) g,
              \pi(y) g
            \rangle
          }
        \, d \mu_G(y)
      = \int_G
          F(y)
          V_g [\pi(y) g] (x)
        \, d \mu_G(y) \\
    & =
        \int_G
          F(y)
          \twisttranslationL{y} [V_g g] (x)
        \, d \mu_G(y)
      = (F \ast_\sigma V_g g) (x)
      = F(x)
      = V_g^e f (x)
      .
  \end{align*}
Hence, $V_g^e (\iota \widetilde{f} \,) = V_g^e f$.
  Since $V_g^e : \Reservoir_w \to L_{1/w}^\infty (G)$ is injective by \Cref{lem:basic_Rw},
  this implies $f = \iota \widetilde{f} \in \iota(\Hpi)$, as required.
\end{proof}

\begin{remark}
  As stated above, \Cref{prop:selfimproving} could also apply for \emph{quasi}-Banach spaces,
  but the issue is that when $Y$ is not a Banach space but rather a quasi-Banach space,
  then the convolution relation $Y \ast \wienerSt{L_w^p} \hookrightarrow Y$ is rarely valid.
  Nevertheless, the identity \eqref{eq:CoorbitWithoutWiener} is known to hold
  for a variety of quasi-Banach spaces;
  see, e.g., \cite{rauhut2007coorbit, galperin2004time, koppensteiner2023anisotropic, koppensteiner2023anisotropic2,liang2012new}.
  It is an open problem whether \eqref{eq:CoorbitWithoutWiener} holds
  in general for quasi-Banach spaces; see also \Cref{thm:local_wavelet}.
\end{remark}

\begin{lemma}\label{lem:coincidence}
Let $w : G \to [1,\infty)$ be a $p$-weight for some $p \in (0,1]$
and let $g \in \Bw^p$ be admissible.
Then each of the spaces $L_w^1(G), L^2(G), L^\infty_{1/w}(G)$ and $\wienerR{L_w^1}$
 is $L_w^p$-compatible.
Furthermore, the following identifications hold:
\begin{enumerate}[label=(\roman*)]
  \item $\Co_g (L^1_w(G)) = \Hw(g)$.

  \item $\Co_g (L^2 (G)) = \Hpi$.

  \item $\Co_g (\wienerR{L^1_w}) = \{ f \in \Hpi \colon V_g f \in \wienerSt{L_w^1} \}$.

  \item $\Co_g (L^{\infty}_{1/w}(G)) = \Reservoir_w$.
\end{enumerate}
\end{lemma}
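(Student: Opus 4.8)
I would deduce every assertion from \Cref{prop:selfimproving}. For each of the four spaces $Y\in\{L_w^1(G),\,L^2(G),\,L_{1/w}^\infty(G),\,\wienerR{L_w^1}\}$ I would verify the hypotheses of that proposition; this \emph{simultaneously} gives the $L_w^p$-compatibility of $Y$ and the description $\Co_g(Y)=\{f\in\Reservoir_w:V_gf\in Y\}$ with an equivalent quasi-norm, and for the three spaces additionally satisfying $\wienerL{Y}\hookrightarrow L^2(G)$ it gives $\Co_g(Y)=\{f\in\Hpi:V_gf\in\wienerL{Y}\}=\{f\in\Hpi:V_gf\in Y\}$. The four identities then follow by recognizing the right-hand sides: $\{f\in\Hpi:V_gf\in L_w^1\}=\Hw(g)$ by definition of $\Hw(g)$; $\{f\in\Hpi:V_gf\in L^2\}=\Hpi$ because $V_g:\Hpi\to L^2(G)$ is an isometry; $\{f\in\Hpi:V_gf\in\wienerL{\wienerR{L_w^1}}\}=\{f\in\Hpi:V_gf\in\wienerSt{L_w^1}\}$ since $\wienerL{\wienerR{Y}}=\wienerSt{Y}$ (immediate from $\maxSt=\maxR\maxL$); and $\{f\in\Reservoir_w:V_gf\in L_{1/w}^\infty\}=\Reservoir_w$ by \Cref{lem:basic_Rw}(i), the norm equivalence coming from \Cref{lem:basic_Rw}(v).

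\textbf{The auxiliary embeddings.} That each $Y$ is a solid, translation-invariant quasi-Banach function space carrying a $p$-norm is routine: each carries a genuine norm (hence a $p$-norm for the given $p$), and the translation operators are bounded because $w$ is submultiplicative and locally bounded (for $L^2$ the right translations are bounded by $\Delta^{1/2}$). For $\wienerL{Y}\hookrightarrow L_{1/w}^\infty$: \Cref{lem:AmalgamWeightedLInftyEmbedding}(ii) gives $\wienerL{L_w^1}\hookrightarrow L_w^\infty\hookrightarrow L_{1/w}^\infty$ (using $w\geq1$); part~(i) with $v\equiv1$ gives $\wienerL{L^2}\hookrightarrow L^\infty\hookrightarrow L_{1/w}^\infty$; part~(i) with $v=w$ (using $\|L_{y^{-1}}\|_{L_{1/w}^\infty\to L_{1/w}^\infty}\leq w(y)$, a one-line consequence of submultiplicativity) gives $\wienerL{L_{1/w}^\infty}\hookrightarrow L_{1/w}^\infty$; and for $\wienerR{L_w^1}$ one uses $\wienerL{\wienerR{L_w^1}}=\wienerSt{L_w^1}\hookrightarrow\wienerL{L_w^1}\hookrightarrow L_w^\infty\hookrightarrow L_{1/w}^\infty$. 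For $\wienerL{Y}\hookrightarrow L^2(G)$ (needed only for $Y=L_w^1,L^2,\wienerR{L_w^1}$): trivially $\wienerL{L^2}\hookrightarrow L^2$; for $L_w^1$ combine $\wienerL{L_w^1}\hookrightarrow L_w^1\cap L_w^\infty$ with the elementary bound $\|f\|_{L^2}^2\leq\|f\|_{L_w^\infty}\,\|f\|_{L_w^1}$ (valid since $w\geq1$); for $\wienerR{L_w^1}$ use $\wienerL{\wienerR{L_w^1}}=\wienerSt{L_w^1}\hookrightarrow\wienerL{L_w^1}\hookrightarrow L^2$.

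\textbf{The convolution relation $Y\ast\wienerSt{L_w^p}\hookrightarrow Y$.} First I would record that for $F\in\wienerSt{L_w^p}$ one has $\maxL F,\maxR F\leq\maxSt F\in L_w^p$, so by solidity $\maxL F,\maxR F\in L_w^p$, whence $\maxL F,\maxR F\in\wienerL{L_w^p}\hookrightarrow L_w^1$ by \Cref{lem:AmalgamWeightedLInftyEmbedding}(ii); in particular $\wienerSt{L_w^p}\hookrightarrow L_w^1$, and since $(L_w^p)^\vee=L_w^p$ by condition~\ref{enu:WeightPSymmetric} also $F^\vee\in\wienerSt{L_w^p}\hookrightarrow L_w^1$, all with norms $\lesssim\|F\|_{\wienerSt{L_w^p}}$. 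Then: for $Y=L_w^1$ use $L_w^1\ast L_w^1\hookrightarrow L_w^1$; for $Y=\wienerR{L_w^1}$ use $\maxR(F_1\ast F_2)\leq\maxR F_1\ast|F_2|\leq\maxR F_1\ast\maxL F_2\in L_w^1\ast L_w^1\hookrightarrow L_w^1$ via \eqref{eq:ConvolutionMaximalFunctionEstimate}; for $Y=L^2$ and $Y=L_{1/w}^\infty$ invoke \Cref{lem:SolidBanachSpaceConvolutionRelation} — both are solid, right-translation-invariant Banach function spaces with the weak Fatou property (constant $1$), and $\|R_x\|_{Y\to Y}\leq w(x)$ holds: for $L_{1/w}^\infty$ directly from submultiplicativity, and for $L^2$ because $\|R_x\|_{L^2\to L^2}=\Delta^{1/2}(x^{-1})$ while \ref{enu:WeightPSymmetric} together with $w\geq1$ and $p\leq1$ forces $w(x)\geq\max\{1,\Delta^{1/p}(x^{-1})\}\geq\Delta^{1/2}(x^{-1})$; hence \Cref{lem:SolidBanachSpaceConvolutionRelation} yields $\|F_1\ast F_2\|_Y\lesssim\|F_1\|_Y\,\|F_2^\vee\|_{L_w^1}\lesssim\|F_1\|_Y\,\|F_2\|_{\wienerSt{L_w^p}}$.

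\textbf{Expected main obstacle.} Everything except the last step is assembly of results already in the paper. The genuinely delicate point is verifying $\|R_x\|_{Y\to Y}\leq w(x)$ for $Y=L^2$ and $Y=L_{1/w}^\infty$ — i.e.\ making the modular-function bookkeeping work out from the weight symmetry \ref{enu:WeightPSymmetric}, where the inequality $p\leq1$ is exactly what is needed so that $\max\{1,\Delta^{1/p}(x^{-1})\}$ dominates $\Delta^{1/2}(x^{-1})$ in all cases. With that in hand, \Cref{prop:selfimproving} applies to all four spaces and the lemma follows as described above (including the norm equivalences $\|\cdot\|_{\Co_g(L_w^1)}\asymp\|\cdot\|_{\Hw(g)}$, $\|\cdot\|_{\Co_g(L^2)}\asymp\|\cdot\|_{\Hpi}$, and $\|\cdot\|_{\Co_g(L_{1/w}^\infty)}\asymp\|\cdot\|_{\Reservoir_w}$).
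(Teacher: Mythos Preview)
Your proposal is correct and follows essentially the same approach as the paper: both reduce everything to \Cref{prop:selfimproving}, verify the three hypotheses (translation invariance, $Y\ast\wienerSt{L_w^p}\hookrightarrow Y$, and $\wienerL{Y}\hookrightarrow L_{1/w}^\infty$) for each of the four spaces using \Cref{lem:AmalgamWeightedLInftyEmbedding} and \Cref{lem:SolidBanachSpaceConvolutionRelation}, and single out the modular-function estimate $\|R_x\|_{L^2\to L^2}=\Delta^{1/2}(x^{-1})\leq w(x)$ as the only nonroutine point. The minor variations (your interpolation $\|f\|_{L^2}^2\leq\|f\|_{L_w^\infty}\|f\|_{L_w^1}$ versus the paper's direct $\wienerL{L_w^1}\hookrightarrow L_w^2\hookrightarrow L^2$ from \Cref{lem:AmalgamWeightedLInftyEmbedding}(ii), and your use of part~(i) for $\wienerL{L_{1/w}^\infty}\hookrightarrow L_{1/w}^\infty$ where the paper simply invokes $\wienerL{Y}\hookrightarrow Y$) are inessential.
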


\begin{proof}
As a preparation, suppose that $v, v_0 : G \to (0,\infty)$ are measurable and
such that $v(x y) \leq v(x) v_0 (y)$ and $v(x y) \leq v_0(x) v(y)$ for all $x,y \in G$.
Using the well-known identity $\int_G f(t s) \, d \mu_G (t) = \Delta(s^{-1}) \int_G f(t) \, d \mu_G (t)$
(see, e.g., \cite[Proposition~2.24]{FollandAHA}), it then follows that
\begin{align*}
  \| \translationR{x} f \|_{L_v^p}^p
  & = \int_G
        |f(y x)|^p  [v(y x x^{-1})]^p
      \, d \mu_G(y) \\
  & = \Delta(x^{-1})
      \int_G
        |f(z)|^p  [v(z x^{-1})]^p
      \, d \mu_G(z)
    \leq [v_0(x^{-1})]^p  \Delta(x^{-1})  \| f \|_{L_v^p}^p
    .
\end{align*}
A similar computation for the left-translation shows
\begin{equation}
  \| \translationL{x} f \|_{L_v^p}
  \leq v_0(x)  \| f \|_{L_v^p}
  \qquad \text{and} \qquad
  \| \translationR{x} f \|_{L_v^p}
  \leq v_0(x^{-1})  [\Delta(x^{-1})]^{1/p}  \| f \|_{L_v^p}
  \label{eq:LebesgueModeratelyWeightedTranslationNorm}
\end{equation}
for all $x \in G$ and $f \in L_v^p(G)$, for arbitrary $p \in (0,\infty)$;
these estimates also remain valid for $p = \infty$.
Next, note $w(x) = w(x y y^{-1}) \leq w(x y) w(y^{-1})$ and hence
$\frac{1}{w (x y)} \leq \frac{1}{w(x)} w^{\vee}(y)$;
similar arguments show that also $\frac{1}{w(x y)} \leq w^{\vee}(x) \frac{1}{w(y)}$.
Therefore, applying \Cref{eq:LebesgueModeratelyWeightedTranslationNorm} with $(v,v_0) = (w,w)$
or $(v,v_0) = (1,1)$ or $(v,v_0) = (\frac{1}{w}, w^{\vee})$, respectively,
shows that each space $Y \in \{ L_w^1, L^2, L_{1/w}^\infty \}$ is invariant
under left- and right-translations.
More precisely, we see because of $p \in (0,1]$ and by conditions \ref{enu:WeightBoundedBelow}
and \ref{enu:WeightPSymmetric} that
\[
  \| \translationR{x} \|_{L^2 \to L^2}
  \leq [\Delta(x^{-1})]^{1/2}
  \leq \begin{cases}
         [\Delta(x^{-1})]^{1/p} \leq w(x^{-1}) \Delta^{1/p}(x^{-1}) = w(x),
         & \text{if } \Delta(x^{-1}) \geq 1, \\
         1 \leq w(x),
         & \text{if } \Delta(x^{-1}) <    1.
       \end{cases}
\]
and $\| \translationR{x} \|_{L_{1/w}^\infty \to L_{1/w}^\infty} \leq w^{\vee}(x^{-1}) = w(x)$.

We now verify that \Cref{prop:selfimproving} is applicable for each of the spaces in question.

First, note that any $Y \in \{ L_w^1(G), L^2(G), L^{\infty}_{1/w}(G), \wienerR{L_w^1} \}$
is a solid Banach space and thus, in particular, a $p$-normed solid quasi-Banach function space.
We saw above that each of the spaces $Y \in \{ L_w^1(G), L^2(G), L^{\infty}_{1/w}(G)\}$
is translation-invariant.
By the properties of Wiener amalgam spaces collected in \Cref{sec:AmalgamPrelims},
the same then also holds for $Y = \wienerR{L_w^1}$.

Second, it will be shown that  $Y \ast \wienerSt{L_w^p} \hookrightarrow Y$.
To handle the case $Y = L_w^1$, we recall from \cite[§3.7]{ReiterClassicalHarmonicAnalysis}
that the convolution relation $L_w^1 \ast L_w^1 \hookrightarrow L_w^1$ holds,
since $w$ is submultiplicative.
Next, note that
\(
  \wienerSt{L_w^p}
  = \wienerR{\wienerL{L_w^p}}
  \hookrightarrow \wienerL{L_w^p}
  \hookrightarrow L_w^1
  ,
\)
where the last step follows from \Cref{lem:AmalgamWeightedLInftyEmbedding}.
Overall, we thus see that
$L_w^1 \ast \wienerSt{L_w^p} \hookrightarrow L_w^1 \ast L_w^1 \hookrightarrow L_w^1$,
as required.
For $F_1 \in \wienerR{L_w^1}$ and $F_2 \in \wienerSt{L_w^p}$,
\Cref{eq:ConvolutionMaximalFunctionEstimate} shows that
\begin{align*}
  \| F_1 \ast F_2 \|_{\wienerR{L_w^1}}
  & = \big\| \maxR[F_1 \ast F_2] \big\|_{L_w^1}
    \leq \big\| \maxR F_1 \ast |F_2| \big\|_{L_w^1} \\
  & \hspace*{-0.95cm} \lesssim
    \| \maxR F_1 \|_{L_w^1}  \| F_2 \|_{\wienerSt{L_w^p}}
    = \| F_1 \|_{\wienerR{L_w^1}}  \| F_2 \|_{\wienerSt{L_w^p}}
  ,
\end{align*}
which proves that $\wienerR{L_w^1} \ast \wienerSt{L_w^p} \hookrightarrow \wienerR{L_w^1}$,
thereby settling the case $Y = \wienerR{L_w^1}$.
If $Y \in \{ L^2, L_{1/w}^\infty \}$ recall from above
that $\| R_x \|_{Y \to Y} \leq w(x)$; hence, \Cref{lem:SolidBanachSpaceConvolutionRelation}
shows that $Y \ast [L_w^1]^{\vee} \hookrightarrow Y$.
Since $\wienerSt{L_w^p} \hookrightarrow \wienerR{L_w^p} \hookrightarrow [L_w^1]^{\vee}$
by \Cref{lem:AmalgamWeightedLInftyEmbedding}, this implies that
$Y \ast \wienerSt{L_w^p} \hookrightarrow Y \ast [L_w^1]^{\vee} \hookrightarrow Y$
for $Y \in \{ L^2, L_{1/w}^\infty \}$.

Lastly, it will be shown that $\wienerL{Y} \hookrightarrow L_{1/w}^\infty(G)$
Since $w \geq 1$, we have $L_w^\infty \hookrightarrow L_{1/w}^\infty$.
Thus, \Cref{lem:AmalgamWeightedLInftyEmbedding} implies that
\(
  \wienerL{\wienerR{L_w^1}}
  \hookrightarrow \wienerL{L_w^1}
  \hookrightarrow L_w^\infty
  \hookrightarrow L_{1/w}^\infty
  ,
\)
which proves the conclusion for $Y \in \{ L_w^1, \wienerR{L_w^1} \}$.
\Cref{lem:AmalgamWeightedLInftyEmbedding} also shows
$\wienerL{L^2} \hookrightarrow L^\infty \hookrightarrow L_{1/w}^\infty$,
where the last step again used that $w \geq 1$.
We trivially have $\wienerL{L_{1/w}^\infty} \hookrightarrow L_{1/w}^\infty$.

Overall, \Cref{prop:selfimproving} shows that each
$Y \in \{ L_w^1, L^2, L_{1/w}^\infty, \wienerR{L_w^1} \}$ is $L_w^p$-compatible.
We now prove the remaining parts of the lemma.

\medskip{}

(i)
\Cref{lem:AmalgamWeightedLInftyEmbedding} shows that
$\wienerL{L_w^1} \hookrightarrow L_w^2 \hookrightarrow L^2$, since $w \geq 1$.
Therefore, \Cref{prop:selfimproving} shows that
\(
  \Co_g (L_w^1)
  = \{ f \in \Hpi \colon V_g f \in L_w^1 \}
  = \Hw
  .
\)

(ii)
We trivially have $\wienerL{L^2} \hookrightarrow L^2$, and by admissibility of $g$
it holds that $V_g f \in L^2(G)$ for every $f \in \Hpi$.
Therefore, \Cref{prop:selfimproving} shows
\(
  \Co_g (L^2)
  = \{ f \in \Hpi \colon V_g f \in L^2(G) \}
  = \Hpi
  .
\)

(iii)
We have $\wienerR{L_w^1} \hookrightarrow L_w^1 \hookrightarrow L^2$ (see the proof of \emph{(i)}).
Therefore, \Cref{prop:selfimproving} shows that
\(
  \Co_g (\wienerR{L_w^1})
  = \{ f \in \Hpi \colon V_g f \in \wienerL{\wienerR{L_w^1}} \}
  = \{ f \in \Hpi \colon V_g f \in \wienerSt{L_w^1} \}
  ,
\)
as claimed.

(iv)
By \Cref{lem:basic_Rw}, we have $V_g f \in L_{1/w}^\infty (G)$ for all $f \in \Reservoir_w$.
Therefore, \Cref{prop:selfimproving} shows
\(
  \Co_g (L_{1/w}^\infty)
  = \{ f \in \Reservoir_w \colon V_g f \in L_{1/w}^\infty \}
  = \Reservoir_w
  ,
\)
as claimed.
\end{proof}

Lastly, it will be shown that the important auxiliary spaces
of so-called ``analyzing vectors'' and ``better vectors''
(cf.\ \cite{feichtinger1989banach1, feichtinger1989banach2})
coincide precisely if the group $G$ is an IN group.
That $G$ being IN is sufficient is simple and well-known
(see, e.g., \cite[Lemma~7.2]{feichtinger1989banach2}), but its necessity remained open;
see also \cite[Section~6.3]{groechenig1991describing}.

\begin{proposition}\label{prop:AnalyzingVectorsAreBetterOnlyIfIN}
Let $w : G \to [1,\infty)$ be a $p$-weight and let $g \in \Bw^p$ be admissible.
The following assertions are equivalent:
\begin{enumerate}[label=(\roman*)]
    \item The spaces $\Co_g (L^1_w)$ and $\Co_g (\wienerR{L^1_w})$
          coincide, i.e., $\Co_g (L^1_w) = \Co_g (\wienerR{L^1_w})$.

    \item The group $G$ is an \emph{IN group}, i.e., there exists a relatively compact
          unit neighborhood $U \subseteq G$ such that $x^{-1} U x = U$ for all $x \in G$.
\end{enumerate}
\end{proposition}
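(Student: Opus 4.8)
The plan is to prove both implications by first observing that (i) is equivalent to a single uniform norm estimate. By \Cref{lem:coincidence} and \Cref{prop:selfimproving} one has the identifications
\[
  \Co_g(L^1_w) = \{ f \in \Hpi : V_g f \in L^1_w \},
  \qquad
  \Co_g(\wienerR{L^1_w}) = \{ f \in \Hpi : V_g f \in \wienerR{L^1_w} \},
\]
with $\| f \|_{\Co_g(L^1_w)} \asymp \| V_g f \|_{L^1_w}$ and $\| f \|_{\Co_g(\wienerR{L^1_w})} \asymp \| \maxR V_g f \|_{L^1_w}$, and the canonical inclusion $\Co_g(\wienerR{L^1_w}) \hookrightarrow \Co_g(L^1_w)$ always holds and is continuous (since $\maxR F \geq |F|$ a.e.). Both spaces are quasi-Banach by \Cref{prop:coorbit_basic}, hence F-spaces, so by the open mapping theorem the inclusion is onto --- i.e.\ (i) holds --- precisely when there is $C > 0$ with $\| \maxR V_g f \|_{L^1_w} \leq C \, \| V_g f \|_{L^1_w}$ for all $f \in \Hw(g)$. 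Write $\Phi := V_g g \in \wienerSt{L^p_w} \hookrightarrow \wienerSt{L^1_w}$; by \Cref{lem:basic_Rw} every $f \in \Reservoir_w$ obeys the reproducing identity $V_g f = V_g f \ast_{\sigma} \Phi$.

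For (ii)$\Rightarrow$(i): if $G$ is IN, fix an open, symmetric, relatively compact, conjugation-invariant unit neighborhood $U$ (obtained from the given one by taking its interior and intersecting with its inverse). Since the Wiener amalgam spaces are independent of the defining neighborhood, all maximal functions may be formed with respect to $U$. The key point is that on an IN group the right-maximal function passes through convolutions onto \emph{either} factor: using $s^{-1} U s = U$ one gets, for $H_1, H_2 \geq 0$,
\[
  \maxR_U(H_1 \ast_{\sigma} H_2)(x)
  \leq \int |H_1(s)| \, \esssup_{u \in s^{-1}Ux} |H_2(u)| \, \dd{s}
  = \int |H_1(s)| \, \maxR_U H_2(s^{-1}x) \, \dd{s}
  = (|H_1| \ast \maxR_U H_2)(x),
\]
and combining this with the standard estimate $\maxL_U(H_1 \ast_{\sigma} H_2) \leq |H_1| \ast \maxL_U H_2$ gives $\maxSt_U(H_1 \ast_{\sigma} H_2) \leq |H_1| \ast \maxSt_U H_2$. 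Applying this with $H_1 = V_g f$, $H_2 = \Phi$ and invoking the reproducing identity yields $\maxR_U(V_g f) \leq |V_g f| \ast \maxR_U \Phi \in L^1_w \ast L^1_w \hookrightarrow L^1_w$ (the convolution relation holds since $w$ is submultiplicative), with implied constant uniform in $f$. This is the required estimate, so (i) holds.

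For (i)$\Rightarrow$(ii) I argue by contraposition: assuming $G$ is not IN, I refute the above estimate by testing it on $f_a := \pi(a) g$, for which \Cref{lem:basic_cocycle} gives $V_g f_a = \twisttranslationL{a} \Phi$, hence $|V_g f_a| = \translationL{a} |\Phi|$. Since $|\Phi|$ is continuous with $|\Phi|(e_G) = \| g \|_{\Hpi}^2 > 0$, pick an open symmetric relatively compact $W \ni e_G$ with $|\Phi| \geq c_0 := \tfrac12 \| g \|_{\Hpi}^2$ on $W$. A short computation using symmetry of $Q$ and left-invariance of $\haarMeasure$ shows $\maxR(V_g f_a)(x) \geq c_0$ whenever $x \in Q\,a\,W = a\,(a^{-1} Q a)\,W$, so that
\[
  \| \maxR(V_g f_a) \|_{L^1_w}
  \geq c_0 \!\! \int_{(a^{-1} Q a) W} \!\!\! w(au) \, \dd{u},
  \qquad
  \| V_g f_a \|_{L^1_w}
  = \int_{G} |\Phi(u)| \, w(au) \, \dd{u}.
\]
Thus it suffices to produce $a_n \to \infty$ along which the first integral grows unboundedly relative to the second. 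Here the failure of the IN property is used: $\bigcup_{a \in G} a^{-1} Q a$ is then not relatively compact (its closure would otherwise be a compact conjugation-invariant unit neighborhood), so the conjugates $a^{-1} Q a$ become arbitrarily elongated, and one can arrange that the \emph{thickened} conjugates $(a_n^{-1} Q a_n) W$ carry --- against the measure $w(au)\,\dd{u}$, or at least against $\haarMeasure$ --- unboundedly more mass than $|\Phi|$, which is concentrated in a fixed region by its decay. This violates the estimate, hence $\Co_g(L^1_w) \neq \Co_g(\wienerR{L^1_w})$, contradicting (i).

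The main obstacle is precisely this last quantitative point: bounding $\int_{(a^{-1} Q a) W} w(au)\,\dd{u}$ from below in terms of how far $a^{-1} Q a$ spreads, and choosing the escape direction of $a_n$ so that $w(a_n \cdot)$ stays comparable to its values near $e_G$ on the relevant regions. In the connected (Lie) case I would handle this by a continuum argument --- $a^{-1}\overline{Q}a$ is a compact connected set joining $e_G$ to points escaping to infinity, and thickening such a path-like set by the fixed neighborhood $W$ forces its $\haarMeasure$-volume to grow with the escape distance --- while the general second-countable locally compact case, and the careful bookkeeping of the interplay with the weight $w$, will require additional input from the structure theory of non-IN groups.
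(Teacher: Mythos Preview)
Your argument for (ii)$\Rightarrow$(i) is correct, though the paper takes a shorter route: with a conjugation-invariant $Q$ one simply has $xQ=Qx$ and hence $\maxL F=\maxR F$ for every $F$, so $\wienerL{Y}=\wienerR{Y}$ and $\wienerSt{L^1_w}=\wienerL{\wienerL{L^1_w}}=\wienerL{L^1_w}$.

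The direction (i)$\Rightarrow$(ii), however, is not proved; what you have written is a correct \emph{reduction} plus an acknowledgment that you do not know how to complete it. Testing on $f_a=\pi(a)g$ and using submultiplicativity of $w$ (so that $w(y)\gtrsim w(a)$ for $y\in QaW$) one extracts from the uniform estimate a bound of the form $\haarMeasure(QaW)\leq C$ for all $a\in G$; taking $Q'\subseteq Q\cap W$ this yields $\sup_{a\in G}\haarMeasure(Q'aQ')<\infty$. The entire difficulty is to pass from this uniform measure bound to the IN property, and this step is genuinely nontrivial --- it is precisely \Cref{prop:INCharacterizedByTwosidedMeasure} in the paper. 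Your proposed substitutes do not suffice: the claim that $\bigcup_a a^{-1}Qa$ must fail to be relatively compact in a non-IN group is false (any compact group with trivial center is a counterexample to your reasoning, since one needs an \emph{open} invariant neighborhood), and the ``continuum argument'' that thickening an elongated conjugate by $W$ forces the Haar measure to grow is not justified (Haar measure need not be comparable to any length-like quantity, and in nonunimodular groups it can even shrink in directions where conjugates spread).

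The paper's proof supplies the missing ingredient by a self-contained argument (due to Tao): from $\haarMeasure(QxQ)\leq C$ one first deduces unimodularity, then shows $\langle\indicator_{xU^2x^{-1}},\indicator_{U^2}\rangle_{L^2}\geq\delta>0$ for all $x$, and finally uses the Hilbert projection theorem on the closed convex hull of the conjugates $\indicator_{xU^2x^{-1}}$ in $L^2(G)$ to find a nonzero conjugation-invariant $F_0\in L^2$; then $F_0\ast F_0\in C_0(G)$ is a continuous conjugation-invariant function that is positive at $e_G$, and any superlevel set furnishes the required invariant neighborhood. This is the idea you are missing.
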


The following result is an essential ingredient
in the proof of Proposition~\ref{prop:AnalyzingVectorsAreBetterOnlyIfIN}.
Its proof was provided to us by T.~Tao \cite{MathOverflowTaoAnswer};
hence, no originality is claimed.
The complete argument is included here, with a few added details.

\begin{proposition}\label{prop:INCharacterizedByTwosidedMeasure}
  If there exists an open, relatively compact unit neighborhood $Q \subseteq G$  satisfying
  \[
    \haarMeasure(Q x Q) \lesssim 1
    \qquad \text{for all } x \in \group,
  \]
  then $\group$ is an IN group.
\end{proposition}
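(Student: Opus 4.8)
The plan is to manufacture, directly from the hypothesis, a compact conjugation-invariant neighborhood of $e_G$. \textbf{Step 1 (unimodularity).} Since $e_G \in Q$, one has $QxQ \supseteq Qx$, so $\Delta(x)\,\mu_G(Q) = \mu_G(Qx) \le \mu_G(QxQ) \lesssim 1$ for all $x \in G$; a continuous homomorphism $\Delta : G \to (0,\infty)$ that is bounded above is trivial, so $G$ is unimodular. Hence $\mu_G$ is also right-invariant, giving $\mu_G(Q \cdot xQx^{-1}) = \mu_G(QxQ) \lesssim 1$ for every $x$. I also record that the hypothesis does not depend on the precise choice of $Q$ among open, relatively compact unit neighborhoods: it passes to smaller ones trivially, and to a larger one $Q'$ by covering $Q'$ by finitely many left- and by finitely many right-translates of $Q$ and invoking unimodularity. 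Thus I may, and will, take $Q$ symmetric and otherwise as convenient.

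\textbf{Step 2 (a uniform covering lemma).} Covering $Q^n$ by finitely many left- and right-translates of $Q$ and using unimodularity again yields $\mu_G(Q^n x Q^n) \lesssim_n \mu_G(QxQ) \lesssim_n 1$, i.e.\ $\mu_G(Q \cdot xQ^n x^{-1}) \lesssim_n 1$ for all $x$. Fix a symmetric open relatively compact $V \subseteq Q$. A standard packing argument applied to $A := xQ^n x^{-1}$ gives: a maximal family $\{a_i\} \subseteq A$ with the sets $V a_i$ pairwise disjoint has at most $\mu_G(Q\cdot A)/\mu_G(V) =: N_n \lesssim_n 1$ members, and by maximality $A \subseteq \bigcup_{i \le N_n} V^2 a_i \subseteq \bigcup_{i \le N_n} Q^2 a_i$ with $a_i \in A$; since $e_G \in A$, one of the $a_i$, say $a_1$, lies in $Q^2$. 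So every conjugate $xQ^n x^{-1}$ is covered by at most $N_n$ right-translates of $Q^2$, one of which sits inside $Q^2$.

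\textbf{Step 3 (the heart --- relative compactness of $W := \bigcup_{x \in G} xQx^{-1}$).} The set $W$ is open, contains $Q$, and is conjugation-invariant (reindex $x \mapsto yx$); once $\overline{W}$ is compact, $\overline{W}$ is a compact conjugation-invariant unit neighborhood and $G$ is IN. Suppose first that $Q$ can be chosen \emph{connected} (as it can when $G$ is a connected Lie group, since $G$ is then a manifold). Then each $A_x = xQx^{-1}$ and each piece $Q^2 a_i$ of Step 2 (with $n = 1$; write $N := N_1$) is connected, so the nerve of the finite open cover $\{Q^2 a_i \cap A_x\}_i$ of the connected set $A_x$ is connected; hence each $a_i$ is joined to $a_1 \in Q^2$ by a chain of at most $N - 1$ overlapping pieces, every overlap forcing $a_j a_k^{-1} \in (Q^2)^{-1} Q^2 = Q^4$. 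Therefore $a_i \in Q^{4N - 2}$ and $A_x \subseteq Q^{4N}$ for \emph{all} $x$, so $W \subseteq \overline{Q^{4N}}$ is relatively compact. The case of a general connected $G$ reduces to the connected Lie case by the Gleason--Yamabe theorem: choosing a compact normal subgroup $K$ with $G/K$ a connected Lie group, the hypothesis transfers along $p : G \to G/K$ because $Q := p^{-1}(\overline{Q}_0)$ is $K$-saturated and satisfies $QxQ = p^{-1}(\overline{Q}_0 \bar x \, \overline{Q}_0)$, so $\mu_{G/K}(\overline{Q}_0 \bar x \, \overline{Q}_0) = \mu_G(QxQ) \lesssim 1$ by Weil's formula; an invariant compact neighborhood in $G/K$ then pulls back (preimages under $p$ are compact and conjugation-invariant) to one in $G$.

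\textbf{Step 4 (the disconnected case and conclusion).} For a general, possibly disconnected $G$ the connectedness used in Step 3 is unavailable, and supplying a substitute is the main obstacle. I would pass through the structure of $G$: the identity component $G^\circ$ is closed, normal, connected, and inherits the hypothesis (with $Q \cap G^\circ$), hence is IN by Step 3; and from a compact open subgroup of the totally disconnected quotient $G/G^\circ$ one obtains an open subgroup $H \le G$ with $H/G^\circ$ compact, which is almost connected, again inherits the hypothesis, and is IN by the same argument run inside $H$. Combining a $G^\circ$-invariant compact neighborhood with the contribution of $H$, and using the uniform covering bounds of Step 2 to keep the $G$-conjugates of the resulting set contained in a fixed compact set, produces the desired compact conjugation-invariant unit neighborhood. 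In every case, once such a neighborhood is produced, $G$ is an IN group, as claimed.
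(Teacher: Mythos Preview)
Your argument for connected groups (Steps 1--3) is correct and takes a genuinely different route from the paper: after unimodularity, you cover each conjugate $xQx^{-1}$ by a uniformly bounded number of translates of $Q^2$, then use connectedness of $Q$ to chain these pieces back to the identity, concluding that $\bigcup_x xQx^{-1}$ is relatively compact; Gleason--Yamabe handles the reduction from general connected to connected Lie. This is a nice geometric argument, though it leans on heavy structure theory that the paper avoids.

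Step 4, however, is not a proof but a plan with the decisive step missing. You correctly produce an open almost-connected subgroup $H \leq G$ and (with the minor extra observation that the Lie quotient $H/K$ has only finitely many components, so one may still choose $Q$ connected inside the open identity component) conclude that $H$ is IN. But having an open IN subgroup does \emph{not} imply $G$ is IN --- $SL_2(\mathbb{Q}_p)$ has compact open subgroups yet is not IN --- so the hypothesis must do real work in your final ``combining'' sentence, and you have not shown how. Step 2 only says each $gVg^{-1}$ is covered by boundedly many translates of $Q^2$; without connectedness of $V$ (which fails in general, since pulling back a connected set through a quotient by a possibly disconnected compact $K$ need not yield a connected set), there is nothing to chain those translates back to $e$. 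Your side claim that $G^\circ$ inherits the hypothesis is also unclear when $G^\circ$ is not open, since $\mu_{G^\circ}$ is then unrelated to the restriction of $\mu_G$. The paper's proof (due to Tao) sidesteps all of this: a convolution estimate gives $\mu_G(U^2 \cap xU^2x^{-1}) \geq \delta > 0$ uniformly in $x$; the Hilbert projection theorem applied to the closed convex hull of $\{\indicator_{xU^2x^{-1}} : x \in G\}$ in $L^2(G)$ yields a nonzero $F_0$ fixed by every $L_xR_x$; then $F_0 \ast F_0 \in C_0(G)$ is conjugation-invariant, and a superlevel set is the desired compact invariant neighborhood.
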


\begin{proof} Throughout the proof, let $C > 0$ be a constant
  satisfying $\mu_G (QxQ) \leq C$ for all $x \in G$.

  \medskip{}

  \textbf{Step~1.} This step shows that $G$ is unimodular.
  If not, then $\Delta(x_0) \neq 1$ for some $x_0 \in \group$.
  Replacing $x_0$ by $x_0^{-1}$ if necessary, it may be assumed that $\Delta(x_0) > 1$.
  Then, by definition of the modular function,
  \[
    C
    \geq \haarMeasure(Q x_0^n Q)
    \geq \haarMeasure(Q x_0^n)
    =    \Delta(x_0^n) \, \haarMeasure(Q)
    =    (\Delta(x_0))^n \, \haarMeasure(Q)
    \rightarrow \infty \quad \text{as} \quad n \to \infty,
  \]
  which is a contradiction.

  \medskip{}

  \textbf{Step~2.}
  For the remainder of the proof, the space $L^2(G) = L^2(G;\R)$
  will be considered as a vector space over $\mathbb{R}$.
  For $U := Q \cap Q^{-1}$, this step will show that there exists $\delta > 0$ such that
  \begin{align} \label{eq:claim_IN}
      \langle \indicator_{x U^2 x^{-1}}, \indicator_{U^2} \rangle_{L^2} \geq \delta > 0,
      \quad x \in G.
  \end{align}

  For this, note by unimodularity of $\group$ that
  $\haarMeasure(U \cdot x U x^{-1}) = \haarMeasure(U x U) \leq \haarMeasure(Q x Q) \leq C$
  for all $x \in \group$.
  Furthermore, $ (\indicator_U \ast \indicator_{x U x^{-1}})^{-1} (\CC \setminus \{0\}) \subseteq U \cdot x U x^{-1}$.
  Hence, using Tonelli's theorem, it follows
  \begin{align*}
    \big( \haarMeasure(U) \big)^2
    & = \haarMeasure(U)  \haarMeasure(x U x^{-1})
      = \int_{\group}
          \indicator_U (z)
          \int_{\group}
            \indicator_{x U x^{-1}} (z^{-1} y)
          \dd{y}
        \dd{z} \\
    & = \int_{\group}
          (\indicator_U \ast \indicator_{x U x^{-1}}) (y)
        \dd{y}
      \leq \haarMeasure(U \cdot x U x^{-1})
            \| \indicator_U \ast \indicator_{x U x^{-1}} \|_{L^\infty} \\
    & \leq C  \| \indicator_U \ast \indicator_{x U x^{-1}} \|_{L^\infty}
    .
  \end{align*}
  Thus, there exists $y = y(x) \in \group$ satisfying
  \begin{align*}
    0
    < \delta
    := \frac{(\haarMeasure(U))^2}{C}
    & \leq (\indicator_U \ast \indicator_{x U x^{-1}})(y^{-1})
      = \int_{\group}
          \indicator_U (z)
          \indicator_{x U x^{-1}} (z^{-1} y^{-1})
        \dd{z} \\
    & = \int_{\group}
          \indicator_U (y^{-1} w)
          \indicator_{x U x^{-1}} (w^{-1})
        \dd{w}
      = \haarMeasure(y U \cap x U x^{-1}),
  \end{align*}
  where the change-of-variables $w = y z$
  and the identity $(x U x^{-1})^{-1} = x U x^{-1}$ were used.

  If $W \subseteq \group$ is open and satisfies $\haarMeasure(W) \geq \delta$,
  then there exists some $w \in W$, which implies that
  \[ \haarMeasure(W^{-1} W) \geq \haarMeasure(w^{-1} W) = \haarMeasure(W) \geq \delta.\]
  Applying this observation to $W = W_y := y U \cap x U x^{-1}$ and using that
  $W_y^{-1} W_y \subseteq U^2 \cap x U^2 x^{-1}$,  it follows that
  \[
    \langle \indicator_{x U^2 x^{-1}}, \indicator_{U^2} \rangle_{L^2}
    = \haarMeasure\bigl(U^2 \cap x U^2 x^{-1}\bigr)
    \geq \haarMeasure(W_y^{-1} W_y)
    \geq \delta,
    \qquad  x \in \group ,
  \]
  which establishes the asserted claim \eqref{eq:claim_IN}.

  \medskip{}

  \textbf{Step~3.}
  Set
  \(
    \Omega
    := \bigl\{ \indicator_{x U^2 x^{-1}} \colon x \in \group \bigr\}
    \subseteq L^2(G) .
  \)
  Since $\indicator_{x U^2 x^{-1}} = L_x R_x \indicator_{U^2}$,
  it is easy to see $L_x R_x \Omega \subseteq \Omega$ for all $x \in \group$.
  Furthermore, since $U = U^{-1}$, each $F \in \Omega$ satisfies $F^{\vee} = F$,
  $0 \leq F \leq 1$ almost everywhere, and $\langle F, \indicator_{U^2} \rangle_{L^2} \geq \delta > 0$
  (cf.\ Step~2).
  Note that all of these properties are preserved under convex combinations
  and under limits in $L^2$;
  this uses that $\group$ is unimodular so that $F \mapsto F^{\vee}$ is a bounded
  linear map on $L^2$ and that if $F_n \to F$ in $L^2$,
  then $F_{n_\ell} \to F$ almost everywhere for a suitable subsequence.
  Hence, letting $\Sigma := \overline{\operatorname{conv} \Omega} \subseteq L^2$
  denote the closed convex hull of $\Omega$, it follows that $L_x R_x \Sigma \subseteq \Sigma$
  for all $x \in \group$ and that each $F \in \Sigma$ satisfies $F = F^{\vee}$ and $0 \leq F \leq 1$
  almost everywhere, and finally $\langle F, \indicator_{U^2} \rangle_{L^2} \geq \delta > 0$.

  By the Hilbert projection theorem (see, e.g., \cite[Theorem~12.3]{RudinFA}),
  there exists a unique $F_0 \in \Sigma$ satisfying
  $\| F_0 \|_{L^2} \leq \| H \|_{L^2}$ for all $H \in \Sigma$.
  Since $\group$ is unimodular, the operator $L_x R_x : L^2 \to L^2$ is unitary;
  hence, $L_x R_x F_0 \in \Sigma$ with $\| L_x R_x F_0 \|_{L^2} = \| F_0 \|_{L^2} \leq \| H \|_{L^2}$
  for all $H \in \Sigma$.
  By the uniqueness of $F_0$, this implies $L_x R_x F_0 = F_0$ for all $x \in \group$.
  Note that $F_0$ is nontrivial since $\langle F_0, \indicator_{U^2} \rangle_{L^2} \geq \delta > 0$.

  \medskip{}

  \textbf{Step~4.}
  Since $\group$ is unimodular, the identity $F \ast H (x) \!=\! \langle F, L_x H^{\vee} \rangle$
  shows that the bilinear map
  \[
    L^2(G) \times L^2(G) \to C_b(\group), \quad
    (F,H) \mapsto F \ast H
  \]
  is well-defined and continuous.
  Since $F \ast H \in C_c(\group)$ for $F,H \in C_c (\group)$, this implies by density
  that $F \ast H \in C_0(\group)$ for all $F,H \in L^2(G)$.
  In particular, $H_0 := F_0 \ast F_0 \in C_0(\group)$, where $F_0$ is as in Step~3.
  Furthermore, since $F_0$ satisfies $F_0 = F_0^{\vee}$
  and $F_0 = \translationL{x} \translationR{x} F_0$,
  it follows that for all $x,y \in \group$, it holds that
  $F_0 (z^{-1} x^{-1} y x) = F_0(x^{-1} y^{-1} x z) = F_0(y^{-1} x z x^{-1})$
  for almost all $z \in \group$.
  Since $\group$ is unimodular, the change-of-variables $w = x z x^{-1}$ therefore shows
  \begin{equation}
    \begin{split}
      H_0(x^{-1} y x)
      & =\! \int_{\group}
              F_0(z)
              F_0(z^{-1} x^{-1} y x) \;
            \dd{z}
        = \int_{\group}
            F_0(z)
            F_0(y^{-1} x z x^{-1}) \;
          \dd{z} \\
      & =\! \int_{\group}
              F_0(x^{-1} w x)
              F_0(y^{-1} w) \;
            \dd{w}
        =
          \int_{\group}
            F_0(w)
            F_0(w^{-1} y) \;
          \dd{w} \\
      & = (F_0 \ast F_0)(y)
        = H_0(y) .
    \end{split}
    \label{eq:INCharacterizationAlmostDone}
  \end{equation}
  where the fourth step used again
  that $F_0 = \translationL{x} \translationR{x} F_0$ and $F_0 = F_0^{\vee}$.

  Lastly, note because of $F_0 \geq 0$ and $F_0 = F_0^{\vee}$ that
  \[H_0(e_{\group}) = \int_{\group} F_0(x) F_0(x^{-1}) \; \dd{x} \!=\! \| F_0 \|_{L^2}^2 > 0.\]
  Since $H_0 \in C_0(\group)$, this implies that
  $V := \bigl\{ x \in \group \colon H_0(x) > \| F_0 \|_{L^2}^2 / 2 \bigr\}$
  is an open, relatively compact unit neighborhood.
  In view of \Cref{eq:INCharacterizationAlmostDone}, it follows that $x V x^{-1} = V$
  for all $x \in \group$.
  Hence, $\group$ is an IN group.
\end{proof}

\begin{proof}[Proof of \Cref{prop:AnalyzingVectorsAreBetterOnlyIfIN}]
Throughout the proof, the identifications
\[
  \Co_g (L^1_w)
  = \{ f \in \Hpi : V_g f \in \wienerL{L_w^1} \}
  \quad \text{and} \quad
  \Co_g (\wienerR{L^1_w})
  = \{ f \in \Hpi : V_g f \in \wienerSt{L^1_w} \}
\]
provided by Lemma~\ref{lem:coincidence} will be used.

\medskip{}

First, suppose that $G$ is an IN group.
Since the spaces $\wienerL{L_w^1}$ and $\wienerSt{L_w^1}$ are independent of the choice of
the neighborhood $Q$ (as $L_w^1$ is left- and right invariant),
and since $\group$ is an IN group, it may be assumed that $x Q x^{-1} = Q$ for all $x \in \group$,
that is, $x Q = Q x$.
This easily implies $\maxL F = \maxR F$ for any measurable $F : \group \to \CC$
and hence $\wienerL{Y} = \wienerR{Y}$ for any solid quasi-Banach function space $Y$ on $\group$.
Therefore,
\[
  \wienerSt{L_w^1}
  = \wienerR{\wienerL{L_w^1}}
  = \wienerL{\wienerL{L_w^1}}
  = \wienerL{L_w^1} ,
\]
which easily implies $\Co_g (L^1_w) = \Co_g (\wienerR{L^1_w})$.

\medskip{}

Second, suppose $\Co_g (\wienerR{L^1_w}) = \Co_g (L^1_w)$.
The inclusion $\iota : \Co_g (\wienerR{L^1_w}) \hookrightarrow \Co_g (L^1_w)$
is clearly bounded and linear.
Since $\Co_g (L^1_w) = \Co_g (\wienerR{L^1_w})$, it follows that $\iota$ is bijective.
By the bounded inverse theorem, this implies that $\iota$ is boundedly invertible,
i.e., $\| \cdot \|_{\Co_g (\wienerR{L^1_w})} \asymp \| \cdot \|_{\Co_g (L^1_w)}$.
In particular, this implies existence of a constant $C_1 > 0$ satisfying
\begin{align*}
    \| \pi(x) g \|_{\Co_g (\wienerR{L^1_w})}
    & \leq C_1  \| \pi(x) g \|_{\Co_g (L^1_w)}
      =    C_1  \big\| V_g [\pi(x) g] \big\|_{\wienerL{L_w^1}}
     =  C_1  \| \twisttranslationL{x} [V_g g] \big\|_{\wienerL{L_w^1}} \\
    & =  C_1  \big\| \translationL{x} [V_g g] \big\|_{\wienerL{L_w^1}}
      = C_1  \big\| \maxL [\translationL{x} (V_g g)] \big\|_{L_w^1}
      =    C_1  \big\| \translationL{x} [ \maxL (V_g g)] \big\|_{L_w^1} \\
    & \leq C_1  w(x)  \big\| \maxL [V_g g] \big\|_{L_w^1}
      =:   C_2  w(x)
  \numberthis \label{eq:AnalyzingBetterBound1}
\end{align*}
for all $x \in \group$.
Since $|V_g g|$ is continuous with $|V_g g (e_\group)| = \| g \|_{\Hpi}^2 > 0$,
there exist $\delta > 0$ and an open, symmetric, relatively compact
unit neighborhood $U \subseteq Q$ satisfying $|V_g g(x)| \geq \delta$ for all $x \in U$.
In particular, this implies that
\begin{align*}
  \maxSt (V_g [\pi(x) g]) (y)
  & = \esssup_{q_1, q_2 \in Q}
        \bigl|V_g [\pi(x) g] (q_1 y q_2)\bigr|
    = \esssup_{q_1, q_2 \in Q}
        \bigl| \twisttranslationL{x} [V_g g] (q_1 y q_2) \bigr| \\
    & =    \sup_{q_1, q_2 \in Q}
             |\translationL{x} [V_g g] (q_1 y q_2)|
    \geq \delta
          \sup_{q_1, q_2 \in Q}
                 \indicator_U (x^{-1} q_1 y q_2) \\
  & = \delta  \indicator_{Q x U Q} (y)
    \geq \delta  \indicator_{Q x Q} (y)
\end{align*}
for all $x,y \in \group$.
Since $w$ is locally bounded (cf.\ \Cref{rem:SubmultiplicativeWeightsLocallyBounded}),
there exists a constant $C_3 = C_3(w,Q) \!>\! 0$ satisfying $w(q) \leq C_3$
for all $q \in Q = Q^{-1}$.
Hence, if $\indicator_{Q x Q}(y) \neq 0$, then $x = q_1^{-1} y q_2^{-1}$
for certain $q_1, q_2 \in Q$ and hence
\(
  w(x)
  \leq w(q_1^{-1}) w(y) w(q_2^{-1})
  \leq C_3^2 \, w(y)
  .
\)
Combining these observations with \eqref{eq:AnalyzingBetterBound1} gives
\begin{align*}
  C_2  w(x)
  \geq \| \pi(x) g \|_{\Co_g (\wienerR{L_w^1})}
  & = \big\| V_g [\pi(x) g] \big\|_{\wienerSt{L_w^1}}
    = \big\| \maxSt (V_g [\pi(x) g]) \big\|_{L_w^1}
    \geq \delta  \| \indicator_{Q x Q} \|_{L_w^1} \\
  & \geq C_3^{-2} \delta  w(x)  \| \indicator_{Q x Q} \|_{L^1}
    =    C_3^{-2} \delta  w(x)  \haarMeasure(Q x Q) .
\end{align*}
It follows therefore that $\haarMeasure(Q x Q) \leq C_2 C_3^2 / \delta =: C$
for all $x \in \group$.
An application of \Cref{prop:INCharacterizedByTwosidedMeasure} shows that $\group$ is an IN group.
\end{proof}

\chapter{Convolution-dominated operators and local spectral invariance}
\label{sec:CD}

This chapter considers classes of convolution-dominated operators.
The first section is devoted to \emph{integral operators},
whereas the second concerns convolution-dominated \emph{matrices}.

Throughout the section, the weight $w : G \to [1,\infty)$ will always be assumed
to be a $p$-weight for some $p \in (0,1]$.

\section{Integral operators}

Throughout, let $g \in \Bwp$ be a fixed admissible vector (see \Cref{sec:def_coorbit}).
Then the image space
\[
  \CalK_g
  := V_g (\Hpi) \subseteq L^2 (G)
\]
is a closed subspace forming a reproducing kernel Hilbert space, with reproducing kernel
\[
  K : \quad
  G \times G \to \mathbb{C}, \quad
  (x,y) \mapsto V_g [\pi(y) g] (x) = \twisttranslationL{y} [V_g g] (x),
\]
see \Cref{sec:admissible}.
Since $g \in \Bwp$ by assumption, it follows that $|V_g g| \in \wienerStC{L^p_w}$, and
\[
  |K(x,y)|
  = |K(y,x)|
  \leq |V_g g| (y^{-1} x),
  \quad x,y \in G.
\]
The following general class of localized kernels will be the central object of study.

\begin{definition}\label{def:CDOperator}
  Let $p \in (0,1]$, let $w : \group \to [1,\infty)$ be a $p$-weight, and let $g \in \Bwp$ be admissible.
  A measurable function $H : \group \times \group \to \CC$ is called
  \emph{$L_w^p$-localized in $\CalK_g$} if it satisfies the following properties:
  \begin{enumerate}[label=(i\arabic*)]
    \item $H(\cdot, y) \in \CalK_g$ for all $y \in \group$,

    \item $\overline{H(x, \cdot)} \in \CalK_g$ for all $x \in \group$,

    \item \label{enu:EnvelopeCondition}
          There exists a nonnegative \emph{envelope} $\Phi \in \wienerStC{L_w^p}$ satisfying
          \begin{equation}
            \max \big\{ |H(x,y)|, |H(y,x)| \big\}
            \leq \Phi(y^{-1} x),
            \qquad  \, x, y \in \group.
            \label{eq:CDDominiationCondition}
          \end{equation}
  \end{enumerate}
  For a measurable function $H$ satisfying condition \ref{enu:EnvelopeCondition},
  the notation $H \dominated \Phi$ will be used.
\end{definition}

\begin{remark}
  Note that if \Cref{eq:CDDominiationCondition} holds, then it also holds for
  $\Phi_0 = \min \{ \Phi, \Phi^{\vee} \}$ instead of $\Phi$.
  Hence, one can always assume $\Phi$ to be symmetric.
\end{remark}

The following lemma summarizes the basic elementary properties of $L_w^p$-localized
kernels and of the associated integral operators.

\begin{lemma}\label{lem:CDElementary}
  If $H$ is $L^p_w$-localized in $\CalK_g$, then the associated integral operator
  \[
    T_H : \quad
    L^r(G) \to L^r(G), \quad
    T_H F(x) = \int_{\group} H(x,y) F(y) \; \dd{y}
  \]
  is well-defined and bounded for arbitrary $r \in [1,\infty]$,
  with absolute convergence of the defining integral for all $x \in \group$.
  Moreover, the following properties hold:
  \begin{enumerate}[label=(\roman*)]
    \item The map $T_H: L^2(G) \to \CalK_g$ is well-defined.

    \item For all $x,y \in \group$,
          \begin{equation}
            \quad
            H(x,y)
            = \big\langle T_H [V_g (\pi(y) g)], V_g(\pi(x) g) \big\rangle_{L^2}
               \label{eq:CDPointwiseBoundedByOperatorNorm}
          \end{equation}
          and $|H(x,y)| \leq \| T_H \|_{\CalK_g \to L^2}  \| g \|_{\Hpi}^2$.

    \item The \emph{adjoint kernel} $\widetilde{H} : \group \times \group \to \CC$
          defined by $\widetilde{H}(x,y) = \overline{H(y,x)}$ is also $L_w^p$-localized in $\CalK_g$.
          In fact, if $H \dominated \Phi$, then $\widetilde{H} \dominated \Phi$.

    \item If $L : \group \times \group \to \CC$ is also $L_w^p$-localized in $\CalK_g$,
          then so is the product $H \odot L$ defined by
          \[
            H \odot L (x,y)
            := \int_{\group}
                 H(x,z)  L(z,y)
               \dd{z}
            =  T_H [L(\cdot,y)] (x).
          \]
          In addition, if $H \dominated \Phi$ and $L \dominated \Theta$ with $\Phi,\Theta$ symmetric,
          then 
          \[ H \odot L \dominated \max \{ \Phi \ast \Theta, \Theta \ast \Phi \}.\]
          Moreover, the identity $(T_H \circ T_L) F = T_{H \odot L} F$ holds for all $F \in L^2(G)$.
  \end{enumerate}
\end{lemma}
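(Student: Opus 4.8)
The plan is to first handle the boundedness of $T_H$ on $L^r$, then the four numbered assertions in order, reusing earlier parts as needed. For the boundedness claim, the key observation is the pointwise domination $|H(x,y)| \leq \Phi(y^{-1}x)$ from the envelope condition \ref{enu:EnvelopeCondition}, so that $|T_H F(x)| \leq \int_G \Phi(y^{-1}x) |F(y)| \, d\mu_G(y) = (|F| \ast \widetilde\Phi)(x)$ in a suitable sense; more precisely, writing this as a convolution one gets $|T_H F| \leq \maxL$-type bounds. Since $\Phi \in \wienerStC{L^p_w} \hookrightarrow \wienerL{L^p_w} \cap \wienerR{L^p_w}$ and $p \le 1 \le r$, \Cref{lem:AmalgamWeightedLInftyEmbedding} gives $\Phi \in L^1_w \hookrightarrow L^1$ as well as $\Phi \in \wienerL{L^1}$ and $\Phi^\vee \in \wienerL{L^1}$; hence $\Phi, \Phi^\vee \in L^1$, and Young's inequality $L^1 \ast L^r \hookrightarrow L^r$ (together with its right-sided analogue) yields $\|T_H F\|_{L^r} \lesssim \|\Phi\|_{L^1} \|F\|_{L^r}$ for all $r \in [1,\infty]$. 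Absolute convergence of the defining integral for \emph{every} $x$ follows because, for fixed $x$, the function $y \mapsto \Phi(y^{-1}x)|F(y)|$ is a product of an $L^1$ function (after the substitution) with an $L^\infty$ function when $r = \infty$, and for $r < \infty$ one uses Hölder with $\Phi(\cdot^{-1}x) \in L^{r'}$, which holds since $\Phi \in L^1 \cap L^\infty$ (using the local boundedness from $\Phi \in \wienerStC{L^p_w}$) hence $\Phi \in L^s$ for all $s \in [1,\infty]$.

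For (i), I would show $T_H F \in \CalK_g$ for $F \in L^2(G)$ by using that $\CalK_g$ is a reproducing kernel Hilbert space with kernel $K(x,y) = \twisttranslationL{y}[V_g g](x)$ and the reproducing property $F'(x) = \langle F', K(\cdot,x)\rangle_{L^2}$ for $F' \in \CalK_g$; the point is that, since $H(\cdot,y) \in \CalK_g$ for each $y$ by property (i1), one has $H(x,y) = \langle H(\cdot,y), K(\cdot,x)\rangle_{L^2}$, and plugging this into the definition of $T_H$ and interchanging integrals (justified by the $L^1$-domination established above, together with $K(\cdot,x) \in \CalK_g \subseteq L^2$ and $|K(\cdot,x)| \le |V_g g|(x^{-1}\cdot) \in L^2$) shows $T_H F(x) = \langle G_F, K(\cdot,x)\rangle_{L^2}$ for a fixed $L^2$ function $G_F$, namely the weak integral $G_F = \int_G F(y) H(\cdot,y)\, d\mu_G(y) \in \CalK_g$; since $\CalK_g$ is closed, $T_H F = G_F \in \CalK_g$. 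For (ii), formula \eqref{eq:CDPointwiseBoundedByOperatorNorm} is obtained by applying the reproducing identity twice: $\langle T_H[V_g(\pi(y)g)], V_g(\pi(x)g)\rangle_{L^2}$ equals $\int\int \overline{V_g(\pi(x)g)(u)}\, H(u,v)\, V_g(\pi(y)g)(v)\, d\mu_G(u)\,d\mu_G(v)$, and since $V_g(\pi(x)g) = K(\cdot,x)$ and $V_g(\pi(y)g) = K(\cdot,y)$ (by \eqref{eq:RK}), the two integrations reproduce $H$ at $(x,y)$ by properties (i1), (i2); the estimate $|H(x,y)| \le \|T_H\|_{\CalK_g \to L^2}\|g\|_{\Hpi}^2$ then follows from Cauchy-Schwarz together with $\|V_g(\pi(z)g)\|_{L^2} = \|\pi(z)g\|_{\Hpi} = \|g\|_{\Hpi}$ (as $V_g$ is an isometry and $\pi$ unitary). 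Assertion (iii) is immediate: $\widetilde H(\cdot,y) = \overline{H(y,\cdot)} \in \CalK_g$ by (i2), $\overline{\widetilde H(x,\cdot)} = H(\cdot,x) \in \CalK_g$ by (i1), and $\max\{|\widetilde H(x,y)|,|\widetilde H(y,x)|\} = \max\{|H(y,x)|,|H(x,y)|\} \le \Phi(y^{-1}x)$ — but one must note $\Phi(y^{-1}x) = \Phi^\vee(x^{-1}y)$, so if $\Phi$ is not symmetric one uses the remark after \Cref{def:CDOperator} to replace it by $\min\{\Phi,\Phi^\vee\}$; in any case $\widetilde H \dominated \Phi$ holds with the symmetrized envelope, which is all that is needed.

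For (iv), the composition/product claim, I would argue as follows. The pointwise envelope bound gives $|H\odot L(x,y)| \le \int_G |H(x,z)||L(z,y)|\, d\mu_G(z) \le \int_G \Phi(z^{-1}x)\Theta(y^{-1}z)\, d\mu_G(z)$, and substituting $w = z^{-1}x$ (so $z = xw^{-1}$, $z^{-1}z = \dots$) one recognizes this as $(\Theta \ast \Phi)(y^{-1}x)$ up to rewriting; carrying out the analogous computation starting from $|L(z,y)| \le \Theta^\vee(y z^{-1})$ etc.\ gives the bound by $(\Phi \ast \Theta)(y^{-1}x)$, whence $|H\odot L(x,y)| \le \max\{\Phi\ast\Theta,\Theta\ast\Phi\}(y^{-1}x)$, and symmetrically for $|H\odot L(y,x)|$. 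That $\Phi\ast\Theta$ and $\Theta\ast\Phi$ lie in $\wienerStC{L^p_w}$ — so the product envelope is again admissible — follows from \Cref{cor:UserFriendlyConvolutionBounds}(i), namely $\wienerSt{L^p_w} \ast \wienerSt{L^p_w} \hookrightarrow \wienerStC{L^p_w}$. The membership properties $H\odot L(\cdot,y) \in \CalK_g$ and $\overline{H\odot L(x,\cdot)} \in \CalK_g$: the first is exactly $T_H[L(\cdot,y)] \in \CalK_g$, which holds by part (i) since $L(\cdot,y) \in \CalK_g \subseteq L^2$; the second follows by applying the same to the adjoint kernels, using $\overline{H\odot L(x,\cdot)} = \widetilde L \odot \widetilde H(\cdot, x)$ (a direct computation swapping the order) together with (iii). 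Finally, $(T_H \circ T_L)F = T_{H\odot L}F$ for $F \in L^2$ is Fubini: $(T_H(T_L F))(x) = \int H(x,z)\int L(z,y)F(y)\,d\mu_G(y)\,d\mu_G(z)$, and the interchange is justified by $\int\int |H(x,z)||L(z,y)||F(y)|\, d\mu_G(y)\,d\mu_G(z) \le \int \Phi(z^{-1}x)(|L(\cdot,\cdot)|\ast\text{-bound}) \dots < \infty$, concretely bounded by $\Phi(\cdot^{-1}x) \in L^2$ paired with $\Theta \ast |F| \in L^2$ via Young and Cauchy-Schwarz. I expect the main obstacle to be the bookkeeping around left-vs-right convolutions and the involution $\Phi^\vee$ in the envelope estimates — getting the argument $y^{-1}x$ versus $x^{-1}y$ right in each of the four kernel positions — rather than any deep point; all the functional-analytic input (RKHS reproducing property, the amalgam convolution relations, Young's inequality) is already available from earlier in the paper.
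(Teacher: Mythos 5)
Your strategy coincides with the paper's at every point where it matters: the two\mbox{-}sided envelope bound for the mapping properties of $T_H$, the reproducing property of $\CalK_g$ applied to $H(\cdot,y)$ and $\overline{H(x,\cdot)}$ for parts (i) and (ii), and the convolution relation $\wienerSt{L_w^p} \ast \wienerSt{L_w^p} \hookrightarrow \wienerStC{L_w^p}$ from \Cref{cor:UserFriendlyConvolutionBounds} for the envelope of $H \odot L$ in (iv). Parts (i)--(iv) are sound: your weak-integral formulation of (i) is equivalent to the paper's argument, which simply tests $T_H F$ against $\CalK_g^{\perp}$ via the Fubini identity $\langle T_H F_1, F_2 \rangle = \int_{\group} F_1(y) \langle H(\cdot,y), F_2 \rangle \, \dd{y}$; your double-reproducing computation in (ii) is exactly the paper's; and in (iii) no symmetrization of $\Phi$ is needed at all, since $\max\{|\widetilde{H}(x,y)|, |\widetilde{H}(y,x)|\} = \max\{|H(x,y)|, |H(y,x)|\}$ verbatim, so $\widetilde{H} \dominated \Phi$ with the original envelope.

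The one step that needs repair is the $L^r$-boundedness. Your bound $|T_H F| \leq |F| \ast \Phi$ places the $L^1$-factor on the \emph{right} of the convolution, and on a nonunimodular group the ``right-sided analogue'' of Young's inequality reads $\| F \ast \Phi \|_{L^r} \leq \| F \|_{L^r} \, \| \Phi \, \Delta^{-1/r'} \|_{L^1}$ rather than $\| F \|_{L^r} \| \Phi \|_{L^1}$; since the whole point of this paper is to handle general (possibly nonunimodular) $G$, the constant you claim is not available from Young's inequality alone. The clean fix --- and what the paper does --- is Schur's test: the two-sided envelope condition \eqref{eq:CDDominiationCondition} gives both $\int_{\group} |H(x,y)| \, \dd{y} \leq \| \Phi \|_{L^1}$ and $\int_{\group} |H(x,y)| \, \dd{x} \leq \| \Phi \|_{L^1}$ by left-invariance of $\haarMeasure$ (no modular function enters), and Schur's test then yields boundedness on every $L^r$, $r \in [1,\infty]$, with constant $\| \Phi \|_{L^1}$. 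Alternatively, your route can be salvaged by noting that conditions \ref{enu:WeightBoundedBelow} and \ref{enu:WeightPSymmetric} force $w \geq \max\{1, \Delta^{-1/p}\} \geq \Delta^{-1/r'}$ (using $p \leq 1 \leq r$), so that $\Phi \in L_w^1$ implies $\Phi \, \Delta^{-1/r'} \in L^1$ --- but this is precisely the bookkeeping the Schur argument avoids.
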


\begin{proof}
  Throughout the proof, let $\Phi \in \wienerStC{L_w^p}$ be symmetric with $H \dominated \Phi$,
  so that
  \begin{align} \label{eq:localizationH}
    |H(x,y)|
    \leq \Phi(y^{-1} x)
    =    \Phi(x^{-1} y)
  \end{align}
  for all $x,y \in \group$.
  Before proving the individual statements of the lemma, we collect a few auxiliary
  observations and prove the boundedness of $T_H : L^r \to L^r$.

  By \Cref{lem:AmalgamWeightedLInftyEmbedding} and because of $w \geq 1$, it follows
  that $\wienerSt{L_w^p} \hookrightarrow \wienerL{L_w^p} \hookrightarrow L^s$
  for all $s \in [1,\infty]$.
  The estimate $|H(x,y)| \leq \Phi(x^{-1} y)$ shows that $H(x,\cdot) \in L^s$ for all $x \in \group$
  and $s \in [1,\infty]$, which implies that the integral defining $T_H F(x)$ exists for
  any $F \in L^r$ and $x \in \group$.
  Moreover, the estimate \eqref{eq:localizationH} yields
  \[
    \int_{\group}
      |H(x,y)| \;
    \dd{y}
    \leq \| \Phi \|_{L^1}
    \qquad \text{and} \qquad
    \int_{\group}
      |H(x,y)| \;
    \dd{x}
    \leq  \| \Phi \|_{L^1}
  \]
  for all $x,y \in \group$.
  An application of Schur's test (see, e.g., \cite[Theorem~6.18]{FollandRA}) therefore yields that
  $T_H : L^r (G) \to L^r(G)$ is bounded for arbitrary $r \in [1,\infty]$.

  Let $F_1,F_2 \in L^2(G)$.
  Then, using the Cauchy-Schwarz inequality,
  the pointwise estimate $|H(x,y)| \leq \sqrt{\strut \Phi(x^{-1} y) \Phi(y^{-1} x)}$
  and Tonelli's theorem, it follows that
  \begin{align*}
    & \int_{\group}
        \int_{\group}
          |F_2(x)|  |H(x,y)|  |F_1(y)|
        \; \dd{y}
      \dd{x} \\
    & \leq \int_{\group}
             |F_2(x)|
             \bigg(
               \int_{\group}
                 \Phi(x^{-1} y)
              \; \dd{y}
             \bigg)^{1/2}
             \bigg(
               \int_{\group}
                 \Phi(y^{-1} x)
                 |F_1(y)|^2 \;
             \;  \dd{y}
             \bigg)^{1/2}
           \dd{x} \\
    & \leq \| \Phi \|_{L^1}^{1/2}
           \bigg(
             \int_{\group}
               |F_2(x)|^2 \;
             \dd{x}
           \bigg)^{1/2}
           \bigg(
             \int_{\group}
               \int_{\group}
                 \Phi(y^{-1} x)
                 |F_1(y)|^2
              \; \dd{y}
             \dd{x}
           \bigg)^{1/2} \\
    & \leq \| \Phi \|_{L^1}^{1/2}
            \| F_2 \|_{L^2}
            \| \Phi \|_{L^1}^{1/2}
            \| F_1 \|_{L^2}.
  \end{align*}
  Therefore, Fubini's theorem is applicable and justifies the calculation
  \begin{equation}
    \begin{split}
      \langle T_H F_1, F_2 \rangle
      & = \int_{\group}
            \int_{\group}
              \overline{F_2 (x)}
               H(x,y)
               F_1(y)
            \; \dd{y}
          \dd{x} \\
      & = \int_{\group}
            F_1(y)
            \int_{\group}
              H(x,y)
               \overline{F_2 (x)}
           \; \dd{x}
          \dd{y} \\
      & = \int_{\group}
            F_1(y)
             \langle H(\cdot,y), F_2 \rangle
         \; \dd{y}
    \end{split}
    \label{eq:IntegralOperatorFubini}
  \end{equation}
  for arbitrary $F_1, F_2 \in L^2 (G)$.

  (i)
  Let $F_1 \in L^2(G)$.
  Since $\CalK_g \subseteq L^2(G)$ is closed, to show $T_H F_1 \in \CalK_g$, it suffices to show
  $\langle T_H F_1, F_2 \rangle = 0$ for all $F_2 \in \CalK_g^{\perp}$.
  For this, simply note that $H(\cdot,y) \in \CalK_g$ for all $y \in \group$, so that
  \Cref{eq:IntegralOperatorFubini} shows
  \(
    \langle T_H F_1, F_2 \rangle
    = \int_{\group}
        F_1(y)
         \langle H(\cdot,y), F_2 \rangle \;
      \dd{y}
    = 0 .
  \)

  (ii)
  Note that the reproducing formula \eqref{eq:RKHS} gives
  $F(x) = \langle F, V_g (\pi(x) g) \rangle_{L^2}$ for all $F \in \CalK_g$ and $x \in G$.
  This, combined with the identity \eqref{eq:IntegralOperatorFubini},
  and $H(\cdot, z) \in \CalK_g$ and $\overline{H(x,\cdot)} \in \CalK_g$, gives
  \begin{align*}
    \big\langle T_H [V_g (\pi(y) g)], V_g (\pi(x) g) \big\rangle_{L^2}
    & = \int_{\group}
          V_g (\pi(y) g) (z)
           \langle H(\cdot,z), V_g (\pi(x) g) \rangle
      \;  \dd{z} \\
    & = \int_{\group}
          V_g (\pi(y) g) (z)
           H(x,z)
       \; \dd{z} \\
    & = \overline{
          \langle \overline{H(x,\cdot)}, V_g(\pi(y) g) \rangle
        }
      = H(x,y) ,
  \end{align*}
  which shows \eqref{eq:CDPointwiseBoundedByOperatorNorm}.
  In particular, using the isometry of $V_g$
  and because of $\| \pi(x) g \|_{\Hpi} = \| g \|_{\Hpi}$ and $V_g(\pi(y) g) \in \CalK_g$,
  this implies that
  \begin{align*}
    |H(x,y)|
     \leq \| T_H \|_{\CalK_g \to L^2}
            \| V_g (\pi(y) g) \|_{L^2}
            \| V_g (\pi(x) g) \|_{L^2}
      =    \| T_H \|_{\CalK_g \to L^2}
            \| g \|_{\Hpi}^2
    ,
  \end{align*}
  which proves part (ii).

  (iii)
  Simply note that $\widetilde{H}(\cdot,y) = \overline{H(y,\cdot)} \in \CalK_g$
  and $\overline{\widetilde{H}(x,\cdot)} = H(\cdot,x) \in \CalK_g$ for all $x,y \in \group$ and that
  \(
    \max
    \{
      |\widetilde{H}(x,y)|,
      |\widetilde{H}(y,x)|
    \}
    = \max
    \{
      |H(x,y)|,
      |H(y,x)|
    \}
    \leq \Phi(y^{-1} x)
    ,
  \)
  even without assuming that $\Phi$ is symmetric.

  (iv)
  By part (i) and because of $L(\cdot,y) \in \CalK_g$, it follows that
  \(
    (H \odot L)(\cdot,y)
    = T_H [L(\cdot, y)]
    \in \CalK_g
    .
  \)
  Another direct calculation shows that
  \(
    \overline{
      (H \odot L)(x, \cdot)
    }
    = T_{\widetilde{L}} \big[ \widetilde{H}(\cdot, x) \big]
    \in \CalK_g ,
  \)
  by Part~(i) applied to $\widetilde{L}$
  (which is $L_w^p$-localized in $\CalK_g$ by Part~(iii)
  and since $\widetilde{H}(\cdot, x) \in \CalK_g$.

  If $H \dominated \Phi$ and $L \dominated \Theta$
  with symmetric $\Phi,\Theta \in \wienerStC{L_w^p}$, then
  \begin{align*}
    \big|
      H \odot L (x,y)
    \big|
    & \leq \int_{\group}
             |H(x,z)|  |L(z,y)|
         \;  \dd{z}
      \leq \int_{\group}
             \Phi(x^{-1} z) \Theta(z^{-1} y)
          \; \dd{z} \\
    & =    \int_{\group}
             \Phi(w) \Theta(w^{-1} x^{-1} y)
         \;  \dd{w}
      =    (\Phi \ast \Theta) (x^{-1} y)
      =    (\Theta \ast \Phi) (y^{-1} x) ,
  \end{align*}
  where the last step used the elementary identity $(F \ast H)^{\vee} = H^\vee \ast F^\vee$.
  The calculation from above also shows $|H \odot L(y,x)| \leq (\Phi \ast \Theta) (y^{-1} x)$.
  Hence, $H \odot L \dominated \max \{ \Phi \ast \Theta, \Theta \ast \Phi \}$.
  Since $\Phi \ast \Theta, \Theta \ast \Phi \in \wienerStC{L_w^p}$
  by \Cref{cor:UserFriendlyConvolutionBounds}, it follows that
  $H \odot L$ is $L_w^p$-localized in $\CalK_g$.

  Lastly, note that \Cref{lem:AmalgamWeightedLInftyEmbedding} shows
  $\Phi \ast \Theta \in \wienerSt{L_w^p} \hookrightarrow \wienerL{L_w^p}  \hookrightarrow L^2$.
  Hence, it follows for arbitrary $F \in L^2(G)$ that
  \begin{align*}
    \int_{\group}
      \int_{\group}
        |H(x,y)  L(y,z)|
        |F(z)|
        \;
      \dd{y}
    \dd{z}
    & \leq \int_{\group}
             \int_{\group}
               \Phi(x^{-1} y) \Theta(y^{-1} z) |F(z)| \;
             \dd{y}
           \dd{z} \\
    & =    \int_{\group}
             |F(z)|  (L_x (\Phi \ast \Theta))(z) \;
           \dd{z} \\
    & \leq \| F \|_{L^2}  \| L_x (\Phi \ast \Theta) \|_{L^2}
      <    \infty
  \end{align*}
  for any $x \in \group$.
  Therefore, Fubini's theorem is applicable in the calculation
  \begin{align*}
    \bigl[T_H (T_L F)\bigr] (x)
    & = \int_{\group}
          H(x,y)  T_L F(y) \;
        \dd{y}
      = \int_{\group}
          \int_{\group}
            H(x,y) \, L(y,z) \, F(z) \;
          \dd{z}
        \dd{y} \\
    & = \int_{\group}
          (H \odot L)(x,z)  F(z) \;
        \dd{z}
      = (T_{H \odot L} F) (x),
  \end{align*}
  which completes the proof.
\end{proof}

The next result establishes a form of \emph{local spectral invariance}
of $L_w^p$-localized integral operators.
Concerning the \emph{holomorphic spectral calculus} appearing in the statement
of the theorem, see, e.g., \cite[Sections~10.21--10.29]{RudinFA}.

\begin{theorem}\label{thm:CDIntegralOperatorsLocalSpectralInvariance}
  Let $p \in (0,1]$ and let $w : \group \to [1,\infty)$ be a $p$-weight.
  For an admissible $g \in \Bwp$, set $\Phi := |V_g g| \in \wienerStC{L^p_w}$ 
  and let $\Theta \in \wienerStC{L_w^p}$ be nonnegative.

  For arbitrary $\delta > 0$, there exists $\eps = \eps(\Theta,g,Q,w,p,\delta) \in (0,\delta)$
  with the following property:
  If $\phi : B_\delta (1) \subseteq \CC \to \CC$ is holomorphic
  and if $H : \group \times \group \to \CC$ is $L_w^p$-localized in $\CalK_g$ satisfying
  \begin{enumerate}[label=(\arabic*)]
      \item $H \dominated \Theta$,
      \item $\| T_H - \identity_{\CalK_g} \|_{\CalK_g \to L^2} \leq \eps$,
  \end{enumerate}
  then there exists an $L_w^p$-localized $H_\phi : \group \times \group \to \CC$
 such that the operator
  $\phi(T_H) : \CalK_g \to \CalK_g$ defined through the holomorphic functional calculus
  satisfies $\phi(T_H) = T_{H_\phi}|_{\CalK_g}$.
\end{theorem}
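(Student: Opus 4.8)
The plan is to realise $\phi(T_H)$ (as an operator on $\CalK_g$) as a Neumann-type series in the kernel algebra. Write $\phi(1+z)=\sum_{n\ge 0}a_n z^n$; since $\phi$ is holomorphic on $B_\delta(1)$ this series has radius of convergence at least $\delta$, so $|a_n|\le C(\phi,\delta)\,(\delta/2)^{-n}$. Let $K$ be the reproducing kernel of $\CalK_g$ from \eqref{eq:RK}; by the estimate recalled at the beginning of \Cref{sec:CD} it is $L_w^p$-localized with $K\dominated\Phi$ for $\Phi:=|V_g g|\in\wienerStC{L_w^p}$, and the associated integral operator $T_K$ is the orthogonal projection of $L^2(G)$ onto $\CalK_g$ (it is Hermitian and reproduces $\CalK_g$). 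Hence $S_0:=H-K$ is again $L_w^p$-localized by \Cref{lem:CDElementary}, with $T_{S_0}|_{\CalK_g}=T_H|_{\CalK_g}-\identity_{\CalK_g}$, so $\|T_{S_0}\|_{\CalK_g\to L^2}\le\eps$, and — replacing $\Theta$ by the symmetric envelope $\min\{\Theta,\Theta^\vee\}$ if necessary — $S_0\dominated\Psi:=\Theta+\Phi$ with $\Psi\in\wienerStC{L_w^p}$ symmetric. Finally, Part~(ii) of \Cref{lem:CDElementary} gives $|S_0(x,y)|\le\eps\,\|g\|_{\Hpi}^2$ for all $x,y\in G$. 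One would then like to set $H_\phi:=\sum_{n\ge0}a_n\,S_0^{\odot n}$ (with $S_0^{\odot 0}:=K$) and prove that this series converges to an $L_w^p$-localized kernel whose operator is $\phi(T_H)$.

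The crucial point is an estimate showing that the envelope norms $\|S_0^{\odot n}\|_{\wienerStC{L_w^p}}$ decay geometrically, with a rate that can be made arbitrarily small by shrinking $\eps$. Smallness of $\|T_{S_0}\|_{\CalK_g\to L^2}$ alone does not give this: iterating the product estimate of \Cref{lem:CDElementary}(iv) only yields $S_0^{\odot n}\dominated$ (an $n$-fold convolution of $\Psi$), whose $\wienerStC{L_w^p}$-norm grows like $\bigl(C_0\|\Psi\|_{\wienerStC{L_w^p}}\bigr)^n$, where $C_0=C_0(p,w,Q)$ is the convolution constant from \Cref{cor:UserFriendlyConvolutionBounds}, and $C_0\|\Psi\|_{\wienerStC{L_w^p}}$ is a fixed quantity that is in general much larger than $\delta$. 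The way around this is to use \emph{both} facts about $S_0$ simultaneously via the truncated envelope $\widetilde\Psi_\eps:=\min\{\Psi,\eps\|g\|_{\Hpi}^2\}$, which is symmetric, dominated by $\Psi\in\wienerStC{L_w^p}$, and satisfies $\maxSt\widetilde\Psi_\eps=\min\{\maxSt\Psi,\eps\|g\|_{\Hpi}^2\}$, so that by dominated convergence $\gamma_\eps:=\|\widetilde\Psi_\eps\|_{\wienerStC{L_w^p}}\to 0$ as $\eps\to 0$. Bounding one of the two $S_0$-factors in $S_0^{\odot 2}(x,y)=\int_G S_0(x,z)\,S_0(z,y)\,\dd{z}$ by $\widetilde\Psi_\eps$ and the other by $\Psi$, and using $\wienerStC{L_w^p}\ast\wienerStC{L_w^p}\hookrightarrow\wienerStC{L_w^p}$ from \Cref{cor:UserFriendlyConvolutionBounds}, gives $S_0^{\odot 2}\dominated\Upsilon_\eps$ with $\|\Upsilon_\eps\|_{\wienerStC{L_w^p}}\le\tau_\eps$, where $\tau_\eps\lesssim_{p,w,Q}\|\Psi\|_{\wienerStC{L_w^p}}\,\gamma_\eps\to 0$. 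Splitting $n=2q+r$ with $r\in\{0,1\}$, writing $S_0^{\odot n}=(S_0^{\odot 2})^{\odot q}\odot S_0^{\odot r}$, and using that the envelope norm is submultiplicative under $\odot$ up to a fixed constant, one obtains — once $\eps$ is small enough that this product stays $\le 1$ — envelopes $S_0^{\odot n}\dominated\Phi_n\in\wienerStC{L_w^p}$ with $\|\Phi_n\|_{\wienerStC{L_w^p}}\le D_\eps\,\rho^n$ for all $n\ge 0$, where $\rho=\rho(\eps)\asymp\tau_\eps^{1/2}\to 0$. Now fix $\eps=\eps(\Theta,g,Q,w,p,\delta)\in(0,\delta)$ small enough that $\rho(\eps)<\delta/2$; this choice does not involve $\phi$.

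With this $\eps$, the series converges. Since $\sum_n|a_n|^p\|\Phi_n\|_{\wienerStC{L_w^p}}^p\lesssim\sum_n(2\rho/\delta)^{np}<\infty$ and $\wienerStC{L_w^p}$ is a $p$-Banach space, $\Phi_\phi:=\sum_n|a_n|\Phi_n$ converges in $\wienerStC{L_w^p}$; since $\wienerStC{L_w^p}\hookrightarrow L^\infty$ its limit is a continuous function with $\sum_n|a_n|\,|S_0^{\odot n}(x,y)|\le\Phi_\phi(y^{-1}x)<\infty$, so $H_\phi:=\sum_n a_n S_0^{\odot n}$ is a well-defined measurable kernel with $H_\phi\dominated\Phi_\phi$, which is condition~\ref{enu:EnvelopeCondition} of \Cref{def:CDOperator}. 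For conditions (i1)--(i2): each partial sum is $L_w^p$-localized (the class is closed under $\odot$ and finite sums by \Cref{lem:CDElementary}), and for fixed $y$ the series $\sum_n a_n S_0^{\odot n}(\cdot,y)$ converges in $L^2(G)$ because $\|S_0^{\odot n}(\cdot,y)\|_{L^2}\le\|\Phi_n\|_{L^2}\lesssim\|\Phi_n\|_{\wienerStC{L_w^p}}$ via $\wienerStC{L_w^p}\hookrightarrow L^2$; as $\CalK_g$ is closed in $L^2$, the limit $H_\phi(\cdot,y)$ lies in $\CalK_g$, and symmetrically $\overline{H_\phi(x,\cdot)}\in\CalK_g$. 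Finally, since $\sigma(T_H|_{\CalK_g})\subseteq\overline{B_\eps(1)}\subseteq B_\delta(1)$, the holomorphic functional calculus gives $\phi(T_H)=\sum_n a_n(T_H|_{\CalK_g}-\identity_{\CalK_g})^n=\sum_n a_n T_{S_0^{\odot n}}|_{\CalK_g}$ with convergence in $\mathcal B(\CalK_g)$ (using $\|T_H|_{\CalK_g}-\identity_{\CalK_g}\|\le\eps<\delta$ and the identity $T_{S_0^{\odot n}}=(T_{S_0})^n$ from \Cref{lem:CDElementary}(iv)); on the other hand, for $F\in\CalK_g$ dominated convergence (with integrable dominating function $y\mapsto\Phi_\phi(y^{-1}x)\,|F(y)|$) lets one interchange sum and integral, so $T_{H_\phi}F=\sum_n a_n T_{S_0^{\odot n}}F$ as well. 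Comparing the two yields $\phi(T_H)=T_{H_\phi}|_{\CalK_g}$ with $H_\phi$ $L_w^p$-localized, as required.

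The main obstacle is precisely the geometric-decay estimate of the second paragraph: the only smallness available — the $\CalK_g\to L^2$ operator norm of $T_H-\identity_{\CalK_g}$ — is invisible to the envelope norm, while the convolution constant $C_0\|\Psi\|_{\wienerStC{L_w^p}}$ controlling the growth of $\|S_0^{\odot n}\|_{\wienerStC{L_w^p}}$ is fixed and may exceed $\delta$, so a naive Neumann series in the envelope norm diverges. The resolution is the interpolation between the two through the truncated envelope $\min\{\Psi,\eps\|g\|_{\Hpi}^2\}$, whose amalgam norm vanishes as $\eps\to0$ by dominated convergence, fed into alternate factors of the $\odot$-products; everything else is bookkeeping with the embeddings and convolution relations of \Cref{sec:WienerAmalgam} and the elementary calculus of $L_w^p$-localized kernels in \Cref{lem:CDElementary}.
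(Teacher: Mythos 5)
Your proposal is correct and follows essentially the same route as the paper's proof: expand $\phi$ in a power series around $1$, subtract the reproducing kernel to form $S_0 = H - K$, and obtain geometric decay of the envelopes of the $\odot$-powers from the truncated envelope $\min\{\Psi,\eps\|g\|_{\Hpi}^2\}$, whose $\wienerSt{L_w^p}$-norm vanishes as $\eps \to 0$ by dominated convergence. The only (harmless) detour is the alternating bound in $S_0^{\odot 2}$ and the $n=2q+r$ splitting: since the pointwise bound $|S_0(x,y)|\le\eps\|g\|_{\Hpi}^2$ from \Cref{lem:CDElementary}(ii) and the envelope bound hold simultaneously, $S_0$ is itself dominated by the truncated envelope, so every factor of $S_0^{\odot n}$ can be bounded by it at once, yielding envelopes of norm at most $(C\gamma_\eps)^n$ directly.
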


\begin{proof}
  The  proof is divided into several steps and closely follows
  the proof of \cite[Theorem~4.3]{MoleculePaper}.

  \medskip{}

  \textbf{Step~1.} \emph{(Choice of $\eps$).}
  Let $\delta > 0$ be given and set $\beta := \| g \|_{\Hpi}^2$
  and $\Theta_0 := \min \{ \Theta, \Theta^{\vee} \}$, so that $H \dominated \Theta_0$.
  Using \Cref{cor:UserFriendlyConvolutionBounds}, choose a constant $C_1 = C_1(w,p,Q) \geq 1$
  satisfying
  \(
    \| F_1 \ast F_2 \|_{\wienerSt{L_w^p}}
    \leq \| |F_1| \ast |F_2| \|_{\wienerSt{L_w^p}}
    \leq C_1  \| F_1 \|_{\wienerSt{L_w^p}}  \| F_2 \|_{\wienerSt{L_w^p}}
  \)
  for all $F_1,F_2 \in \wienerSt{L_w^p}$.

  For $\eps > 0$, set $\Psi_\eps := \min \{ \eps \beta, \Phi + \Theta_0 \}$.
  Since $\maxSt \Psi_\eps \leq \min \{ \eps \beta, \maxSt \Phi + \maxSt \Theta \} \in L_w^p$,
  it follows from the dominated convergence theorem
  (applied along an arbitrary null-sequence $\eps_n \to 0$)
  that $\| \Psi_\eps \|_{\wienerSt{L_w^p}} \to 0$ as $\eps \downarrow 0$.
  Hence, there exists $\eps = \eps(\Theta,g,\delta,w,p,Q) \in (0,\frac{\delta}{2})$
  such that $\| \Psi_\eps \|_{\wienerSt{L_w^p}} \leq \frac{\delta}{4 C_1}$.

  \medskip{}

  \textbf{Step~2.} \emph{(Series representation of $\phi(T_H)$).}
  Let $\phi : B_\delta (1) \to \CC$ be holomorphic.
  By assumption, $\| \identity_{\CalK_g} - T_H \|_{\CalK_g \to \CalK_g} \leq \eps < \frac{\delta}{2}$,
  and hence $\sigma(T_H) \subseteq B_{\delta/2} (1)$.
  This implies that $\phi(T_H) : \CalK_g \to \CalK_g$ is a well-defined bounded linear operator.
  By expanding $\phi$ into a power series,
  we can write $\phi(z) = \sum_{n=0}^\infty a_n \, (z - 1)^n$ for all ${z \in B_\delta (1)}$,
  for a suitable sequence $(a_n)_{n \in \N_0} \subseteq \CC$.
  The series representing $\phi$ convergences locally uniformly on $B_\delta (1)$.
  Therefore, elementary properties of the holomorphic functional calculus
  (see, e.g., \mbox{\cite[Theorem~10.27]{RudinFA}}) show that
  \begin{equation}
    \phi(T_H) = \sum_{n=0}^\infty a_n \, (T_H - \identity_{\CalK_g})^n,
    \label{eq:SpectralCalculusSeriesRepresentation}
  \end{equation}
  with convergence in the operator norm.
  An application of the Cauchy-Hadamard formula gives
  $\delta \leq \big[\, \limsup_{n \to \infty} |a_n|^{1/n} \,\big]^{-1}$.
  Thus, there exists some $N = N(\phi,\delta) \in \N$ such that $|a_n|^{1/n} \leq \frac{2}{\delta}$
  for all $n \geq N$.
  Consequently, there exists $C_\phi = C_\phi (\delta) > 0$ such that
  \begin{equation}
    |a_n| \leq C_\phi  (2/\delta)^n
    \label{eq:SpectralCalculusCoefficientBound}
  \end{equation}
  for all $n \in \N_0$.

 \medskip{}

  \textbf{Step~3.} \emph{(Integral representation of $T_H - \identity_{\CalK_g}$)}.
  For $F \in \CalK_g$ and $x \in G$, the reproducing formula \eqref{eq:RKHS} yields that
 $F(x)
      = T_K F (x)
      $, where $K$ is the reproducing kernel given by \eqref{eq:RK}.
  In other words,
  \begin{equation}
    T_K |_{\CalK_g} = \identity_{\CalK_g}
    \qquad \text{and} \qquad
    T_H - \identity_{\CalK_g}
    = T_L |_{\CalK_g}
    \quad \text{for} \quad
    L := H - K.
    \label{eq:IdentityIntegralRepresentation}
  \end{equation}
  Since $\overline{K(x,\cdot)} = V_g [\pi(x) g] \in \CalK_g$
  and $K(\cdot,y) = V_g [\pi(y) g] \in \CalK_g$ for arbitrary $x,y \in \group$,
  it follows that $L(\cdot,y) \in \CalK_g$ and $\overline{L(x,\cdot)} \in \CalK_g$
  for all $x,y \in \group$.
  Since also
  \[
    |L(y,x)|
    \leq \Phi(x^{-1} y) + \Theta_0(x^{-1} y)
    = \Phi(y^{-1} x) + \Theta_0(y^{-1} x)
  \]
  and $|L(x,y)| \leq \Phi(y^{-1} x) + \Theta_0(y^{-1} x)$
  and hence $L \dominated \Phi + \Theta_0$ with $\Phi + \Theta_0 \in \wienerStC{L_w^p}$,
  it follows that $L$ is $L_w^p$-localized in $\CalK_g$.

  \medskip{}

  \textbf{Step~4.} \emph{(Refined $L_w^p$-localization of $L$).}
  By Step~3 and the assumptions of the theorem, it holds that
  \[
    \| T_L \|_{\CalK_g \to L^2}
    = \| T_H - \identity_{\CalK_g} \|_{\CalK_g \to L^2}
    \leq \eps.
  \]
  Therefore, the pointwise estimate following from the identity
  \eqref{eq:CDPointwiseBoundedByOperatorNorm} shows that
  \[
    |L(x,y)|
    \leq \| T_L \|_{\CalK_g \to L^2}  \| g \|_{\Hpi}^2
    \leq \eps \beta,
    \qquad  x,y \in \group.
  \]
  Combined with the estimate from the end of Step~3,
  this shows $L \dominated \Psi_\eps$.

  \medskip{}

  \textbf{Step~5.} \emph{(Powers of $\Psi_\eps$ and $L$):}
  For $n \in \N$, define $\Psi_\eps^{\ast(1)} := \Psi_\eps$
  and $\Psi_{\eps}^{\ast (n+1)} := \Psi_\eps \ast \Psi_\eps^{\ast(n)}$ inductively.
  Similarly, let $L^{\circ (1)} := L$ and $ L^{\circ (n+1)} := L^{\circ (n)} \odot L$
  for $n \in \N$, where $\odot$ is the product defined in \Cref{lem:CDElementary}.
  By a straightforward induction, that lemma shows that
  $(T_H - \identity_{\CalK_g})^n = T_L^n |_{\CalK_g} = T_{L^{\circ(n)}} |_{\CalK_g}$
  and that $L^{\circ(n)}$ is $L_w^p$-localized in $\CalK_g$ for each $n \in \N$.
  In particular, this implies
  \begin{equation}
    L^{\circ(n)}(\cdot,y) \in \CalK_g
    \quad \text{and} \quad
    \overline{L^{\circ(n)} (x,\cdot)} \in \CalK_g,
    \qquad  x ,y \in \group .
    \label{eq:IteratedKernelsBelongToRKHS}
  \end{equation}
  Furthermore, a straightforward induction shows
  $\Psi_\eps^{\ast(n+1)} = \Psi_\eps^{\ast(n)} \ast \Psi_\eps$
  and $(\Psi_\eps^{\ast(n)})^{\vee} = \Psi_\eps^{\ast(n)}$ for all $n \in \N$.

  Another induction argument shows that $L^{\circ(n)} \dominated \Psi_\eps^{\ast(n)}$:
  For $n = 1$, this was shown in Step~4.
  For the induction step, \Cref{lem:CDElementary} shows by symmetry of $\Psi_\eps$
  and $\Psi_\eps^{\ast(n)}$ and because of $L^{\circ(n)} \dominated \Psi_\eps^{\ast(n)}$
  and $L \dominated \Psi_\eps$ that
  \[
    L^{\circ(n+1)}
    = L^{\circ(n)} \odot L
    \dominated \max
               \big\{
                 \Psi_\eps \ast \Psi_\eps^{\ast(n)} ,
                 \Psi_\eps^{\ast(n)} \ast \Psi_\eps
               \big\}
    = \Psi_\eps^{\ast(n+1)} ,
  \]
  as required.
  Lastly, it holds that
  \begin{equation}
    \| \Psi_\eps^{\ast(n)} \|_{\wienerSt{L_w^p}}
    \leq \bigl(\delta / 4\bigr)^n
    ,
    \qquad  n \in \N.
    \label{eq:PsiConvolutionPowersNormEstimate}
  \end{equation}
  Indeed, for $n = 1$ this follows since $C_1 \geq 1$ and hence
  $\| \Psi_\eps \|_{\wienerSt{L_w^p}} \leq \frac{\delta}{4 C_1} \leq \frac{\delta}{4}$
  by the choice of $\eps$ in Step~1.
  Next, for the induction step note by choice of $C_1$ that
  \[
    \| \Psi_\eps^{\ast(n+1)} \|_{\wienerSt{L_w^p}}
    = \| \Psi_\eps \ast \Psi_\eps^{\ast(n)} \|_{\wienerSt{L_w^p}}
    \leq C_1  \| \Psi_\eps \|_{\wienerSt{L_w^p}}
              \| \Psi_\eps^{\ast(n)} \|_{\wienerSt{L_w^p}}
    \leq \delta/4  (\delta/4)^n
    =    (\delta/4)^{n+1} ,
  \]
  which establishes the claim \eqref{eq:PsiConvolutionPowersNormEstimate}.

  \medskip{}

  \textbf{Step~6.} \emph{(Construction of $H_\phi$).}
  Combining \Cref{eq:SpectralCalculusCoefficientBound,eq:PsiConvolutionPowersNormEstimate} gives
  \[
    \sum_{n=1}^\infty
      \big(
        |a_n|  \| \Psi_\eps^{\ast(n)} \|_{\wienerSt{L_w^p}}
      \big)^p
    \leq \sum_{n=1}^\infty
           \big[
             C_\phi  (2/\delta)^n  (\delta/4)^n
           \big]^p
    =    C_\phi^p
          \sum_{n=1}^\infty
                 (1/2^p)^n
    <    \infty .
  \]
  Since the norm on $\wienerStC{L_w^p}$ is a $p$-norm, the preceding estimate implies
  by \Cref{lem:QuasiBanachAbsoluteConvergence} that the series
  $\sum_{n=1}^\infty |a_n| \, \Psi_\eps^{\ast(n)}$
  is unconditionally convergent in $\wienerStC{L_w^p}$.
  Since $\wienerStC{L_w^p} \hookrightarrow C_b(\group)$
  as a consequence of \Cref{lem:AmalgamWeightedLInftyEmbedding},
  the series in particular converges uniformly.

  Define $\Psi := |a_0| \, \Phi + \sum_{n=1}^\infty |a_n| \, \Psi_\eps^{\ast(n)} \in \wienerStC{L_w^p}$.
  Then the kernel $H_\phi : \group \times \group \to \CC $ defined by
  \[
    H_{\phi} (x,y)
    = a_0  K(x,y) + \sum_{n=1}^\infty a_n \, L^{\circ (n)}(x,y)
  \]
  is well-defined with the series converging absolutely, and
  \begin{equation}
    \begin{split}
      |H_\phi (x,y)|
      & \leq |a_0|  |K(x,y)| + \sum_{n=1}^\infty |a_n|  |L^{\circ(n)}(x,y)| \\
      & \leq |a_0|  \Phi(y^{-1} x)
             + \sum_{n=1}^\infty
                 |a_n|  \Psi_\eps^{\ast(n)}(y^{-1} x)
        =    \Psi(y^{-1} x)
        <    \infty ,
    \end{split}
    \label{eq:LocalSpectralInvarianceKernelDomination}
  \end{equation}
  since $L^{\circ (n)} \dominated \Psi_\eps^{\ast(n)}$ (cf.\ Step~5.)
  Similar arguments show that also $|H_\phi (y,x)| \leq \Psi(y^{-1} x)$
  and thus $H_\phi \dominated \Psi$.

  To prove that $H_\phi$ is $L_w^p$-localized in $\CalK_g$, it remains to show
  $H_\phi(\cdot,y) \in \CalK_g$ and $\overline{H_\phi(x,\cdot)} \in \CalK_g$
  for all $x,y \in \group$.
  To see this, note that \Cref{lem:AmalgamWeightedLInftyEmbedding} shows
  $\Psi \in \wienerSt{L_w^p} \hookrightarrow \wienerL{L_w^p} \hookrightarrow L^2$.
  In combination with \Cref{eq:LocalSpectralInvarianceKernelDomination}
  and the dominated convergence theorem, this implies that the series defining
  $H_\phi (\cdot,y)$ converges in $L^2(G)$.
  Since $\CalK_g \subseteq L^2(G)$ is closed,
  since $L^{\circ(n)}(\cdot,y) \in \CalK_g$ by \Cref{eq:IteratedKernelsBelongToRKHS},
  and since $K(\cdot,y) \in \CalK_g$,
  this implies $H_\phi(\cdot,y) \in \CalK_g$, as required.
  The proof of $\overline{H_\phi(x,\cdot)} \in \CalK_g$ is similar,
  using that $\big( y \mapsto \Psi(y^{-1} x) = \Psi(x^{-1} y) \big) \in L^2(G)$.
\\~\\
  \textbf{Step~7.} \emph{($\phi(T_H) = T_{H_\phi} |_{\CalK_g}$).}
  Let $F \in \CalK_g$ and $x \in \group$.
  Note that Step 6 shows
  \[ \sum_{n=1}^\infty |a_n| \, |L^{\circ(n)}(x,\cdot)| \leq \Psi(x^{-1} \cdot), \]
  where $\Psi \in L^2(G)$.
  Hence, the dominated convergence theorem justifies the following calculation:
  \begin{align*}
    T_{H_\phi} F (x)
    & = a_0  T_K F (x) + \sum_{n=1}^\infty [a_n  T_{L^{\circ(n)}} F (x)]
     = a_0  F (x)
       + \sum_{n=1}^\infty
         \big(
           a_n  [(T_H - \identity_{\CalK_g})^n F] (x)
         \big) \\
    & = \sum_{n=0}^\infty \big( a_n  [(T_H - \identity_{\CalK_g})^n F] (x) \big)
     = [\phi(T_H) F] (x) ,
  \end{align*}
  where the second (resp.\ fourth) equality used \eqref{eq:IdentityIntegralRepresentation}
  (resp.\ \eqref{eq:SpectralCalculusSeriesRepresentation}).
  Thus, $\phi(T_H) = T_{H_\phi} |_{\CalK_g}$.
\end{proof}

\Cref{thm:CDIntegralOperatorsLocalSpectralInvariance} provides an extension
of \cite[Theorem~4.3]{MoleculePaper} from $L^1_w$-localized kernels
to general $L^p_w$-localized kernels for $p \in (0,1]$.
This extension will be crucial for developing the theory of molecules for quasi-Banach coorbit spaces
in \Cref{sec:molecules}.

\section{Matrices}

This section concerns matrices that are indexed by discrete subsets of $G$
(that do not need to be subgroups) and that possess a certain off-diagonal decay.
The precise notion is as follows.

\begin{definition}
Let $p \in (0,1]$ and let $w : G \to [1,\infty)$ be a $p$-weight.
Let $\Lambda = (\lambda_i)_{i \in I}$ and $\Gamma = (\gamma_j)_{j \in J}$
be relatively separated families in $G$.

A matrix $A = (A_{i,j})_{(i,j) \in I \times J} \in \mathbb{C}^{I \times J}$
is called \emph{$L^p_w$-localized}
if there exists an \emph{envelope}
$\Phi \in \wienerStC{L^p_w}$ such that
\begin{align} \label{eq:CD_matrix_envelope}
  |A_{i,j}|
  \leq \min
       \{
         \Phi(\gamma_j^{-1} \lambda_i), \;
         \Phi(\lambda_i^{-1} \gamma_j)
       \},
  \quad i \in I, \; j \in J.
\end{align}
If condition \eqref{eq:CD_matrix_envelope} holds, the notation $A \dominated \Phi$
 will be used.
The space of all $L^p_w$-localized matrices in $\CC^{\Lambda \times \Gamma}$ is the collection
\[
  \goodMatrices(\Lambda, \Gamma)
  := \bigg\{
       A \in \mathbb{C}^{I \times J}
       \; : \;
       A \dominated \Phi \;\; \text{for some} \;\; \Phi \in \wienerStC{L^p_w}
     \bigg\},
\]
and is equipped with the mapping $\| \cdot \|_{\goodMatrices} : \goodMatrices \to [0,\infty)$
defined by
\[
  \| A \|_{\goodMatrices}
  := \inf
     \big\{
       \| \Phi \|_{\wienerSt{L^p_w}}
       \; : \;
       A \dominated \Phi \in \wienerStC{L^p_w}
     \big\}.
\]
In case of $\Lambda = \Gamma$, the notation
$\goodMatrices(\Lambda) := \goodMatrices(\Lambda, \Lambda)$ will be used.
\end{definition}

The basic properties of $L^p_w$-localized matrices are collected in the following lemma.

\begin{lemma} \label{lem:CD_matrix_basic}
  Let  $\Lambda = (\lambda_i)_{i \in I}$, $\Gamma = (\gamma_j)_{j \in J}$,
  and $\Upsilon = (\upsilon_k)_{k \in K}$ be relatively separated families in $G$.
  Then the following hold:
  \begin{enumerate}[label=(\roman*)]
    \item The space $\goodMatrices(\Lambda, \Gamma)$ forms a quasi-Banach space
          with $p$-norm $\| \cdot \|_{\goodMatrices}$.

    \item Any $A = (A_{i, j})_{(i,j) \in I \times J} \in \goodMatrices (\Lambda, \Gamma)$
          satisfies
          \begin{align} \label{eq:CD_schurtype}
              \sum_{i \in I}
                |A_{i, j}|
              \lesssim \rel(\Lambda)  \| A \|_{\goodMatrices}
              \quad \text{and} \quad
              \sum_{j \in J}
                |A_{i, j}|
              \lesssim \rel(\Gamma)  \| A \|_{\goodMatrices},
          \end{align}
          with an implicit constant depending only on $p,w$ and $Q$.
          In particular, the embedding
          \(
            \goodMatrices (\Lambda, \Gamma)
            \hookrightarrow \mathcal{B} (\ell^r (J), \ell^r (I))
          \)
          holds for all $r \in [1,\infty]$, with
          \begin{align} \label{eq:CD_schurtype2}
              \| A \|_{\ell^r (J) \to \ell^r (I)}
              \lesssim  \max \{ \rel(\Lambda), \rel(\Gamma) \}
                         \| A \|_{\goodMatrices}.
          \end{align}

    \item If $A \in \goodMatrices (\Upsilon, \Lambda)$
          and $B \in \goodMatrices (\Lambda, \Gamma)$,
          then the product $A B$
          (which is defined as usual by $(A B)_{k,j} = \sum_{i \in I} A_{k,i} B_{i,j}$)
          satisfies $AB \in \goodMatrices ( \Upsilon, \Gamma)$, with
          \begin{align} \label{eq:CD_matrix_algebra}
            \| A B \|_{\goodMatrices}
            \lesssim \rel(\Lambda)  \| A \|_{\goodMatrices} \| B \|_{\goodMatrices}.
          \end{align}
          for an implicit constant depending only on $p,w$ and $Q$.
  \end{enumerate}
\end{lemma}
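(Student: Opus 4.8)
The plan is to prove the three assertions in the order (ii) $\to$ (i) $\to$ (iii): the pointwise series bounds underlying the Schur-type estimate in (ii) are exactly what is needed both for completeness in (i) and for the product estimate in (iii).

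\emph{Part (ii).}  Fix a nonnegative envelope $\Phi \in \wienerStC{L^p_w}$ with $A \dominated \Phi$.  Using $|A_{i,j}| \leq \Phi(\gamma_j^{-1}\lambda_i)$ together with \eqref{eq:ShiftedSeriesEstimateOneTerm} applied to $F_1 = \Phi$ (and the relatively separated family $\Lambda$) gives $\sum_{i} |A_{i,j}| \leq \tfrac{\rel(\Lambda)}{\mu_G(Q)}\|\Phi\|_{\wienerL{L^1}}$; symmetrically, $|A_{i,j}| \leq \Phi(\lambda_i^{-1}\gamma_j)$ and the same estimate applied to $\Gamma$ give $\sum_j |A_{i,j}| \leq \tfrac{\rel(\Gamma)}{\mu_G(Q)}\|\Phi\|_{\wienerL{L^1}}$.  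It remains to bound $\|\Phi\|_{\wienerL{L^1}} = \|\maxL\Phi\|_{L^1}$: since $\maxR\maxL\Phi = \maxSt\Phi \in L^p_w$, we have $\maxL\Phi \in \wienerR{L^p_w} \hookrightarrow [L^1_w]^{\vee}$ by \Cref{lem:AmalgamWeightedLInftyEmbedding}(iii), and since $\Delta(x^{-1}) \leq w(x)$ for a $p$-weight $w$ (because $w \geq 1$, $w(x) \geq \Delta^{1/p}(x^{-1})$ by \ref{enu:WeightPSymmetric}, and $p \leq 1$), the change of variables $\int_G f \, d\mu_G = \int_G f^{\vee} \Delta(\cdot^{-1}) \, d\mu_G$ gives $\|\maxL\Phi\|_{L^1} \leq \|(\maxL\Phi)^{\vee}\|_{L^1_w} \lesssim \|\maxL\Phi\|_{\wienerR{L^p_w}} = \|\Phi\|_{\wienerSt{L^p_w}}$.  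Taking the infimum over all envelopes yields \eqref{eq:CD_schurtype}, and \eqref{eq:CD_schurtype2} follows from the (discrete) Schur test, cf.\ \cite[Theorem~6.18]{FollandRA}.

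\emph{Part (i).}  Absolute homogeneity of $\|\cdot\|_{\goodMatrices}$ and the $p$-triangle inequality $\|A+B\|_{\goodMatrices}^p \leq \|A\|_{\goodMatrices}^p + \|B\|_{\goodMatrices}^p$ are immediate from the corresponding properties of the $p$-norm on $\wienerStC{L^p_w}$, using that $A \dominated \Phi$ and $B \dominated \Psi$ force $A+B \dominated \Phi+\Psi$.  Positive definiteness follows from $\wienerStC{L^p_w} \hookrightarrow L^{\infty}(G)$ (a consequence of \Cref{lem:AmalgamWeightedLInftyEmbedding}): any envelope $\Phi$ of a nonzero $A$ satisfies $0 < |A_{i_0,j_0}| \leq \Phi(\gamma_{j_0}^{-1}\lambda_{i_0}) \leq \|\Phi\|_{L^{\infty}} \lesssim \|\Phi\|_{\wienerSt{L^p_w}}$ for a nonzero entry $A_{i_0,j_0}$.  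For completeness it suffices to show that every $p$-absolutely convergent series converges; given $(A^{(n)})_n$ with $\sum_n \|A^{(n)}\|_{\goodMatrices}^p < \infty$, I would pick envelopes $\Phi_n \dominated A^{(n)}$ with $\sum_n \|\Phi_n\|_{\wienerSt{L^p_w}}^p < \infty$ and set $\Phi := \sum_n \Phi_n$, which converges unconditionally in $\wienerStC{L^p_w}$ by \Cref{lem:QuasiBanachAbsoluteConvergence}, hence uniformly by the $L^\infty$-embedding.  Then $A_{i,j} := \sum_n A^{(n)}_{i,j}$ is well-defined (as $\sum_n |A^{(n)}_{i,j}| \leq \Phi(\gamma_j^{-1}\lambda_i)$) with $A \dominated \Phi$, so $A \in \goodMatrices(\Lambda,\Gamma)$, and $\|A - \sum_{n\leq N} A^{(n)}\|_{\goodMatrices}^p \leq \sum_{n>N}\|\Phi_n\|_{\wienerSt{L^p_w}}^p \to 0$.

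\emph{Part (iii).}  Choose envelopes $\Phi \dominated A$ and $\Psi \dominated B$.  The series $(AB)_{k,j} = \sum_i A_{k,i}B_{i,j}$ converges absolutely since $\sum_i |A_{k,i}||B_{i,j}| \leq \sum_i \Phi(\upsilon_k^{-1}\lambda_i)\Psi(\lambda_i^{-1}\gamma_j)$, and applying \eqref{eq:ShiftedSeriesEstimateTwoTerms} (with $F_1 = \Psi$, $F_2 = \Phi$, $x = \gamma_j$, $y = \upsilon_k$) bounds this by $\tfrac{\rel(\Lambda)}{\mu_G(Q)}(\maxL\Phi \ast \maxR\Psi)(\upsilon_k^{-1}\gamma_j)$; the mirrored choice of envelope forms gives $|(AB)_{k,j}| \leq \tfrac{\rel(\Lambda)}{\mu_G(Q)}(\maxL\Psi \ast \maxR\Phi)(\gamma_j^{-1}\upsilon_k)$, whence $AB \dominated \Theta$ with $\Theta := \tfrac{\rel(\Lambda)}{\mu_G(Q)}\max\{\maxL\Phi \ast \maxR\Psi,\, \maxL\Psi \ast \maxR\Phi\}$.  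The crux — and the step I expect to be the main obstacle — is to check $\Theta \in \wienerStC{L^p_w}$: the relation $\wienerL{L^p_w} \ast \wienerR{L^p_w} \hookrightarrow L^p_w$ only lands in $L^p_w$, which is not enough, so instead I would first observe $\maxL\Phi, \maxR\Psi \in \wienerStC{L^p_w}$ — e.g.\ $\maxSt(\maxL\Phi) = \maxL(\maxSt\Phi) = \maxL_{Q^2}(\maxR\Phi) \in L^p_w$ because $\maxR\Phi \in \wienerL{L^p_w}$ and $\wienerL{L^p_w}$ is independent of the defining neighborhood, with norm $\lesssim \|\Phi\|_{\wienerSt{L^p_w}}$, and likewise for $\maxR\Psi$ — and then invoke the algebra relation $\wienerSt{L^p_w} \ast \wienerSt{L^p_w} \hookrightarrow \wienerStC{L^p_w}$ from \Cref{cor:UserFriendlyConvolutionBounds}(i).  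This yields $\Theta \in \wienerStC{L^p_w}$ with $\|\Theta\|_{\wienerSt{L^p_w}} \lesssim \rel(\Lambda)\|\Phi\|_{\wienerSt{L^p_w}}\|\Psi\|_{\wienerSt{L^p_w}}$; taking infima over envelopes of $A$ and $B$ gives $AB \in \goodMatrices(\Upsilon,\Gamma)$ and \eqref{eq:CD_matrix_algebra}.
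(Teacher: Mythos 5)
Your proof is correct and follows essentially the same route as the paper: the Schur-type bounds via \Cref{lem:StandardShiftedSeriesEstimates} and the embedding $\|\Phi\|_{\wienerL{L^1}} \lesssim \|\Phi\|_{\wienerSt{L_w^p}}$ for (ii), completeness via summed envelopes and \Cref{lem:QuasiBanachAbsoluteConvergence} for (i), and the product envelope $\maxL\Phi \ast \maxR\Psi$ (the paper uses the sum of the two convolutions where you use the pointwise maximum, which is equivalent) combined with $\wienerSt{L_w^p} \ast \wienerSt{L_w^p} \hookrightarrow \wienerStC{L_w^p}$ for (iii). The only blemish is notational: you write ``$\Phi_n \dominated A^{(n)}$'' and ``$\Phi \dominated A$'' where the paper's convention is $A \dominated \Phi$.
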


\begin{proof}
All implied constants in this proof only depend on $p,w,Q$.

(i)
The absolute homogeneity and the $p$-norm property
$\| A + B \|_{\goodMatrices}^p \leq \| A \|_{\goodMatrices}^p + \| B \|_{\goodMatrices}^p$
of $\| \cdot \|_{\goodMatrices}$
follow directly from the corresponding properties of $\| \cdot \|_{\wienerSt{L^p_w}}$.
For showing that $\| \cdot \|_{\goodMatrices}$ is positive definite, let $A \in \goodMatrices (\Lambda, \Gamma)$. For any
 $\Phi \in \wienerStC{L^p_w}$ satisfying $A \dominated \Phi$, note that, by \Cref{lem:AmalgamWeightedLInftyEmbedding},
\begin{equation}
  |A_{i,j}|
  \leq \Phi(\gamma_j^{-1} \lambda_i)
  \leq \| \Phi \|_{C_b}
  =    \| \Phi \|_{L^{\infty}}
  \lesssim \| \Phi \|_{\wienerSt{L^p_w}}.
  \label{eq:SpecialMatrixNormDominatesMatrixEntries}
\end{equation}
Hence, taking the infimum over all $\Phi$ satisfying  $A \dominated \Phi$ yields directly that
$|A_{i,j}| \lesssim \| A \|_{\goodMatrices}$. Thus, if $\| A \|_{\goodMatrices} = 0$, then $A = 0$, which shows that
 $\| \cdot \|_{\goodMatrices}$ is positive definite.

It remains to show that $\goodMatrices(\Lambda, \Gamma)$ is complete.
For this, by \Cref{lem:QuasiBanachAbsoluteConvergence},
it suffices to show that if $(A^{(n)})_{n \in \mathbb{N}}$ is a sequence
in $\goodMatrices(\Lambda, \Gamma)$ satisfying
$\sum_{n \in \mathbb{N}} \| A^{(n)} \|_{\goodMatrices}^p < \infty$,
then $\sum_{n \in \mathbb{N}}  A^{(n)} $ converges in $\goodMatrices(\Lambda, \Gamma)$.
Given such a sequence $(A^{(n)})_{n \in \mathbb{N}}$,
define $A \in \mathbb{C}^{I \times J}$ by $A_{i,j} := \sum_{n \in \mathbb{N}} A_{i,j}^{(n)}$.
That $A$ is well-defined follows from
the observation after \Cref{eq:SpecialMatrixNormDominatesMatrixEntries} by noting
\[
  \sum_{n \in \N}
    |A_{i,j}^{(n)}|
  \leq \sum_{n \in \N}
         |A_{i,j}^{(n)}|^p
  \lesssim \sum_{n \in \N}
             \| A^{(n)} \|_{\goodMatrices}^p
  < \infty
  .
\]
For $n \in \mathbb{N}$, let $\Phi_n \in \wienerStC{L^p_w}$ be such that $A^{(n)} \dominated \Phi_n$
and $\|\Phi_n \|_{\wienerSt{L^p_w}} \leq 2 \| A^{(n)} \|_{\goodMatrices}$,
and define $\Theta_N := \sum_{n = N+1}^{\infty} \Phi_n$ for $N \in \mathbb{N}_0$.
The choice of $\Phi_n$ implies that
\[
  \sum_{n \in \mathbb{N}}
    \| \Phi_n \|_{\wienerSt{L_w^p}}^p
  \leq 2^p \sum_{n \in \mathbb{N}}
           \| A^{(n)} \|_{\goodMatrices}^p
  < \infty
  ,
\]
so that $\Theta_N \in \wienerStC{L^p_w}$ by \Cref{lem:QuasiBanachAbsoluteConvergence}.
In addition, each $\Theta_N$ satisfies the envelope property
\[
  \bigg| \bigg( A - \sum_{n = 1}^N A^{(n)} \bigg)_{i,j} \bigg|
  \leq \sum_{n=N+1}^{\infty}
         \min \{ \Phi_n (\gamma_j^{-1} \lambda_i), \; \Phi_n (\lambda_i^{-1} \gamma_j) \}
  \leq \min \{ \Theta_N (\gamma_j^{-1} \lambda_i), \; \Theta_N (\lambda_i^{-1} \gamma_j) \}
\]
for all $i \in I$ and $j \in J$.
In particular, this implies that $A - \sum_{n = 1}^N A^{(n)} \in \goodMatrices(\Lambda, \Gamma)$
with
\[
  \bigg\| A - \sum_{n = 1}^N A^{(n)} \bigg\|_{\goodMatrices}^p
  \leq \| \Theta_N \|^p_{\wienerSt{L^p_w}}
  \leq \sum_{n = N+1}^{\infty} \| \Phi_n \|_{\wienerSt{L^p_w}}^p
  \to  0
  \quad \text{as} \quad N \to \infty.
\]
Thus, $\sum_{n \in \mathbb{N}}  A^{(n)} $ converges in $\goodMatrices(\Lambda, \Gamma)$.
Overall, this shows that $\goodMatrices(\Lambda, \Gamma)$ is complete.

\medskip{}

(ii)
Let $A \in \goodMatrices(\Lambda, \Gamma)$ and let $\Phi \in \wienerStC{L^p_w}$
be such that $A \dominated \Phi$.
Because of $w \geq 1$, an application of \Cref{lem:StandardShiftedSeriesEstimates} yields,
for all $j \in J$, that
\[
  \sum_{i \in I} |A_{i,j} |
  \leq \sum_{i \in I} \Phi(\gamma_j^{-1} \lambda_i)
  \leq \frac{\rel(\Lambda)}{\mu_G(Q)} \| \Phi \|_{\wienerL{L^1_w}}
  \lesssim \frac{\rel(\Lambda)}{\mu_G(Q)} \| \Phi \|_{\wienerStC{L^p_w}},
\]
where the last inequality follows from \Cref{lem:AmalgamWeightedLInftyEmbedding}.
Similarly,
$\sum_{j \in J} |A_{i,j} | \lesssim \frac{\rel(\Gamma)}{\mu_G(Q)} \| \Phi \|_{\wienerStC{L^p_w}}$.
This implies the estimates \eqref{eq:CD_schurtype}.
The embedding \eqref{eq:CD_schurtype2} is a direct consequence of Schur's test,
see, e.g., \cite[Theorem~6.18]{FollandRA}.

\medskip{}

(iii)
Let $\Phi, \Theta \in \wienerStC{L^p_w}$ be such that $A \dominated \Phi$ and $B \dominated \Theta$.
By \Cref{lem:StandardShiftedSeriesEstimates},
it follows that, for arbitrary $\upsilon_k \in \Upsilon$ and $\gamma_j \in \Gamma$,
\[
  |(AB)_{k, j}|
  \leq \sum_{i \in I} |A_{k, i}| |B_{i, j}|
  \leq \sum_{i \in I} \Phi(\lambda_i^{-1} \upsilon_k) \Theta(\gamma_j^{-1} \lambda_i)
  \leq \frac{\rel(\Lambda)}{\mu_G (Q)}
        \big(\maxL \Theta \ast \maxR \Phi\big) (\gamma_j^{-1} \upsilon_k).
\]
A similar calculation also gives
\(
  |(AB)_{k,j}|
  \leq \frac{\rel(\Lambda)}{\mu_G (Q)}
        \big( \maxL \Phi \ast \maxR \Theta \big) (\upsilon_k^{-1} \gamma_j)
  .
\)
Define the function $H : G \to [0,\infty)$ by
\[
  H
  := \frac{\rel(\Lambda)}{\mu_G (Q)}
      \bigg(
             \big(\maxL \Theta \ast \maxR \Phi\big) + \big(\maxL \Phi \ast \maxR \Theta \big)
           \bigg),
\]
so that $|(AB)_{k,j}| \leq \min \{ H(\gamma_j^{-1} \upsilon_k), \; H(\upsilon_k^{-1} \gamma_j) \}$
for all $j \in J$ and $k \in K$.
By \Cref{cor:UserFriendlyConvolutionBounds}, it follows that
$\maxL \Theta \ast \maxR \Phi \in \wienerStC{L^p_w}$ and
$\maxL \Phi \ast \maxR \Theta \in \wienerStC{L^p_w}$, and thus $H \in \wienerStC{L^p_w}$, with
\[
  \| H \|_{\wienerStC{L^p_w}}
  \lesssim \rel(\Lambda)  \| \Theta \|_{\wienerStC{L^p_w}} \| \Phi \|_{\wienerStC{L^p_w}}.
\]
Overall, $AB \dominated H$ with $\|A B \|_{\goodMatrices} \leq \| H \|_{\wienerStC{L^p_w}}$,
which easily yields the desired claim.
\end{proof}

The proof of the following theorem resembles the proof
of \Cref{thm:CDIntegralOperatorsLocalSpectralInvariance},
but for matrices instead of integral operators.
For $L^1_w$-localized matrices, the result can already be found in
\cite{MoleculePaper}.

\begin{theorem}\label{thm:CDMatricesLocalSpectralInvariance}
  Let $w : G \to [1,\infty)$ be a $p$-weight for some $p \in (0,1]$
  and let $\Theta \in \wienerStC{L^p_w}$.
  Let $R > 0$, and let $\Lambda = (\lambda_i)_{i \in I}$
  be a relatively separated family in $G$ with $\rel(\Lambda) \leq R$.

  For arbitrary $\delta > 0$, there exists $\eps = \eps (\Theta, R, \delta, w, p, Q) \in (0, \delta)$
  with the following property:
  If $\phi : B_{\delta} (1) \subseteq \mathbb{C} \to \mathbb{C}$ is holomorphic
  and if $A \in \goodMatrices (\Lambda)$ satisfies
  \begin{enumerate}[label=(\arabic*)]
    \item $A \dominated \Theta$,
    \item \(
            \| A - \identity_{\ell^2 (I)} \|_{\ell^2(I) \to \ell^2 (I)}
            \leq \eps
            ,
          \)
  \end{enumerate}
  then the operator $\phi (A) : \ell^2 (I) \to \ell^2 (I)$
  defined through the holomorphic functional calculus is well-defined
  and its associated matrix is an element of $\goodMatrices(\Lambda)$.
\end{theorem}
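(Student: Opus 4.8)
The plan is to follow the proof of \Cref{thm:CDIntegralOperatorsLocalSpectralInvariance}, but to work directly with the $p$-norm $\| \cdot \|_{\goodMatrices}$ rather than tracking explicit envelopes; this simplification is available here because membership of a matrix in $\goodMatrices(\Lambda)$ is \emph{exactly} the envelope condition \eqref{eq:CD_matrix_envelope}, with no analogue of the side conditions $H(\cdot,y) \in \CalK_g$, $\overline{H(x,\cdot)} \in \CalK_g$ present for integral operators. Two preliminary observations come first. (a) The identity matrix $\identity_{\ell^2(I)} = (\delta_{i,j})_{i,j \in I}$ lies in $\goodMatrices(\Lambda)$: choosing any nonnegative, symmetric $\Phi_I \in \wienerStC{L^p_w}$ with $\Phi_I(e_G) \geq 1$ --- e.g.\ a continuous, compactly supported bump function, whose two-sided maximal function is again continuous and compactly supported, hence in $L^p_w$ because $w$ is locally bounded by \Cref{rem:SubmultiplicativeWeightsLocallyBounded} --- gives $\identity_{\ell^2(I)} \dominated \Phi_I$. (b) By \Cref{lem:CD_matrix_basic}~(iii) and $\rel(\Lambda) \leq R$, there is a constant $C_1 = C_1(R,w,p,Q) \geq 1$ with $\| A_1 A_2 \|_{\goodMatrices} \leq C_1 \| A_1 \|_{\goodMatrices} \| A_2 \|_{\goodMatrices}$ for all $A_1, A_2 \in \goodMatrices(\Lambda)$, and by \Cref{lem:CD_matrix_basic}~(ii) the embedding $\goodMatrices(\Lambda) \hookrightarrow \mathcal{B}(\ell^2(I))$ is continuous.

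Next I would set $\Theta_0 := \min\{\Theta, \Theta^{\vee}\}$ (symmetric and in $\wienerStC{L^p_w}$, using $(L^p_w)^{\vee} = L^p_w$) and $B := A - \identity_{\ell^2(I)} \in \goodMatrices(\Lambda)$. Since $A \dominated \Theta_0$ and $\identity_{\ell^2(I)} \dominated \Phi_I$, one has $B \dominated \Theta_0 + \Phi_I$; moreover hypothesis (2) gives the pointwise bound $|B_{i,j}| = |\langle B e_j, e_i \rangle_{\ell^2}| \leq \| A - \identity_{\ell^2(I)} \|_{\ell^2(I) \to \ell^2(I)} \leq \eps$ for the standard orthonormal basis $(e_i)_{i \in I}$ of $\ell^2(I)$. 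Combining the two yields $B \dominated \Psi_\eps := \min\{\eps, \Theta_0 + \Phi_I\}$, which is symmetric, continuous, and satisfies $\maxSt \Psi_\eps \leq \min\{\eps, \maxSt(\Theta_0 + \Phi_I)\}$; by dominated convergence in $L^p_w$ (along any null sequence $\eps_n \downarrow 0$) we get $\| \Psi_\eps \|_{\wienerSt{L^p_w}} \to 0$ as $\eps \downarrow 0$. I would therefore fix $\eps = \eps(\Theta, R, \delta, w, p, Q) \in (0, \delta/2)$ small enough that $\| \Psi_\eps \|_{\wienerSt{L^p_w}} \leq \delta/(4 C_1)$; then $\| B \|_{\goodMatrices} \leq \delta/(4 C_1)$, and a straightforward induction using (b) yields $\| B^n \|_{\goodMatrices} \leq (\delta/4)^n$ for all $n \in \N$.

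With $\eps$ fixed, $\| B \|_{\ell^2(I) \to \ell^2(I)} \leq \eps < \delta/2$, so $\sigma(A) \subseteq \overline{B_\eps(1)} \subseteq B_\delta(1)$ and $\phi(A)$ is well-defined. Expanding $\phi(z) = \sum_{n=0}^\infty a_n (z-1)^n$ on $B_\delta(1)$, the Cauchy--Hadamard formula gives $\delta \leq \bigl(\limsup_n |a_n|^{1/n}\bigr)^{-1}$, hence a constant $C_\phi = C_\phi(\delta) > 0$ with $|a_n| \leq C_\phi (2/\delta)^n$ for all $n \in \N_0$; consequently $\| a_n B^n \|_{\goodMatrices} \leq C_\phi 2^{-n}$ for $n \geq 1$, so $\sum_{n \geq 1} \| a_n B^n \|_{\goodMatrices}^p < \infty$. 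As $\goodMatrices(\Lambda)$ is a $p$-Banach space (\Cref{lem:CD_matrix_basic}~(i)), \Cref{lem:QuasiBanachAbsoluteConvergence} shows that $\widetilde{A} := a_0 \, \identity_{\ell^2(I)} + \sum_{n \geq 1} a_n B^n$ converges in $\goodMatrices(\Lambda)$, hence also in $\mathcal{B}(\ell^2(I))$ by (b). On the other hand, by the elementary properties of the holomorphic functional calculus (see \cite[Theorem~10.27]{RudinFA}), $\phi(A) = \sum_{n=0}^\infty a_n (A - \identity_{\ell^2(I)})^n = \sum_{n=0}^\infty a_n B^n$ with convergence in $\mathcal{B}(\ell^2(I))$. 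Therefore $\phi(A) = \widetilde{A}$ as operators, so the matrix associated with $\phi(A)$ equals $\widetilde{A} \in \goodMatrices(\Lambda)$, as claimed.

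I do not expect a serious obstacle: the matrix case is lighter than \Cref{thm:CDIntegralOperatorsLocalSpectralInvariance} because there is no reproducing-kernel side condition to preserve, and the pointwise bound $|B_{i,j}| \leq \| B \|_{\ell^2 \to \ell^2}$ that here replaces the more delicate estimate \eqref{eq:CDPointwiseBoundedByOperatorNorm} is immediate from the definition of the operator norm. The only points needing mild care are the membership $\identity_{\ell^2(I)} \in \goodMatrices(\Lambda)$ and the verification that $C_1$, and hence $\eps$, can be taken to depend only on $\Theta, R, \delta, w, p, Q$ --- in particular uniformly over all relatively separated $\Lambda$ with $\rel(\Lambda) \leq R$, which is precisely where the hypothesis $\rel(\Lambda) \leq R$ is used (through \Cref{lem:CD_matrix_basic}~(iii)).
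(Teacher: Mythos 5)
Your proposal is correct and follows essentially the same route as the paper's proof: both dominate $B = A - \identity_{\ell^2(I)}$ by a truncated envelope $\min\{\eps, \Theta + \text{bump}\}$ whose $\wienerSt{L^p_w}$-norm is driven to zero by dominated convergence, then use the submultiplicativity of $\| \cdot \|_{\goodMatrices}$ (with constant controlled by $\rel(\Lambda) \leq R$) together with the Cauchy--Hadamard bound and the $p$-norm completeness of $\goodMatrices(\Lambda)$ to sum the Neumann-type series. The minor presentational differences (symmetrizing $\Theta$, parametrizing the truncation by $\eps$ rather than $k^{-1}$) do not change the argument.
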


\begin{proof}
Throughout, let $\Phi \in \wienerStC{L^p_w}$ be such that $A \dominated \Phi$.
The proof is split into four steps.

\medskip{}

\textbf{Step 1.} \emph{(Choice of $\eps$).}
For a symmetric function $\varphi \in C_c (G) \subseteq \wienerStC{L^p_w}$
such that $\varphi \geq 0$ and $\varphi(e_G) = 1$, define $\Theta := \varphi + \Phi$.
For $k \in \mathbb{N}$, let $\Theta_k := \min \{k^{-1}, \Theta \}$ and note the pointwise
estimate and convergence
\[
  \maxSt \Theta_k (x)
  \leq \min \{ k^{-1}, \maxSt \Theta (x) \}
  \to 0
  \quad \text{as} \quad k \to \infty.
\]
Since $\maxSt \Theta \in L^p_w (G)$, an application of Lebesgue's dominated convergence theorem
implies $\| \Theta_k \|_{\wienerSt{L^p_w}} = \| \maxSt \Theta_k \|_{L^p_w} \to 0$ as $k \to \infty$.

Let $C = C(p,w,Q) > 0$ be the implicit constant appearing in \Cref{eq:CD_matrix_algebra}
and define $C_1 := \max \{1, C  R \}$ and $C_2 := (4/\delta)  C_1 > 0$.
By the previous paragraph, there exists $k_0 \in \mathbb{N}$ satisfying $k_0 \geq 2/\delta$
and $\| \Theta_{k_0} \|_{\wienerStC{L^p_w}} \leq 1/C_2$.
For such a fixed $k_0$, set $\eps := k_0^{-1}$ throughout.

\medskip{}

\textbf{Step 2.} \emph{(Series representation of $\phi (A)$).}
Let $(a_n)_{n \in \mathbb{N}_0}$ with $\phi(z) = \sum_{n \in \mathbb{N}_0} a_n  (z-1)^n$
for all $z \in B_{\delta}(1)$ with uniform convergence on compact subsets of $B_{\delta} (1)$.
By assumption, $\| A - \identity_{\ell^2(I)} \|_{\ell^2 \to \ell^2} \leq \eps$,
and $\eps \leq \delta / 2$ by Step 1,
and thus $\sigma (A) \subseteq \overline{B_{\delta /2}(1)} \subseteq B_{\delta} (1)$.
Therefore, basic properties of the holomorphic functional calculus
(see, e.g., \mbox{\cite[Theorem~10.27]{RudinFA}}) yield that
\begin{align} \label{eq:series_expansion_CD_matrix}
  \phi(A)
  = \sum_{n \in \mathbb{N}_0}
      a_n  (A - \identity_{\ell^2 (I)})^n
\end{align}
with convergence in the operator norm topology.
Since $\delta \leq \big[ \limsup_{n \to \infty} |a_n|^{1/n} \big]^{-1}$
by the Cauchy-Hadamard formula, there exists $N = N(\phi, \delta) \in \mathbb{N}$
such that $|a_n|^{1/n} \leq 2/\delta$ for all $n \geq N$.
In particular, this implies that there exists $C_{\phi} = C_{\phi} (\delta) > 0$ satisfying
\begin{align} \label{eq:coefficient_estimate}
  |a_n|
  \leq C_{\phi}  \big( 2/\delta \big)^n
\end{align}
for all $n \in \mathbb{N}_0$.

\medskip{}

\textbf{Step 3.} \emph{($L^p_w$-localization of $A -\identity_{\ell^2(I)}$).}
Identify the operator $A - \identity_{\ell^2(I)}$ with the matrix $B \in \mathbb{C}^{I \times I}$
given by $B_{i,i'} = A_{i,i'} - \delta_{i,i'}$ for $i, i' \in I$.
Since $\| A - \identity_{\ell^2 (I)} \|_{\ell^2 \to \ell^2} \leq \eps = k_0^{-1}$,
it follows that $|B_{i, i'} | \leq k_0^{-1}$.
In addition, a direct calculation gives
\[
  \big|B_{i, i'}\big| = \big| A_{i,i'} - \delta_{i, i'} \big|
  \leq \Phi (\lambda_{i'}^{-1} \lambda_i) + \varphi (\lambda_{i'}^{-1} \lambda_i)
  = \Theta (\lambda_{i'}^{-1} \lambda_i ),
  \quad i, i' \in I.
\]
Similarly, it follows that $|B_{i, i'} | \leq \Theta(\lambda_i^{-1} \lambda_{i'} )$.
Thus, $B \dominated \Theta_{k_0}$.

\medskip{}

\textbf{Step 4.} \emph{(Norm convergence of $\phi(A)$).}
By the choice of $C_1 \geq 1$ in Step 1, it follows by an induction argument
and \Cref{eq:CD_matrix_algebra} that $B^n \in \goodMatrices (\Lambda)$, with
\[
  \| B^n \|_{\goodMatrices}
  \leq C_1^n  \| B \|_{\goodMatrices}^n
  \leq \bigg( \frac{C_1}{C_2} \bigg)^n
  = \bigg( \frac{\delta}{4} \bigg)^n,
  \quad n \in \mathbb{N},
\]
where the second equality used that
$\| B \|_{\goodMatrices} \leq \|\Theta_{k_0} \|_{\wienerSt{L^p_w}} \leq C_2^{-1}$; see Step 3.
Combining this, together with \Cref{eq:coefficient_estimate}, yields
\[
  \sum_{n \in \mathbb{N}}
    \big(|a_n|  \big\| (A - \identity_{\ell^2(I)})^n \|_{\goodMatrices} \big)^p
  \leq \sum_{n \in \mathbb{N}}
         \Big( C_{\phi}  (2/\delta)^n  (\delta/4)^n \Big)^p
  \leq C_{\phi}^p \sum_{n \in \mathbb{N}} ( 2^{-p} )^n < \infty.
\]
Since \Cref{lem:CD_matrix_basic} shows that $\goodMatrices(\Lambda)$ is a quasi-Banach space
with $p$-norm $\| \cdot \|_{\goodMatrices}$,
it follows by \Cref{lem:QuasiBanachAbsoluteConvergence} that the series
$\sum_{n \in \mathbb{N}} a_n (A - \identity_{\ell^2(I)} )^n$
converges in $\goodMatrices(\Lambda)$.
Since $\identity_{\ell^2 (I)} \dominated \varphi$, it follows that
$(A - \identity_{\ell^2 (I)})^0 = \identity_{\ell^2 (I)} \in \goodMatrices (\Lambda)$.
Therefore,
\[ \phi(A) =a_0 (A - \identity_{\ell^2 (I)})^0 + \sum_{n \in \mathbb{N}} a_n (A - \identity_{\ell^2(I)} )^n  \in \goodMatrices,
\]
which completes the proof.
\end{proof}

\chapter{Molecular frames and Riesz sequences for coorbit spaces} \label{sec:molecules}
This chapter is devoted to the notion of molecules in coorbit spaces.
The main results obtained show that coorbit spaces and associated sequence spaces
can be decomposed in terms of dual molecules of frames and Riesz sequences.
These results will be used in Section~\ref{sec:applications}
to provide criteria for boundedness of operators on coorbit spaces.

\section{Frames and Riesz sequences}

Let $\mathcal{H}$ be a separable Hilbert space.
A countable family $(g_i)_{i \in I}$ in $\mathcal{H}$ is called
a \emph{Bessel sequence} in $\mathcal{H}$ if there exists $B>0$ such that
\[
  \sum_{i \in I} |\langle f, g_i \rangle |^2
  \leq B \| f \|_{\mathcal{H}}^2
  \quad \text{for all} \quad f \in \mathcal{H}.
\]
Equivalently, $(g_i)_{i \in I}$ is a Bessel sequence
if the \emph{coefficient operator} associated to $(g_i)_{i \in I}$,
\[
  \analysis : \quad
  \mathcal{H} \to \ell^2 (I), \quad
  f \mapsto (\langle f, g_i \rangle)_{i \in I}
\]
is well-defined and bounded.
The \emph{reconstruction operator} $\synthesis := \analysis^* : \ell^2 (I) \to \mathcal{H}$
associated to $(g_i)_{i \in I}$ is given by $\synthesis (c_i)_{i \in I} = \sum_{i \in I} c_i g_i$.
The \emph{frame} and \emph{Gramian operator} associated to $(g_i)_{i \in I}$ are defined by
$\frameop := \synthesis \analysis : \mathcal{H} \to \mathcal{H}$
and $\gramian := \analysis \synthesis : \ell^2(I) \to \ell^2 (I)$, respectively.

A Bessel sequence $(g_i)_{i \in I}$ is called a \emph{frame} for $\mathcal{H}$,
if there exist $A, B > 0$ (called \emph{frame bounds}) satisfying
\begin{align}\label{eq:frame_ineq}
  A  \| f \|_{\CalH}^2
  \leq \sum_{i \in I} |\langle f, g_i \rangle |^2
  \leq B  \| f \|_{\mathcal{H}}^2
  \quad \text{for all} \quad f \in \mathcal{H};
\end{align}
this holds if and only if the frame operator $\frameop : \mathcal{H} \to \mathcal{H}$
is bounded and invertible.
Two Bessel sequences $(g_i)_{i \in I}$ and $(h_i)_{i \in I}$ are said to be
\emph{dual frames} for $\mathcal{H}$ if
\[
  f
  = \sum_{i \in I} \langle f, g_i \rangle h_i
  = \sum_{i \in I} \langle f, h_i \rangle g_i
  \quad \text{for all} \quad f \in \mathcal{H}.
\]
If $(g_i)_{i \in I}$ is a frame for $\mathcal{H}$,
then $(\frameop^{-1} g_i)_{i \in I}$ is a dual frame of $(g_i)_{i \in I}$,
called the \emph{canonical dual frame}.
A frame is called \emph{Parseval} if \eqref{eq:frame_ineq} holds with equality,
i.e., if $\frameop = \identity_{\mathcal{H}}$.  If $(g_i)_{i \in I}$ is a frame for $\mathcal{H}$,
then the system $(\frameop^{-1/2} g_i)_{i \in I}$ is a Parseval frame for $\mathcal{H}$.

A Bessel sequence $(g_i)_{i \in I}$ is called a \emph{Riesz sequence} in $\mathcal{H}$,
if there exist $A,B > 0$ (called \emph{Riesz bounds}) satisfying
\begin{align}\label{eq:riesz_ineq}
  A  \| c \|_{\ell^2}^2
  \leq \bigg\| \sum_{i \in I} c_i g_i \bigg\|_{\mathcal{H}}^2
  \leq B  \| c \|_{\ell^2}^2
  \quad \text{for all} \quad c = (c_i)_{i \in I} \in \ell^2 (I).
\end{align}
Equivalently, $(g_i)_{i \in I}$ is a Riesz sequence if and only if the Gramian operator
$\gramian : \ell^2 (I) \to \ell^2(I)$ is bounded and invertible.
If $(g_i)_{i \in I}$ is a Riesz sequence in $\mathcal{H}$,
then it admits a unique biorthogonal system
$(h_i)_{i \in I}$ in $\overline{\Span} \{ g_i  :  i \in I \}$.
A Riesz sequence $(g_i)_{i \in I}$ is a frame for $\overline{\Span} \{ g_i  :  i \in I \}$,
and $(\frameop^{-1/2} g_i)_{i \in I}$ is an orthonormal sequence in $\mathcal{H}$,
where $\frameop $ is the frame operator considered
as an operator on $\overline{\Span} \{ g_i  :  i \in I \}$.

For background, proofs, and further properties, see, e.g.,
the books \cite{christensen2016introduction, young2001introduction}.

\section{Molecular systems and their basic properties}
Throughout this section, $g \in \Bwp$ denotes an admissible vector and $\CalK_g := V_g (\Hpi)$
the associated reproducing kernel Hilbert space.

The following definition introduces the central notion of this section.

\begin{definition}\label{def:CoorbitMolecules}
  Let $w : G \to [1,\infty)$ be a $p$-weight for some $p \in (0,1]$.
  Let $\Lambda = (\lambda_i)_{i \in I}$ be relatively separated in $G$.
  \begin{enumerate}[label=(\alph*)]
    \item A family $(h_{i} )_{i \in I}$ in $\Hpi$ is a
          system of \emph{$(L^p_w,g)$-molecules} in $\Hpi$
          if there exists a symmetric \emph{envelope} $\Phi \in \wienerStC{L^p_w}$ such that
          \begin{align}
            |V_g h_{i} (x) |
            \leq \Phi (\lambda_i^{-1} x)
            \label{eq:molecule_envelope}
          \end{align}
          for all $i \in I$ and $x \in G$.

  \item A family  $(H_i)_{i \in I}$ in $\CalK_g$ is a system of
        \emph{$L^p_w$-molecules} in $\CalK_g$
        if there exists a symmetric \emph{envelope} $\Phi \in \wienerStC{L^p_w}$ such that
        \begin{align}
          |H_{i} (x) | \leq \Phi (\lambda_i^{-1} x)
          \label{eq:molecule_envelope2}
        \end{align}
        for all $i \in I$ and $x \in G$.
  \end{enumerate}
  If condition \eqref{eq:molecule_envelope} (resp.\ \eqref{eq:molecule_envelope2}) holds,
  this will be indicated using the notation $(h_i)_{i \in I} \dominated \Phi$
  (resp.\ $(H_i)_{i \in I} \dominated \Phi$).
\end{definition}

\begin{remark}
  A family $(H_i)_{i \in I}$ is a system of $L^p_w$-molecules in $\CalK_g$
  if and only if $(V_g^{-1} H_i)_{i \in I}$ is a system of $(L^p_w,g)$-molecules in $\Hpi$.
\end{remark}

The molecule condition \eqref{eq:molecule_envelope} is
independent of the choice of the admissible vectors $g, h \in \Bwp$, as long as the
matrix coefficient $V_g h$ is well-localized.

\begin{lemma}\label{lem:MoleculeConditionIndependence}
If $g,h \in \Bwp$ are admissible with $V_g h \in \wienerSt{L_w^p}$,
and if $(h_i)_{i \in I} \subseteq \Hpi$ is a system of $(L_w^p, g)$ molecules,
then it is also a system of $(L_w^p, h)$ molecules.
\end{lemma}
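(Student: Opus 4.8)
The plan is to use the reproducing formula for matrix coefficients together with the convolution relations for Wiener amalgam spaces established earlier. The key observation is that if $(h_i)_{i \in I} \dominated \Phi$ with respect to the vector $g$, meaning $|V_g h_i(x)| \leq \Phi(\lambda_i^{-1} x)$ for a symmetric envelope $\Phi \in \wienerStC{L^p_w}$, then we can pass from $V_g h_i$ to $V_h h_i$ via the reproducing formula. Specifically, by Part~(ii) of \Cref{lem:basic_cocycle} (applied with the admissible vector $g$, noting that $h_i \in \Hpi$), we have $V_h h_i = V_g h_i \ast_\sigma V_h g$. Since twisted convolution is pointwise dominated by ordinary convolution of absolute values, this gives $|V_h h_i(x)| \leq (|V_g h_i| \ast |V_h g|)(x)$ for all $x \in G$.

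First I would make the pointwise estimate explicit: for each $i \in I$ and $x \in G$,
\[
  |V_h h_i(x)|
  \leq (|V_g h_i| \ast |V_h g|)(x)
  \leq (\Phi(\lambda_i^{-1} \cdot) \ast |V_h g|)(x).
\]
Now I would recognize that $\Phi(\lambda_i^{-1} \cdot) = \translationL{\lambda_i} \Phi$, so that $\translationL{\lambda_i} \Phi \ast |V_h g| = \translationL{\lambda_i}(\Phi \ast |V_h g|)$ by left-invariance of convolution. Setting $\Psi := \Phi \ast |V_h g|$, we obtain $|V_h h_i(x)| \leq \Psi(\lambda_i^{-1} x)$ for all $i$ and $x$. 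It remains to check that $\Psi \in \wienerStC{L^p_w}$ and that it can be taken symmetric. Since $\Phi \in \wienerStC{L^p_w}$ and $V_h g \in \wienerSt{L^p_w}$ (using $|V_h g| = |V_g h|^\vee$, the fact that $w$ is a $p$-weight so $\wienerSt{L^p_w}$ is invariant under involution, and the hypothesis $V_g h \in \wienerSt{L^p_w}$; alternatively just note $V_h g \in \wienerSt{L^p_w}$ follows symmetrically), \Cref{cor:UserFriendlyConvolutionBounds} yields $\wienerSt{L^p_w} \ast \wienerSt{L^p_w} \hookrightarrow \wienerStC{L^p_w}$, so $\Psi \in \wienerStC{L^p_w}$. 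To get a symmetric envelope, I would replace $\Psi$ by $\min\{\Psi, \Psi^\vee\}$, which still dominates $|V_h h_i|$ provided one also checks the bound with the roles reflected — but actually the cleaner route is to observe that the molecule definition only requires \emph{existence} of a symmetric envelope, and $\widetilde\Psi := \max\{\Psi, \Psi^\vee\}$ is symmetric, lies in $\wienerStC{L^p_w}$ (since $\wienerStC{L^p_w}$ is invariant under involution and solid), and still satisfies $|V_h h_i(x)| \leq \Psi(\lambda_i^{-1}x) \leq \widetilde\Psi(\lambda_i^{-1}x)$.

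I do not expect any serious obstacle here; the argument is essentially a one-line application of the reproducing formula plus the convolution relation $\wienerSt{L^p_w} \ast \wienerSt{L^p_w} \hookrightarrow \wienerStC{L^p_w}$. The only point requiring mild care is the bookkeeping that $V_h g \in \wienerSt{L^p_w}$: this follows because $h \in \Bwp$ gives $V_h h \in \wienerSt{L^p_w}$ and the hypothesis directly gives $V_g h \in \wienerSt{L^p_w}$, whence $V_h g = (V_g h)^\vee$ (up to the unimodular factor $\overline{\sigma(x,x^{-1})}$ from \Cref{lem:basic_cocycle}(i), which does not affect absolute values) lies in $\wienerSt{L^p_w}$ since this space is involution-invariant for a $p$-weight $w$. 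A secondary bookkeeping point is that \Cref{lem:basic_cocycle}(ii) requires $h$ to be admissible, which is part of the hypothesis, and requires applying the formula with analyzing vector $g$ — which is admissible by assumption — to the fixed vector $h_i \in \Hpi$; this is exactly the setting of that lemma. The argument is symmetric in $g$ and $h$ in the sense that it only uses $V_g h \in \wienerSt{L^p_w}$ (plus $g, h \in \Bwp$), matching the stated hypotheses.
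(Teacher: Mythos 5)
Your proposal is correct and takes essentially the same route as the paper: the reproducing formula $V_h h_i = V_g h_i \ast_\sigma V_h g$ combined with the convolution relation $\wienerSt{L_w^p} \ast \wienerSt{L_w^p} \hookrightarrow \wienerStC{L_w^p}$ and the observation that left translation pulls out of the convolution. The only (cosmetic) difference is the symmetrization of the envelope: you take $\max\{\Psi,\Psi^{\vee}\}$ of $\Psi = \Phi \ast |V_h g|$, whereas the paper dominates $\Psi$ by $\Psi_0 \ast \Psi_0$ with the symmetric majorant $\Psi_0 = |V_g h| + |V_h g| + \Phi$; both yield a valid symmetric envelope in $\wienerStC{L_w^p}$.
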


\begin{proof}
Since $(h_i)_{i \in I}$ is a system of $(L_w^p, g)$ molecules,
there exists a symmetric envelope $\Phi \in \wienerStC{L_w^p}$ satisfying
$|V_g h_i| \leq \translationL{\lambda_i} \Phi$ for all $i \in I$.
Since $w$ is a $p$-weight, $[L_w^p]^{\vee} = L_w^p$, whence the assumption
 $V_g h \in \wienerSt{L_w^p}$ implies by \Cref{lem:basic_cocycle}
that $\Psi_0 := |V_g h| + |V_h g| + \Phi \in \wienerSt{L_w^p}$ is symmetric.
Therefore, \Cref{cor:UserFriendlyConvolutionBounds} and the elementary identity
$(F_1 \ast F_2)^{\vee} = F_2^{\vee} \ast F_1^{\vee}$ imply that
$\Psi :=  \Psi_0 \ast \Psi_0 \in \wienerStC{L_w^p}$ is symmetric.
Using the reproducing formula $V_h h_i = V_g h_i \ast_{\sigma} V_h g$ (cf. \Cref{lem:basic_cocycle}), it follows that
\begin{align*}
  |V_h h_i (x)|
  & \leq  \big( |V_g h_i| \ast |V_h g| \big) (x)
    \leq  \big( \translationL{\lambda_i} \Phi \ast |V_h g| \big) (x)
    \leq  \translationL{\lambda_i} [\Phi \ast |V_h g|](x) \\
  & \leq  \translationL{\lambda_i} [\Psi_0 \ast \Psi_0](x)
    =    \Psi(\lambda_i^{-1} x),
\end{align*}
where it is used that $(\translationL{\lambda} F_1) \ast F_2 = \translationL{\lambda} [F_1 \ast F_2]$. This shows that $(h_i)_{i \in I}$ is a family of $(L_w^p, h)$ molecules.
\end{proof}

In light of \Cref{lem:MoleculeConditionIndependence}, a system $(h_i)_{i \in I}$ satisfying
the molecule condition \eqref{eq:molecule_envelope} will simply be referred to as a system of $L^p_w$-molecules in $\Hpi$.

The following lemma shows that the molecule property is preserved
under the action of convolution-dominated matrices and integral operators.

\begin{lemma}\label{lem:CDMapsMoleculesToMolecules}
  Let $(H_i)_{i \in I}$ be a system of $L^p_w$-molecules in $\CalK_g$
  indexed by the relatively separated family $\Lambda = (\lambda_i)_{i \in I}$.
  Then the following hold:
  \begin{enumerate}[label=(\roman*)]
    \item If $H : G \times G \to \mathbb{C}$ is $L_w^p$-localized in $\CalK_g$
          (see \Cref{def:CDOperator}), then the family
          $(\widetilde{H}_i)_{i \in I} \subseteq \CalK_g$ defined by
          \[
            \widetilde{H}_i (x)
            := (T_H H_i) (x) = \int_G H(x,y) H_i (x) \; d\mu_G (y)
          \]
          is also a system of $L_w^p$-molecules in $\CalK_g$.

    \item If $\Lambda = (\lambda_i)_{i \in I}$ and $\Gamma = (\gamma_j)_{j \in J}$
          are relatively separated families in $G$ and if the matrix
          $A = (A_{j,i})_{(j,i) \in J \times I}\in \goodMatrices (\Gamma, \Lambda)$,
          then the family $(H'_j)_{j \in J} \subseteq \CalK_g$ defined by
          \[
            H'_j
            := A (H_i)_{i \in I}
            := \sum_{i \in I} A_{j, i} H_i
          \]
          is also a system of $L^p_w$-localized molecules in $\mathcal{K}_g$.
  \end{enumerate}
\end{lemma}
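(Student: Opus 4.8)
The plan is to treat the two parts separately; in each case I would first check that the new vectors again lie in $\CalK_g$, and then produce a \emph{symmetric} envelope in $\wienerStC{L^p_w}$ by dominating a single convolution by a symmetric majorant.

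For part (i), fix a symmetric envelope $\Theta \in \wienerStC{L^p_w}$ with $H \dominated \Theta$ — which exists by the Remark following \Cref{def:CDOperator} — and a symmetric envelope $\Phi \in \wienerStC{L^p_w}$ with $(H_i)_{i \in I} \dominated \Phi$. Since $H_i \in \CalK_g \subseteq L^2(G)$ and $T_H \colon L^2(G) \to \CalK_g$ is well-defined with absolutely convergent defining integral (\Cref{lem:CDElementary}, including part~(i)), the vector $\widetilde{H}_i = T_H H_i$ again lies in $\CalK_g$. For the envelope I would estimate, using the left-invariance of $\mu_G$ (substitute $y = \lambda_i z$),
\[
  |\widetilde{H}_i (x)|
  \leq \int_G |H(x,y)|\,|H_i(y)| \dd{y}
  \leq \int_G \Theta(y^{-1} x)\,\Phi(\lambda_i^{-1} y) \dd{y}
  = (\Phi \ast \Theta)(\lambda_i^{-1} x) .
\]
Setting $\Xi := \Theta + \Phi \in \wienerStC{L^p_w}$, which is symmetric, one has $\Phi \ast \Theta \leq \Xi \ast \Xi$ pointwise by monotonicity of convolution of nonnegative functions, and $\Psi := \Xi \ast \Xi$ is a symmetric element of $\wienerStC{L^p_w}$ by \Cref{cor:UserFriendlyConvolutionBounds}. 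Hence $|\widetilde{H}_i(x)| \leq \Psi(\lambda_i^{-1} x)$ for all $i, x$, i.e.\ $(\widetilde{H}_i)_{i \in I} \dominated \Psi$, which is the claim.

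For part (ii), fix symmetric envelopes $\Phi_A \in \wienerStC{L^p_w}$ for $A$ (so that $|A_{j,i}| \leq \Phi_A(\gamma_j^{-1}\lambda_i)$) and $\Phi \in \wienerStC{L^p_w}$ for $(H_i)_{i \in I}$. Since $\|H_i\|_{L^2} \leq \|\Phi\|_{L^2}$ uniformly in $i$ (again by left-invariance, using $\wienerStC{L^p_w} \hookrightarrow L^2(G)$ from \Cref{lem:AmalgamWeightedLInftyEmbedding}) and $\sum_{i \in I} |A_{j,i}| < \infty$ for each $j$ by \Cref{lem:CD_matrix_basic}, the series $H'_j = \sum_i A_{j,i} H_i$ converges absolutely in $L^2(G)$; as $\CalK_g$ is closed, $H'_j \in \CalK_g$. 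For the envelope I would bound $|H_i(x)| \leq \Phi(\lambda_i^{-1}x)$ and apply \Cref{lem:StandardShiftedSeriesEstimates} (with $F_1 = \Phi$, $F_2 = \Phi_A$, and $\gamma_j$ in the role of $y$):
\[
  |H'_j(x)|
  \leq \sum_{i \in I} \Phi_A(\gamma_j^{-1}\lambda_i)\,\Phi(\lambda_i^{-1}x)
  \leq \frac{\rel(\Lambda)}{\mu_G(Q)}\,\bigl(\maxL \Phi_A \ast \maxR \Phi\bigr)(\gamma_j^{-1}x) .
\]
Exactly as in the proof of part~(iii) of \Cref{lem:CD_matrix_basic}, $\maxL \Phi_A \ast \maxR \Phi \in \wienerStC{L^p_w}$ by \Cref{cor:UserFriendlyConvolutionBounds}; replacing it by the symmetric majorant $\Psi := \frac{\rel(\Lambda)}{\mu_G(Q)}\bigl((\maxL\Phi_A \ast \maxR\Phi) + (\maxL\Phi_A \ast \maxR\Phi)^{\vee}\bigr) \in \wienerStC{L^p_w}$ — here one uses $[\wienerStC{L^p_w}]^{\vee} = \wienerStC{L^p_w}$, valid since $w$ is a $p$-weight — then gives $|H'_j(x)| \leq \Psi(\gamma_j^{-1}x)$ with $\Psi$ symmetric, as required.

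The genuinely routine parts are the change of variables in (i) and matching the series in (ii) to the precise form of \Cref{lem:StandardShiftedSeriesEstimates}. The only point needing a little care — and the reason I pass through the symmetric majorants $\Xi \ast \Xi$, resp.\ $(\maxL\Phi_A \ast \maxR\Phi) + (\maxL\Phi_A \ast \maxR\Phi)^{\vee}$ — is that \Cref{def:CoorbitMolecules} demands a \emph{symmetric} envelope, whereas the convolutions naturally produced by these estimates need not be symmetric.
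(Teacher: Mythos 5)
Your proof is correct and follows essentially the same route as the paper: dominate $\widetilde{H}_i$, resp.\ $H_j'$, pointwise by a convolution via the substitution $y = \lambda_i z$, resp.\ \Cref{lem:StandardShiftedSeriesEstimates}, and then symmetrize the resulting envelope using \Cref{cor:UserFriendlyConvolutionBounds}. The only (harmless) differences are cosmetic — you symmetrize via $(\Theta+\Phi)\ast(\Theta+\Phi)$ where the paper uses $\Phi\ast\Theta+\Theta\ast\Phi$, and in part~(ii) you additionally spell out the $L^2$-convergence of the series defining $H_j'$, a point the paper leaves implicit.
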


\begin{proof}
  Let $\Phi \in \wienerStC{L_w^p}$ be a symmetric envelope
  such that $(H_i)_{i \in I} \dominated \Phi$.

  (i)
  Let $\Theta \in \wienerStC{L_w^p}$ be a symmetric envelope for $H$.
  An application of \Cref{lem:CDElementary} shows that
  $\widetilde{H}_i = T_H H_i \in \CalK_g$ for all $i \in I$.
  Moreover, it holds that
  \[
    |\widetilde{H}_i (x)|
    \leq \int_{\group} \Theta(y^{-1} x) \Phi(\lambda_i^{-1} y) \dd{y}
    =    (\Phi \ast \Theta) (\lambda_i^{-1} x)
  \]
  and similarly
  \(
    |\widetilde{H}_i(x)|
    \leq    (\Theta \ast \Phi) (x^{-1} \lambda_i) .
  \)
  Therefore, using the convolution relation of \Cref{cor:UserFriendlyConvolutionBounds},
  it follows that $\Phi \ast \Theta + \Theta \ast \Phi \in \wienerStC{L_w^p}$ is a (symmetric)
  envelope for $(\widetilde{H}_i)_{i \in I}$.

  (ii)
  Let $\Theta \in \wienerStC{L^p_w}$ be such that $A \dominated \Theta$.
  For $x \in G$ and $j \in J$,
  an application of \Cref{lem:StandardShiftedSeriesEstimates} gives
  \[
    |H'_j (x)|
    \leq \sum_{i \in I}
           |\Theta (\gamma_j^{-1} \lambda_i) | |\Phi (\lambda_i^{-1} x) |
    \leq \frac{\rel(\Lambda)}{\mu_G (Q)}
         \big( \maxL \Theta \ast \maxR \Phi \big) (\gamma_j^{-1} x).
  \]
  Since $\Phi, \Theta \in \wienerStC{L^p_w}$,
  it follows that also $\maxL \Theta, \maxR \Phi \in \wienerStC{L^p_w}$
  (cf.\ Section \ref{sec:AmalgamPrelims}), and hence
  $\Psi := \maxL \Theta \ast \maxR \Phi \in \wienerStC{L^p_w} + (\maxL \Theta + \maxR \Phi)^{\vee}$
  by \Cref{cor:UserFriendlyConvolutionBounds} and because of
  $[\wienerStC{L_w^p}]^{\vee} = \wienerStC{L_w^p}$,
  which holds since $w$ is a $p$-weight.
  Thus, $\Psi$ is a symmetric envelope for $(H_j')_{j \in J}$.
\end{proof}

The frame operator and Gramian associated to a system of molecules
are convolution-dominated operators, as shown next.

\begin{lemma}\label{lem:MoleculesFrameOperatorExpression}
  Let $(H_i)_{i \in I}$ be a system of $L^p_w$-molecules in $\CalK_g$,
  indexed by the relatively separated family $\Lambda = (\lambda_i)_{i \in I}$.
  Then the following hold:
  \begin{enumerate}[label=(\roman*)]
    \item The kernel
          \begin{equation}
            H : \quad
            \group \times \group \to \CC, \quad
            (x,y) \mapsto \sum_{i \in I} H_i(x) \overline{H_i(y)}
            \label{eq:FrameOperatorKernel}
          \end{equation}
          is well-defined (with absolute convergence of the series)
          and $L_w^p$-localized in $\CalK_g$.

    \item The family $(H_i)_{i \in I} \subseteq \CalK_g$ is Bessel and the frame operator
          $\frameop : \CalK_g \to \CalK_g$ is given by $\frameop = T_H |_{\CalK_g}$.

    \item If $\Phi \in \wienerStC{L^p_w}$ is symmetric and such that
          $(H_i)_{i \in I} \dominated \Phi$,
          then the Gramian matrix $\gramian = (\langle H_{i'}, H_i \rangle)_{i, i' \in I}$
          is an element of $\goodMatrices(\Lambda)$
          with envelope $\Phi \ast \Phi \in \wienerStC{L^p_w}$.
  \end{enumerate}
\end{lemma}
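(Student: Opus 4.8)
The plan is to prove the three assertions in order, with each feeding into the next. Throughout fix a symmetric envelope $\Phi \in \wienerStC{L^p_w}$ with $(H_i)_{i \in I} \dominated \Phi$.

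First I would handle (i). The goal is to show that the series $\sum_{i \in I} H_i(x) \overline{H_i(y)}$ converges absolutely and that the resulting kernel $H$ is $L^p_w$-localized in $\CalK_g$. For absolute convergence and the envelope estimate, I would bound $\sum_i |H_i(x)| \, |H_i(y)| \leq \sum_i \Phi(\lambda_i^{-1} x) \Phi(\lambda_i^{-1} y)$ and apply \Cref{lem:StandardShiftedSeriesEstimates}, specifically the two-term estimate \eqref{eq:ShiftedSeriesEstimateTwoTerms}: writing $\Phi(\lambda_i^{-1} x) = \widetilde{\Phi}(\lambda_i^{-1} x)$ and $\Phi(\lambda_i^{-1} y) = \Phi^\vee(y^{-1} \lambda_i)$ (using symmetry of $\Phi$, so $\Phi(\lambda_i^{-1}y) = \Phi(y^{-1}\lambda_i)$), the sum is dominated by $\frac{\rel(\Lambda)}{\mu_G(Q)} (\maxL \Phi \ast \maxR \Phi)(y^{-1} x)$. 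Since $\maxL \Phi, \maxR \Phi \in \wienerStC{L^p_w}$ and $\wienerStC{L^p_w} \ast \wienerStC{L^p_w} \hookrightarrow \wienerStC{L^p_w}$ by \Cref{cor:UserFriendlyConvolutionBounds}, this gives a symmetric envelope $\Psi := \frac{\rel(\Lambda)}{\mu_G(Q)}\max\{\maxL\Phi \ast \maxR\Phi, \maxR\Phi\ast\maxL\Phi\} \in \wienerStC{L^p_w}$ (using $[\wienerStC{L^p_w}]^\vee = \wienerStC{L^p_w}$), so $H \dominated \Psi$. It then remains to check $H(\cdot, y) \in \CalK_g$ and $\overline{H(x,\cdot)} \in \CalK_g$. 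Since $\CalK_g$ is closed in $L^2(G)$, each $H_i \in \CalK_g$, and since $\Psi \in \wienerSt{L^p_w} \hookrightarrow L^2(G)$ (by \Cref{lem:AmalgamWeightedLInftyEmbedding}) dominates the partial sums, the dominated convergence theorem shows $\sum_{i} \overline{H_i(y)} H_i \to H(\cdot, y)$ in $L^2(G)$, hence $H(\cdot, y) \in \CalK_g$; the argument for $\overline{H(x,\cdot)} = \sum_i \overline{H_i(x)} H_i \in \CalK_g$ is identical.

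Next, (ii): the Bessel property and the identity $\frameop = T_H|_{\CalK_g}$. For the Bessel property I would use that $V_g : \Hpi \to \CalK_g$ is a unitary identification and that, writing $h_i := V_g^{-1} H_i$, the system $(h_i)$ is a system of $(L^p_w, g)$-molecules; then for $f \in \Hpi$, $\sum_i |\langle f, h_i\rangle|^2 = \sum_i |V_g^{-1}H_i$-pairing which one computes via the reproducing kernel, $\langle F, H_i\rangle_{L^2}$ for $F = V_g f \in \CalK_g$. The cleanest route: the kernel $H$ of part (i) is $L^p_w$-localized, so by \Cref{lem:CDElementary} the operator $T_H : L^2(G) \to \CalK_g$ is bounded, and a direct Fubini computation (justified by the absolute convergence established in (i) together with boundedness of $T_{\Psi$-type kernels}) gives $\langle T_H F, F\rangle_{L^2} = \sum_i |\langle F, H_i\rangle_{L^2}|^2$ for $F \in \CalK_g$, which is exactly the statement that $(H_i)$ is Bessel with $\frameop = T_H|_{\CalK_g}$ (using that $\frameop = \synthesis\analysis$ and $\langle \frameop F, F\rangle = \|\analysis F\|_{\ell^2}^2$). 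The interchange of sum and integral in $\langle T_H F, F\rangle_{L^2} = \int\int \overline{F(x)} \sum_i H_i(x)\overline{H_i(y)} F(y)\,dy\,dx = \sum_i \int\int \overline{F(x)}H_i(x)\overline{H_i(y)}F(y)\,dy\,dx = \sum_i |\langle F, H_i\rangle_{L^2}|^2$ is the main technical point; I would justify it by Tonelli applied to the nonnegative integrand $|F(x)| \Psi(y^{-1}x) |F(y)|$, which is integrable since $F \in L^2$ and $\Psi \ast$-convolution is $L^2$-bounded, analogously to the estimate in the proof of \Cref{lem:CDElementary}.

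Finally, (iii): the Gramian $\gramian = (\langle H_{i'}, H_i\rangle_{L^2})_{i,i' \in I}$ lies in $\goodMatrices(\Lambda)$ with envelope $\Phi \ast \Phi$. This is a direct estimate: $|\langle H_{i'}, H_i\rangle_{L^2}| \leq \int_G |H_{i'}(x)| |H_i(x)|\,d\mu_G(x) \leq \int_G \Phi(\lambda_{i'}^{-1}x)\Phi(\lambda_i^{-1}x)\,d\mu_G(x)$. Substituting $z = \lambda_i^{-1}x$ and using symmetry of $\Phi$, this equals $\int_G \Phi(\lambda_{i'}^{-1}\lambda_i z)\Phi(z)\,d\mu_G(z) = \int_G \Phi((\lambda_i^{-1}\lambda_{i'})^{-1} z) \Phi(z) \, d\mu_G(z)$; recognizing this as a convolution, one gets $|\langle H_{i'}, H_i\rangle_{L^2}| \leq (\Phi^\vee \ast \Phi)(\lambda_i^{-1}\lambda_{i'}) = (\Phi \ast \Phi)(\lambda_i^{-1}\lambda_{i'})$ (symmetry again), and by the analogous substitution also $\leq (\Phi\ast\Phi)(\lambda_{i'}^{-1}\lambda_i)$. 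Hence $\gramian \dominated \Phi \ast \Phi$, and $\Phi \ast \Phi \in \wienerStC{L^p_w}$ by \Cref{cor:UserFriendlyConvolutionBounds}, so $\gramian \in \goodMatrices(\Lambda)$ as claimed.

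I expect the main obstacle to be the Fubini/Tonelli justification in part (ii) — pinning down that $\langle T_H F, F\rangle_{L^2}$ may be expanded termwise as $\sum_i |\langle F, H_i\rangle|^2$ — since this requires simultaneously controlling the pointwise series from (i) and the $L^2$-mapping properties of the envelope convolution operator; everything else is a routine application of \Cref{lem:StandardShiftedSeriesEstimates}, \Cref{cor:UserFriendlyConvolutionBounds}, and \Cref{lem:AmalgamWeightedLInftyEmbedding}.
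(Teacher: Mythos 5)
Your proposal is correct and, for parts (i) and (iii), follows essentially the same route as the paper: the two-term estimate \eqref{eq:ShiftedSeriesEstimateTwoTerms} of \Cref{lem:StandardShiftedSeriesEstimates} combined with the convolution relation of \Cref{cor:UserFriendlyConvolutionBounds} for the envelope of $H$, $L^2$-convergence of $\sum_i \overline{H_i(y)}\, H_i$ for membership in the closed subspace $\CalK_g$, and the direct convolution estimate for the Gramian. Part (ii) is organized differently: the paper first proves the Bessel property by bounding the synthesis operator, $\| \sum_i c_i H_i \|_{L^2} \lesssim \| c \|_{\ell^2}$, via Cauchy--Schwarz and \eqref{eq:ShiftedSeriesEstimateOneTerm}, and then obtains $\frameop = T_H|_{\CalK_g}$ pointwise from $T_H F(x) = \langle F, \overline{H(x,\cdot)} \rangle_{L^2}$ together with the $L^2$-convergence of the series defining $\overline{H(x,\cdot)}$; you instead compute the quadratic form $\langle T_H F, F \rangle_{L^2} = \sum_i |\langle F, H_i \rangle|^2$ by Tonelli/Fubini and read off both claims at once. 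Your interchange is justified exactly as you say (the majorant $|F(x)|\, \Psi(y^{-1}x)\, |F(y)|$ is integrable by the same Cauchy--Schwarz computation as in the proof of \Cref{lem:CDElementary}), and the restriction to $F \in \CalK_g$ is harmless for Besselness since the $H_i$ lie in $\CalK_g$. The only step you should make explicit is the passage from equality of the quadratic forms of $T_H|_{\CalK_g}$ and $\frameop$ to equality of the operators: this requires polarization, which is valid here because the scalar field is $\CC$ and both operators map $\CalK_g$ into $\CalK_g$ (the former by \Cref{lem:CDElementary}). This is a one-line addition, not a gap.
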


\begin{proof}
  Let $\Phi \in \wienerStC{L_w^p}$ be a symmetric envelope for $(H_i)_{i \in I}$.

  (i)
  For $x,y \in G$, an application of \Cref{eq:ShiftedSeriesEstimateTwoTerms} gives
  \[
    |H(x,y)|
    \leq \sum_{i \in I}
           |H_i (x)| \, |H_i(y)|
    \leq \sum_{i \in I}
           \Phi(\lambda_i^{-1} x) \Phi(y^{-1} \lambda_i)
    \leq \frac{\rel(\Lambda)}{\haarMeasure(Q)}
         \big( \maxL \Phi \ast \maxR \Phi \big) (y^{-1} x)
    .
  \]
  Since $\Phi \in \wienerStC{L_w^p}$, also $\maxL \Phi, \maxR \Phi \in \wienerStC{L_w^p}$;
  see \Cref{sec:AmalgamPrelims}.
  Thus, \Cref{cor:UserFriendlyConvolutionBounds} shows that
  $\Theta:= \maxL \Phi \ast \maxR \Phi \in \wienerStC{L_w^p}$ as well
  and in particular that $\big( \maxL \Phi \ast \maxR \Phi \big) (y^{-1} x) < \infty$.
  Thus, the series defining $H(x,y)$ converges absolutely.
  Since $|H(x,y)| = |H(y,x)|$, the above estimate also shows that
  $H \dominated \maxL \Phi \ast \maxR \Phi$.
  In addition, since $\overline{H(x,\cdot)} = H(\cdot,x)$,
  to show that $H$ is $L^p_w$-localized in $\CalK_g$,
  it remains to show that $H(\cdot, y) \in \CalK_g$ for all $y \in \group$.
  For this, first note as a consequence of \Cref{eq:ShiftedSeriesEstimateOneTerm}
  and \Cref{lem:AmalgamWeightedLInftyEmbedding} that
  \[
    \sum_{i \in I}
      |H_i(y)|
    \leq \sum_{i \in I}
           \Phi(y^{-1} \lambda_i)
    \leq \frac{\rel(\Lambda)}{\haarMeasure(Q)} \| \Phi \|_{\wienerL{L^1}}
    \lesssim \frac{\rel(\Lambda)}{\haarMeasure(Q)} \| \Phi \|_{\wienerSt{L_w^p}}
    < \infty .
  \]
  Furthermore, \Cref{lem:AmalgamWeightedLInftyEmbedding} also shows
  \(
    \| H_i \|_{L^2}
    \leq \| \Phi(\lambda_i^{-1} \cdot) \|_{L^2}
    \leq \| \Phi \|_{L^2}
    \lesssim \| \Phi \|_{\wienerSt{L_w^p}} ,
  \)
  so that it follows that
  \(
    \sum_{i \in I}
      |H_i (y)| \, \| H_i \|_{L^2}
    < \infty .
  \)
  This shows that the series defining $H(\cdot,y)$ converges in $L^2(G)$.
  Since $H_i \in \CalK_g$ for all $i \in I$ and since $\CalK_g   \subseteq L^2(G)$
  is closed, it follows that $H(\cdot,y) \in \CalK_g$.

  \medskip{}

  (ii) For showing that $(H_i)_{i \in I}$ is a Bessel sequence in $L^2 (G)$,
  let $c = (c_i)_{i \in I} \in \ell^2(I)$ be arbitrary.
  Recall from \Cref{eq:ShiftedSeriesEstimateOneTerm}
  that $\sum_{i \in I} \Phi(x^{-1} \lambda_i) \lesssim 1$.
  Using the Cauchy-Schwarz inequality and the estimate
  $|H_i (x)| \leq \Phi(\lambda_i^{-1} x) = \Phi(x^{-1} \lambda_i)$,
  it follows therefore that
  \begin{align*}
    \int_{\group}
      \bigg(
        \sum_{i \in I}
          |c_i|  |H_i(x)|
      \bigg)^2
    \dd{x}
    & \leq \int_{\group}
             \bigg(
               \sum_{i \in I} |c_i|^2 \, \Phi(\lambda_i^{-1} x)
             \bigg)
             \bigg(
               \sum_{i \in I}
                \Phi(x^{-1} \lambda_i)
             \bigg)
           \dd{x} \\
    & \lesssim \sum_{i \in I} |c_i|^2 \| \Phi \|_{L^1}
      \lesssim \| c \|_{\ell^2}^2
      .
  \end{align*}
  This shows that $(H_i)_{i \in I} \subseteq L^2(G)$ is a Bessel sequence.
  Lastly, by the first part of the proof and \Cref{lem:CDElementary},
  it follows that $T_H |_{\CalK_g} : \CalK_g \to \CalK_g$ is well-defined and bounded.
  Let $F \in \CalK_g$ and $x \in \group$.
  As shown in the proof of (i), the series
  $\overline{H(x,\cdot)} = \sum_{i \in I} \overline{H_i(x)} \, H_i$
  converges in $L^2(G)$, and thus
  \[
    T_H F (x)
    = \langle F, \overline{H(x,\cdot)} \rangle_{L^2}
    = \sum_{i \in I}
        \langle F, \overline{H_i(x)} \, H_i \rangle_{L^2}
    = \sum_{i \in I}
        \langle F, H_i \rangle_{L^2} \, H_i (x)
    = \frameop F (x),
  \]
  so that $\frameop = T_H|_{\CalK_g}$, as claimed.

  \medskip{}

  (iii)
  For $i, i' \in I$, a direct calculation gives
  \begin{align*}
    |\langle H_{i'}, H_i \rangle |
    &=    |\langle H_i, H_{i'} \rangle |
     \leq \int_G \Phi (\lambda_i^{-1} x) \Phi(\lambda_{i'}^{-1} x) \; d\mu_G (x)
     =    \int_G \Phi (x^{-1} \lambda_i) \Phi(\lambda_{i'}^{-1} x) \; d\mu_G (x) \\
    &= (\Phi \ast \Phi)(\lambda_{i'}^{-1} \lambda_i),
  \end{align*}
  where the penultimate equality used the symmetry of $\Phi$.
  By \Cref{cor:UserFriendlyConvolutionBounds}, $\Phi \ast \Phi \in \wienerStC{L^p_w}$,
  and $(\Phi \ast \Phi)^{\vee} = \Phi^{\vee} \ast \Phi^{\vee} = \Phi \ast \Phi$.
  Thus, $(\langle H_{i'}, H_i \rangle)_{i,i' \in I} \in \goodMatrices(\Lambda)$.
\end{proof}

\section{Dual frames of molecules}
\label{sec:dual_frame_molecule}

Henceforth, let $w : G \to [1,\infty)$ be a $p$-weight for some $p \in (0,1]$ and let
$g \in \Bwp$ be an admissible vector.
Recall that $\CalK_g := V_g (\Hpi)$ is a reproducing kernel Hilbert space (cf.\ \Cref{eq:RKHS})
with reproducing kernel given by
\[
  K(x,y)
  = V_g [\pi(y) g](x)
  = \overline{\twisttranslationL{x} [V_g g] (y)},
\]
see \Cref{eq:RK}.
For fixed $x \in G$, the notation
\[
  K_x
  := \overline{K(x, \cdot)}
   = \twisttranslationL{x} [V_g g]
   = V_g [\pi(x) g]
\]
will be used.

The following result proves the existence of \emph{almost tight} frames
of (weighted) reproducing kernels.
For the notion of a \emph{disjoint cover} that appears in the statement,
cf.\ Section~\ref{sec:discrete}.

\begin{proposition}\label{prop:AlmostTightFrames}
  For every $\eps \in (0,1)$, there exists a compact, symmetric unit neighborhood $U \subseteq Q$
  with the following property:

  If $\Lambda = (\lambda_i)_{i \in I}$ is a relatively separated, $U$-dense family in $\group$
  and $(U_i)_{i \in I}$ is a disjoint cover of $\group$ associated to $U$ and $\Lambda$,
  then the family $\big( \sqrt{\haarMeasure(U_i)} \cdot K_{\lambda_i} \big)_{i \in I}$
  is a frame for $\CalK_g$ with lower frame bound $1 - \eps$ and upper frame bound $1 + \eps$.
\end{proposition}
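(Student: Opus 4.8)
The plan is to reduce the statement to a norm estimate for the frame operator on $\CalK_g$, expressed through the integral‑operator calculus of \Cref{sec:CD}. Write $\Phi_0 := |V_g g| \in \wienerStC{L^p_w}$ and $K_{\lambda_i} := V_g[\pi(\lambda_i) g] = \twisttranslationL{\lambda_i}[V_g g]$, so that $|K_{\lambda_i}(x)| = \Phi_0(\lambda_i^{-1} x)$. Whenever $U \subseteq Q$, the family $\bigl(\sqrt{\mu_G(U_i)}\, K_{\lambda_i}\bigr)_{i \in I}$ is a system of $L^p_w$‑molecules in $\CalK_g$ with envelope $\sqrt{\mu_G(U)}\,\Phi_0$, hence by \Cref{lem:MoleculesFrameOperatorExpression} it is a Bessel sequence whose frame operator $\frameop : \CalK_g \to \CalK_g$ equals $T_{H^\frameop}|_{\CalK_g}$ for the $L^p_w$‑localized kernel $H^\frameop(x,z) = \sum_{i \in I}\mu_G(U_i)\, K_{\lambda_i}(x)\,\overline{K_{\lambda_i}(z)}$. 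On the other hand, the reproducing formula \eqref{eq:RKHS} is precisely the identity $\identity_{\CalK_g} = T_K|_{\CalK_g}$, and by admissibility of $g$ (i.e.\ the isometry property of $V_g$) together with the fact that $(U_i)_{i\in I}$ partitions $G$ one has $K(x,z) = \int_G K_y(x)\overline{K_y(z)}\, d\mu_G(y) = \sum_{i\in I}\int_{U_i} K_y(x)\overline{K_y(z)}\,d\mu_G(y)$. Therefore $\frameop - \identity_{\CalK_g} = T_L|_{\CalK_g}$ with
\[
  L(x,z) = \sum_{i \in I}\int_{U_i}\bigl(K_{\lambda_i}(x)\overline{K_{\lambda_i}(z)} - K_y(x)\overline{K_y(z)}\bigr)\, d\mu_G(y),
\]
and it suffices to show $\|T_L\|_{L^2 \to L^2}$ can be made $< \eps$ by shrinking $U$: then $(1-\eps)\,\identity \le \frameop \le (1+\eps)\,\identity$ on $\CalK_g$, which is the assertion (the two Schur suprema also give boundedness a priori).

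The key step is a uniform pointwise bound on the integrand defining $L$. Fix $i$ and $y \in U_i$, and write $y = \lambda_i u$ with $u := \lambda_i^{-1} y \in U$. Since $\pi(\lambda_i u)g = \overline{\sigma(\lambda_i,u)}\,\pi(\lambda_i)\pi(u)g$ and a rank‑one operator $v \otimes v$ is insensitive to multiplying $v$ by a unimodular scalar, we have $K_y(x)\overline{K_y(z)} = \widetilde K_i(x)\overline{\widetilde K_i(z)}$ with $\widetilde K_i := V_g[\pi(\lambda_i)\pi(u)g]$; this is how the cocycle $\sigma$ is eliminated. Now $K_{\lambda_i} - \widetilde K_i = \twisttranslationL{\lambda_i}\bigl[V_g(g - \pi(u)g)\bigr]$, so using Cauchy--Schwarz and $|V_g(g-\pi(u)g)| \le |V_g g| + |V_g[\pi(u)g]| = \Phi_0 + \twisttranslationL{u}\Phi_0$ one obtains, with $\eta_U := \sup_{u \in U}\|g - \pi(u)g\|_{\Hpi}$ and $\Phi_2 := \maxR_{Q^2}\Phi_0$,
\[
  |K_{\lambda_i}(x) - \widetilde K_i(x)| \le P_U(\lambda_i^{-1}x), \qquad P_U(v) := \min\bigl\{\eta_U\,\|g\|_{\Hpi},\ 2\maxR_{Q}\Phi_0(v)\bigr\},
\]
together with $|\widetilde K_i(x)| \le \maxR_Q\Phi_0(\lambda_i^{-1}x)$. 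Expanding the difference of the two rank‑one kernels and using $|K_{\lambda_i}(z)| = \Phi_0(\lambda_i^{-1}z)$ yields
\[
  |L(x,z)| \le \sum_{i \in I}\mu_G(U_i)\Bigl[P_U(\lambda_i^{-1}x)\,\Phi_0(\lambda_i^{-1}z) + \maxR_Q\Phi_0(\lambda_i^{-1}x)\,P_U(\lambda_i^{-1}z)\Bigr].
\]
Crucially, $\eta_U \to 0$ as $U \downarrow \{e_G\}$: indeed $\|g - \pi(u)g\|_{\Hpi}^2 = 2\|g\|_{\Hpi}^2 - 2\operatorname{Re}V_g g(u)$, and $V_g g \in \wienerStC{L^p_w}$ is continuous with $V_g g(e_G) = \|g\|_{\Hpi}^2$. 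In particular $P_U \le 2\maxR_Q\Phi_0 \in L^1(G)$ with $P_U \to 0$ pointwise, so $\|P_U\|_{L^1} \to 0$ by dominated convergence.

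Finally one runs Schur's test. Integrating the displayed bound and using left‑invariance of $\mu_G$ (so $\int_G \Phi_0(\lambda_i^{-1}z)\,d\mu_G(z) = \|\Phi_0\|_{L^1}$, etc.) gives
\[
  \int_G |L(x,z)|\, d\mu_G(z) \le \|\Phi_0\|_{L^1}\sum_{i}\mu_G(U_i)P_U(\lambda_i^{-1}x) + \|P_U\|_{L^1}\sum_{i}\mu_G(U_i)\maxR_Q\Phi_0(\lambda_i^{-1}x).
\]
For $z \in U_i$ one has $\lambda_i^{-1}x = (\lambda_i^{-1}z)(z^{-1}x)$ with $\lambda_i^{-1}z \in U \subseteq Q$, hence $P_U(\lambda_i^{-1}x) \le \min\{\eta_U\|g\|_{\Hpi},\, 2\maxR_{Q^2}\Phi_0(z^{-1}x)\}$ and $\maxR_Q\Phi_0(\lambda_i^{-1}x) \le \maxR_{Q^2}\Phi_0(z^{-1}x)$; summing $\sum_i \int_{U_i}(\cdots)\,d\mu_G(z) = \int_G(\cdots)\,d\mu_G(z)$ and using $\int_G \maxR_{Q^2}\Phi_0(z^{-1}x)\,d\mu_G(z) = \|(\maxR_{Q^2}\Phi_0)^{\vee}\|_{L^1} < \infty$ (finite because $\Phi_0 \in \wienerStC{L^p_w} \hookrightarrow \wienerStC{L^1}$, with amalgam spaces independent of the defining neighborhood) shows
\[
  \sup_x \int_G |L(x,z)|\, d\mu_G(z) \le C\bigl(\alpha_U + \|P_U\|_{L^1}\bigr), \qquad \alpha_U := \int_G \min\bigl\{\eta_U\|g\|_{\Hpi},\, 2\maxR_{Q^2}\Phi_0(v)\bigr\}\,\Delta(v^{-1})\, d\mu_G(v),
\]
for a constant $C = C(\Phi_0,Q) > 0$, and $\alpha_U \to 0$ by dominated convergence. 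An identical computation bounds $\sup_z \int_G |L(x,z)|\, d\mu_G(x)$. By Schur's test (\cite[Theorem~6.18]{FollandRA}), $\|T_L\|_{L^2 \to L^2} \to 0$ as $U \downarrow \{e_G\}$; choosing a compact symmetric unit neighborhood $U \subseteq Q$ small enough that $\|T_L\|_{L^2 \to L^2} \le \eps$ finishes the proof. I expect the main obstacle to be the cocycle $\sigma$, which is dealt with by the rank‑one observation; the remaining work is routine bookkeeping of Wiener amalgam norms and an application of dominated convergence.
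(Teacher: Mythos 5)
Your overall strategy is genuinely different from the paper's: you recast the frame inequality as the operator-norm estimate $\|T_L\|_{L^2\to L^2}\le\eps$ for the kernel $L=H^{\frameop}-K$ and run Schur's test, whereas the paper works directly with the quantity $\bigl|\|F\|_{L^2}-(\sum_i|\langle F,\sqrt{\mu_G(U_i)}K_{\lambda_i}\rangle|^2)^{1/2}\bigr|$ via a pointwise/Cauchy--Schwarz computation. Most of your reduction is sound (the resolution $K(x,z)=\int_G K_y(x)\overline{K_y(z)}\,d\mu_G(y)$, the identification $\frameop=T_{H^{\frameop}}|_{\CalK_g}$ via \Cref{lem:MoleculesFrameOperatorExpression}, and the Schur bookkeeping). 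However, there is one genuine gap, and it sits exactly at the point the paper spends Step~1 of its proof on: you claim that $\eta_U:=\sup_{u\in U}\|g-\pi(u)g\|_{\Hpi}\to 0$ as $U\downarrow\{e_G\}$ because ``$V_gg\in\wienerStC{L_w^p}$ is continuous.'' This is not available in the paper's setting. The representation $\pi$ is only a strongly \emph{measurable} projective representation with a Borel cocycle; the paper guarantees continuity of $|V_gf|$ only (see \Cref{eq:VoiceTransformAbsoluteValueContinuous}), and membership $g\in\Bwp$ gives $V_gg\in\wienerSt{L_w^p}$, with only $|V_gg|$ in the \emph{continuous} subspace $\wienerStC{L_w^p}$. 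For a genuinely projective, non-continuous $\pi$, the map $u\mapsto\pi(u)g$ need not be continuous at $e_G$, so $\eta_U$ need not tend to $0$; only $\sup_{u\in U}\inf_{z\in\T}\|g-z\,\pi(u)g\|_{\Hpi}\to 0$ holds, and establishing even that requires Varadarajan's continuity of $x\mapsto\pi(x)$ into $\mathcal U(\Hpi)/\T$ — this is precisely what the paper's Step~1 (the measurable phase $\tau(x,y)$ together with \cite[Lemma~7.1, Theorem~7.5]{varadarajan1985geometry}) is for.

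The good news is that your own rank-one observation already contains the fix: since $K_y(x)\overline{K_y(z)}$ is unchanged when $K_y$ is multiplied by a unimodular scalar, you are free to compare $K_{\lambda_i}$ not with $\widetilde K_i=V_g[\pi(\lambda_i)\pi(u)g]$ but with $z(u)\,\widetilde K_i$ for a (near-)minimizing phase $z(u)\in\T$. The pointwise bound then becomes $|K_{\lambda_i}(x)-z(u)\widetilde K_i(x)|\le\min\{\tilde\eta_U\|g\|_{\Hpi},\,2\maxR_Q\Phi_0(\lambda_i^{-1}x)\}$ with $\tilde\eta_U:=\sup_{u\in U}\inf_{z\in\T}\|g-z\,\pi(u)g\|_{\Hpi}$ (no measurable selection of $z(u)$ is needed, since the phase enters only in the upper bound, not in the definition of $L$), and the rest of your Schur/dominated-convergence argument goes through verbatim once you prove $\tilde\eta_U\to 0$ by the Varadarajan argument. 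As written, though, the step ``$\eta_U\to 0$'' is false in the stated generality, so the proof is incomplete for projective representations; for ordinary ($\sigma\equiv 1$) representations, where strong measurability implies strong continuity, your argument is correct as it stands.
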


\begin{proof}
  The proof is divided into three steps.

  \medskip{}

  \textbf{Step~1.}
  In this step, it will be shown that there exists a measurable map
  $\tau : G \times G \to \T$ such that
  \begin{align} \label{eq:step1_frame_rkhs}
    \| K_x - \tau (x,y) K_y \|_{L^2} \to 0
    \quad \text{as} \quad y^{-1} x \to e_G.
  \end{align}
  For showing \eqref{eq:step1_frame_rkhs},
  consider the map $( (x,y), \tau) \mapsto  - \| K_x - \tau \, K_y \|_{L^2}$
  from $(\group \times \group) \times \T$ into $\R$,
  which is clearly measurable with respect to $(x,y)$ and continuous with respect to $\tau$.
  A straightforward application of the measurable maximum theorem
  (see, e.g., \cite[Theorem~18.19]{AliprantisBorderHitchhiker}) yields a measurable map
  $\tau : \group \times \group \to \T$ satisfying
  \[
    \| K_x - \tau(x,y) K_y \|_{L^2}
    = \min_{z \in \T} \| K_x - z \, K_y \|_{L^2}
    \qquad \text{for all} \quad x,y \in \group.
  \]
  Since $\group$ is first countable,  it suffices to show
  $\| K_{x_n} - \tau(x_n,y_n) K_{y_n} \|_{L^2} \to 0$ as $n \to \infty$,
  for arbitrary sequences $(x_n)_{n \in \N},(y_n)_{n \in \N} \subseteq \group$
  satisfying $y_n^{-1} x_n \to e_{\group}$.

  Let $\mathcal{U} := \mathcal{U} (\Hpi)$ denote the group of unitary operators on $\Hpi$,
  equipped with the strong topology.
  Furthermore, let $\mathcal{P} := \mathcal{U} / (\T \, \identity_{\Hpi})$ and let
  $\varrho : \mathcal{U} \to \mathcal{P}$ denote the canonical projection.
  Since $\pi$ is a $\sigma$-representation,
  an application of \cite[Theorem~7.5]{varadarajan1985geometry} implies that the quotient map
  $[\pi] : \group \to \mathcal{P}, x \mapsto \varrho(\pi(x))$ is continuous.
  Thus, $\varrho(\pi(x_n^{-1} y_n)) \to \varrho(\pi(e_{\group})) = \varrho(\identity_{\Hpi})$,
  and an application of \cite[Lemma~7.1]{varadarajan1985geometry} yields a sequence
  $(z_n)_{n \in \N}$ of numbers $z_n \in \T$
  satisfying $z_n  \pi(x_n^{-1} y_n) \to \identity_{\Hpi}$ in the strong topology.
  Since $V_g : \Hpi \to L^2(G)$ is an isometry and $K_x = V_g [\pi(x) g]$,
  it follows that
  \begin{align*}
    \big\| K_{x_n} - \tau(x_n, y_n) K_{y_n} \big\|_{L^2}
    & = \inf_{z \in \T}
          \big\| K_{x_n} - z  K_{y_n} \big\|_{L^2}
      = \inf_{z \in \T}
          \big\| \pi(x_n) g - z  \pi(y_n) g \big\|_{\Hpi} \\
    & = \inf_{z \in \T}
          \big\|
            g - z \, \overline{\sigma(x_n^{-1}, x_n)} \sigma(x_n^{-1}, y_n) \pi(x_n^{-1} y_n) g
          \big\|_{\Hpi} \\
    & \leq \big\| g - z_n  \pi(x_n^{-1} y_n) g \big\|_{\Hpi}
      \to 0,
  \end{align*}
  where the penultimate step used that $\pi(x_n)$ is unitary and that
  \[
    \pi(x_n)^{-1} \pi(y_n)
    = \overline{\sigma(x_n^{-1}, x_n)} \sigma(x_n^{-1}, y_n) \pi(x_n^{-1} y_n)
    .
  \]
  Therefore,
  \(
    \big\| K_{x_n} - \tau(x_n, y_n) K_{y_n} \big\|_{L^2}
    \leq \big\| g - z_n  \pi(x_n^{-1} y_n) g \big\|_{\Hpi}
    \to 0
  \)
  as $n \to \infty$.
  \medskip{}

  \textbf{Step~2.}
  This step shows that also $\| K_x - \tau(x,y) K_y \|_{L^1} \to 0$ as $y^{-1} x \to e_{\group}$.
  For this, set $\Theta := |V_g g|$ and note by \Cref{lem:AmalgamWeightedLInftyEmbedding}
  and because of $V_g g \in \wienerSt{L_w^p}$ and $w \geq 1$ that
  \(
    \Theta
    \in \wienerStC{L_w^p}
    \hookrightarrow \wienerL{L_w^1}
    \hookrightarrow L^1
    ;
  \)
  see also \Cref{eq:VoiceTransformAbsoluteValueContinuous}.
  Let $\delta > 0$ be arbitrary and choose a compact set $\Omega \subseteq \group$ satisfying
  $\int_{\group \setminus \Omega} \Theta d\haarMeasure \leq \frac{\delta}{4}$.
  By Step~1, there exists a compact, symmetric unit neighborhood $V \subseteq Q$ such that
  for all $x, y \in G$ satisfying $y^{-1} x \in V$,
  \[
    \| K_x - \tau(x,y) K_y \|_{L^2}
    \leq \frac{\delta/2}{1 + \sqrt{\haarMeasure(Q \Omega)}}
    \leq \frac{\delta / 2}{1 + \sqrt{\haarMeasure{(V \Omega)}}}.
  \]
  Using the estimate $|K_x(y)| \leq \Theta(x^{-1} y)$ and the Cauchy-Schwarz inequality,
  it follows therefore that, for $x,y \in \group$ with $y^{-1} x \in V$,
  \begin{align*}
    & \| K_x - \tau(x,y) K_y \|_{L^1} \\
    & \leq \int_{\group \setminus x V \Omega}
             \Theta(x^{-1} z) + \Theta(y^{-1} z)
           \dd{z}
           + \int_{\group}
               \indicator_{x V \Omega} (z)
                |K_x(z) - \tau(x,y) K_y(z)|
             \dd{z} \\
    & \leq
           2 \int_{\group \setminus \Omega} \Theta(w) \dd{w}
          + \| \indicator_{x V \Omega} \|_{L^2}
             \| K_x - \tau(x,y) K_y \|_{L^2} \\
    & \leq \frac{\delta}{2}
           + \sqrt{\vphantom{h}\smash{\haarMeasure(V \Omega)}}
              \frac{\delta / 2}{1 + \sqrt{\haarMeasure(V \Omega)}}
      \leq \delta ,
  \end{align*}
  where the second inequality used the change of variables $w = x^{-1} z$,
  respectively $w = y^{-1} z$, and the inclusions
  $\group \setminus V \Omega \subseteq \group \setminus \Omega$
  and $\group \setminus y^{-1} x V \Omega \subseteq \group \setminus \Omega$,
  which holds since $x^{-1} y \in V$.

  \medskip{}

  \textbf{Step~3.}
  Let $\eps > 0$, and choose $\delta > 0$ so small that $1 + \sqrt{\delta} \leq \sqrt{1 + \eps}$
  and $1 - \sqrt{\delta} \geq \sqrt{1 - \eps}$.
  By Step~2, there exists a compact symmetric unit neighborhood $U \subseteq Q$ satisfying
  \[
    \| K_x - \tau(x,y) K_y \|_{L^1}
    \leq \delta \big/ \bigl(1 + 2 \| \maxL \Theta \|_{L^1}\bigr)
  \]
  for all $x,y \in \group$ satisfying $y^{-1} x \in U$.
  Given this choice of $U$, let $\Lambda = (\lambda_i)_{i \in I}$ and $(U_i)_{i \in I}$
  as in the statement of the proposition.
  Let $F \in \CalK_g$ be arbitrary.
  Since the family $(U_i)_{i \in I}$ is pairwise disjoint and satisfies
  $\group = \bigcup_{i \in I} U_i$, a direct calculation entails
  \begin{align*}
    \bigg|
      \| F \|_{L^2}
      - \bigg(
          \sum_{i \in I}
            \big|
              \big\langle
                F,
                \sqrt{\vphantom{h} \smash{\haarMeasure{(U_i)}}} \, K_{\lambda_i}
              \big\rangle
            \big|^2
        \bigg)^{\frac{1}{2}}
    \bigg|
    & = \bigg|
          \big\| \, |F| \, \big\|_{L^2}
          - \bigg\|\!
              \bigg(
                \sum_{i \in I}
                  \big|
                    \big\langle F, \tau(x, \lambda_i) \, K_{\lambda_i} \big\rangle
                  \big|^2
                   \indicator_{U_i} (x) \!
              \bigg)^{\frac{1}{2}}
            \bigg\|_{L^2_x}
        \bigg| \\
    & \leq \bigg\|
             |F(x)|
             - \Big|
                 \sum_{i \in I}
                   \big\langle F, \tau(x, \lambda_i) \, K_{\lambda_i} \big\rangle
                    \indicator_{U_i} (x)
               \Big|
           \bigg\|_{L^2_x} \\
    & \leq \bigg\|
             \sum_{i \in I}
               \indicator_{U_i}(x)
                \big(
                       F(x) - \overline{\tau(x,\lambda_i)} \, F(\lambda_i)
                     \big)
           \bigg\|_{L^2_x}
    \,\, ,
  \end{align*}
  where the notation $\| \cdot \|_{L_x^2}$ is used to indicate that the $L^2$-norm is taken
  with respect to $x \in G$.
  Thus, setting
  \[
    H(x)
    := \bigg|
         \sum_{i \in I}
           \indicator_{U_i} (x)
           \int_{\group}
             F(y)  \overline{\bigl(K_x(y) - \tau(x,\lambda_i) K_{\lambda_i}(y) \bigr)}
          \; \dd{y}
       \bigg| ,
  \]
  it holds that
  \(
    \big|
      \| F \|_{L^2}
      - \big(
          \sum_{i \in I}
            |
              \langle F, \sqrt{\haarMeasure{(U_i)}} \, K_{\lambda_i} \rangle
            |^2
        \big)^{1/2}
    \big|
    \leq \| H \|_{L^2} .
  \)

  Note that if $\indicator_{U_i}(x) \neq 0$,
  then $x \in U_i \subseteq \lambda_i U \subseteq \lambda_i Q$.
  On the one hand, this implies $\lambda_i^{-1} x \in U$ and hence
  \(
    \| K_x - \tau(x,\lambda_i) \, K_{\lambda_i} \|_{L^1}
    \leq \delta / (1 + 2 \| \maxL \Theta \|_{L^1})
    .
  \)
  On the other hand, the above considerations show for $\indicator_{U_i}(x) \neq 0$
  that $y^{-1} \lambda_i = y^{-1} x x^{-1} \lambda_i \in y^{-1} x Q^{-1} = y^{-1} x Q$
  and hence $\Theta(y^{-1} \lambda_i) \leq \maxL \Theta (y^{-1} x)$,
  since $\Theta$ is continuous and $Q$ is open.
  Combining this with the estimate $|K_x(y)| \leq \Theta(y^{-1} x)$
  and the Cauchy-Schwarz inequality, it follows that
  \begin{align*}
    H(x)
    & \leq \sum_{i \in I}
             \indicator_{U_i}(x)
             \int_{\group}
               |F(y)|
                |K_x(y) - \tau(x, \lambda_i) K_{\lambda_i}(y)|^{1/2}
                \big( \Theta(y^{-1} x) + \Theta(y^{-1} \lambda_i) \big)^{1/2}
             \; \dd{y} \\
    & \leq \bigg(\!
             \sum_{i \in I}
               \int_{\group} \!
                 \indicator_{U_i} (x)
                  2 \, \maxL \Theta (y^{-1} x)
                  |F(y)|^2
              \; \dd{y} \!
           \bigg)^{\!\! \frac{1}{2}}
            \bigg(\!
                   \sum_{i \in I}
                     \indicator_{U_i} (x)
                     \, \| K_x - \tau(x,\lambda_i) \, K_{\lambda_i} \|_{L^1} \!
                 \bigg)^{\!\! \frac{1}{2}} \\
    & \leq \bigg( \frac{\delta}{1 + 2 \, \| \maxL \Theta \|_{L^1}} \bigg)^{1/2}
            \bigg(
                   2
                   \int_{\group}
                     |F(y)|^2
                      \maxL \Theta (y^{-1} x)
                   \; \dd{y}
                 \bigg)^{1/2}
           .
  \end{align*}
  Therefore,
  \begin{align*}
    \| H \|_{L^2}^2
    & \leq \frac{2 \delta}{1 + 2 \, \| \maxL \Theta \|_{L^1}}
           \int_{\group}
             \int_{\group}
               |F(y)|^2
                \maxL \Theta (y^{-1} x)
            \; \dd{y}
           \dd{x} \\
    & \leq \frac{2 \delta}{1 + 2 \, \| \maxL \Theta \|_{L^1}}
           \| F \|_{L^2}^2
           \| \maxL \Theta \|_{L^1}
      \leq \delta \, \| F \|_{L^2}^2 ,
  \end{align*}
  and hence
  \[
    \bigg|
      \| F \|_{L^2}
      - \bigg(
          \sum_{i \in I}
            \big|
              \big\langle
                F,
                \sqrt{\vphantom{h} \smash{\haarMeasure(U_i)}} \, K_{\lambda_i}
              \big\rangle
            \big|^2
        \bigg)^{1/2}
    \bigg|
    \leq \| H \|_{L^2}
    \leq \sqrt{\delta} \, \| F \|_{L^2}
    .
  \]
  By the choice of $\delta$, this easily implies
  \[
    (1 - \eps)  \| F \|_{L^2}^2
    \leq \sum_{i \in I}
           |\langle F, \sqrt{\vphantom{h} \smash{\haarMeasure(U_i)}} \, K_{\lambda_i} \rangle|^2
    \leq (1 + \eps)  \| F \|_{L^2}^2
  \]
  for all $F \in \CalK_g$, as required.
\end{proof}

We aim to apply \Cref{thm:CDIntegralOperatorsLocalSpectralInvariance} to prove the existence
of a dual frame for the family $(K_{\lambda_i})_{i \in I}$ that forms a system of molecules.
To this end, we need to construct a frame of molecules $(\tau_i \, K_{\lambda_i})_{i \in I}$
with a fixed envelope $\Phi$ but such that the associated frame operator $\frameop$ satisfies
$\| \identity_{\CalK_g} - \frameop \|_{\CalK_g \to \CalK_g} \leq \eps = \eps(\Phi)$.
In combination with \Cref{prop:AlmostTightFrames},
the following lemma shows that this can indeed be done.

\begin{lemma}\label{lem:AlmostTightFrameEnvelope}
  Let $U \subseteq Q$ be a unit neighborhood and assume
  that $\Lambda = (\lambda_i)_{i \in I} \subseteq \group$ is relatively separated
  and that $(U_i)_{i \in I}$ is a family of measurable sets $U_i \subseteq \lambda_i U$
  satisfying $\group = \bigcupdot_{i \in I} U_i$.

  If $(\tau_i)_{i \in I} \subseteq [0,\infty)$ satisfies $\tau_i \leq C  \haarMeasure(U_i)$
  for all $i \in I$ and some $C > 0$, then the family
  $\bigl(\tau_i^{1/2}  K_{\lambda_i}\bigr)_{i \in I}$
  is a system of $L_w^p$-molecules in $\CalK_g$ (indexed by $\Lambda$)
  and the kernel $H$ in \Cref{eq:FrameOperatorKernel}
  associated to $\bigl(\tau_i^{1/2}  K_{\lambda_i}\bigr)_{i \in I}$ satisfies
  \[
    |H(y,x)|
    = |H(x,y)|
    \leq C  \bigl[\maxL (V_g g) \ast \maxR (V_g g)\bigr](y^{-1} x)
  \]
  for all $x,y \in G$.
\end{lemma}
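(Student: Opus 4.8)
The plan is to reduce both assertions to the single pointwise estimate $|K_{\lambda_i}(x)| \le \Theta(\lambda_i^{-1}x)$, where $\Theta := |V_g g|$. First I would record that $\Theta \in \wienerStC{L^p_w}$ (since $g \in \Bwp$ and $|V_g g|$ is continuous by \eqref{eq:VoiceTransformAbsoluteValueContinuous}) and that $\Theta$ is symmetric, i.e.\ $\Theta^{\vee} = \Theta$, which follows from the intertwining identity $V_g g(x^{-1}) = \overline{\sigma(x,x^{-1})}\,\overline{V_g g(x)}$ in \Cref{lem:basic_cocycle}(i). Since $K_{\lambda_i} = \twisttranslationL{\lambda_i}[V_g g] = V_g[\pi(\lambda_i) g] \in \CalK_g$ and $|\sigma| \equiv 1$, this immediately yields $|K_{\lambda_i}(x)| \le \Theta(\lambda_i^{-1}x)$ for all $i \in I$ and $x \in G$.

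For the molecule assertion, the key observation is that $\haarMeasure(U_i) \le \haarMeasure(\lambda_i Q) = \haarMeasure(Q)$ by $U_i \subseteq \lambda_i U \subseteq \lambda_i Q$ and left-invariance of $\haarMeasure$, whence $\tau_i^{1/2} \le (C\haarMeasure(Q))^{1/2}$ for every $i$. Combined with $|K_{\lambda_i}(x)| \le \Theta(\lambda_i^{-1}x)$, this shows $\bigl(\tau_i^{1/2} K_{\lambda_i}\bigr)_{i \in I} \dominated (C\haarMeasure(Q))^{1/2}\,\Theta$, and since $(C\haarMeasure(Q))^{1/2}\,\Theta \in \wienerStC{L^p_w}$ is symmetric, $\bigl(\tau_i^{1/2} K_{\lambda_i}\bigr)_{i \in I}$ is a system of $L^p_w$-molecules in $\CalK_g$ indexed by $\Lambda$, in the sense of \Cref{def:CoorbitMolecules}.

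For the kernel estimate, I would first observe that \eqref{eq:FrameOperatorKernel} together with $\tau_i \ge 0$ gives $H(x,y) = \sum_{i\in I}\tau_i\,K_{\lambda_i}(x)\,\overline{K_{\lambda_i}(y)}$, so that $H(y,x) = \overline{H(x,y)}$ and hence $|H(y,x)| = |H(x,y)|$; absolute convergence of the series is granted by \Cref{lem:MoleculesFrameOperatorExpression}(i) (or, a posteriori, by the bound derived below). Invoking $|K_{\lambda_i}(x)| \le \Theta(\lambda_i^{-1}x)$ reduces matters to estimating $\sum_{i\in I}\tau_i\,\Theta(\lambda_i^{-1}x)\,\Theta(\lambda_i^{-1}y)$. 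The decisive step — which here plays the role of the relative-separation bound in \eqref{eq:ShiftedSeriesEstimateTwoTerms} — is to trade the coefficients $\tau_i$ for integrals over the disjoint cover: since $\tau_i \le C\haarMeasure(U_i)$ and the summand is constant in the integration variable,
\[
  \sum_{i\in I}\tau_i\,\Theta(\lambda_i^{-1}x)\,\Theta(\lambda_i^{-1}y)
  \le C\sum_{i\in I}\int_{U_i}\Theta(\lambda_i^{-1}x)\,\Theta(\lambda_i^{-1}y)\;\dd{z}.
\]
Inside each integral, $z \in U_i \subseteq \lambda_i Q$ forces $\lambda_i^{-1}z \in Q$ and, by symmetry of $Q$, also $z^{-1}\lambda_i \in Q$; this gives $\lambda_i^{-1}x \in Q\,(z^{-1}x)$ and $y^{-1}\lambda_i \in (y^{-1}z)\,Q$, so that $\Theta(\lambda_i^{-1}x) \le \maxR\Theta(z^{-1}x)$ and $\Theta(\lambda_i^{-1}y) = \Theta(y^{-1}\lambda_i) \le \maxL\Theta(y^{-1}z)$ (using that $\Theta$ is continuous and $Q$ is open). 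Substituting these bounds, summing over $i$, and using $\bigcupdot_{i\in I}U_i = G$ yields
\[
  |H(x,y)| \le C\int_G \maxL\Theta(y^{-1}z)\,\maxR\Theta(z^{-1}x)\;\dd{z} = C\,\bigl(\maxL\Theta \ast \maxR\Theta\bigr)(y^{-1}x),
\]
the last equality being the substitution $t = y^{-1}z$ in the definition of convolution together with left-invariance of $\haarMeasure$. Since $\maxL\Theta = \maxL(V_g g)$ and $\maxR\Theta = \maxR(V_g g)$, as maximal functions depend only on the modulus, this is precisely the claimed bound.

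The only genuinely delicate point I anticipate is this passage from $\sum_{i\in I}\tau_i(\cdots)$ to $\int_G(\cdots)$ via the disjoint cover; once it is set up, the remaining steps are routine bookkeeping with $Q$ and its symmetry, and the fact that $\maxL\Theta \ast \maxR\Theta \in \wienerStC{L^p_w}$ (needed only if one wishes to deduce well-definedness of $H$ directly from the estimate) is supplied by \Cref{cor:UserFriendlyConvolutionBounds}.
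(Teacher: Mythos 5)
Your proof is correct and follows essentially the same route as the paper: the pointwise bound $|K_{\lambda_i}(x)|\le|V_g g|(\lambda_i^{-1}x)$ gives the molecule property, and the key step of replacing $\tau_i\le C\,\mu_G(U_i)$ by an integral over the disjoint cover, then dominating the integrand by $\maxR$ and $\maxL$ of the envelope on $U_i\subseteq\lambda_i Q$, is exactly the paper's argument. The only cosmetic difference is your use of $\mu_G(Q)$ rather than $\mu_G(U)$ in the molecule envelope constant, which is equally valid since $U\subseteq Q$.
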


\begin{proof}
  Set $\Phi := |V_g g| \in \wienerStC{L_w^p}$ and note that $\Phi$ satisfies
  $|K_x(y)| \leq \Phi(x^{-1} y) = \Phi(y^{-1} x)$ for all $x,y \in \group$.
  Since $\tau_i \leq C  \haarMeasure(U_i) \leq C  \haarMeasure(U)$,
  this implies that $\bigl(\tau_i^{1/2}  K_{\lambda_i}\bigr)_{i \in I}$
  is a system  of $L_w^p$-molecules in $\CalK_g$
  with envelope $\sqrt{C  \haarMeasure(U)} \cdot \Phi$.

  Let $x,y \in \group$.
  For $i \in I$ and $z \in U_i \subseteq \lambda_i Q$,
  we have $\lambda_i^{-1} x = \lambda_i^{-1} z z^{-1} x \in Q z^{-1} x$
  and $y^{-1} \lambda_i = y^{-1} z (\lambda_i^{-1} z)^{-1} \in y^{-1} z Q^{-1} = y^{-1} z Q$.
  Since $Q$ is open and $\Phi$ is continuous, this implies
  $|K_{\lambda_i} (x)| \leq \Phi(\lambda_i^{-1} x) \leq \maxR \Phi (z^{-1} x)$
  and $|K_{\lambda_i} (y)| \leq \Phi(y^{-1} \lambda_i) \leq \maxL \Phi (y^{-1} z)$
  for all $i \in I$ and $z \in U_i$.
  Hence, by definition of $H$ in \Cref{eq:FrameOperatorKernel},
  \begin{align*}
    |H(x,y)|
    & \leq C \sum_{i \in I}
             \Big[
               \haarMeasure(U_i)  |K_{\lambda_i} (x)|  |K_{\lambda_i} (y)|
             \Big]
      \leq C \sum_{i \in I}
               \int_{U_i}
                 \maxR \Phi (z^{-1} x)
                 \maxL \Phi (y^{-1} z)
               \dd{z} \\
    & =    C \int_{\group}
               \maxL \Phi(w)
               \maxR \Phi(w^{-1} y^{-1} x)
             \dd{w}
      =    C  \big( \maxL \Phi \ast \maxR \Phi \big) (y^{-1} x).
  \end{align*}
  Since $H(y,x) = \overline{H(x,y)}$, this completes the proof.
\end{proof}

Using the previous two results, we can prove the main result of this subsection.

\begin{theorem}\label{thm:dual_frame_RKHS}
  There exists a compact unit neighborhood $U \subseteq Q$ with the following property:
  If $\Lambda = (\lambda_i)_{i \in I}$ is relatively separated and $U$-dense in $G$,
  the following assertions hold:
  \begin{enumerate}[label=(\roman*)]
      \item The family $(K_{\lambda_i})_{i \in I}$ is a frame for $\CalK_g$
            and admits a dual frame $(H_i)_{i \in I}$ of $L_w^p$-molecules in $\CalK_g$.

      \item There exists a Parseval frame $(F_i)_{i \in I}$ for $\CalK_g$
            that is a system of $L_w^p$-molecules in $\CalK_g$.
  \end{enumerate}
\end{theorem}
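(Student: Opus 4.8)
The strategy is to sample a weighted family of reproducing kernels that is \emph{almost} a Parseval frame (\Cref{prop:AlmostTightFrames}), to observe that its frame operator is an integral operator whose kernel is dominated by a \emph{fixed} envelope independent of the sampling set (\Cref{lem:AlmostTightFrameEnvelope}), and then to invert this frame operator—and extract a square root of it—inside the algebra of convolution-dominated operators via the local holomorphic functional calculus of \Cref{thm:CDIntegralOperatorsLocalSpectralInvariance}. The delicate point is the order of quantifiers. First put $\Theta := \maxL (V_g g) \ast \maxR (V_g g)$, which lies in $\wienerStC{L^p_w}$ by \Cref{cor:UserFriendlyConvolutionBounds} (since $\maxL (V_g g), \maxR (V_g g) \in \wienerStC{L^p_w}$), and fix $\delta := \tfrac12$. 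Applying \Cref{thm:CDIntegralOperatorsLocalSpectralInvariance} to this $\Theta$ and $\delta$ produces a number $\eps = \eps(\Theta, g, Q, w, p, \delta) \in (0, \tfrac12)$. Feeding this $\eps$ into \Cref{prop:AlmostTightFrames} yields a compact symmetric unit neighborhood $U \subseteq Q$; this is the neighborhood asserted in the theorem. Since $B_{1/2}(1) \subseteq \{ z \in \CC : \Re z > 0 \}$, both $\phi(z) := 1/z$ and $\psi(z) := z^{-1/2}$ (principal branch) are holomorphic on $B_\delta(1)$, and \Cref{thm:CDIntegralOperatorsLocalSpectralInvariance} applies to each with the \emph{same} $\eps$.

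Now let $\Lambda = (\lambda_i)_{i \in I}$ be relatively separated and $U$-dense, pick a disjoint cover $(U_i)_{i \in I}$ associated with $U$ and $\Lambda$ (\Cref{sec:discrete}), and set $\tau_i := \haarMeasure(U_i)$. By \Cref{prop:AlmostTightFrames} the family $(\sqrt{\tau_i}\, K_{\lambda_i})_{i \in I}$ is a frame for $\CalK_g$ with frame bounds $1 - \eps$ and $1 + \eps$; by \Cref{lem:AlmostTightFrameEnvelope} (applied with $C = 1$) it is a system of $L^p_w$-molecules, and the kernel $H$ of its frame operator, as in \eqref{eq:FrameOperatorKernel}, satisfies $H \dominated \Theta$. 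By \Cref{lem:MoleculesFrameOperatorExpression} the frame operator equals $\frameop = T_H|_{\CalK_g}$; as the frame bounds are $1 \pm \eps$ and $T_H$ maps $\CalK_g$ into itself, $\| T_H - \identity_{\CalK_g} \|_{\CalK_g \to L^2} \leq \eps$. Hence \Cref{thm:CDIntegralOperatorsLocalSpectralInvariance} is applicable to $H$, and produces $L^p_w$-localized kernels $H_\phi, H_\psi$ in $\CalK_g$ with $\frameop^{-1} = T_{H_\phi}|_{\CalK_g}$ and $\frameop^{-1/2} = T_{H_\psi}|_{\CalK_g}$.

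For part~(i): the unweighted family $(K_{\lambda_i})_{i \in I}$ is itself a system of $L^p_w$-molecules in $\CalK_g$ (with envelope $|V_g g|$), hence a Bessel sequence by \Cref{lem:MoleculesFrameOperatorExpression}; combined with the lower estimate $\sum_{i} |\langle F, K_{\lambda_i}\rangle|^2 \geq \haarMeasure(U)^{-1} \sum_{i} \tau_i |\langle F, K_{\lambda_i}\rangle|^2 \geq \haarMeasure(U)^{-1}(1-\eps)\|F\|_{L^2}^2$, which uses $\tau_i \leq \haarMeasure(\lambda_i U) = \haarMeasure(U)$, this shows $(K_{\lambda_i})_{i \in I}$ is a frame for $\CalK_g$. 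Define $H_i := \tau_i\, \frameop^{-1} K_{\lambda_i} = T_{H_\phi}[\tau_i K_{\lambda_i}]$. Since $(\tau_i K_{\lambda_i})_{i \in I}$ is a system of molecules, \Cref{lem:CDMapsMoleculesToMolecules}(i) shows that $(H_i)_{i \in I}$ is a system of $L^p_w$-molecules; it is Bessel because $\frameop^{-1}$ and the scalars $\sqrt{\tau_i} \leq \sqrt{\haarMeasure(U)}$ are bounded. Finally, $(\sqrt{\tau_i}\, \frameop^{-1} K_{\lambda_i})_{i \in I}$ is the canonical dual frame of $(\sqrt{\tau_i}\, K_{\lambda_i})_{i \in I}$, so expanding both reconstruction formulas and using that $\tau_i \in \R$ and $\frameop^{-1}$ is self-adjoint gives $F = \sum_i \langle F, K_{\lambda_i}\rangle H_i = \sum_i \langle F, H_i\rangle K_{\lambda_i}$ for all $F \in \CalK_g$; thus $(K_{\lambda_i})_{i\in I}$ and $(H_i)_{i \in I}$ are dual frames. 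For part~(ii): the family $(F_i)_{i \in I} := (\sqrt{\tau_i}\, \frameop^{-1/2} K_{\lambda_i})_{i \in I} = (T_{H_\psi}[\sqrt{\tau_i}\, K_{\lambda_i}])_{i \in I}$ is a Parseval frame for $\CalK_g$, since $(\sqrt{\tau_i}\, K_{\lambda_i})_{i \in I}$ is a frame with frame operator $\frameop$, and it is a system of $L^p_w$-molecules by another application of \Cref{lem:CDMapsMoleculesToMolecules}(i) to the molecule system $(\sqrt{\tau_i}\, K_{\lambda_i})_{i \in I}$.

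The main obstacle is precisely the quantifier bookkeeping: the envelope $\Theta$ must be pinned down before \Cref{thm:CDIntegralOperatorsLocalSpectralInvariance} is used to extract $\eps$, and $\eps$ must be fixed before \Cref{prop:AlmostTightFrames} selects $U$; it is the uniformity in $\Lambda$ of the envelope bound in \Cref{lem:AlmostTightFrameEnvelope} (with implicit constant $C=1$) that makes this chain of dependencies consistent. Everything else reduces to routine applications of the already-established properties of convolution-dominated operators and molecules.
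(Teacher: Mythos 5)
Your proposal is correct and follows essentially the same route as the paper's proof: fix the envelope $\Theta = \maxL(V_g g) \ast \maxR(V_g g)$, extract $\eps$ from \Cref{thm:CDIntegralOperatorsLocalSpectralInvariance}, choose $U$ via \Cref{prop:AlmostTightFrames}, and invert (and take the square root of) the frame operator of $(\sqrt{\tau_i}\,K_{\lambda_i})_{i\in I}$ inside the class of $L_w^p$-localized kernels, using \Cref{lem:AlmostTightFrameEnvelope} and \Cref{lem:CDMapsMoleculesToMolecules}. The only differences are cosmetic elaborations (the explicit choice $\delta = \tfrac12$ with $\phi(z)=1/z$, $\psi(z)=z^{-1/2}$, and the direct lower frame bound for the unweighted family, which the paper instead deduces from the dual-frame relation between two Bessel systems).
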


\begin{proof}
  Set $C := 1$ and $\Theta := C  \maxL (V_g g) \ast \maxR (V_g g)$.
  Since $|V_g g| \in \wienerStC{L_w^p}$
  and thus also $\maxL (V_g g), \maxR(V_g g) \in \wienerStC{L_w^p}$,
  \Cref{cor:UserFriendlyConvolutionBounds} shows that $\Theta \in \wienerStC{L_w^p}$ as well.
  An application of \Cref{thm:CDIntegralOperatorsLocalSpectralInvariance}
  yields $\eps = \eps(g,Q,w,p) \in (0,1)$ such that
  whenever $H : \group \times \group \to \CC$ is $L_w^p$-localized in $\CalK_g$
  with $H \dominated \Theta$ and with $\| T_H - \identity_{\CalK_g} \|_{\CalK_g \to L^2} \leq \eps$,
  then there exist kernels $H_1,H_2 : \group \times \group \to \CC$
  that are $L_w^p$-localized in $\CalK_g$ and such that
  $(T_H |_{\CalK_g})^{-1} = T_{H_1} |_{\CalK_g}$
  and $(T_H |_{\CalK_g})^{-1/2} = T_{H_2} |_{\CalK_g}$,
  where the operators on the left-hand side are defined by the holomorphic functional calculus.

  For this choice of $\eps$, let the compact unit neighborhood $U \subseteq Q$
  be as provided by \Cref{prop:AlmostTightFrames}.
  Let $\Lambda = (\lambda_i)_{i \in I} \subset\! \group$
  be any relatively separated and $U$-dense family.
  Then $I$ is countable and there exists a disjoint cover $(U_i)_{i \in I}$
  associated to $\Lambda$ and $U$ (cf.\ Section~\ref{sec:discrete}), i.e.,
  pairwise disjoint Borel sets $U_i \subseteq \lambda_i U$
  satisfying $\group = \biguplus_{i \in I} U_i$.
  Set $\tau_i := \haarMeasure(U_i)$ and note
  $\tau_i \leq \haarMeasure(U_i) \leq C  \haarMeasure(U_i)$.
  Therefore, \Cref{lem:AlmostTightFrameEnvelope} shows that the family
  $\bigl(\tau_i^{1/2} \, K_{\lambda_i}\bigr)_{i \in I}$ is a system of $L_w^p$-molecules
  in $\CalK_g$  and that the kernel $H$ in \Cref{eq:FrameOperatorKernel} is $L_w^p$-localized
  in $\CalK_g$ with $H \dominated \Theta$.
  Furthermore, \Cref{lem:MoleculesFrameOperatorExpression} shows that
  $\bigl(\tau_i^{1/2} \, K_{\lambda_i}\bigr)_{i \in I}$ is a Bessel sequence
  and that the associated frame operator $\frameop : \CalK_g \to \CalK_g$ satisfies
  $\frameop = T_H |_{\CalK_g}$.
  Moreover, the choice of $U$ (cf.\ \Cref{prop:AlmostTightFrames}) ensures that
  \[
    - \eps \, \| F \|_{L^2}^2
    \leq \langle (\frameop - \identity_{\CalK_g}) F, F \rangle
    \leq \eps \, \| F \|_{L^2}^2
    \quad \text{for all} \quad \, F \in \CalK_g
    .
  \]
  Since $\frameop$ is self-adjoint and $\frameop = T_H |_{\CalK_g}$, this yields
  \(
    \| T_H - \identity_{\CalK_g} \|_{\CalK_g \to L^2}
    = \| \frameop - \identity_{\CalK_g} \|_{\CalK_g \to L^2}
    \leq \eps
    .
  \)
  By the choice of $\eps$, this implies that
  \[
    \frameop^{-1} = (T_H |_{\CalK_g})^{-1} = T_{H_1} |_{\CalK_g}
    \qquad \text{and} \qquad
    \frameop^{-1/2} = (T_H |_{\CalK_g})^{-1/2} = T_{H_2} |_{\CalK_g}
  \]
  for suitable kernels $H_1, H_2 : \group \times \group \to \CC$
  that are $L_w^p$-localized in $\CalK_g$.

 (i)
  Since $|V_g g| \in \wienerStC{L_w^p}$ by assumption, it follows
  directly from the definitions that $(K_{\lambda_i})_{i \in I}$ is a
  system of $L_w^p$-molecules in $\CalK_g$.
  By \Cref{lem:MoleculesFrameOperatorExpression}, this implies that $(K_{\lambda_i})_{i \in I}$
  is a Bessel family.
  Similarly, since $0 \leq \tau_i \leq \haarMeasure(\lambda_i U) = \haarMeasure(U)$
  for all $i \in I$, it follows  that $\bigl(\tau_i \, K_{\lambda_i}\bigr)_{i \in I}$
  is also a system of $L_w^p$-molecules in $\CalK_g$.
  By the above, $\frameop^{-1} = T_{H_1}|_{\CalK_g}$,
  and thus \Cref{lem:CDMapsMoleculesToMolecules} shows that
  $(H_i)_{i \in I} := \big( \frameop^{-1} [\tau_i \, K_{\lambda_i}] \big)_{i \in I}$
  is also a system of $L_w^p$-molecules in $\CalK_g$, and hence a Bessel sequence
  by \Cref{lem:MoleculesFrameOperatorExpression}.
  Moreover, for arbitrary $F \in \CalK_g$,
  \[
    F
    = \frameop^{-1}
      \bigg(
        \sum_{i \in I}
          \big\langle F, \tau_i^{1/2} \, K_{\lambda_i} \big\rangle \,\,
          \tau_i^{1/2} \, K_{\lambda_i}
      \bigg)
    = \sum_{i \in I}
        \langle F, K_{\lambda_i} \rangle
        \frameop^{-1} (\tau_i \, K_{\lambda_i})
    = \sum_{i \in I}
        \langle F, K_{\lambda_i} \rangle
        H_i .
  \]
  Since $(K_{\lambda_i})_{i \in I}$ and $(H_i)_{i \in I}$ are both Bessel families,
  this implies that they form a pair of dual frames.

  \smallskip{}

  (ii)
  Since $\bigl(\tau_i^{1/2} \, K_{\lambda_i}\bigr)_{i \in I}$ is a system
  of $L_w^p$-molecules in $\CalK_g$, an application of \Cref{lem:CDMapsMoleculesToMolecules}
  shows that the same is true for
  \(
    (F_i)_{i \in I}
    := \bigl(
        \frameop^{-1/2} \, [\tau_i^{1/2} \, K_{\lambda_i} \,]
      \bigr)_{i \in I}
    = \big(
        T_{H_2} [\tau_i^{1/2} \, K_{\lambda_i} \,]
      \big)_{i \in I}
    .
  \)
  The system $(F_i)_{i \in I}$ is a Parseval frame for $\CalK_g$.
\end{proof}

\section{Dual Riesz sequences of molecules}

With notation as in Section~\ref{sec:dual_frame_molecule},
the aim of this section is to establish the existence of
a Riesz sequence $(K_{\lambda_i})_{i \in I}$ of reproducing kernels
\[
  K_{\lambda_i}
  = \twisttranslationL{\lambda_i} [V_g g]
\]
whose biorthogonal system also forms a family of $L^p_w$-molecules in $\CalK_g := V_g (\Hpi)$.

The first result shows the existence of almost tight Riesz sequences of reproducing kernels.
This fact could be cited from the general result \cite[Lemma~6.1]{MoleculePaper},
but the proof is included for the sake of completeness.

\begin{proposition}\label{prop:eps_riesz}
  For every $\eps \in (0,1)$, there exists a compact unit neighborhood $U \supset Q$
  in $G$ with the following property:

  If $\Lambda = (\lambda_i)_{i \in I}$ is any countable family in $G$ that is $U$-separated,
  then the system $(K_{\lambda_i})_{i \in I}$ forms a Riesz sequence in $\CalK_g$
  with lower Riesz bound $(1-\eps)^2 \| V_g g \|_{L^2}^2$
  and upper Riesz bound $(1+\eps)^2 \| V_g g \|^2_{L^2}$.
\end{proposition}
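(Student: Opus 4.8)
The plan is to mimic the standard argument for the existence of $\eps$-tight Riesz sequences from reproducing kernels: the key estimates are that the Gramian matrix of $(K_{\lambda_i})_{i \in I}$ is close to a scalar multiple of the identity whenever the points $\lambda_i$ are spread out. First I would set $\Theta := |V_g g| \in \wienerStC{L^p_w}$ and recall that $\Theta$ is continuous with $\Theta(e_G) = \|g\|_{\Hpi}^2 > 0$, and that $|K_x(y)| \le \Theta(y^{-1}x)$ by \eqref{eq:RK}. The heart of the matter is the following: for the Gramian $\gramian = \big(\langle K_{\lambda_{i'}}, K_{\lambda_i}\rangle\big)_{i,i' \in I}$ acting on $\ell^2(I)$, one has $\langle K_{\lambda_i}, K_{\lambda_i}\rangle = \|V_g g\|_{L^2}^2$ for every $i$ (since $\|K_{\lambda_i}\|_{L^2} = \|\pi(\lambda_i)g\|_{\Hpi} = \|g\|_{\Hpi}$, and $\|K_x\|_{L^2} = \|V_g[\pi(x)g]\|_{L^2} = \|\pi(x)g\|_{\Hpi}$... more precisely $\|K_x\|_{L^2}^2 = \|V_g g\|_{L^2}^2$ by the orthogonality/reproducing structure), so writing $\gramian = \|V_g g\|_{L^2}^2 \cdot \identity_{\ell^2} + E$, the off-diagonal error matrix $E$ has entries bounded by $|E_{i,i'}| \le (\Theta \ast \Theta)(\lambda_{i'}^{-1}\lambda_i)$ for $i \ne i'$ and $E_{i,i} = 0$.

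Next I would control $\|E\|_{\ell^2 \to \ell^2}$ via Schur's test. Using $U$-separatedness of $\Lambda$ together with \Cref{lem:StandardShiftedSeriesEstimates} (or a direct packing argument), one gets $\sum_{i' : i' \ne i} (\Theta \ast \Theta)(\lambda_{i'}^{-1}\lambda_i) \le \sup_{i} \sum_{i' \ne i} \Psi(\lambda_{i'}^{-1}\lambda_i)$ where $\Psi := \maxL(\Theta\ast\Theta) \in \wienerStC{L^p_w} \hookrightarrow \wienerL{L^1}$; crucially, as $U$ grows the points $\lambda_i$, $i \ne i'$, are forced to stay outside $\lambda_{i'} U$, so this sum is bounded by $\frac{\rel(\Lambda)}{\mu_G(U)}\|\Psi \cdot \indicator_{G \setminus U}\|_{\wienerL{L^1}}$-type quantity that tends to $0$ as $U \uparrow G$, by dominated convergence (since $\maxL(\Theta\ast\Theta) \in L^1$). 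Choosing $U$ large enough that this Schur bound is $\le \eps' \|V_g g\|_{L^2}^2$ for a suitable $\eps'$ depending on $\eps$, one obtains $\| \gramian - \|V_g g\|_{L^2}^2 \identity_{\ell^2}\|_{\ell^2 \to \ell^2} \le \eps' \|V_g g\|_{L^2}^2$, hence the spectrum of $\gramian$ lies in $[(1-\eps')\|V_g g\|_{L^2}^2, (1+\eps')\|V_g g\|_{L^2}^2]$. I would then pick $\eps'$ so that $1-\eps' \ge (1-\eps)^2$ and $1+\eps' \le (1+\eps)^2$ (e.g.\ $\eps' = \eps$ works since $(1-\eps)^2 \le 1-\eps \le 1+\eps \le (1+\eps)^2$), which immediately gives, via the characterization of Riesz sequences through the Gramian recalled in the section on frames and Riesz sequences, that $(K_{\lambda_i})_{i \in I}$ is a Riesz sequence with the claimed bounds $(1\mp\eps)^2\|V_g g\|_{L^2}^2$.

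The main obstacle I anticipate is making the ``$U$ large $\Rightarrow$ off-diagonal sum small'' step fully rigorous with the correct uniformity: one must ensure the packing/counting estimate for a $U$-separated family is uniform in $\Lambda$ (this is exactly what $\rel(\Lambda) < \infty$ and the independence of Wiener amalgam norms from the defining neighborhood buy us, cf.\ \eqref{eq:SequenceSpaceIndependentOfBaseSet}), and that the tail $\|\maxL(\Theta\ast\Theta) \cdot \indicator_{G \setminus U}\|_{L^1} \to 0$ is applied to a fixed $L^1$-function independent of $\Lambda$. A secondary point to be careful about is the normalization: verifying $\|K_x\|_{L^2}^2 = \|V_g g\|_{L^2}^2$ for all $x$ (which follows since $K_x = V_g[\pi(x)g]$, $V_g$ is an isometry, and $\|\pi(x) g\|_{\Hpi} = \|g\|_{\Hpi}$, combined with $\|V_g g\|_{L^2} = \|g\|_{\Hpi} \cdot \|C_\pi\text{-type constant}\|$ — actually directly $\|V_g[\pi(x)g]\|_{L^2} = \|V_{g}g\|_{L^2}$ need not hold pointwise-in-$x$ unless $V_g$ is an isometry on all of $\Hpi$, which it is by admissibility of $g$, so $\|K_x\|_{L^2} = \|\pi(x)g\|_{\Hpi} = \|g\|_{\Hpi}$ and separately $\|V_g g\|_{L^2}$ need not equal $\|g\|_{\Hpi}^2$; I would instead just track $\|K_{\lambda_i}\|_{L^2}^2 = \|g\|_{\Hpi}^2$ and recompute what the correct constant is, or alternatively note that since $g$ is admissible $V_g$ is an isometry so $\langle K_{\lambda_i}, K_{\lambda_i}\rangle_{L^2} = \langle \pi(\lambda_i) g, \pi(\lambda_i) g\rangle_{\Hpi} = \|g\|_{\Hpi}^2$, and reconcile this with the statement's $\|V_g g\|_{L^2}^2$; these coincide iff $\|V_g g\|_{L^2} = \|g\|_{\Hpi}$, which indeed holds because $V_g$ is an isometry applied to $g$ itself: $\|V_g g\|_{L^2} = \|g\|_{\Hpi}$ — wait, that gives $\|V_g g\|_{L^2}^2 = \|g\|_{\Hpi}^2$ only if $\|g\|_{\Hpi} = 1$, so in general the diagonal entry is $\|g\|_{\Hpi}^2$ and one should double-check the statement's normalization against $\|V_g g\|_{L^2}^2 = \|g\|_{\Hpi}^2$, which is precisely the identity $\|V_g g\|_{L^2} = \|g\|_{\Hpi}$ squared — consistent). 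Modulo this bookkeeping, the argument is routine.
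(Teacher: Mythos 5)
Your argument is correct in substance, but it follows a genuinely different route from the paper's proof, so a comparison is worthwhile. You perturb the Gramian about a multiple of the identity: since $g$ is admissible, $V_g$ is an isometry and each $\pi(\lambda_i)$ is unitary, so every diagonal entry equals $\| K_{\lambda_i} \|_{L^2}^2 = \| \pi(\lambda_i) g \|_{\Hpi}^2 = \| g \|_{\Hpi}^2 = \| V_g g \|_{L^2}^2$ exactly (your normalization worry resolves itself, as you eventually concluded), while the off-diagonal part is controlled by Schur's test together with the observation that $U$-separation forces $\lambda_i^{-1} \lambda_{i'} \notin U U^{-1}$ for $i \neq i'$, so the row sums of $(\Theta \ast \Theta)(\lambda_{i'}^{-1} \lambda_i)$ are tails of a fixed integrable quantity and vanish as $U$ grows, uniformly in $\Lambda$ because any $U$-separated family with $U \supset Q$ has $\rel(\Lambda) \leq 1$. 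The paper instead truncates each normalized kernel to $\lambda_i U$: the truncated pieces have pairwise disjoint supports and are therefore orthogonal, with norms within $\eps/2$ of $1$ (Step~1 of the paper's proof), and the truncation errors are estimated in the synthesis norm via \Cref{lem:StandardShiftedSeriesEstimates} (Step~3); the two contributions are then combined by the triangle inequality, which is exactly where the bounds $(1 \pm \eps)^2$ come from. Both proofs rest on the same dominated-convergence mechanism for choosing $U$; note that the paper replaces the sharp cutoff by a continuous one $1 - \varphi$ with $\varphi \in C_c(G)$ precisely so that the covering estimate (which requires continuity of the summand) applies, and you should do the same in your Schur-sum step, since $\maxL(\Theta \ast \Theta) \cdot \indicator_{G \setminus U U^{-1}}$ is not continuous. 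What each approach buys: your route yields the slightly sharper bounds $(1 \pm \eps) \| V_g g \|_{L^2}^2$ directly, which of course imply the stated $(1 \pm \eps)^2$ bounds; the paper's route does not need the diagonal of the Gramian to be exactly constant, only that each kernel's mass concentrates on $\lambda_i U$, which is why essentially the same template also handles the almost-tight \emph{frame} statement of \Cref{prop:AlmostTightFrames}, where no such exact identity is available.
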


\begin{proof}
Throughout the proof, let $\Phi := |V_g g| \in \wienerStC{L^p_w}$
and define the normalized reproducing kernels $\widetilde{K}_{y} := K_{y} / \| V_g g \|_{L^2}$
for $y \in G$.
We note that
\begin{equation}
  |K_{y} (x)| = |\twisttranslationL{y} [V_g g]| = L_{y} |V_g g|(x) = (\translationL{y} \Phi)(x)
  \quad \text{and hence} \quad
  \| \widetilde{K}_y \|_{L^2} = \frac{\| \translationL{y} \Phi \|_{L^2}}{\| V_g g \|_{L^2}} = 1 .
  \label{eq:RieszProofKernelNormalization}
\end{equation}
The proof is split into three steps.

\medskip{}

\textbf{Step 1.} \emph{(Localized norm estimate).}
The group $G$ being second-countable, there exists an increasing sequence
$(U_n)_{n \in \mathbb{N}}$ of compact sets $U_n \subseteq G$
such that $G = \bigcup_{n \in \mathbb{N}} U_n$.
By Lemma~\ref{lem:AmalgamWeightedLInftyEmbedding},
it follows that $\Phi \in \wienerSt{L^p_w} \hookrightarrow L^2_w \hookrightarrow L^2$.
In addition, $\indicator_{U_n^c} \cdot \Phi \to 0$ as $n \to \infty$,
with pointwise convergence, and $|\indicator_{U_n^c} \cdot \Phi|^2 \leq |\Phi|^2 \in L^1$.
Therefore, an application of Lebesgue's dominated convergence theorem
yields the existence of $n_0 \in \mathbb{N}$ such that
 \(
   \| \indicator_{U_{n_0}^c} \cdot \Phi \|_{L^2}
   \leq \frac{\eps}{2}  \| V_g g \|_{L^2}
   = \frac{\eps}{2}  \| \Phi \|_{L^2}.
 \)

Using \Cref{eq:RieszProofKernelNormalization}, we see for any measurable $U \supset U_{n_0}$ in $G$
and any $y \in G$ that
\begin{align*}
        \Big|  \| K_{y} \|_{L^2}^2 - \| K_{y} \cdot \indicator_{y U} \|_{L^2}^2 \Big|
  &\leq \big\| K_{y} \cdot \indicator_{(y U_{n_0})^c} \big\|_{L^2}^2
  \leq  \big\| (L_{y} \Phi) \cdot (L_{y} \indicator_{U^c_{n_0}} ) \|_{L^2}^2
  =     \big\|  \Phi \cdot  \indicator_{U^c_{n_0}}  \|_{L^2}^2.
\end{align*}
This, combined with the estimate
$\| \indicator_{U_{n_0}^c} \cdot \Phi \|_{L^2} \leq \frac{\eps}{2}  \| V_g g \|_{L^2}$,
gives
\begin{align*}
  \Big| \| \widetilde{K}_{y} \cdot \indicator_{y U} \|_{L^2}^2 - 1 \Big|
  &= \| V_g g \|_{L^2}^{-2}
      \Big|
             \| K_{y} \|_{L^2}^2
             - \| K_{y} \cdot \indicator_{y U} \|_{L^2}^2
           \Big|
  \leq  \bigg(\frac{\eps}{2} \bigg)^2
  .
\end{align*}
Using that $\eps \in (0,1)$, this easily implies
\begin{align}\label{eq:final_estimate_step1}
  \bigg( 1 - \frac{\eps}{2} \bigg)^2
  \leq 1 - \bigg( \frac{\eps}{2} \bigg)^2
  \leq \big\| \widetilde{K}_{y} \cdot \indicator_{y U} \big\|_{L^2}^2
  \leq 1 + \bigg( \frac{\eps}{2} \bigg)^2
  \leq \bigg( 1 + \frac{\eps}{2} \bigg)^2
\end{align}
for arbitrary $y \in G$ and any Borel set $U \supset U_{n_0}$.

\medskip{}

\textbf{Step 2.} \emph{(Construction of compact set).}
For the sequence $(U_n)_{n \in \mathbb{N}}$ of Step~1, let $(\widetilde{U}_n)_{n \in \mathbb{N}}$
be defined by $\widetilde{U}_n := (U_n \overline{Q})^{-1}$.
Since $\Phi$ is symmetric, it follows that if $x \in G$ is such that
\[
  0
  \neq \maxL \big( (\Phi \cdot \indicator_{\widetilde{U}_n^c} )^{\vee} \big) (x)
  =    \big\| (\Phi \cdot \indicator_{\widetilde{U}_n^c} )^{\vee} \big\|_{L^{\infty} (xQ)}
  =    \big\| \Phi \cdot \indicator_{(U_n \overline{Q})^c}  \big\|_{L^{\infty} (xQ)},
\]
then $(U_n \overline{Q})^c \cap x Q \neq \emptyset$.
Therefore, there exists $q \in Q$ such that $xq \in (U_n \overline{Q})^c$,
which implies that $x \notin U_n$.
Hence,
\[
  0
  \leq \maxL \big( (\Phi \cdot \indicator_{\widetilde{U}_n^c} )^{\vee} \big)
  \leq \indicator_{U_n^c} \cdot \maxL (\Phi^{\vee})
  =    \indicator_{U_n^c} \cdot \maxL \Phi
  \to 0
  \quad \text{as} \quad n \to \infty,
\]
with pointwise convergence.
An application of Lebesgue's dominated convergence theorem yields
therefore some $m \in \mathbb{N}$ such that
\(
  \|\Phi \|_{L^1} \| (\Phi \cdot \indicator_{\widetilde{U}_m^c})^{\vee} \|_{\wienerL{L^1}}
  < \| \Phi \|_{L^2}  \mu_G (Q)  (\eps / 2)^2.
\)
Choose a symmetric $\varphi \in C_c (G)$ satisfying $0 \leq \varphi \leq 1$
and $\varphi \equiv 1$ on $\widetilde{U}_{m}$.
Set $U' := \supp \varphi$. For $U_{n_0}$ as in Step~1,
define $U := \overline{Q} \cup U_{n_0} \cup U'$ and $\Theta := \Phi \cdot (1 - \varphi)$.
Then $0 \leq \Theta \leq \Phi \cdot \indicator_{\widetilde{U_{m}^c}}$, and thus
\begin{align} \label{eq:final-estimate_step2}
    \| \Theta \|_{L^1} \| \Theta \|_{\wienerL{L^1}}
  < \| \Phi \|_{L^2}  \mu_G (Q)  \bigg( \frac{\eps}{2} \bigg)^2
\end{align}
by construction.

\medskip{}

\textbf{Step 3.} \emph{($\eps$-Riesz sequences).}
Let $\Lambda = (\lambda_i)_{i \in I}$ be a $U$-separated family in $G$.
To ease notation, set $H_{\lambda_i} := \widetilde{K}_{\lambda_i} \cdot \indicator_{\lambda_i U}$
for $i \in I$.
By construction of $U$, if $x \in (\lambda_i U)^c$,
then $\lambda_i^{-1} x \notin U \supset \supp \varphi$,
so that $1 - \varphi (\lambda_i^{-1} x) = 1$.
Hence,
\[
  \indicator_{(\lambda_i U)^c}
  \leq L_{\lambda_i} (1 - \varphi)
  \quad \text{and} \quad
  |\widetilde{K}_{\lambda_i} - H_{\lambda_i}|
  = \|V_g g \|_{L^2}^{-1}  |K_{\lambda_i}| \cdot \indicator_{(\lambda_i U)^c}
  \leq \|V_g g \|_{L^2}^{-1}  L_{\lambda_i} \Theta.
\]

For showing the Riesz inequalities, let $c = (c_i)_{i \in I} \in c_{00} (I)$ be arbitrary.
Then the above estimate yields
\[
  \Bigg|\,
    \bigg\| \sum_{i \in I} c_i \widetilde{K}_{\lambda_i} \bigg\|_{L^2}
    - \bigg\| \sum_{i \in I} c_i H_{\lambda_i} \bigg\|_{L^2}
    \,
  \Bigg|
  \leq \bigg\|
         \sum_{i \in I}
           c_i  \big( \widetilde{K}_{\lambda_i} - H_{\lambda_i} \big)
       \bigg\|_{L^2}
  \leq \| V_g g \|_{L^2}^{-1}
       \bigg\|
         \sum_{i \in I}
           |c_i | L_{\lambda_i} \Theta
       \bigg\|_{L^2}.
\]
Since $\Lambda$ is $U$-separated and $U \supset Q$, it follows that $\rel(\Lambda) \leq 1$.
Therefore, by \Cref{lem:StandardShiftedSeriesEstimates} and \Cref{eq:final-estimate_step2}
and since $\Theta^{\vee} = \Theta$, it follows that
\[
  \bigg\|
    \sum_{i \in I}
      |c_i| L_{\lambda_i} \Theta
  \bigg\|_{L^2}^2
  \leq (\mu_G (Q))^{-1}
        \| \Theta^{\vee} \|_{\wienerL{L^1}}
        \| \Theta \|_{L^1}
        \| c \|_{\ell^2}^2
  \leq \| \Phi \|_{L^2}
        \bigg(\frac{\eps}{2} \bigg)^2
        \| c \|_{\ell^2}^2
  .
\]
Combining the obtained inequalities thus gives
\[
  \Bigg| \,
    \bigg\|
      \sum_{i \in I}
        c_i \widetilde{K}_{\lambda_i}
    \bigg\|_{L^2}
    - \bigg\|
        \sum_{i \in I}
          c_i H_{\lambda_i}
      \bigg\|_{L^2}
    \,
  \Bigg|
  \leq  \frac{\eps}{2}  \| c \|_{\ell^2}.
\]
The family $(H_{\lambda_i})_{i \in I}$ of vectors
$H_{\lambda_i} = \widetilde{K}_{\lambda_i} \cdot \indicator_{\lambda_i U}$ is orthogonal,
since $(\lambda_i U)_{i \in I}$ is pairwise disjoint.
Therefore, \Cref{eq:final_estimate_step1} implies that
\[
  \bigg( 1 - \frac{\eps}{2} \bigg)  \| c \|_{\ell^2}
  \leq \bigg\| \sum_{i \in I} c_i H_{\lambda_i} \bigg\|_{L^2}
  \leq \bigg( 1 + \frac{\eps}{2} \bigg)  \| c \|_{\ell^2}.
\]
Therefore, an application of the triangle inequality easily yields that
\[
  ( 1 - \eps )  \| c \|_{\ell^2}
  \leq \bigg\| \sum_{i \in I} c_i \widetilde{K}_{\lambda_i} \bigg\|_{L^2}
  \leq  ( 1 + \eps )  \| c \|_{\ell^2},
\]
which easily completes the proof.
\end{proof}

The following theorem establishes the existence of Riesz sequences
$(K_{\lambda_i})_{i \in I}$ that admit a biorthogonal system of $L_w^p$ molecules.

\begin{theorem}\label{thm:dual_riesz_RKHS}
There exists a compact unit neighborhood $U \supset Q$ with the following property:
  If $\Lambda = (\lambda_i)_{i \in I}$ is $U$-separated in $G$,
  the following assertions hold:
  \begin{enumerate}[label=(\roman*)]
    \item The family $(K_{\lambda_i})_{i \in I}$ is a Riesz sequence in $\CalK_g$
          whose unique biorthogonal system $(H_i)_{i \in I}$
          in $\overline{\Span} \{ K_{\lambda_i} : i \in I\}$
          is a family of $L^p_w$-molecules in $\CalK_g$.

    \item There exists an orthonormal sequence $(F_i)_{i \in I}$
          in $\overline{\Span} \{ K_{\lambda_i} : i \in I\}$
          that is a system of $L_w^p$-molecules in $\CalK_g$.
  \end{enumerate}
\end{theorem}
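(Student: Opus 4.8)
The plan is to mimic the proof of \Cref{thm:dual_frame_RKHS}, but working with the Gramian on $\ell^2(I)$ instead of the frame operator on $\CalK_g$, and replacing \Cref{prop:AlmostTightFrames} and \Cref{thm:CDIntegralOperatorsLocalSpectralInvariance} by \Cref{prop:eps_riesz} and \Cref{thm:CDMatricesLocalSpectralInvariance}. Write $\beta := \| V_g g \|_{L^2}^2 > 0$, and recall that $(K_{\lambda_i})_{i \in I}$ is always a system of $L^p_w$-molecules in $\CalK_g$ with envelope $|V_g g| \in \wienerStC{L^p_w}$. First I would set $\Theta := \beta^{-1} \, |V_g g| \ast |V_g g|$, which lies in $\wienerStC{L^p_w}$ and is symmetric by \Cref{cor:UserFriendlyConvolutionBounds} (since $|V_g g|$ is), fix $\delta := 1/2$, and apply \Cref{thm:CDMatricesLocalSpectralInvariance} with this $\Theta$, with $R := 1$, and with $\delta$, obtaining a number $\eps = \eps(\Theta, 1, \delta, w, p, Q) \in (0,\delta)$. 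Then I would choose $\eps_0 \in (0,1)$ so small that $2\eps_0 + \eps_0^2 \leq \eps$, and let $U \supset Q$ be the compact unit neighborhood furnished by \Cref{prop:eps_riesz} for this $\eps_0$. The claim is that this $U$ works.

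Next, let $\Lambda = (\lambda_i)_{i \in I}$ be $U$-separated. Since $U \supset Q$, the sets $\lambda_i Q$ are pairwise disjoint, hence $\rel(\Lambda) \leq 1$. By \Cref{lem:MoleculesFrameOperatorExpression}, $(K_{\lambda_i})_{i \in I}$ is a Bessel sequence whose Gramian $\gramian = (\langle K_{\lambda_{i'}}, K_{\lambda_i} \rangle)_{i,i' \in I}$ satisfies $\gramian \dominated |V_g g| \ast |V_g g|$, so $\widetilde{\gramian} := \beta^{-1} \gramian \in \goodMatrices(\Lambda)$ with $\widetilde{\gramian} \dominated \Theta$. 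By \Cref{prop:eps_riesz}, $(K_{\lambda_i})_{i \in I}$ is a Riesz sequence with Riesz bounds $(1 \mp \eps_0)^2 \beta$; equivalently, the self-adjoint invertible operator $\widetilde{\gramian}$ on $\ell^2(I)$ has spectrum in $[(1-\eps_0)^2, (1+\eps_0)^2]$, so that $\| \widetilde{\gramian} - \identity_{\ell^2(I)} \|_{\ell^2 \to \ell^2} \leq 2\eps_0 + \eps_0^2 \leq \eps$. Since $\delta < 1$, the maps $z \mapsto 1/z$ and $z \mapsto z^{-1/2}$ (with a fixed holomorphic branch of the square root near $1$) are holomorphic on $B_\delta(1)$; as $\widetilde{\gramian}$ is positive and invertible, the functional-calculus operators $\phi(\widetilde{\gramian})$ coincide with $\widetilde{\gramian}^{-1}$ and $\widetilde{\gramian}^{-1/2}$, and \Cref{thm:CDMatricesLocalSpectralInvariance} shows that the matrices of $\widetilde{\gramian}^{-1}$ and $\widetilde{\gramian}^{-1/2}$ both belong to $\goodMatrices(\Lambda)$.

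It remains to transfer this to the molecule statements. For (i), with $\synthesis$ the synthesis operator of $(K_{\lambda_i})_{i \in I}$, one checks from $\gramian = \analysis\synthesis$ that $H_i := \synthesis(\gramian^{-1} e_i) = \sum_{j \in I} (\gramian^{-1})_{j,i} \, K_{\lambda_j}$ satisfies $\langle H_i, K_{\lambda_k} \rangle = \delta_{i,k}$ and lies in $\overline{\Span} \{ K_{\lambda_i} : i \in I \}$, so by uniqueness of the biorthogonal system of a Riesz sequence this is the family $(H_i)_{i \in I}$ from the theorem. Hence $(H_i)_{i \in I} = A \, (K_{\lambda_j})_{j \in I}$ with $A_{i,j} := (\gramian^{-1})_{j,i} = \beta^{-1} (\widetilde{\gramian}^{-1})_{j,i}$; since the envelope condition \eqref{eq:CD_matrix_envelope} is symmetric under interchanging the two indices, $\goodMatrices(\Lambda)$ is stable under transposition, complex conjugation, and scalar multiples, so $A \in \goodMatrices(\Lambda)$, and \Cref{lem:CDMapsMoleculesToMolecules} shows that $(H_i)_{i \in I}$ is a system of $L^p_w$-molecules in $\CalK_g$. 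For (ii), set $\widetilde{K}_{\lambda_j} := \beta^{-1/2} K_{\lambda_j}$ and $F_i := \sum_{j \in I} (\widetilde{\gramian}^{-1/2})_{j,i} \, \widetilde{K}_{\lambda_j}$; since the Gramian of $(\widetilde{K}_{\lambda_j})_{j \in I}$ equals $\widetilde{\gramian}$ and $\widetilde{\gramian}^{-1/2}$ is self-adjoint, a direct computation gives $\langle F_i, F_k \rangle = \langle \widetilde{\gramian} \, \widetilde{\gramian}^{-1/2} e_i, \widetilde{\gramian}^{-1/2} e_k \rangle_{\ell^2} = \delta_{i,k}$, so $(F_i)_{i \in I}$ is orthonormal and lies in $\overline{\Span} \{ K_{\lambda_i} : i \in I \}$ as a synthesis image; writing $F_i = \beta^{-1/2} \sum_j (\widetilde{\gramian}^{-1/2})_{j,i} K_{\lambda_j}$ and noting $\beta^{-1/2} (\widetilde{\gramian}^{-1/2})^{T} \in \goodMatrices(\Lambda)$, another application of \Cref{lem:CDMapsMoleculesToMolecules} yields that $(F_i)_{i \in I}$ is a system of $L^p_w$-molecules.

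The main point requiring care is the interface between the abstract frame theory and the localization machinery: one must normalize the reproducing kernels by $\beta = \| V_g g \|_{L^2}^2$ so that the Gramian becomes close to the identity on $\ell^2(I)$ — which is exactly the hypothesis of \Cref{thm:CDMatricesLocalSpectralInvariance} — and one must fix $\eps$ (and hence the radius $\delta$ and the parameters $R = 1$, $w$, $p$, $Q$) \emph{before} selecting the neighborhood $U$, so that the conclusion holds uniformly for all $U$-separated families. Beyond this, the argument only uses the elementary stability properties of $\goodMatrices(\Lambda)$ under transposition/conjugation together with \Cref{lem:MoleculesFrameOperatorExpression} and \Cref{lem:CDMapsMoleculesToMolecules}; no genuinely new estimates are needed.
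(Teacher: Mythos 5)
Your proposal is correct and follows essentially the same route as the paper's proof: normalize the kernels by $\| V_g g \|_{L^2}$, use \Cref{prop:eps_riesz} to make the Gramian $\eps$-close to the identity on $\ell^2(I)$, invoke \Cref{thm:CDMatricesLocalSpectralInvariance} (with the envelope $\Theta = \beta^{-1} |V_g g| \ast |V_g g|$ fixed before choosing $U$) to place $\widetilde{\gramian}^{-1}$ and $\widetilde{\gramian}^{-1/2}$ in $\goodMatrices(\Lambda)$, and transfer to molecules via \Cref{lem:CDMapsMoleculesToMolecules}. The only cosmetic differences are your explicit choice $\delta = 1/2$ and the bookkeeping $2\eps_0 + \eps_0^2 \leq \eps$ in place of the paper's $\eps/3$, and writing the dual/orthonormal systems via transposes rather than elementwise conjugates (which coincide here by self-adjointness of the Gramian).
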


\begin{proof}
 Throughout, let $\Theta := \| V_g g \|_{L^2}^{-2} \cdot \big( |V_g g| \ast |V_g g|)$.
 Then $\Theta \in \wienerStC{L^p_w}$ by Corollary~\ref{cor:UserFriendlyConvolutionBounds}.

 By Theorem~\ref{thm:CDMatricesLocalSpectralInvariance},
 there exists $\eps = \eps(g, p, w, Q) \in (0,1)$ such that,
 for any $Q$-separated family $\Lambda = (\lambda_i)_{i \in I}$ in $G$
 and any $A \in \goodMatrices (\Lambda)$ satisfying $A \dominated \Theta$
 and $\| A - \identity_{\ell^2 (I)} \|_{\ell^2 \to \ell^2} \leq \eps$,
 it holds that $A^{-1} \in \goodMatrices (\Lambda)$ and $A^{-1/2} \in \goodMatrices(\Lambda)$.
 Using \Cref{prop:eps_riesz}, let $U \subseteq G$ be a compact set such that $Q \subseteq U$
 and such that for every $U$-separated family $\Lambda = (\lambda_i)_{i \in I}$ in $G$,
 the family $(\widetilde{K}_{\lambda_i})_{i \in I}$
 of normalized kernels $\widetilde{K}_{\lambda_i} := \| V_g g \|_{L^2}^{-1} \cdot K_{\lambda_i}$
 satisfies the Riesz inequalities
 \begin{align}\label{eq:dual_riesz_auxiliary}
   \bigg( 1 - \frac{\eps}{3} \bigg)^2 \| c \|_{\ell^2}^2
   \leq \bigg\| \sum_{i \in I} c_i \widetilde{K}_{\lambda_i} \bigg\|_{L^2}^2
   \leq \bigg( 1 + \frac{\eps}{3} \bigg)^2 \| c \|_{\ell^2}^2
 \end{align}
 for all $c \in \ell^2 (I)$.

 Let $\Lambda = (\lambda_i)_{i \in I}$ be an arbitrary $U$-separated family in $G$.
 Clearly, $\Lambda$ is $Q$-separated.
 An application of Lemma~\ref{lem:MoleculesFrameOperatorExpression} yields that the Gramian matrix
  \(
    \widetilde{\gramian}
    = \big(
        \langle \widetilde{K}_{\lambda_{i'}}, \widetilde{K}_{\lambda_{i}} \rangle
      \big)_{i, i' \in I}
  \)
 of $(\widetilde{K}_{\lambda_{i}})_{i \in I}$
 satisfies $\widetilde{\gramian} \in \goodMatrices(\Lambda)$
 with $\widetilde{\gramian} \dominated \Theta$.
 Note that $\widetilde{\gramian} = \widetilde{\analysis} \circ \widetilde{\synthesis}$,
 where $\widetilde{\analysis} = \widetilde{\synthesis}^\ast$ and $\widetilde{\synthesis}$ are,
 respectively, the analysis and synthesis operators
 associated to $(\widetilde{K}_{\lambda_i})_{i \in I}$.
 Using \Cref{eq:dual_riesz_auxiliary}, we see
 \begin{align*}
   \big|
    \langle (\widetilde{\gramian} - \identity_{\ell^2(I)}) c, c \rangle
   \big|
   &= \big|
      \langle \widetilde{\synthesis} c, \widetilde{\synthesis} c \rangle
      - \| c \|_{\ell^2}^2
     \big|
   \leq \max
        \big\{
          (1 + \tfrac{\eps}{3})^2 - 1, \quad
          1 - (1 - \tfrac{\eps}{3})^2
        \big\}
         \| c \|_{\ell^2}^2 \\
   &\leq \eps  \| c \|_{\ell^2}^2
 \end{align*}
 and hence $\| \identity_{\ell^2 (I)} - \widetilde{\gramian} \|_{\ell^2 \to \ell^2} \leq \eps$;
 here, we used that $\identity_{\ell^2(I)} - \widetilde{\gramian}$ is self-adjoint.
 Thus, the choice of $\eps$ via \Cref{thm:CDMatricesLocalSpectralInvariance}
 yields that $\widetilde{\gramian}^{-1} \in \goodMatrices(\Lambda)$
 and $\widetilde{\gramian}^{-1/2} \in \goodMatrices(\Lambda)$.

 \medskip{}

(i)
 Let $\overline{\widetilde{\gramian}^{-1}} \in \goodMatrices(\Lambda)$
 be the element-wise conjugate matrix of $\widetilde{\gramian}^{-1}$.
 By \Cref{lem:CDMapsMoleculesToMolecules}, the family
  \(
    (H_{i})_{{i} \in I}
    := \overline{\widetilde{\gramian}^{-1}}
       (\widetilde{K}_{\lambda_{i'}})_{i' \in I}
  \)
 is a system of $L^p_w$-molecules.
 Moreover, \Cref{lem:CD_matrix_basic} yields that the series
 \(
   \widetilde{H}_{i}
   := \sum_{i' \in I}
       (\overline{\widetilde{\gramian}^{-1}})_{i, i'}
       \widetilde{K}_{\lambda_{i'}}
 \)
 is norm convergent in $L^2$,
 and thus 
 \[ \widetilde{H}_{i} \in \overline{\Span} \{ K_{\lambda_{i'}} : i' \in I \}.\]
 For $i \in I$, let $H_{i} := \| V_g g \|_{L^2}^{-1}  \widetilde{H}_{i}$.
 Then also $H_i \in \overline{\Span} \{ K_{\lambda_{i'}} : i' \in I \}$,
 and a direct calculation entails
 \begin{align}
   \langle H_{i'}, K_{\lambda_i} \rangle
   &= \| V_g g \|_{L^2}^{-1}
      \bigg\langle
        \sum_{\ell \in I}
          (\overline{\widetilde{\gramian}^{-1}})_{i', \ell}
          \widetilde{K}_{\lambda_{\ell}}, K_{\lambda_i}
      \bigg\rangle
    = \sum_{i \in I}
        (\overline{\widetilde{\gramian}^{-1}})_{i', \ell}
        \langle
          \widetilde{K}_{\lambda_{\ell}},
          \widetilde{K}_{\lambda_i}
        \rangle \\
   &= \overline{
        \sum_{\ell \in I}
          (\widetilde{\gramian}^{-1})_{i', \ell} (\widetilde{\gramian})_{\ell, i}
      }
    = \overline{
        (\identity_{\ell^2(I)})_{i', i}
      }
    = \delta_{i', i},
 \end{align}
 which shows the desired biorthogonality.

 \medskip{}

(ii)
 By similar arguments as in (i), it follows that the system
 $(F_{i})_{i \in I} := \overline{\widetilde{\gramian}^{-1/2}} (\widetilde{K}_{\lambda_{i'}})_{i' \in I}$
 is $L^p_w$-localized and that $F_{i} \in \overline{\Span} \{ K_{\lambda_{i'}} : i' \in I \}$.
 Since $\widetilde{\gramian} = \widetilde{\gramian}^*$,
 it follows that also $(\widetilde{\gramian}^{-1/2})^* = \widetilde{\gramian}^{-1/2}$,
 and thus $(\overline{\widetilde{\gramian}^{-1/2}})_{i, i'} = (\widetilde{\gramian}^{-1/2})_{i', i}$
 for all $i, i' \in I$.
 Using this, together with
 \(
   \widetilde{\gramian} \widetilde{\gramian}^{-1/2}
   = \widetilde{\gramian}^{-1/2} \widetilde{\gramian}
 \),
 it follows that
 \begin{align*}
   \langle F_{i'}, F_i \rangle
   &= \sum_{j, j' \in I}
        \bigg\langle
          (\overline{\widetilde{\gramian}^{-1/2}})_{i', j} \widetilde{K}_{\lambda_j},
          (\overline{\widetilde{\gramian}^{-1/2}})_{i, j'} \widetilde{K}_{\lambda_{j'}}
        \bigg\rangle
    = \sum_{j,j' \in I}
        (\widetilde{\gramian}^{-1/2})_{j, i'}
        (\widetilde{\gramian}^{-1/2})_{i, j'}
        (\widetilde{\gramian})_{j', j} \\
   &= \sum_{j \in I}
        (\widetilde{\gramian}^{-1/2} \widetilde{\gramian})_{i, j}
        (\widetilde{\gramian}^{-1/2})_{j, i'}
    = (\widetilde{\gramian}^{-1/2} \widetilde{\gramian} \widetilde{\gramian}^{-1/2})_{i, i'}
    = \delta_{i, i'}
 \end{align*}
 for all $i, i' \in I$.
 Hence, the sequence $(F_i)_{i \in I}$ is orthonormal.
\end{proof}

\section{Dual molecular systems for coorbit spaces}

The purpose of this section is to show that the canonical reproducing properties of molecular frames
and Riesz sequences on $\Hpi$ and $\ell^2 (\Lambda)$ extend to coorbit spaces $\Co(Y)$
and associated sequence spaces $Y_d (\Lambda)$.

The first result shows that the analysis (resp.\ synthesis) operator
associated to a system of molecules acts boundedly from (resp.\ into) coorbit spaces.

\begin{proposition}\label{prop:molecule_bounded}
Let $w : G \to [1,\infty)$ be a $p$-weight for some $p \in (0,1]$
and let $g \in \Bwp$ be admissible.
Let $Y$ be an $L_w^p$-compatible quasi-Banach function space on $G$.
Suppose that $(g_{i})_{i \in I}$ is a family of $L^p_w$-molecules in $\Hpi$
indexed by the relatively separated family $\Lambda = (\lambda_i)_{i \in I}$,
with $(g_i)_{i \in I} \dominated_{\Lambda} \Phi$ for a symmetric $\Phi \in \wienerStC{L_w^p}$.
Then the following hold:
\begin{enumerate}[label=(\roman*)]
 \item The coefficient operator
       \[
         \analysis : \quad
         \Co_g(Y) \to Y_d (\Lambda), \quad
         f \mapsto (\langle f, g_{i}\rangle)_{i \in I}
       \]
       is well-defined and bounded with $\| \analysis \| \lesssim \| \Phi \|_{\wienerSt{L_w^p}}$,
       with implied constant depending only on $g,Q,w,p,\rel(\Lambda)$.

\item It holds
      \(
        Y_d (\Lambda)
        \hookrightarrow (\wienerL{Y})_d (\Lambda)
        \hookrightarrow \ell_{1/w}^\infty (\Lambda)
        ,
      \)
      and the reconstruction operator
      \begin{align}\label{eq:coorbit_synthesis}
        \synthesis : \quad
        Y_d (\Lambda) \to \Co_g(Y), \quad
        c = (c_{i})_{i \in I} \mapsto \sum_{i \in I} c_{i} g_{i}
      \end{align}
      is well-defined and bounded,
      with the series \eqref{eq:coorbit_synthesis} converging unconditionally in
      the weak-$\ast$-topology on $\Reservoir_w = \Reservoir_w(g)$
      and with $\| \synthesis \| \lesssim \| \Phi \|_{\wienerSt{L_w^p}}$,
      where the implied constant only depends on $g,Q,w,p,Y$.
\end{enumerate}
\end{proposition}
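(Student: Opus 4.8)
The plan is to route every convolution estimate through the Wiener amalgam space $\wienerL{Y}$ and the compatibility relation $\wienerL{Y}\ast\wienerSt{L_w^p}\hookrightarrow\wienerL{Y}$, since for a genuine quasi-Banach $Y$ no relation $Y\ast L_w^1\hookrightarrow Y$ is available. Fix a symmetric envelope $\Phi\in\wienerStC{L_w^p}$ with $(g_i)_{i\in I}\dominated_\Lambda\Phi$, i.e.\ $|V_g g_i|\le\translationL{\lambda_i}\Phi$. Recall from \Cref{lem:AmalgamWeightedLInftyEmbedding} that $\wienerStC{L_w^p}\hookrightarrow L_w^1$ and (as already used in \Cref{lem:MoleculesFrameOperatorExpression}) that $\maxL\Phi,\maxR\Phi\in\wienerStC{L_w^p}$. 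Since $\translationL{\lambda_i}\Phi\in L_w^1$ and $L_w^1$ is left-invariant, each $g_i$ lies in $\Hw(g)$, so $\langle f,g_i\rangle$ is defined for every $f\in\Reservoir_w$, and by Part~(iii) of \Cref{lem:basic_Rw} equals $\langle V_g f,V_g g_i\rangle_{L_{1/w}^\infty,L_w^1}$ for $f\in\Reservoir_w$.

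\textbf{Part (i).} For $f\in\Co_g(Y)$ the identity above yields $|\langle f,g_i\rangle|\le\int_G|V_g f(y)|\,\Phi(\lambda_i^{-1}y)\,d\mu_G(y)$. If $x\in\lambda_i Q$ then $\lambda_i^{-1}x\in Q$ and $\lambda_i^{-1}y=(\lambda_i^{-1}x)(x^{-1}y)$, so $\Phi(\lambda_i^{-1}y)\le\maxR\Phi(x^{-1}y)$ (using that $\Phi$ is continuous). Summing over $i$, using $\sum_i\indicator_{\lambda_i Q}\le\rel(\Lambda)$ and $(\maxR\Phi)^\vee=\maxL\Phi$ (as $\Phi^\vee=\Phi$), one gets the pointwise bound $\sum_{i\in I}|\langle f,g_i\rangle|\,\indicator_{\lambda_i Q}\le\rel(\Lambda)\,\big(|V_g f|\ast\maxL\Phi\big)$. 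Since $|V_g f|\in\wienerL{Y}$ and $\maxL\Phi\in\wienerStC{L_w^p}$, solidity of $Y$ together with the compatibility relation and $\wienerL{Y}\hookrightarrow Y$ give $\|\analysis f\|_{Y_d(\Lambda)}\lesssim\rel(\Lambda)\,\|V_g f\|_{\wienerL{Y}}\,\|\Phi\|_{\wienerSt{L_w^p}}=\rel(\Lambda)\,\|f\|_{\Co_g(Y)}\,\|\Phi\|_{\wienerSt{L_w^p}}$, with implied constant controlled by the compatibility constant of $Y$ and by $Q,w,p$.

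\textbf{Part (ii).} First I would establish the two embeddings. For $Y_d(\Lambda)\hookrightarrow(\wienerL{Y})_d(\Lambda)$, the elementary bound $\maxL\!\big(\sum_i|c_i|\indicator_{\lambda_i Q}\big)\le\sum_i|c_i|\indicator_{\lambda_i Q^2}$, combined with the $\Lambda$-uniform independence of $Y_d(\Lambda)$ from the base neighbourhood (Section~\ref{sub:SequenceSpaces}, using right-invariance of $Y$), does the job; composing with $\wienerL{Y}\hookrightarrow L_{1/w}^\infty$ and the identification $(L_{1/w}^\infty)_d(\Lambda)=\ell_{1/w}^\infty(\Lambda)$ (valid since $w^\vee$ is submultiplicative) yields the second embedding. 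In particular $C:=\sum_i|c_i|\indicator_{\lambda_i Q}\in\wienerL{Y}$ with $\|C\|_{\wienerL{Y}}\lesssim\|c\|_{Y_d(\Lambda)}$. The arithmetic core is then: since $C(t)\ge|c_i|$ for $t\in\lambda_i Q$, one has $|c_i|\le\mu_G(Q)^{-1}\int_{\lambda_i Q}C\,d\mu_G$, and for $t\in\lambda_i Q$ and arbitrary $x$ one has $\Phi(\lambda_i^{-1}x)\le\maxR\Phi(t^{-1}x)$; together with $\sum_i\indicator_{\lambda_i Q}\le\rel(\Lambda)$ this gives $\sum_{i\in I}|c_i|\,\Phi(\lambda_i^{-1}x)\le\frac{\rel(\Lambda)}{\mu_G(Q)}\,(C\ast\maxR\Phi)(x)$ for all $x\in G$. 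Now for the synthesis operator: for $h\in\Hw$, $|\langle g_i,h\rangle_{\Hpi}|=|\langle V_g g_i,V_g h\rangle_{L^2}|\le\int_G|V_g h|\,\Phi(\lambda_i^{-1}\cdot)$, so $\sum_i|c_i|\,|\langle g_i,h\rangle|\le\frac{\rel(\Lambda)}{\mu_G(Q)}\|V_g h\|_{L_w^1}\,\|C\ast\maxR\Phi\|_{L_{1/w}^\infty}$, which is finite since $\maxR\Phi\in\wienerStC{L_w^p}$, $\wienerL{Y}\ast\wienerSt{L_w^p}\hookrightarrow\wienerL{Y}\hookrightarrow L_{1/w}^\infty$, and $\|C\|_{\wienerL{Y}}\lesssim\|c\|_{Y_d(\Lambda)}$. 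Hence $\sum_i c_i g_i$ converges unconditionally in $\sigma(\Reservoir_w,\Hw)$ to some $\synthesis c\in\Reservoir_w$ (the limit functional is bounded by the uniform boundedness principle). Because $\pi(x)g\in\Hw$ and $f\mapsto V_g f(x)=\langle f,\pi(x)g\rangle$ is weak-$\ast$ continuous, $V_g(\synthesis c)=\sum_i c_i V_g g_i$ pointwise, so $|V_g(\synthesis c)|\le\sum_i|c_i|\Phi(\lambda_i^{-1}\cdot)$; applying $\maxL$, inserting the core estimate inside the essential supremum, and then the compatibility relation, one obtains $\|\synthesis c\|_{\Co_g(Y)}=\|\maxL(V_g\synthesis c)\|_Y\le\frac{\rel(\Lambda)}{\mu_G(Q)}\|C\ast\maxR\Phi\|_{\wienerL{Y}}\lesssim\rel(\Lambda)\,\|c\|_{Y_d(\Lambda)}\,\|\Phi\|_{\wienerSt{L_w^p}}$.

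\textbf{Main obstacle.} Beyond the bookkeeping of the various one-sided maximal functions, the essential difficulty is that $Y$ need not satisfy $Y\ast L_w^1\hookrightarrow Y$, so the naive chain $|V_g(\synthesis c)|\lesssim C\ast\maxR\Phi$ with $C\in Y$ cannot be concluded in $Y$ directly. The fix — and the one step I expect to require the most care — is to observe that the placeholder $C=\sum_i|c_i|\indicator_{\lambda_i Q}$, which a priori only lies in $Y$, in fact lies in $\wienerL{Y}$ with comparable norm (this is exactly the first embedding), so that the compatibility convolution $\wienerL{Y}\ast\wienerSt{L_w^p}\hookrightarrow\wienerL{Y}$ becomes available. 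A secondary subtlety is that the synthesis series converges only weak-$\ast$, which forces one to verify convergence by testing against $\Hw$ and then to transport the envelope bound for $V_g(\synthesis c)$ through the weak-$\ast$ continuity of the maps $f\mapsto\langle f,\pi(x)g\rangle$.
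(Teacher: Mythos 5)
Your proposal is correct and follows essentially the same route as the paper: both parts reduce the required estimates to the compatibility relation $\wienerL{Y} \ast \wienerSt{L_w^p} \hookrightarrow \wienerL{Y}$ after passing from the discrete data to the placeholder function $C = \sum_i |c_i| \indicator_{\lambda_i Q}$ (resp.\ to $|V_g f| \ast \maxL\Phi$ in Part~(i)), and the weak-$\ast$ convergence of the synthesis series is obtained, as in the paper, by testing against $\Hw$ and using $\wienerL{Y} \hookrightarrow L_{1/w}^\infty$.

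The one substantive deviation is in your ``arithmetic core'' for Part~(ii). You bound $|c_i| \leq \mu_G(Q)^{-1} \int_{\lambda_i Q} C \, d\mu_G$ and then sum over $i$, which forces you to absorb the overlap $\sum_i \indicator_{\lambda_i Q} \leq \rel(\Lambda)$ and yields $\sum_i |c_i| \, \Phi(\lambda_i^{-1} x) \leq \frac{\rel(\Lambda)}{\mu_G(Q)} (C \ast \maxR\Phi)(x)$. The paper instead integrates the single-index bound $\Phi(\lambda_i^{-1}x) \leq \maxR\Phi\bigl((\lambda_i q)^{-1} x\bigr)$ over $q \in Q$ to get $\mu_G(Q)\, \Phi(\lambda_i^{-1}x) \leq (\indicator_{\lambda_i Q} \ast \maxR\Phi)(x)$ and then sums with weights $|c_i|$, which reproduces $C$ exactly and avoids the factor $\rel(\Lambda)$. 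This matters only for the quantitative claim: the statement asserts that the implied constant for $\synthesis$ depends only on $g,Q,w,p,Y$, whereas your version makes it depend on $\rel(\Lambda)$ as well. The boundedness conclusion itself is unaffected, and the fix is the one-line rearrangement just described.
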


\begin{proof}

(i)
Since $\wienerSt{L^p_w} \hookrightarrow \wienerL{L^p_w} \hookrightarrow L^1_w (G)$
by \Cref{lem:AmalgamWeightedLInftyEmbedding}, it follows from
\eqref{eq:molecule_envelope} that $g_{i} \in \Hw(g)$.
Therefore, for every $f \in \Reservoir_w$ and $i \in I$,
the pairing $\langle f, g_{i} \rangle_{\Reservoir_w, \Hw} \in \mathbb{C}$ is well-defined.
Using Part~(iii) of \Cref{lem:basic_Rw} and noting that $\Phi$ is symmetric,
it follows from \eqref{eq:molecule_envelope} that
\begin{align*}
  |\langle f, g_{i} \rangle |
  & = |\langle V_g f, V_g g_{i} \rangle_{L^{\infty}_{1/w}, L^1_w} |
    \leq \int_G |V_g f(x)|  \Phi(\lambda_i^{-1} x) \; d\mu_G (x) \\
  & = (|V_g f| \ast \Phi^{\vee})(\lambda_i)
    = (|V_g f| \ast \Phi)(\lambda_i)
\end{align*}
for every $i \in I$.
If $x \in \lambda_i Q$, then $\lambda_i = x q$ for some $q \in Q$.
Since $Q$ is open and $\Phi$ is continuous, this implies
$\Phi(y^{-1} \lambda_i) = \Phi(y^{-1} xq) \leq \maxL \Phi(y^{-1} x)$ for all $y \in G$.
Thus,
\begin{align*}
  |\langle f, g_{i} \rangle |
  & \leq [|V_g f| \ast \Phi] (\lambda_i)
    =    \int_G  |V_g f(y)|  \Phi (y^{-1} \lambda_i) \; d\mu_G (y) \\
  & \leq \int_G  |V_g f(y)| \maxL \Phi (y^{-1} x) \; d\mu_G (y)
    =    [|V_g f| \ast \maxL \Phi] (x)
\end{align*}
for every $x \in \lambda_i Q$.
Therefore,
\begin{align*}
  \big\| (\langle f, g_{i} \rangle )_{i \in I} \big\|_{Y_d}
  \!= \bigg\|
        \sum_{i \in I}
          |\langle f, g_{i} \rangle | \indicator_{\lambda_i Q}
      \bigg\|_{Y} \!\!
  \leq \bigg\|
         (|V_g f| \ast \maxL \Phi)
         \, \sum_{i \in I}
              \indicator_{\lambda_i Q}
       \bigg\|_{Y} \!\!
  \leq \rel(\Lambda) \, \big\|\, |V_g f | \ast \maxL \Phi \big\|_{Y}
  .
\end{align*}
Since $\Phi \in \wienerSt{L^p_w}$, it also holds $\maxL \Phi \in \wienerSt{L^p_w}$,
with $\| \maxL \Phi \|_{\wienerSt{L_w^p}} \lesssim \| \Phi \|_{\wienerSt{L_w^p}}$,
where the implied constant only depends on $p,w,Q$.
The convolution relation $\wienerL{Y} \ast \wienerSt{L^p_w} \!\hookrightarrow\! \wienerL{Y}$
from condition \ref{enu:CompatibleConvolution} of \Cref{def:compatible} therefore yields that
\begin{align*}
  \big\| (\langle f, g_{i} \rangle )_{i \in I} \big\|_{Y_d}
  & \leq \rel(\Lambda)  \big\|\, |V_g f | \ast \maxL \Phi \big\|_{Y} \\
  & \lesssim_{Y,w,p,Q}\,\,
      \rel(\Lambda)
       \| V_g f \|_{\wienerL{Y}}
       \| \maxL \Phi \|_{\wienerSt{L^p_w}} \\
  & \lesssim_{w,p,Q} \,\,
      \rel(\Lambda)
       \| \Phi \|_{\wienerSt{L_w^p}}
       \| f \|_{\Co_g (Y)},
\end{align*}
and thus $\analysis : \Co_g(Y) \to Y_d (\Lambda)$ is well-defined and bounded,
with $\| \analysis \| \lesssim \rel(\Lambda)  \| \Phi \|_{\wienerSt{L_w^p}}$.

\medskip{}

(ii)
To verify the embedding $Y_d(\Lambda) \hookrightarrow (\wienerL{Y})_d (\Lambda)$,
let $c = (c_i)_{i \in I} \in Y_d(\Lambda)$.
Note for $x,y \in G$ and $i \in I$ satisfying
$\indicator_{xQ}(y) \neq 0 \neq \indicator_{\lambda_i Q}(y)$ that
$x \in y Q^{-1} \subseteq \lambda_i  Q Q$.
Thus,
\[
  \maxL
  \bigg(
    \sum_{i \in I}
      |c_{i} | \indicator_{\lambda_i Q}
  \bigg) (x)
  = \bigg\|
      \indicator_{xQ} \sum_{i \in I} |c_{i} | \indicator_{\lambda_i Q}
    \bigg\|_{L^{\infty}}
  \leq \sum_{i \in I} |c_{i}| \indicator_{\lambda_i QQ} (x).
\]
Since $Y_d (\Lambda, Q) = Y_d (\Lambda, QQ)$ with equivalent quasi-norms
(where the implied constant only depends on $Y$ and $Q$, but not on $\Lambda$;
see \Cref{eq:SequenceSpaceIndependentOfBaseSet}),
this shows that $\sum_{i \in I} |c_{i} | \indicator_{\lambda_i Q} \in \wienerL{Y}$ with
\(
  \big\| \sum_{i \in I} |c_i| \, \indicator_{\lambda_i Q} \big\|_{\wienerL{Y}}
  \leq \big\| \sum_{i \in I} |c_i| \, \indicator_{\lambda_i Q Q} \big\|_Y
  \lesssim \| c \|_{Y_d(\Lambda)}
  .
\)
In addition, since $Y$ is $L_w^p$-com\-pa\-ti\-ble, condition \ref{enu:CompatibleLInfty} of \Cref{def:compatible}
implies that $\wienerL{Y} \hookrightarrow L_{1/w}^\infty$, which then implies the embedding
$(\wienerL{Y})_d (\Lambda) \hookrightarrow (L_{1/w}^\infty)_d (\Lambda) = \ell_{1/w}^\infty (\Lambda)$.

To prove the claim regarding the reconstruction operator $\synthesis$,
note that if $F : G \to \mathbb{C}$ is continuous, then since $Q$ is open
it holds that $|F(x)| = |F(q q^{-1} x)| \leq (\maxR F)(q^{-1} x)$ for all $q \in Q$.
Since $(g_i)_{i \in I} \dominated \Phi$, we thus see
\(
  |V_g g_i (x)|
  \leq \Phi(\lambda_i^{-1} x)
  \leq (\maxR \Phi)(q^{-1} \lambda_i^{-1} x)
  =    (\maxR \Phi) ( (\lambda_i q)^{-1} x)
\)
for all $q \in Q$.
Integrating this estimate over $q \in Q$, we obtain
\begin{align*}
  \mu_G (Q)  |V_g g_{i}(x)|
  & \leq \int_G
           \indicator_{Q} (q)
           (\maxR \Phi) ( (\lambda_i q)^{-1} x)
         \; d\mu_G (q)
    =    \int_G
           \indicator_{Q} (\lambda_i^{-1} \widetilde{q})
           (\maxR \Phi) ( \widetilde{q}^{-1} x)
         \; d\mu_G (\widetilde{q}) \\
  & =    (\indicator_{\lambda_i Q} \ast \maxR \Phi)(x)
\end{align*}
for every $x \in G$.
For $c = (c_{i})_{i \in I} \in Y_d (\Lambda)$, this implies
\[
  \mu_G (Q)
   \sum_{i \in I}
          |c_{i}| \, |V_g g_{i}|
  \leq \bigg(
         \sum_{i \in I}
           |c_{i} | \indicator_{\lambda_i Q}
       \bigg)
       \ast \maxR \Phi.
\]
An application of the convolution relation
$\wienerL{Y} \ast \wienerSt{L_w^p} \hookrightarrow \wienerL{Y}$
from condition \ref{enu:CompatibleConvolution} of \Cref{def:compatible}
and of the embedding $Y_d (\Lambda) \hookrightarrow (\wienerL{Y})_d (\Lambda)$ from above
shows that
\begin{align*}
  \bigg\| \sum_{i \in I} |c_{i}| \, |V_g g_{i}| \bigg\|_{\wienerL{Y}}
  & \lesssim_Q \bigg\|
                 \bigg(
                   \sum_{i \in I}
                     |c_{i}| \, \indicator_{\lambda_i Q}
                 \bigg)
                 \ast \maxR \Phi
               \bigg\|_{\wienerL{Y}} \\
  & \lesssim_{Y,p,w,Q} \bigg\|
                         \sum_{i \in I}
                           |c_{i} | \indicator_{\lambda_i Q}
                       \bigg\|_{\wienerL{Y}}
                       \| \maxR \Phi \|_{\wienerSt{L^p_w}} \\
  & \lesssim_{p,w,Q} \| c \|_{(\wienerL{Y})_d}  \| \Phi \|_{\wienerSt{L_w^p}} \\
    & \lesssim_{Q,Y} \| c \|_{Y_d(\Lambda)}
                   \| \Phi \|_{\wienerSt{L^p_w}}.
  \numberthis \label{eq:D0_bounded}
\end{align*}
By the solidity of $\wienerL{Y}$, it follows that the map
\(
  \synthesis_0 :
  Y_d (\Lambda) \to \wienerL{Y}, \;
  (c_{i})_{i \in I} \mapsto \sum_{i \in I} c_{i} V_g g_{i}
\)
is well-defined and bounded.

For showing that $\synthesis$ is well-defined and bounded, let $f \in \Hw \subseteq \Hpi$.
Since $V_g : \Hpi \to L^2 (G)$ is an isometry,
\[
  |\langle g_{i}, f \rangle_{\Hpi} |
  = |\langle V_g g_i, V_g f \rangle_{L^2(G)} |
  \leq \int_G |V_g g_{i} (x) |  | V_g f (x) | \; d\mu_G (x).
\]
This, combined with the embedding $\wienerL{Y} \hookrightarrow L^{\infty}_{1/w} (G)$
(see condition \ref{enu:CompatibleLInfty} of \Cref{def:compatible})
and the estimate \eqref{eq:D0_bounded}, gives
\begin{align*}
  \sum_{i \in I}
    |c_{i} | \, | \langle g_{i}, f \rangle_{\Hpi} |
  & \leq \| V_g f \|_{L^1_w}
         \bigg\|
           \sum_{i \in I}
             |c_{i}|  |V_g g_{i}|
         \bigg\|_{L^{\infty}_{1/w}}
    \lesssim \| f \|_{\Hw}
             \bigg\| \sum_{i \in I} |c_{i}|  |V_g g_{i} | \bigg \|_{\wienerL{Y}} \\
  & \lesssim \| c \|_{Y_d (\Lambda)} \| f \|_{\Hw}.
\end{align*}
The absolute convergence of $\sum_{i \in I} c_{i} \langle g_{i}, f \rangle$
for $f \in \Hpi$ implies that $\synthesis c = \sum_{i \in I} c_{i} g_{i} \in \Reservoir_w$
is well-defined and that the series defining $\synthesis c$ converges unconditionally
in the weak-$\ast$-topology on $\Reservoir_w$.
The identity $V_g (\synthesis c) = \sum_{i \in I} c_{i} V_g g_{i} = \synthesis_0 c$
and the estimate \eqref{eq:D0_bounded} yield that
\(
  \| \synthesis c \|_{\Co_g (Y)}
  = \| V_g (\synthesis c) \|_{\wienerL{Y}}
  = \| \synthesis_0 c \|_{\wienerL{Y}}
  \lesssim \| \Phi \|_{\wienerSt{L_w^p}} \| c \|_{Y_d (\Lambda)},
\)
which completes the proof.
\end{proof}

The method of proof used for \Cref{prop:molecule_bounded}
resembles the one of \cite[Lemma~A.3]{MoleculePaper},
with several modifications to apply the convolution relation
\ref{enu:CompatibleConvolution} of \Cref{def:compatible}.

The next result will be helpful for extending certain identities from $\Hpi$
to $\Reservoir_w$ by density.

\begin{lemma}\label{lem:wstar_continuity}
  Let $(g_{i})_{i \in I} \subseteq \Hpi$ be a system of $L_w^p$-molecules
  indexed by the relatively separated family $\Lambda = (\lambda_i)_{i \in I}$.
  Then the associated reconstruction and coefficient operators
  $\synthesis : \ell^{\infty}_{1/w} (I) \to \Reservoir_w(g)$
  and $\analysis : \Reservoir_w(g) \to \ell^{\infty}_{1/w} (I)$
  are well-defined, bounded, and weak-$*$-continuous.
\end{lemma}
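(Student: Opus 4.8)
The plan is to reduce everything to known boundedness of $\synthesis$ and $\analysis$ on the already-studied spaces, exploiting the facts that $\ell^\infty_{1/w}(I) = (L^\infty_{1/w})_d(\Lambda)$, that $L^\infty_{1/w}$ is $L_w^p$-compatible (by \Cref{lem:coincidence}), and that $\Co_g(L^\infty_{1/w}) = \Reservoir_w$ with equivalent norms. First I would invoke \Cref{prop:molecule_bounded} with $Y := L^\infty_{1/w}$: since $(g_i)_{i\in I}$ is a system of $L_w^p$-molecules in $\Hpi$ with some symmetric envelope $\Phi \in \wienerStC{L_w^p}$, part (i) of that proposition gives that $\analysis : \Co_g(L^\infty_{1/w}) \to (L^\infty_{1/w})_d(\Lambda)$ is well-defined and bounded, and part (ii) gives that $\synthesis : (L^\infty_{1/w})_d(\Lambda) \to \Co_g(L^\infty_{1/w})$ is well-defined and bounded, with the synthesis series converging unconditionally in the weak-$*$-topology on $\Reservoir_w$. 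Using the identifications $\Co_g(L^\infty_{1/w}) = \Reservoir_w$ and $(L^\infty_{1/w})_d(\Lambda) = \ell^\infty_{1/w}(I)$ (the latter by the discussion in \Cref{sub:SequenceSpaces}, applied with the weight $\frac{1}{w}$, which satisfies the required moderateness with respect to the submultiplicative $w^\vee$, so that $u(i) = \frac{1}{w(\lambda_i)}$), this already establishes well-definedness and boundedness of both operators.

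It remains to prove weak-$*$-continuity. For $\synthesis$, I would argue as follows. Since $\Reservoir_w = (\Hw)^*$, a net $c^{(\alpha)} \to c$ weak-$*$ in $\ell^\infty_{1/w}(I) = (\ell^1_w(I))^*$ means $\sum_{i\in I} c^{(\alpha)}_i \, d_i \to \sum_{i\in I} c_i \, d_i$ for every $d = (d_i)_{i\in I} \in \ell^1_w(I)$; I want to show $\langle \synthesis c^{(\alpha)}, f\rangle \to \langle \synthesis c, f\rangle$ for every $f \in \Hw$. From the proof of part (ii) of \Cref{prop:molecule_bounded}, one has the bound $\sum_{i\in I} |\langle g_i, f\rangle_{\Hpi}| \, \frac{1}{w(\lambda_i)} \cdot w(\lambda_i) = \sum_{i\in I}|\langle g_i,f\rangle_{\Hpi}| \lesssim \|f\|_{\Hw}$ — more precisely, the sequence $\big(\langle g_i, f\rangle_{\Hpi}\big)_{i\in I}$ lies in $\ell^1_w(I)$, since $|\langle g_i, f\rangle_{\Hpi}| \le (|V_g g_i| \ast |V_g f|)(e_G)$ and the argument there produces an $\ell^1_w$-bound. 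Hence, by definition of the extended pairing and unconditional weak-$*$ convergence of the synthesis series,
\[
  \langle \synthesis c^{(\alpha)}, f\rangle
  = \sum_{i\in I} c^{(\alpha)}_i \, \overline{\langle f, g_i\rangle}
  = \sum_{i\in I} c^{(\alpha)}_i \, \langle g_i, f\rangle_{\Hpi}
  \longrightarrow \sum_{i\in I} c_i \, \langle g_i, f\rangle_{\Hpi}
  = \langle \synthesis c, f\rangle,
\]
where the passage to the limit is exactly testing the weak-$*$ convergence $c^{(\alpha)} \to c$ against the fixed element $\big(\langle g_i, f\rangle_{\Hpi}\big)_{i\in I} \in \ell^1_w(I)$. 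This gives weak-$*$-continuity of $\synthesis$.

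For $\analysis : \Reservoir_w \to \ell^\infty_{1/w}(I)$, weak-$*$-continuity means: if $f^{(\alpha)} \to f$ weak-$*$ in $\Reservoir_w = (\Hw)^*$, then $\analysis f^{(\alpha)} \to \analysis f$ weak-$*$ in $\ell^\infty_{1/w}(I) = (\ell^1_w(I))^*$, i.e., $\sum_{i\in I} \langle f^{(\alpha)}, g_i\rangle \, \overline{d_i} \to \sum_{i\in I} \langle f, g_i\rangle \, \overline{d_i}$ for every $d \in \ell^1_w(I)$. The natural route is duality: $\analysis$ is the adjoint of $\synthesis$ in the sense that $\sum_{i\in I} \langle f, g_i\rangle\,\overline{d_i} = \langle f, \synthesis' d\rangle$ for a suitable $\synthesis' d \in \Hw$, where $\synthesis' d := \sum_{i\in I} \overline{d_i}\, g_i$ converges in $\Hw$ whenever $d \in \ell^1_w(I)$ — this last convergence holds because $\|g_i\|_{\Hw} = \|V_g g_i\|_{L^1_w} \le \|\translationL{\lambda_i}\Phi\|_{L^1_w} \le w(\lambda_i)\|\Phi\|_{L^1_w}$ by \Cref{lem:AmalgamWeightedLInftyEmbedding} and the translation estimate, so $\sum_i |d_i|\,\|g_i\|_{\Hw} \le \|\Phi\|_{L^1_w}\sum_i |d_i| w(\lambda_i) = \|\Phi\|_{L^1_w}\|d\|_{\ell^1_w} < \infty$. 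Once $\synthesis' d \in \Hw$ is identified, testing the weak-$*$ convergence $f^{(\alpha)} \to f$ against this fixed vector gives the claim. The main obstacle I anticipate is the careful bookkeeping of complex conjugations and of which pairing is sesquilinear versus bilinear (the anti-dual pairing $\langle \cdot,\cdot\rangle$ on $\Reservoir_w \times \Hw$ is anti-linear in the second slot, while $\ell^1_w$–$\ell^\infty_{1/w}$ duality is typically taken bilinear), together with verifying that $\synthesis' d$ genuinely represents the functional $f \mapsto \sum_i \langle f, g_i\rangle\,\overline{d_i}$ on $\Hw$; this is purely a matter of matching conventions rather than a substantive difficulty, and everything else follows from \Cref{prop:molecule_bounded} and the norm estimates already available.
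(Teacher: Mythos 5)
Your proposal is correct and follows essentially the same route as the paper: both arguments obtain well-definedness and boundedness from \Cref{prop:molecule_bounded} applied with $Y = L^\infty_{1/w}$ and $Y = L^1_w$ (using \Cref{lem:coincidence} for the identifications $\Co_g(L^\infty_{1/w}) = \Reservoir_w$, $\Co_g(L^1_w) = \Hw$), and then deduce weak-$*$-continuity from the adjoint identities $\langle \synthesis c, f\rangle = \langle c, \analysis_0 f\rangle$ and $\langle f, \synthesis_0 c\rangle = \langle \analysis f, c\rangle$ with $\analysis_0 : \Hw \to \ell^1_w(I)$ and $\synthesis_0 : \ell^1_w(I) \to \Hw$. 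Your direct norm estimate $\|g_i\|_{\Hw} \leq w(\lambda_i)\|\Phi\|_{L^1_w}$ in place of citing the boundedness of $\synthesis_0$ is a harmless variation, and the conjugation bookkeeping you flag is indeed only a matter of conventions.
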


\begin{proof}
Throughout the proof, it will be used that $L_w^1$ and $L_{1/w}^\infty$ are
$L_w^p$-compatible and that $\Co(L^1_w) = \Hw$ and $\Co(L^{\infty}_{1/w}) = \Reservoir_w$
(cf.\ \Cref{lem:coincidence}) and that $(L^1_w)_d (\Lambda) = \ell^1_w (I)$
and $(L^{\infty}_{1/w})_d (\Lambda) = \ell^{\infty}_{1/w} (I)$,
with the interpretation $w(i) = w(\lambda_i)$ for $i \in I$ and similarly for $1/w$.
(cf.\ Section~\ref{sec:discrete}).

An application of \Cref{prop:molecule_bounded} with $Y = L^{\infty}_{1/w}$
(resp.\ $Y = L^{1}_w$) yields that the synthesis operator
$\synthesis : \ell^{\infty}_{1/w} (I) \to \Reservoir_w$ and the coefficient operator
$\analysis_0 : \Hw \to \ell_w^1(I), f \mapsto (\langle f, g_i \rangle)_{i \in I}$
are well-defined and bounded.
Therefore, if $f \in \Hw$ and $c \in \ell^{\infty}_{1/w} (I)$, then
\[
  \langle \synthesis c, f \rangle_{\Reservoir_w, \Hw}
  = \sum_{i \in I}
      c_i \langle g_{i}, f \rangle_{\Hpi}
  = \sum_{i \in I}
      c_i \, \overline{\langle f, g_{i} \rangle_{\Reservoir_w, \Hw}}
  = \langle c, \analysis_0 f \rangle.
\]
This easily implies that $\synthesis : \ell^{\infty}_{1/w} \to \Reservoir_w$
is continuous if the domain and co-domain are equipped with the weak-$*$-topology.

Similarly, \Cref{prop:molecule_bounded} implies that
$\analysis : \Reservoir_w \to \ell_{1/w}^\infty (I)$ and the reconstruction operator
$\synthesis_0 : \ell^1_w (I) \to \Hw, (c_i)_{i \in I} \mapsto \sum_{i \in I} c_i \, g_i$
are well-defined and bounded.
Since $c = \sum_{i \in I} c_i \delta_i$ with unconditional convergence in $\ell_w^1$
for $c = (c_i)_{i \in I} \in \ell_w^1$,
this implies that $\synthesis_0 c = \sum_{i \in I} c_i g_i$ converges unconditionally in $\Hw$.
Therefore, if $c \in \ell^1_w (I)$ and $f \in \Reservoir_w$, then
\[
  \langle f, \synthesis_0 c \rangle
  = \sum_{i \in I}
      \langle f, g_i \rangle \, \overline{c_i}
  = \langle \analysis f, c \rangle.
\]
This easily implies that the map
$\analysis : \Reservoir_w \to \ell^{\infty}_{1/w} (I)$ is weak-$*$-continuous.
\end{proof}

As a consequence, we can now show that certain reproducing formulas involving families of molecules
that are valid on $\Hpi$ (or even only on $\Hw$) extend to $\Reservoir_w$
(and hence to all coorbit spaces $\Co (Y)$).

\begin{corollary}\label{cor:DensityExtension}
  Let $(g_i)_{i \in I}, (h_i)_{i \in I} \subseteq \Hpi$ be two systems of
  $L_w^p$-molecules index by the relatively separated family $\Lambda = (\lambda_i)_{i \in I}$.
  Then the following hold:
  \begin{enumerate}[label=(\roman*)]
    \item If $f = \sum_{i \in I} \langle f,h_i \rangle g_i$ for all $f \in \Hw$,
          then the same holds for all $f \in \Reservoir_w$.

    \item If $c_i = \langle \sum_{\ell \in I} c_\ell \, g_\ell, h_i \rangle_{\Hpi}$ holds for all
          finitely supported sequences $c \in c_{00}(I)$ and all $i \in I$, then
          \[
            c_i
            = \bigg\langle
                \sum_{\ell \in I}
                  c_\ell \, g_\ell,
                h_i
              \bigg\rangle_{\Reservoir_w, \Hw}
          \]
          holds for all $c \in \ell_{1/w}^\infty (I)$ and all $i \in I$.
  \end{enumerate}
\end{corollary}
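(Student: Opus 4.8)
Both statements are instances of the same principle: an identity that holds on the dense subspace $\Hw \subseteq \Reservoir_w$ (or a continuity statement valid there) can be propagated to all of $\Reservoir_w$ by exploiting the weak-$*$-continuity of the synthesis and coefficient operators established in \Cref{lem:wstar_continuity}. The key topological fact is that $\Hw$ is weak-$*$-dense in $\Reservoir_w = (\Hw)^*$: indeed, this is a standard consequence of the canonical embedding $\Hw \hookrightarrow \Hpi \hookrightarrow \Reservoir_w$ (\Cref{lem:basic_Rw}) together with the Hahn--Banach theorem, since any $\varphi \in ((\Reservoir_w, \text{weak-}*))^* = \Hw$ that vanishes on $\Hw$ must be $0$.

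For part (i), I would fix $f \in \Reservoir_w$ and choose a net $(f_\alpha) \subseteq \Hw$ with $f_\alpha \to f$ in the weak-$*$-topology on $\Reservoir_w$. Since $(h_i)_{i \in I}$ is a system of $L_w^p$-molecules, the coefficient operator $\analysis^{(h)} : \Reservoir_w \to \ell_{1/w}^\infty(I)$, $u \mapsto (\langle u, h_i\rangle)_{i}$, is weak-$*$-continuous by \Cref{lem:wstar_continuity}, so $\analysis^{(h)} f_\alpha \to \analysis^{(h)} f$ weak-$*$ in $\ell_{1/w}^\infty(I)$. Likewise the synthesis operator $\synthesis^{(g)} : \ell_{1/w}^\infty(I) \to \Reservoir_w$, $c \mapsto \sum_i c_i g_i$, is weak-$*$-continuous. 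Composing, the map $u \mapsto \sum_i \langle u, h_i\rangle g_i$ is weak-$*$-continuous on $\Reservoir_w$; by hypothesis it agrees with the identity on $\Hw$, and the identity map on $\Reservoir_w$ is trivially weak-$*$-continuous, so the two weak-$*$-continuous maps agree on the weak-$*$-dense subspace $\Hw$, hence on all of $\Reservoir_w$. That gives $f = \sum_i \langle f, h_i\rangle g_i$ for every $f \in \Reservoir_w$, with unconditional weak-$*$-convergence of the series (the unconditional convergence being part of the conclusion of \Cref{prop:molecule_bounded}(ii) applied with $Y = L_{1/w}^\infty$).

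For part (ii), the argument is the mirror image. Fix $i \in I$ and $c \in \ell_{1/w}^\infty(I)$; pick a net $(c^{(\alpha)}) \subseteq c_{00}(I)$ with $c^{(\alpha)} \to c$ in the weak-$*$-topology on $\ell_{1/w}^\infty(I) = (\ell_w^1(I))^*$ (such a net exists since $c_{00}(I)$ is norm-dense, hence weak-$*$-dense, in $\ell_{1/w}^\infty(I)$). By \Cref{lem:wstar_continuity}, $\synthesis^{(g)} c^{(\alpha)} \to \synthesis^{(g)} c$ weak-$*$ in $\Reservoir_w$, and since $h_i \in \Hw$ the evaluation $u \mapsto \langle u, h_i\rangle_{\Reservoir_w, \Hw}$ is weak-$*$-continuous on $\Reservoir_w$ by definition of that topology. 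Hence $c \mapsto \langle \synthesis^{(g)} c, h_i\rangle$ is weak-$*$-continuous on $\ell_{1/w}^\infty(I)$; it agrees with the (weak-$*$-continuous) coordinate functional $c \mapsto c_i$ on the weak-$*$-dense set $c_{00}(I)$ by hypothesis, so the two agree everywhere on $\ell_{1/w}^\infty(I)$, yielding $c_i = \langle \sum_\ell c_\ell g_\ell, h_i\rangle_{\Reservoir_w, \Hw}$ for all $c \in \ell_{1/w}^\infty(I)$.

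\textbf{Main obstacle.} The only genuinely delicate point is the weak-$*$-density of $\Hw$ in $\Reservoir_w$ and the corresponding density of $c_{00}(I)$ in $\ell_{1/w}^\infty(I)$; both are soft functional-analytic facts (Goldstine-type arguments / Hahn--Banach), but one must be careful that $\Reservoir_w$ is the \emph{anti}-dual of $\Hw$ rather than the ordinary dual, so that the relevant identification $((\Reservoir_w, \text{weak-}*))^* \cong \Hw$ uses the conjugate-linear pairing; this is exactly the pairing that \Cref{lem:wstar_continuity} is phrased with, so no inconsistency arises. A secondary (purely bookkeeping) subtlety is that $\ell_{1/w}^\infty(I)$ is not separable, so one should argue with nets rather than sequences, or alternatively invoke the weak-$*$-continuity directly without passing to limits at all — i.e., simply observe that two weak-$*$-continuous maps agreeing on a weak-$*$-dense subspace agree everywhere, which avoids any explicit net manipulation.
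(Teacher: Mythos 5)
Your argument is essentially identical to the paper's proof: establish weak-$*$-density of $\Hw$ in $\Reservoir_w$ (via the point-separation/Hahn--Banach argument) and of $c_{00}(I)$ in $\ell_{1/w}^\infty(I)$, then note that the relevant composed maps are weak-$*$-continuous by \Cref{lem:wstar_continuity} and agree with the identity (resp.\ the coordinate functionals) on the dense subspace. One small correction: in part (ii) you justify the weak-$*$-density of $c_{00}(I)$ in $\ell_{1/w}^\infty(I)$ by claiming it is \emph{norm}-dense, which is false (the norm closure of $c_{00}(I)$ in $\ell_{1/w}^\infty(I)$ consists only of sequences $c$ with $c_i / w(\lambda_i) \to 0$); the correct soft argument is that $c_{00}(I)$ separates the points of the predual $\ell_w^1(I)$, which gives weak-$*$-density exactly as in your part (i).
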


\begin{proof}
(i)
\Cref{lem:basic_Hw} shows that $\pi(G) g \subseteq \Hw$ is complete in $\Hw$;
therefore, it follows that $\Hw \hookrightarrow \Hpi \hookrightarrow \Reservoir_w$
separates the points of $\Hw$; i.e., if $g \in \Hw$ satisfies
$f (g) = \langle f, g \rangle_{\Hpi} = 0$ for all $f \in \Hw$, then $g = 0$.
In particular, this implies that $\Hw$ is weak-$*$-dense in $\Reservoir_w$;
see, e.g., \cite[Corollary~5.108]{AliprantisBorderHitchhiker}.

Let $\analysis_g$ (resp.\ $\analysis_h$) and $\synthesis_g$ (resp.\ $\synthesis_h$)
denote the coefficient and reconstruction operators
associated to $(g_i)_{i \in I}$ (resp.\ $(h_i)_{i \in I}$).
By assumption, it holds $f = \synthesis_g \analysis_h f$ for all $f \in \Hw$.
Since these operators are bounded between $\Reservoir_w$ and $\ell_{1/w}^\infty (I)$
and continuous with respect to the weak-$\ast$-topologies on $\Reservoir_w$
and $\ell_{1/w}^\infty$ (cf.\ \Cref{lem:wstar_continuity}),
it follows that $f = \synthesis_g \analysis_h f$ for all $f \in \Reservoir_w$.

\medskip{}

(ii)
Let $\analysis_g$ (resp.\ $\analysis_h$) and $\synthesis_g$ (resp.\ $\synthesis_h$)
denote the coefficient and reconstruction operators
associated to $(g_i)_{i \in I}$ (resp.\ $(h_i)_{i \in I}$).
By \Cref{lem:wstar_continuity}, these operators are bounded between $\Reservoir_w$
and $\ell_{1/w}^\infty (I)$ and continuous with respect to the weak-$\ast$-topologies
on $\Reservoir_w$ and $\ell_{1/w}^\infty$.
By assumption, we have $c = \analysis_h \synthesis_g$
for all $c \in c_{00}(I)$.
Since $c_{00}(I)$ is weak-$\ast$-dense in $\ell_{1/w}^\infty (I)$,
this implies the claim.
\end{proof}

The following result establishes an atomic decomposition of the coorbit space
$\Co(Y)$ in terms of two families of molecules.

\begin{theorem}\label{thm:frame_extension}
Let $w$ be a $p$-weight for some $p \in (0,1]$ and let $Y$ be a solid,
translation-invariant quasi-Banach function space
such that $Y$ is $L^p_w$-compatible.
Suppose $g \in \Bwp$ is admissible.

There exists a compact unit neighborhood $U \subseteq G$
such that for every relatively separated, $U$-dense family $\Lambda = (\lambda_i)_{i \in I}$ in $G$,
the following properties hold:
\begin{enumerate}[label=(\roman*)]
  \item There exists a family $(h_i)_{i \in I}$
        of $L^p_w$-localized molecules in $\Hpi$
        indexed by $\Lambda$
        such that any $f \in \Co_g(Y)$ can be represented as
        \[
          f
          = \sum_{i \in I}
             \langle f, \pi(\lambda_i) g \rangle h_i
          = \sum_{i \in I}
              \langle f, h_i \rangle \pi(\lambda_i) g
        \]
        with unconditional convergence of the series
        in the weak-$*$-topology on $\Reservoir_w(g)$.

  \item There exists a family $(g_i)_{i \in I}$ of $L^p_w$-localized molecules in $\Hpi$
        such that any vector $f \in \Co_g(Y)$ can be represented as
        \[
          f = \sum_{i \in I} \langle f, g_i \rangle g_i
        \]
        with unconditional convergence of the series
        in the weak-$*$-topology on $\Reservoir_w(g)$.
\end{enumerate}
\end{theorem}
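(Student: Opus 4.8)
The plan is to deduce \Cref{thm:frame_extension} by transporting the Hilbert-space results of \Cref{thm:dual_frame_RKHS} through the coefficient transform $V_g$ and then applying the boundedness statements for molecular analysis/synthesis operators from \Cref{prop:molecule_bounded}, together with the density extension principle from \Cref{cor:DensityExtension}. First I would fix the compact unit neighborhood $U \subseteq Q$ provided by \Cref{thm:dual_frame_RKHS}, and let $\Lambda = (\lambda_i)_{i \in I}$ be relatively separated and $U$-dense. Recall that $K_{\lambda_i} = V_g[\pi(\lambda_i) g]$; since $V_g : \Hpi \to \CalK_g$ is an isometric isomorphism, a pair of dual frames $(K_{\lambda_i})_{i \in I}$, $(H_i)_{i \in I}$ for $\CalK_g$ pulls back to a pair of dual frames $(\pi(\lambda_i) g)_{i \in I}$, $(h_i)_{i \in I}$ for $\Hpi$, where $h_i := V_g^{-1} H_i$. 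Because $(H_i)_{i \in I}$ is a system of $L_w^p$-molecules in $\CalK_g$ (with some symmetric envelope $\Phi \in \wienerStC{L_w^p}$), the remark after \Cref{def:CoorbitMolecules} shows $(h_i)_{i \in I}$ is a system of $(L_w^p, g)$-molecules in $\Hpi$, hence an $L_w^p$-molecular system by \Cref{lem:MoleculeConditionIndependence}; similarly $(\pi(\lambda_i) g)_{i \in I}$ is trivially a system of $L_w^p$-molecules since $|V_g[\pi(\lambda_i) g]| = \translationL{\lambda_i}|V_g g|$ and $|V_g g| \in \wienerStC{L_w^p}$.

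Next I would establish the two reproducing identities on the dense subspace $\Hw \subseteq \Reservoir_w$. From $f = \sum_{i\in I}\langle f, \pi(\lambda_i) g\rangle h_i = \sum_{i\in I}\langle f, h_i\rangle \pi(\lambda_i) g$ for all $f \in \Hpi$ — which holds since they are dual frames for $\Hpi$ — I would restrict to $f \in \Hw$. Then \Cref{cor:DensityExtension}(i), applied once with $(g_i) = (h_i)$, $(h_i) = (\pi(\lambda_i)g)$ and once with the roles swapped, upgrades both identities to all $f \in \Reservoir_w$, with the series converging unconditionally in the weak-$*$-topology on $\Reservoir_w(g)$; this uses \Cref{lem:wstar_continuity} to guarantee weak-$*$-continuity of the relevant analysis and synthesis operators. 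For $f \in \Co_g(Y) \subseteq \Reservoir_w$ in particular, the identities hold; the unconditional weak-$*$-convergence is exactly the assertion of part (i). For part (ii), I would instead start from \Cref{thm:dual_frame_RKHS}(ii): there is a Parseval frame $(F_i)_{i\in I}$ for $\CalK_g$ consisting of $L_w^p$-molecules. Setting $g_i := V_g^{-1} F_i$ gives a Parseval frame for $\Hpi$ that is an $L_w^p$-molecular system, so $f = \sum_{i\in I}\langle f, g_i\rangle g_i$ for all $f \in \Hpi$, hence for all $f \in \Hw$, and then \Cref{cor:DensityExtension}(i) with $(g_i)=(h_i)=(g_i)$ extends this to $\Reservoir_w$ with unconditional weak-$*$-convergence.

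To complete the statement I should also note that the series make sense — i.e. that $(\langle f, \pi(\lambda_i) g\rangle)_{i\in I}$ and $(\langle f, h_i\rangle)_{i\in I}$ and $(\langle f, g_i\rangle)_{i\in I}$ lie in appropriate sequence spaces and that $\synthesis$ maps back into $\Co_g(Y)$ — but this is precisely \Cref{prop:molecule_bounded}: the coefficient operator $\analysis$ is bounded $\Co_g(Y) \to Y_d(\Lambda)$ and the reconstruction operator $\synthesis$ is bounded $Y_d(\Lambda) \to \Co_g(Y)$ for any $L_w^p$-molecular system, so in fact one gets the stronger conclusion that $f \mapsto (\langle f, \cdot\rangle)$ and its partner are bounded maps realizing $\Co_g(Y)$ as a retract of $Y_d(\Lambda)$; I would mention this as a strengthening although the theorem as stated only asks for the pointwise representation. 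The main obstacle, and the only genuinely delicate point, is the passage from $\Hw$ (or $\Hpi$) to $\Reservoir_w$: one must verify that \Cref{cor:DensityExtension} is applicable, i.e. that all the systems involved are $L_w^p$-molecular in the precise sense of \Cref{def:CoorbitMolecules} so that the weak-$*$-continuity in \Cref{lem:wstar_continuity} applies — in particular the envelopes coming from \Cref{thm:dual_frame_RKHS} must be tracked to be genuinely in $\wienerStC{L_w^p}$ and symmetric, which they are by construction there via \Cref{lem:CDMapsMoleculesToMolecules} and \Cref{cor:UserFriendlyConvolutionBounds}. Everything else is a matter of unwinding definitions and citing the already-established propositions.
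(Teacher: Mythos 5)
Your proposal is correct and follows essentially the same route as the paper: fix $U$ from \Cref{thm:dual_frame_RKHS}, pull the dual (resp.\ Parseval) frame of $L_w^p$-molecules back to $\Hpi$ via the unitary $V_g$, and then combine \Cref{cor:DensityExtension} with \Cref{prop:molecule_bounded} to extend the reproducing identities from $\Hw$ to $\Reservoir_w \supseteq \Co_g(Y)$ with unconditional weak-$*$ convergence. The only (immaterial) difference is that for part (ii) you invoke Part (i) of \Cref{cor:DensityExtension} with both systems equal to $(g_i)_{i\in I}$, which is indeed the natural choice for the identity $f=\sum_i \langle f,g_i\rangle g_i$.
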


\begin{proof}
By \Cref{thm:dual_frame_RKHS} and since $V_g : \Hpi \to \CalK_g $ is unitary
and $K_\lambda = V_g [\pi(\lambda) g]$, there exists a compact unit neighborhood $U \subseteq G$
such that for every relatively separated, $U$-dense family $\Lambda$ in $G$,
there exists a system of $(L^p_w,g)$-localized molecules  $(h_i)_{i \in I}$
(namely, $h_i = V_g^{-1} [H_i]$ with $(H_i)_{i \in I}$ as in \Cref{thm:dual_frame_RKHS})
that forms a dual frame of $\bigl(\pi(\lambda_i) g \bigr)_{i \in I}$.
The claim then follows by recalling that $\Co_g (Y) \subseteq \Reservoir_w$
and by combining Part (i) of \Cref{cor:DensityExtension}
with \Cref{prop:molecule_bounded}.
This shows (i).

The proof of (ii) is similar, using Part (ii) of \Cref{thm:dual_frame_RKHS}
and Part (ii) of \Cref{cor:DensityExtension}.
\end{proof}

Dual to \Cref{thm:frame_extension}, we show the existence of dual Riesz sequences of molecules for coorbit spaces.

\begin{theorem} \label{cor:riesz_extension}
Let $w$ be a $p$-weight for some $p \in (0,1]$ and let $Y$ be a solid, translation-invariant
quasi-Banach function space such that $Y$ is $L^p_w$-compatible.
Suppose $g \in \Bwp$ is admissible.

There exists a compact unit neighborhood $U \subseteq G$
such that for every $U$-separated family $\Lambda = (\lambda_i)_{i \in I}$ in $G$, there exists
a family $(h_i)_{i \in I} \subseteq \overline{\Span} \{ \pi(\lambda_i) g : i \in I \} \subseteq \Hpi$
which forms a system of $L^p_w$-localized molecules
and such that the moment problem
\[
  \langle f, \pi(\lambda_i) g \rangle = c_i, \quad i \in I,
\]
admits the solution $f = \sum_{i \in I} c_i h_i \in \Co_g(Y)$
for any given sequence $(c_i)_{i \in I} \in Y_d (\Lambda)$.
\end{theorem}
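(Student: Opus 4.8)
The plan is to mirror the proof of \Cref{thm:frame_extension}, but now using \Cref{thm:dual_riesz_RKHS} in place of \Cref{thm:dual_frame_RKHS}. First I would invoke \Cref{thm:dual_riesz_RKHS} to obtain a compact unit neighborhood $U \supset Q$ with the stated property: for every $U$-separated family $\Lambda = (\lambda_i)_{i \in I}$ in $G$, the reproducing kernels $(K_{\lambda_i})_{i \in I}$ form a Riesz sequence in $\CalK_g$ whose biorthogonal system $(H_i)_{i \in I}$ in $\overline{\Span}\{K_{\lambda_i} : i \in I\}$ is a family of $L^p_w$-molecules in $\CalK_g$. Transporting through the unitary $V_g : \Hpi \to \CalK_g$ and recalling that $K_{\lambda_i} = V_g[\pi(\lambda_i) g]$, I set $h_i := V_g^{-1}[H_i]$. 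By the remark following \Cref{def:CoorbitMolecules}, $(h_i)_{i \in I}$ is then a system of $(L^p_w,g)$-molecules in $\Hpi$, and since $V_g$ is unitary and the $H_i$ lie in $\overline{\Span}\{K_{\lambda_i}\}$, we get $h_i \in \overline{\Span}\{\pi(\lambda_i) g : i \in I\} \subseteq \Hpi$. Biorthogonality transports as well: $\langle h_i, \pi(\lambda_j) g \rangle_{\Hpi} = \langle H_i, K_{\lambda_j} \rangle_{L^2} = \delta_{i,j}$.

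Next I would verify that $f := \sum_{i \in I} c_i h_i$ is a well-defined element of $\Co_g(Y)$ for any $(c_i)_{i \in I} \in Y_d(\Lambda)$ and that it solves the moment problem. Well-definedness and the bound $\|f\|_{\Co_g(Y)} \lesssim \|c\|_{Y_d(\Lambda)}$ follow directly from Part~(ii) of \Cref{prop:molecule_bounded} applied to the molecular system $(h_i)_{i \in I}$: the reconstruction operator $\synthesis : Y_d(\Lambda) \to \Co_g(Y)$, $c \mapsto \sum_{i \in I} c_i h_i$, is well-defined and bounded, with the series converging unconditionally in the weak-$\ast$-topology on $\Reservoir_w(g)$. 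It remains to check that $\langle f, \pi(\lambda_j) g \rangle = c_j$ for all $j \in I$. This is where \Cref{cor:DensityExtension}\,(ii) enters: by biorthogonality, $\langle \sum_{\ell \in I} c_\ell h_\ell, \pi(\lambda_j) g \rangle_{\Hpi} = c_j$ for all finitely supported $c$; since $Y_d(\Lambda) \hookrightarrow \ell^\infty_{1/w}(\Lambda)$ by Part~(ii) of \Cref{prop:molecule_bounded}, Part~(ii) of \Cref{cor:DensityExtension} (with the roles of the two molecular systems being $(h_i)_{i \in I}$ and $(\pi(\lambda_i) g)_{i \in I}$, the latter being a system of $L^p_w$-molecules since $g \in \Bwp$) upgrades this identity to all $c \in \ell^\infty_{1/w}(\Lambda) \supseteq Y_d(\Lambda)$, which yields the claim.

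One technical point worth spelling out is that \Cref{cor:DensityExtension}\,(ii) requires \emph{both} $(h_i)_{i \in I}$ and $(\pi(\lambda_i) g)_{i \in I}$ to be systems of $L^p_w$-molecules indexed by $\Lambda$; the first is handled above, and the second follows because $|V_g[\pi(\lambda_i) g]| = |L_{\lambda_i}^\sigma V_g g| = L_{\lambda_i}|V_g g| \le L_{\lambda_i} \Phi$ with $\Phi := |V_g g| \in \wienerStC{L^p_w}$ symmetric, using the intertwining property of \Cref{lem:basic_cocycle}. I anticipate that the main (very mild) obstacle is simply bookkeeping: making sure the envelope $U$ from \Cref{thm:dual_riesz_RKHS} is the one we output, that $U$-separated implies $\rel(\Lambda) \le 1$ so that $\Lambda$ is relatively separated (as needed for \Cref{prop:molecule_bounded} and \Cref{cor:DensityExtension}), and that the convergence of $\sum_i c_i h_i$ is interpreted in the weak-$\ast$ sense on $\Reservoir_w(g)$ rather than in $\Co_g(Y)$ itself (norm convergence need not hold when $Y$ is, e.g., $L^\infty_{1/w}$). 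None of these requires new ideas beyond what is already assembled in \Cref{prop:molecule_bounded}, \Cref{cor:DensityExtension}, and \Cref{thm:dual_riesz_RKHS}.
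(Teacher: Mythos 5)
Your proposal is correct and follows essentially the same route as the paper: it invokes \Cref{thm:dual_riesz_RKHS}, transports the biorthogonal system of molecules through the unitary $V_g$, extends the identity $c = \analysis_g \synthesis_h c$ from $c_{00}(\Lambda)$ to $\ell^\infty_{1/w}(I)$ via Part~(ii) of \Cref{cor:DensityExtension}, and concludes using the embedding $Y_d(\Lambda) \hookrightarrow \ell^\infty_{1/w}(I)$ together with the boundedness of $\synthesis_h : Y_d(\Lambda) \to \Co_g(Y)$ from \Cref{prop:molecule_bounded}. The technical points you flag (both families being molecular systems, $U$-separation implying $\rel(\Lambda) \le 1$, weak-$\ast$ convergence) are all handled correctly.
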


\begin{proof}
  By \Cref{thm:dual_riesz_RKHS} and since $V_g : \Hpi \to \CalK_g$ is unitary
  and $K_\lambda = V_g [\pi(\lambda) g]$, there exists a compact unit neighborhood $U \subseteq G$
  such that for any $U$-separated family $\Lambda = (\lambda_i)_{i \in I}$ in $G$,
  the family $\bigl(\pi(\lambda_i) g\bigr)_{i \in I}$ is a Riesz sequence in $\Hpi$
  with unique biorthogonal sequence $(h_i)_{i \in I}$
  in $\overline{\Span} \{ \pi(\lambda_i) g : i \in I \}$ forming a family of $(L_w^p, g)$ molecules
  in $\Hpi$ indexed by $\Lambda$.
  Let $\analysis_g$ (resp.\ $\analysis_h$) and $\synthesis_g$ (resp.\ $\synthesis_h$)
  denote the coefficient and reconstruction operators associated to $(\pi(\lambda_i) g)_{i \in I}$
  (resp.\ $(h_i)_{i \in I}$).
  Then $c = \analysis_g \synthesis_h c$ for all $c \in c_{00} (\Lambda)$.
  By Part~(ii) of \Cref{cor:DensityExtension}, the same then holds for all
  $c \in \ell_{1/w}^\infty (I)$.
  Since $Y_d (\Lambda) \hookrightarrow \ell_{1/w}^\infty (I)$ by \Cref{prop:molecule_bounded},
  this implies the claim.
\end{proof}

\chapter{Applications of molecular decompositions}
\label{sec:applications}

This chapter provides three applications of the existence of dual frames and Riesz sequences of molecules.

\section{Boundedness of operators}
A powerful principle for showing that an operator is bounded on distribution or function spaces
is to show that is maps adequate ``atoms'' into ``molecules'', see, e.g.,
\cite{taibleson1980the, torres1991boundedness, frazier1990discrete}
for various classical versions of this principle.
An abstract version of this principle for coorbit spaces was obtained in \cite[Theorem 3.5]{groechenig2009molecules}.
The following theorem provides an extension of this result to quasi-Banach spaces.

\begin{theorem}\label{thm:bounded_extension}
  Let $w$ be a $p$-weight for some $p \in (0,1]$
  and let $Y$ be a solid, translation-invariant quasi-Banach function space
  such that $Y$ is $L^p_w$-compatible.
  For an admissible $g \in \Bwp$, let $\Lambda = (\lambda_i)_{i \in I}$
  be a relatively separated family in $G$ such that
  $\bigl(\pi(\lambda_i) g \bigr)_{i \in I}$ is a frame for $\Hpi$
  with a dual frame $(h_i)_{i \in I}$ forming a family of $L_w^p$-molecules
  in $\Hpi$.

  If $T : \Hw \to \Reservoir_w$ is a bounded linear operator
  such that the vectors
  \[
     m_i
     := T [\pi(\lambda_i) g],
     \quad i \in I,
  \]
  define a system of $L^p_w$-molecules with envelope $\Phi \in \wienerStC{L^p_w}$,
  then $T$ extends to a well-defined bounded linear operator $T : \Co(Y) \to \Co(Y)$ with
  \begin{align} \label{eq:operator_extension}
    \| T \|_{\Co(Y) \to \Co(Y)}
    \lesssim \| \Phi \|_{\wienerSt{L^p_w}},
  \end{align}
  where the implicit constant only depends on $g,Q,w,p,Y,\Lambda$, and $(h_i)_{i \in I}$.
\end{theorem}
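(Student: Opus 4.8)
The plan is to realize the extension as the composition $\widetilde T := \synthesis_m\circ\analysis_h$ of a molecular analysis operator and a molecular synthesis operator, to estimate its norm via \Cref{prop:molecule_bounded}, and then to check that $\widetilde T$ coincides with $T$ on the space $\Hw$ of test vectors. First I would observe that the atoms $\bigl(\pi(\lambda_i)g\bigr)_{i\in I}$ themselves form a system of $L^p_w$-molecules in $\Hpi$: by the intertwining identity of \Cref{lem:basic_cocycle}(i) one has $V_g[\pi(\lambda_i)g]=\twisttranslationL{\lambda_i}[V_g g]$, so that $|V_g[\pi(\lambda_i)g](x)|=|V_g g|(\lambda_i^{-1}x)$, and $|V_g g|\in\wienerStC{L^p_w}$ is a symmetric envelope since $g\in\Bwp$. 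Writing $\analysis_h$ for the coefficient operator $f\mapsto(\langle f,h_i\rangle)_{i\in I}$ attached to $(h_i)_{i\in I}$ and $\synthesis_m$ for the reconstruction operator $c\mapsto\sum_{i\in I}c_i m_i$ attached to $(m_i)_{i\in I}$, \Cref{prop:molecule_bounded}(i) shows that $\analysis_h$ is bounded from $\Co_g(Y)$ into $Y_d(\Lambda)$ with norm controlled by $\rel(\Lambda)$ and a fixed envelope of $(h_i)_{i\in I}$, while \Cref{prop:molecule_bounded}(ii) shows that $\synthesis_m$ is bounded from $Y_d(\Lambda)$ into $\Co_g(Y)$ with $\|\synthesis_m\|\lesssim\|\Phi\|_{\wienerSt{L^p_w}}$, the defining series converging unconditionally in the weak-$*$ topology of $\Reservoir_w$. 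Hence $\widetilde T$ is a bounded linear operator on $\Co_g(Y)$ satisfying \eqref{eq:operator_extension}, with implicit constant depending only on $g,Q,w,p,Y,\Lambda$ and $(h_i)_{i\in I}$.

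Next I would record that the same operators $\analysis_h$ and $\synthesis_m$ are also defined and bounded between $\Reservoir_w$ and $\ell^\infty_{1/w}(\Lambda)$ (\Cref{lem:wstar_continuity}) and between $\Hw$ and $\ell^1_w(\Lambda)$; the latter follows from \Cref{prop:molecule_bounded} applied with $Y=L^1_w$, using $\Hw=\Co_g(L^1_w)$ and $\ell^1_w(\Lambda)=(L^1_w)_d(\Lambda)$ from \Cref{lem:coincidence}. Since these versions are mutually compatible restrictions, one may view $\widetilde T$ unambiguously as a bounded operator on $\Reservoir_w$ that maps $\Co_g(Y)$ boundedly into itself, and it remains to verify $\widetilde T|_{\Hw}=T$. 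Fix $f\in\Hw$ and set $c:=\analysis_h f=(\langle f,h_i\rangle)_{i\in I}\in\ell^1_w(\Lambda)$. Because the finitely supported sequences form an unconditional basis of $\ell^1_w(\Lambda)$ and both $\synthesis_m$ and the reconstruction operator of $\bigl(\pi(\lambda_i)g\bigr)_{i\in I}$ are bounded from $\ell^1_w(\Lambda)$ into $\Hw$, the series $\sum_{i\in I}c_i m_i$ and $\sum_{i\in I}c_i\pi(\lambda_i)g$ converge unconditionally in $\Hw$. Since $(h_i)_{i\in I}$ is a dual frame of $\bigl(\pi(\lambda_i)g\bigr)_{i\in I}$, the second series sums to $f$ in $\Hpi$, hence also in $\Hw$ because $\Hw\hookrightarrow\Hpi$; thus $f=\sum_{i\in I}c_i\pi(\lambda_i)g$ in $\Hw$. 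Applying the bounded operator $T\colon\Hw\to\Reservoir_w$ and using $m_i=T[\pi(\lambda_i)g]$ yields
\[
  Tf = \sum_{i\in I}c_i\,m_i = \synthesis_m c = \widetilde T f
\]
with convergence in $\Reservoir_w$, the last two equalities holding because $\sum_{i\in I}c_i m_i$ already converges to $\synthesis_m c$ in $\Hw\hookrightarrow\Reservoir_w$. Hence $\widetilde T$ is the desired bounded extension of $T$ to $\Co(Y)$; by \Cref{prop:molecule_bounded}(ii) the representation $\widetilde T f=\sum_{i\in I}\langle f,h_i\rangle\,m_i$ in fact holds with weak-$*$ unconditional convergence for every $f\in\Co_g(Y)$.

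The theory built up earlier does essentially all of the work, so I do not expect a serious obstacle; the only points needing genuine care are the compatibility of the various incarnations of $\analysis_h$ and $\synthesis_m$ (on $\Hw$, on $Y_d(\Lambda)/\Co_g(Y)$, and on $\ell^\infty_{1/w}(\Lambda)/\Reservoir_w$) and the upgrade of the frame reconstruction of $f\in\Hw$ from convergence in $\Hpi$ to convergence in $\Hw$, which is what makes it legitimate to push the reconstruction through the merely $\Hw$-bounded operator $T$.
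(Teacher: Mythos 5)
Your proposal is correct and follows essentially the same route as the paper's proof: the extension is defined as $\widetilde T = \synthesis_m \circ \analysis_h$, its boundedness and the norm bound \eqref{eq:operator_extension} come from \Cref{prop:molecule_bounded}, and the identification $\widetilde T|_{\Hw} = T$ is obtained by upgrading the dual-frame expansion of $f \in \Hw$ to unconditional convergence in $\Hw = \Co_g(L_w^1)$ (via $\synthesis_g : \ell_w^1(I) \to \Hw$ and $\analysis_h f \in \ell_w^1(I)$) so that the $\Hw$-bounded operator $T$ can be pushed through the series. The only cosmetic difference is that you route some of the compatibility checks through \Cref{lem:wstar_continuity}, whereas the paper invokes \Cref{cor:DensityExtension} up front; the substance is identical.
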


\begin{proof}
  Since $\big(\pi(\lambda_i) g \big)_{i \in I}$ and $(h_i)_{i \in I}$
  are dual frames of molecules, it follows by \Cref{cor:DensityExtension}
  and because of $\Co(Y) \subseteq \Reservoir_w$ that any vector $f \in \Co(Y)$ can be represented as
  $f = \sum_{i \in I} \langle f, h_i \rangle \pi(\lambda_i) g$.
  In addition, Proposition~\ref{prop:molecule_bounded} provides the estimate
  \[ \| \analysis_h \|_{Y_d (\Lambda)} = \| (\langle f, h_i \rangle)_{i \in I} \|_{Y_d (\Lambda)} \lesssim \| f \|_{\Co(Y)},\]
  where the implicit constant is independent of $f$
  (in fact, only depending on $Y,Q,\Lambda,(h_i)_{i \in I}$).
  With the family $(m_i)_{i \in I} = \big( T[\pi(\lambda_i) g] \big)_{i \in I}$
  from the statement of the theorem, \Cref{prop:molecule_bounded} also shows
  that the reconstruction operator
  $\synthesis_m : Y_d(\Lambda) \to \Co_g (Y), (c_i)_{i \in I} \mapsto \sum_{i \in I} c_i \, m_i$
  is well-defined and bounded, with unconditional convergence of the series in the
  weak-$\ast$-topology of $\Reservoir_w \supset \Co_g (Y)$
  and with $\| \synthesis_m \|_{Y_d \to \Co(Y)} \lesssim \| \Phi \|_{\wienerSt{L_w^p}}$,
  where the implied constant only depends on $g,Q,w,p,Y$.

  Now, define $\widetilde{T} : \Co_g (Y) \to \Co_g (Y)$ by
  \[
    \widetilde{T} := \synthesis_m \circ \analysis_h,
    \quad \text{that is}, \quad
    \widetilde{T} f
    = \sum_{i \in I} \langle f, h_i \rangle m_i
    = \sum_{i \in I} \langle f, h_i \rangle T [\pi(\lambda_i) g]
    .
  \]
  As a composition of bounded linear operators, $\widetilde{T} : \Co(Y) \to \Co(Y)$
  is itself a well-defined bounded linear operator, with
  \[
    \| \widetilde{T} \|_{\Co(Y) \to \Co(Y)}
    \leq \| \analysis_h \| \cdot \| \synthesis_m \|
    \lesssim \| \Phi \|_{\wienerSt{L_w^p}}
    ,
  \]
  where the implied constant only depends on $g,Q,w,p,Y,\Lambda$, and $(h_i)_{i \in I}$.
  This shows \eqref{eq:operator_extension}.

  To prove that $\widetilde{T}$ is an extension of $T$,
  let $\synthesis_g$ denote the reconstruction operator associated to the family
  $\bigl(\pi(\lambda_i) g\bigr)_{i \in I}$.
  Since $\Hw = \Co_g (L_w^1)$ by \Cref{lem:coincidence}
  and since $(L_w^1)_d(\Lambda) = \ell_w^1 (I)$,
  \Cref{prop:molecule_bounded} shows that $\synthesis_g : \ell_{w}^1 (I) \to \Hw$
  is well-defined and bounded.
  Since we have $c = \sum_{i \in I} c_i \, \delta_i$ with unconditional convergence
  in $\ell_w^1 (I)$ for all $c \in \ell_w^1(I)$, it follows that the series defining
  $\synthesis_g c = \sum_{i \in I} c_i \, \pi(\lambda_i) g$ converges unconditionally
  in $\Hw$ for any $c \in \ell_w^1 (I)$.
  Since \Cref{prop:molecule_bounded} also implies that
  $(\langle f, h_i \rangle)_{i \in I} \in \ell_w^1(I)$ for any $f \in \Hw = \Co_g (L_w^1)$,
  and since $(h_i)_{i \in I}$ is a dual frame for $\big( \pi(\lambda_i) g \big)_{i \in I}$,
  we thus see $f = \sum_{i \in I} \langle f, h_i \rangle \pi(\lambda_i) g$
  for all $f \in \Hw \subseteq \Hpi$, where the series converges unconditionally in $\Hw$.
  Because $T : \Hw \to \Reservoir_w$ is a bounded linear operator, this implies
  \[
    T f
    = T \bigg( \sum_{i \in I} \langle f, h_i \rangle \pi(\lambda_i) g \bigg)
    = \sum_{i \in I} \langle f, h_i \rangle T [\pi(\lambda_i) g]
    = \synthesis_m \analysis_h f
    = \widetilde{T} f
  \]
  for all $f \in \Hw$.
  Thus, $\widetilde{T} : \Co_g (Y) \to \Co_g(Y)$ is indeed a bounded extension of $T$.
\end{proof}

The proof of \Cref{thm:bounded_extension} follows the arguments in \cite{groechenig2009molecules} closely.

\begin{remark} With notation as in \Cref{thm:bounded_extension}, the following hold:
 \begin{enumerate}[label=(\alph*)]
  \item The existence of a frame $(\pi(\lambda_i) g )_{i \in I}$ for $\Hpi$
        with a dual frame $(h_i)_{i \in I}$ forming a family of $(L^p_w, g)$-molecules
        is guaranteed by \Cref{thm:dual_frame_RKHS},
        for $\Lambda$ ``sufficiently dense''.

  \item The assumption that $T : \Hw \to \Reservoir_w$ is bounded is satisfied,
        in particular, whenever $T : \Hpi \to \Hpi$ is bounded,
        since $\Hw \hookrightarrow \Hpi \hookrightarrow \Reservoir_w$
        by Section~\ref{sec:integrable}.
 \end{enumerate}

\end{remark}

\section{Embeddings}

In this section, we show that the embedding property $\Co_g(Y) \hookrightarrow \Co_g(Z)$
can be decided at the level of sequence spaces; namely, this embedding is valid if and only if
$Y_d(\Lambda) \hookrightarrow Z_d(\Lambda)$.
For Banach spaces, this result is already known;
see \cite[Theorem~8.4]{feichtinger1989banach2}.

We start by showing that the boundedness of the embedding $Y_d(\Lambda) \hookrightarrow Z_d(\Lambda)$
does not depend on the choice of the relatively separated and relatively dense family $\Lambda$.

\begin{lemma}\label{lem:SequenceSpaceEmbeddingIndependence}
  Let $Y,Z$ be translation-invariant, solid quasi-Banach function spaces on $G$.
  Let $\Lambda = (\lambda_i)_{i \in I}$ be relatively separated and relatively dense in $G$. Assume that
  $Y_d(\Lambda) \hookrightarrow Z_d(\Lambda)$.
  Then, if $\Gamma = (\gamma_j)_{j \in J} \subseteq G$ is any relatively separated family,
  the embedding $Y_d (\Gamma) \hookrightarrow Z_d (\Gamma)$ holds.
\end{lemma}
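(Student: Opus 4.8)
The plan is to transfer sequences between $\Gamma$ and $\Lambda$ via a change-of-index map furnished by the relative density of $\Lambda$, and then to chain the assumed embedding $Y_d(\Lambda)\hookrightarrow Z_d(\Lambda)$ with the neighborhood-independence of the sequence spaces (cf.\ \eqref{eq:SequenceSpaceIndependentOfBaseSet}). Since $\Lambda$ is $V$-dense for some relatively compact unit neighborhood $V$, i.e.\ $G=\bigcup_{i\in I}\lambda_iV$, I would first choose for each $j\in J$ an index $a(j)\in I$ with $\gamma_j\in\lambda_{a(j)}V$, say $\gamma_j=\lambda_{a(j)}v_j$ with $v_j\in V$. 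This yields the two elementary inclusions $\gamma_jQ\subseteq\lambda_{a(j)}VQ$ and $\lambda_{a(j)}VQ\subseteq\gamma_jV^{-1}VQ$; set $Q':=VQ$ and $Q'':=V^{-1}VQ$, which are again relatively compact unit neighborhoods. Throughout, I would use that $Y$ and $Z$ are right-translation-invariant, so that $Y_d(\cdot,Q)\asymp Y_d(\cdot,Q')\asymp Y_d(\cdot,Q'')$ (and likewise for $Z$) with constants that do not depend on the underlying family, by \eqref{eq:SequenceSpaceIndependentOfBaseSet}.

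Given $c=(c_j)_{j\in J}\in Y_d(\Gamma)$, define $d=(d_i)_{i\in I}$ by $d_i:=\sum_{j\,:\,a(j)=i}|c_j|$ (with $d_i:=0$ on empty fibres). Because each $j$ has a unique $a(j)$ and $\lambda_iQ'\subseteq\gamma_jQ''$ whenever $a(j)=i$, one obtains the pointwise estimate $\sum_{i\in I}d_i\,\indicator_{\lambda_iQ'}\le\sum_{j\in J}|c_j|\,\indicator_{\gamma_jQ''}$. The right-hand side lies in $Y$ since $c\in Y_d(\Gamma)=Y_d(\Gamma,Q'')$, so by solidity of $Y$ the left-hand side does too, which gives $d\in Y_d(\Lambda,Q')=Y_d(\Lambda)$ with $\|d\|_{Y_d(\Lambda)}\lesssim\|c\|_{Y_d(\Gamma)}$, the implied constant depending only on $Y,Q,V$. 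Applying the hypothesis $Y_d(\Lambda)\hookrightarrow Z_d(\Lambda)$ then yields $d\in Z_d(\Lambda)$ with $\|d\|_{Z_d(\Lambda)}\lesssim\|c\|_{Y_d(\Gamma)}$. For the return step I would use the complementary inclusion $\gamma_jQ\subseteq\lambda_{a(j)}Q'$, which gives $\sum_{j\in J}|c_j|\,\indicator_{\gamma_jQ}\le\sum_{i\in I}d_i\,\indicator_{\lambda_iQ'}$ pointwise; the right-hand side lies in $Z$ since $d\in Z_d(\Lambda)=Z_d(\Lambda,Q')$, so solidity of $Z$ gives $\sum_j|c_j|\indicator_{\gamma_jQ}\in Z$, i.e.\ $c\in Z_d(\Gamma)$, together with $\|c\|_{Z_d(\Gamma)}\le\big\|\sum_id_i\indicator_{\lambda_iQ'}\big\|_Z\asymp\|d\|_{Z_d(\Lambda)}\lesssim\|c\|_{Y_d(\Gamma)}$. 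This is precisely the continuous embedding $Y_d(\Gamma)\hookrightarrow Z_d(\Gamma)$.

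The argument is essentially bookkeeping, and I do not expect a serious obstacle; the one point that genuinely requires the structure of the setting is that the norm-equivalences $Y_d(\cdot,Q)\asymp Y_d(\cdot,Q')$ and $Z_d(\cdot,Q)\asymp Z_d(\cdot,Q')$ hold with constants \emph{uniform in the family}, which is exactly what \eqref{eq:SequenceSpaceIndependentOfBaseSet} provides and which is what makes the two passages---first at $\Gamma$, then at $\Lambda$---compatible. Note that neither relative separation nor relative density of $\Gamma$ enters beyond what is needed for $Y_d(\Gamma)$ and $Z_d(\Gamma)$ to be defined, and no fibre-boundedness of the map $a$ is required.
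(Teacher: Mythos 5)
Your proof is correct and follows essentially the same route as the paper's: transfer the sequence from $\Gamma$ to $\Lambda$ using the covering provided by relative density, apply the hypothesis at $\Lambda$, and transfer back, with solidity and the family-uniform $Q$-independence of the sequence-space norms (\eqref{eq:SequenceSpaceIndependentOfBaseSet}) doing the work at each step. The only difference is cosmetic: the paper uses the multi-valued assignment $e_i = \sum_{j} |c_j| \indicator_{\gamma_j Q \cap \lambda_i U \neq \emptyset}$ and pays a factor $\rel(\Lambda)$ to control the over-counting on the return trip, whereas your single-valued selection $j \mapsto a(j)$ partitions $J$ into fibres and avoids that factor entirely.
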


\begin{proof}
  Since $\Lambda$ is relatively dense, there exists a relatively compact
  unit neighborhood $U \subseteq G$ satisfying $\sum_{i \in I} \indicator_{\lambda_i U} (x) \geq 1$
  for all $x \in G$.
  Let $c = (c_j)_{j \in J} \in Y_d(\Gamma)$ be arbitrary and note for
  $e_i := \sum_{j \in J} |c_j| \indicator_{\gamma_j Q \cap \lambda_i U \neq \emptyset}$ that
  \begin{equation}
    0
    \leq \sum_{j \in J}
           |c_j| \, \indicator_{\gamma_j Q} (x)
    \leq \sum_{i \in I}
         \bigg[
           \indicator_{\lambda_i U} (x)
           \sum_{j \in J}
             |c_j| \,
             \indicator_{\gamma_j Q} (x) \,
             \indicator_{\lambda_i U} (x)
         \bigg]
    \leq \sum_{i \in I}
           e_i \, \indicator_{\lambda_i U} (x)
    .
    \label{eq:SequenceSpaceEmbeddingIndependenceProof1}
  \end{equation}

  Let $V := Q U^{-1} Q$; we claim for arbitrary $x \in G$, $i \in I$, and $j \in J$ that
  \[
    \indicator_{\lambda_i Q} (x)
    \indicator_{\gamma_j Q \cap \lambda_i U \neq \emptyset}
    \leq \indicator_{\lambda_i Q} (x)
         \indicator_{\gamma_j V} (x)
    .
  \]
  Indeed, this is trivial if the left-hand side vanishes.
  If not, then $x = \lambda_i q$ for some $q \in Q$ and there exists some
  $y = \lambda_i u \in \gamma_j Q \cap \lambda_i U$.
  This implies
  \[
    x
    = \lambda_i q
    = \lambda_i u u^{-1} q
    = y u^{-1} q
    \in \gamma_j Q U^{-1} Q
    = \gamma_j V
    ,
  \]
  from which the claimed estimate follows easily.
  Combining this estimate with the definition of $e_i$, it follows that
  \begin{align*}
    0
    \leq \sum_{i \in I}
           e_i \indicator_{\lambda_i Q} (x)
    & =    \sum_{j \in J}
           \bigg[
             |c_j|
             \sum_{i \in I}
               \indicator_{\lambda_i Q} (x)
               \indicator_{\gamma_j Q \cap \lambda_i U \neq \emptyset}
           \bigg] \\
    & \leq \sup_{x \in G}
           \bigg[
             \sum_{i \in I}
               \indicator_{\lambda_i Q} (x)
           \bigg]
            \sum_{j \in J}
                   |c_j|
                   \indicator_{\gamma_j V} (x)
      = \rel(\Lambda)
         \sum_{j \in J}
                |c_j|
                \indicator_{\gamma_j V} (x)
      ;
  \end{align*}
  for arbitrary $x \in G$.

  Lastly, let us write $\| \cdot \|_{Y_d (\Lambda, U)}$ to denote the (quasi)-norm on the
  discrete sequence space $Y_d(\Lambda)$, but using the set $U$ instead of $Q$
  in \Cref{eq:SequenceSpaceNormDefinition}; similar notation will also be used for $Z_d$.
  As noted in \Cref{sub:SequenceSpaces}, since $Y,Z$ are translation-invariant,
  different choices of relatively compact unit neighborhoods for the set $Q$
  yield equivalent (quasi)-norms.
  By combining \Cref{eq:SequenceSpaceEmbeddingIndependenceProof1} with the solidity of $Z$,
  then using the embedding $Y_d (\Lambda) \hookrightarrow Z_d (\Lambda)$
  and finally combining the previous estimate with the solidity of $Y$, we thus see
  \begin{align*}
    \| c \|_{Z_d (\Gamma)}
    & = \bigg\|
          \sum_{j \in J}
            |c_j| \, \indicator_{\gamma_j Q}
        \bigg\|_Z
      \leq \bigg\|
             \sum_{i \in I}
               e_i \indicator_{\lambda_i U}
           \bigg\|_Z \\
    & = \| e \|_{Z_d (\Lambda, U)}
      \lesssim \| e \|_{Z_d (\Lambda)}
      \lesssim \| e \|_{Y_d (\Lambda)}
      = \bigg\|
          \sum_{i \in I}
            e_i \, \indicator_{\lambda_i Q}
        \bigg\|_Y \\
    & \leq \rel(\Lambda)
           \bigg\|
             \sum_{j \in J}
               |c_j| \,
               \indicator_{\gamma_j V}
           \bigg\|_Y
      =    \rel(\Lambda) \, \| c \|_{Y_d(\Gamma, V)}
      \lesssim \| c \|_{Y_d(\Gamma)}
      .
  \end{align*}
  This shows that $Y_d (\Gamma) \hookrightarrow Z_d (\Gamma)$ and thus completes the proof.
\end{proof}

Using the previous lemma, we can now state and prove the main result of this subsection.

\begin{theorem}\label{thm:CoorbitEmbeddingViaSequenceSpaces}
  Let $w : G \to [1,\infty)$ be a $p$-weight for some $p \in (0,1]$
  and let $g \in \Bwp$ be admissible.
  Let $Y,Z$ be solid, translation-invariant quasi-Banach spaces on $G$
  that are both $L_w^p$-compatible.

  Then the embedding $\Co_g (Y) \hookrightarrow \Co_g (Z)$ holds if and only if
  $Y_d (\Lambda) \hookrightarrow Z_d(\Lambda)$ holds for some (every)
  relatively separated and relatively dense family $\Lambda = (\lambda_i)_{i \in I} \subseteq G$.
\end{theorem}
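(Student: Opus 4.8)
The plan is to prove the two implications separately, using the molecular frame machinery developed in the preceding sections. By \Cref{lem:SequenceSpaceEmbeddingIndependence}, the condition $Y_d(\Lambda) \hookrightarrow Z_d(\Lambda)$ is independent of the chosen relatively separated, relatively dense family $\Lambda$, so it suffices to fix one convenient such family. The natural choice is a family $\Lambda = (\lambda_i)_{i \in I}$ that is relatively separated and $U$-dense, where $U \subseteq G$ is a compact unit neighborhood small enough for the conclusion of \Cref{thm:dual_frame_RKHS} to hold simultaneously for $Y$ and $Z$ (one may intersect the two neighborhoods provided by that theorem; note the neighborhood there depends only on $g,Q,w,p$, not on $Y$ or $Z$, so in fact a single $U$ works). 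For such a $\Lambda$, the system $\bigl(\pi(\lambda_i) g\bigr)_{i \in I}$ is a frame for $\Hpi$ with a dual frame $(h_i)_{i \in I}$ consisting of $L^p_w$-molecules in $\Hpi$.

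First I would prove the direction ``$Y_d(\Lambda) \hookrightarrow Z_d(\Lambda) \implies \Co_g(Y) \hookrightarrow \Co_g(Z)$''. Let $f \in \Co_g(Y)$. By \Cref{thm:frame_extension}(i) (applied to $Y$), combined with \Cref{prop:molecule_bounded}, we have the reproducing formula $f = \sum_{i \in I} \langle f, h_i \rangle \, \pi(\lambda_i) g$ with weak-$*$ convergence in $\Reservoir_w$, and moreover $\analysis_h f = (\langle f, h_i \rangle)_{i \in I} \in Y_d(\Lambda)$ with $\| \analysis_h f \|_{Y_d(\Lambda)} \lesssim \| f \|_{\Co_g(Y)}$. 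The assumed embedding gives $\analysis_h f \in Z_d(\Lambda)$ with $\| \analysis_h f \|_{Z_d(\Lambda)} \lesssim \| \analysis_h f \|_{Y_d(\Lambda)}$. Now apply \Cref{prop:molecule_bounded}(ii) to the molecular family $\bigl(\pi(\lambda_i) g\bigr)_{i \in I}$ and the space $Z$: the reconstruction operator $\synthesis : Z_d(\Lambda) \to \Co_g(Z)$, $c \mapsto \sum_i c_i \, \pi(\lambda_i) g$, is well-defined and bounded, with weak-$*$ convergence in $\Reservoir_w$. Since the series $\sum_i \langle f, h_i \rangle \, \pi(\lambda_i) g$ converges weak-$*$ in $\Reservoir_w$ to $f$ (already known, and the $Z$-reconstruction gives the same weak-$*$ limit), we conclude $f = \synthesis(\analysis_h f) \in \Co_g(Z)$ with $\| f \|_{\Co_g(Z)} \lesssim \| \analysis_h f \|_{Z_d(\Lambda)} \lesssim \| \analysis_h f \|_{Y_d(\Lambda)} \lesssim \| f \|_{\Co_g(Y)}$.

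Next I would prove the converse ``$\Co_g(Y) \hookrightarrow \Co_g(Z) \implies Y_d(\Lambda) \hookrightarrow Z_d(\Lambda)$''. Here the idea is to realize $Y_d(\Lambda)$-sequences inside $\Co_g(Y)$ via synthesis and then read them back off via analysis with the dual frame, using that $\bigl(\pi(\lambda_i)g\bigr)_{i\in I}$ and $(h_i)_{i\in I}$ are dual frames. Let $c = (c_i)_{i \in I} \in Y_d(\Lambda)$. By \Cref{prop:molecule_bounded}(ii) applied to $Y$, the element $f := \synthesis c = \sum_i c_i \, \pi(\lambda_i) g$ lies in $\Co_g(Y)$ with $\| f \|_{\Co_g(Y)} \lesssim \| c \|_{Y_d(\Lambda)}$. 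The assumed coorbit embedding gives $f \in \Co_g(Z)$ with $\| f \|_{\Co_g(Z)} \lesssim \| f \|_{\Co_g(Y)} \lesssim \| c \|_{Y_d(\Lambda)}$. Applying \Cref{prop:molecule_bounded}(i) (the coefficient operator for the molecular family $(h_i)_{i \in I}$ and the space $Z$), we get $\analysis_h f = (\langle f, h_i \rangle)_{i \in I} \in Z_d(\Lambda)$ with $\| \analysis_h f \|_{Z_d(\Lambda)} \lesssim \| f \|_{\Co_g(Z)}$. Finally, the key algebraic point: since $\bigl(\pi(\lambda_i) g\bigr)_{i \in I}$ is a Bessel family and $(h_i)_{i \in I}$ its dual frame, the relation $\langle \sum_\ell c_\ell \, \pi(\lambda_\ell) g, h_i \rangle = c_i$ holds for finitely supported $c$, and hence, by \Cref{cor:DensityExtension}(ii), for all $c \in \ell^\infty_{1/w}(I) \supseteq Y_d(\Lambda)$ (the last inclusion from \Cref{prop:molecule_bounded}(ii)). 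Thus $\analysis_h f = c$, which yields $\| c \|_{Z_d(\Lambda)} \lesssim \| c \|_{Y_d(\Lambda)}$, i.e.\ $Y_d(\Lambda) \hookrightarrow Z_d(\Lambda)$.

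The main obstacle — and the place where care is needed — is the consistent bookkeeping of \emph{which} family plays the role of ``atoms'' and which of ``dual molecules'' in each application of \Cref{prop:molecule_bounded}, together with verifying that the frame $\bigl(\pi(\lambda_i)g\bigr)_{i\in I}$ itself is a system of $L^p_w$-molecules (which it is, since $|V_g[\pi(\lambda_i)g]| = L_{\lambda_i}|V_g g|$ and $|V_g g| \in \wienerStC{L^p_w}$ because $g \in \Bwp$). One must also check that the weak-$*$ limits computed via $Y$-reconstruction and $Z$-reconstruction agree with the original $f$; this follows because $\Co_g(Y), \Co_g(Z) \hookrightarrow \Reservoir_w$ and the series converges weak-$*$ in $\Reservoir_w$ regardless of the target coorbit space, together with the injectivity of $V_g$ on $\Reservoir_w$ (\Cref{lem:basic_Rw}(iii)). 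Beyond this, the argument is a direct diagram-chase through the analysis/synthesis operators and is free of serious analytic difficulty, since all the hard estimates are already encapsulated in \Cref{prop:molecule_bounded}, \Cref{thm:dual_frame_RKHS}, \Cref{cor:DensityExtension}, and \Cref{lem:SequenceSpaceEmbeddingIndependence}.
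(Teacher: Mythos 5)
Your forward implication ($Y_d(\Lambda) \hookrightarrow Z_d(\Lambda) \Rightarrow \Co_g(Y) \hookrightarrow \Co_g(Z)$) is correct and follows essentially the same route as the paper (the paper uses the Parseval frame of \Cref{thm:frame_extension}(ii) rather than the dual-frame pair, but that is immaterial). The converse direction, however, has a genuine gap. You claim that, because $\bigl(\pi(\lambda_i)g\bigr)_{i\in I}$ and $(h_i)_{i\in I}$ are dual frames, the relation $\bigl\langle \sum_{\ell} c_\ell\, \pi(\lambda_\ell) g,\, h_i\bigr\rangle = c_i$ holds for finitely supported $c$, and you feed this into \Cref{cor:DensityExtension}(ii). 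This is false for redundant frames: duality gives $\synthesis_g \analysis_h = \identity$ on $\Hpi$, but $\analysis_h \synthesis_g$ is only a (generally nontrivial) projection on $\ell^2(I)$, not the identity on sequences. Concretely, $\langle \pi(\lambda_\ell) g, h_i\rangle = \delta_{\ell,i}$ is a biorthogonality statement that fails for an overcomplete frame and its dual (e.g.\ for the frame $\{e_1,e_1,e_2,e_2,\dots\}$ with dual $\{\tfrac12 e_1,\tfrac12 e_1,\dots\}$, the coefficient sequence $(1,0,0,\dots)$ is mapped to $(\tfrac12,\tfrac12,0,\dots)$). Since $\analysis_h \synthesis_g c = Pc$ for a projection $P$, your chain of estimates only bounds $\|Pc\|_{Z_d}$, which does not control $\|c\|_{Z_d}$.

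The fix is exactly what the paper does: for this direction one must use \Cref{cor:riesz_extension} instead of \Cref{thm:frame_extension}. Take $U$ as provided by \Cref{cor:riesz_extension} and let $\Lambda$ be a \emph{maximal} $U$-separated family (maximality makes it relatively dense, so \Cref{lem:SequenceSpaceEmbeddingIndependence} still applies). Then $\bigl(\pi(\lambda_i)g\bigr)_{i\in I}$ is a Riesz sequence with a genuinely biorthogonal system $(h_i)_{i\in I}$ of $L^p_w$-molecules, so $c = \analysis_g \synthesis_h c$ holds for all $c \in Y_d(\Lambda)$ by \Cref{cor:DensityExtension}(ii), and the diagram chase $\| c \|_{Z_d} = \| \analysis_g\, \iota\, \synthesis_h c \|_{Z_d} \lesssim \| c \|_{Y_d}$ goes through. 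Note also that the neighborhoods in \Cref{thm:frame_extension} and \Cref{cor:riesz_extension} play opposite roles ($U$-dense versus $U$-separated), so your suggestion to intersect neighborhoods and use a single $\Lambda$ for both directions does not work; the two implications require different (types of) families $\Lambda$, which is harmless since \Cref{lem:SequenceSpaceEmbeddingIndependence} makes the sequence-space condition independent of the choice.
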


\begin{proof}
  First, suppose that the identity map $\iota : \Co_g (Y) \to \Co_g (Z), f \mapsto f$
  is well-defined and bounded.
  Choose a compact unit neighborhood $U \subseteq G$ as provided by \Cref{cor:riesz_extension}.
  Let $\Lambda = (\lambda_i)_{i \in I}$ be a maximal $U$-separated family in $G$.
  For every $x \in G$ we then either have $x = \lambda_i$ for some $i \in I$
  (and hence $x \in \bigcup_{i \in I} \lambda_i U U^{-1}$)
  or $x \notin \{ \lambda_i \colon i \in I \}$ and then $x U \cap \lambda_i U \neq \emptyset$
  for some $i \in I$, by maximality; hence, $x \in \lambda_i U U^{-1}$.
  Overall, this shows that $G = \bigcup_{i \in I} \lambda_i U U^{-1}$,
  which easily implies that $\Lambda$ is relatively separated and relatively dense in $G$.

  \Cref{cor:riesz_extension} yields a family $(h_i)_{i \in I} \subseteq \Hpi$
  that forms a system of $L_w^p$-localized molecules and such that
  $c = \analysis_g \synthesis_h c$ for every $c \in Y_d(\Lambda)$,
  where $\analysis_g$ and $\synthesis_h$ are the coefficient and reconstruction operators associated to
  $\bigl(\pi(\lambda_i) g\bigr)_{i \in I}$ and $(h_i)_{i \in I}$, respectively.
  By \Cref{prop:molecule_bounded}, we know that $\synthesis_h : Y_d (\Lambda) \to \Co_g(Y)$
  and $\analysis_g : \Co_g (Z) \to Z_d(\Lambda)$ are bounded.
  Hence, for every $c \in Y_d (\Lambda)$,
  \[
    \| c \|_{Z_d (\Lambda)}
    = \| \analysis_g \iota \synthesis_h c \|_{Z_d(\Lambda)}
    \leq \| \analysis_g \|_{\Co_g (Z) \to Z_d}
          \| \iota \|_{\Co_g (Y) \to \Co_g (Z)}
          \| \synthesis_h \|_{Y_d(\Lambda) \to \Co_g (Y)}
          \| c \|_{Y_d (\Lambda)}
    \!<\! \infty .
  \]
  This proves that $Y_d (\Lambda) \hookrightarrow Z_d (\Lambda)$.
  Since $\Lambda$ is relatively separated and relatively dense in $G$, \Cref{lem:SequenceSpaceEmbeddingIndependence}
  shows that in fact $Y_d (\Gamma) \hookrightarrow Z_d(\Gamma)$
  for every relatively separated family $\Gamma$ in $G$.

  \medskip{}

  Conversely, suppose that the identity map $\iota : Y_d (\Lambda) \to Z_d (\Lambda)$ is well-defined and bounded
  for some (hence, every)
  relatively separated and relatively dense family $\Lambda$ in $G$ by \Cref{lem:SequenceSpaceEmbeddingIndependence}.
  \Cref{thm:frame_extension} shows that there exists a relatively separated and relatively dense family
  $\Lambda = (\lambda_i)_{i \in I}$ in $G$ and a family $(g_i)_{i \in I}$
  of $L_w^p$-localized molecules in $\Hpi$ indexed by $\Lambda$ such that
  $f = \synthesis_g \analysis_g f$ for all $f \in \Co_g (Y)$.
  By \Cref{prop:molecule_bounded}, it follows that that $\analysis_g : \Co_g (Y) \to Y_d (\Lambda)$
  and $\synthesis_g : Z_d (\Lambda) \to \Co_g (Z)$ are well-defined and bounded.
  Hence, we see for any $f \in \Co_g (Y)$ that $f = \synthesis_g \iota \analysis_g f \in \Co_g (Z)$
  with
  \[
    \| f \|_{\Co_g (Z)}
    = \| \synthesis_g \iota \analysis_g f \|_{\Co_g (Z)}
    \leq \| \synthesis_g \|_{Z_d (\Lambda) \to \Co_g (Z)}
         \| \iota \|_{Y_d(\Lambda) \to Z_d(\Lambda)}
         \| \analysis_g \|_{\Co_g(y) \to Y_d(\Lambda)}
    .
  \]
  This shows that $\Co_g (Y) \hookrightarrow \Co_g (Z)$.
\end{proof}

\section{Local behavior of matrix coefficients}

\Cref{prop:selfimproving} provides a sufficient condition on a quasi-Banach function space $Y$
under which the associated coorbit space $\Co(Y)$ can be characterized as
\begin{align} \label{eq:local_wavelet}
 \Co(Y) := \{ f \in \Reservoir_w : V_g f \in \wienerL{Y} \} = \{ f \in \Reservoir_w : V_g f \in Y \},
\end{align}
i.e., when $V_g f \in Y$ automatically implies $V_g f \in \wienerL{Y}$.
In particular, this is the case for genuine Banach spaces, cf.\ \cite[Theorem 8.1]{feichtinger1989banach2}.
In addition, the characterization \eqref{eq:local_wavelet} holds for certain settings
in which there exists a specific vector $g_0$ such that $\|V_{g_0} f \|_{\wienerL{Y}} \lesssim \| V_{g_0} f \|_Y$,
see, e.g., \cite[Theorem 3.3]{galperin2004time}.
The following proposition provides an abstract result to extend the characterization
\eqref{eq:local_wavelet} from one specific vector to more general analyzing vectors.

\begin{proposition}\label{thm:local_wavelet}

Let $w$ be a $p$-weight for some $p \in (0,1]$ and let $Y$ be a solid,
translation-invariant quasi-Banach function space such that $Y$ is $L^p_w$-compatible.
Moreover, assume that $w$ is a control weight for $Y$, i.e.,
$\| \translationR{x} \|_{Y \to Y} \leq w(x)$ for all $x \in \group$.

Suppose that $g \in \Bw^p$ is an  admissible vector and that there exists $C>0$ such that
\[
  \| V_g f \|_{\wienerL{Y}} \leq C \| V_g f \|_{Y}
\]
for all $f \in \Reservoir_w(g)$.
Then, for any admissible vector $h \in \Bw^p$ satisfying $V_h g \in \wienerSt{L^p_w}$,
it holds that
\[
  \| V_h f \|_{\wienerL{Y}} \asymp \| V_h f \|_{Y}
\]
for all $f \in \Reservoir_w(h)$.
Moreover,
\[
  \Co_g (Y)
  = \{ f \in \Reservoir_w (g) : V_g f \in Y \}
  = \{ f \in \Reservoir_w(h) : V_h f \in Y \}
  = \Co_h (Y).
\]
\end{proposition}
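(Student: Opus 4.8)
The plan is to reduce everything to the reproducing formula $V_h f = V_g f \ast_\sigma V_h g$ (from \Cref{lem:basic_Rw,lem:basic_cocycle}) together with the convolution relations established in \Cref{sec:WienerAmalgam}. First I would record the hypotheses that make the spaces involved actually coincide as sets: since $h \in \Bw^p$ is admissible with $V_h g \in \wienerSt{L^p_w}$, symmetry of $\wienerSt{L^p_w}$ under involution (because $w$ is a $p$-weight, so $[L^p_w]^\vee = L^p_w$) and the identity $|V_g h| = |V_h g|^\vee$ give $V_g h \in \wienerSt{L^p_w}$ as well. By Part~(ii) of \Cref{lem:AmalgamWeightedLInftyEmbedding} this forces $V_g h, V_h g \in L^1_w$, so \Cref{lem:basic_Hw}(i) yields $\Hw(g) = \Hw(h)$ and hence $\Reservoir_w(g) = \Reservoir_w(h)$ and $\Co_g(Y) = \Co_h(Y)$ as a consequence of \Cref{prop:coorbit_basic}(i). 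Thus the two ``reservoir'' spaces in the displayed chain of equalities are literally the same space, and only the norm-equivalence $\| V_h f \|_{\wienerL{Y}} \asymp \| V_h f \|_Y$ needs a real argument; the set-theoretic identities $\{ f : V_g f \in Y \} = \{ f : V_h f \in Y\}$ will then follow from that equivalence combined with \Cref{prop:coorbit_basic}(i).

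For the norm equivalence, one direction is free: $\wienerL{Y} \hookrightarrow Y$ (stated in \Cref{sec:AmalgamPrelims}) gives $\| V_h f \|_Y \lesssim \| V_h f \|_{\wienerL{Y}}$. For the reverse inequality I would go through $g$. Starting from $f \in \Reservoir_w(h) = \Reservoir_w(g)$ with $V_h f \in Y$, apply the reproducing formula $V_g f = V_h f \ast_\sigma V_g h$ and estimate
\[
  \| V_g f \|_Y
  \leq \big\| |V_h f| \ast |V_g h| \big\|_Y.
\]
Here I want $Y \ast \wienerR{L^p_w} \hookrightarrow Y$, which holds because $w$ is a control weight for $Y$: this is exactly the first convolution relation in \Cref{cor:UserFriendlyConvolutionBounds}(ii) provided $w$ is a \emph{strong} control weight, but more carefully, one only needs $\wienerL{Y} \ast \wienerR{L^p_w} \hookrightarrow Y$; applying it with $|V_h f|$ in place of an element of $\wienerL{Y}$ requires knowing $V_h f \in \wienerL{Y}$, which is what we are trying to prove. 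So instead I would use the control-weight convolution relation $Y \ast [L^1_w]^\vee \hookrightarrow Y$ from \Cref{lem:SolidBanachSpaceConvolutionRelation} if $Y$ is Banach, or—to cover the genuinely quasi-Banach case—argue via $\wienerL{Y}$ directly as follows: first show $V_g f \in \wienerL{Y}$ using that $V_g f = V_g f \ast_\sigma V_g g$ plus $L^p_w$-compatibility of $Y$ (condition \ref{enu:CompatibleConvolution}); then the hypothesis $\|V_g f\|_{\wienerL{Y}} \leq C \|V_g f\|_Y$ bounds $\|V_g f\|_{\wienerL{Y}}$ by $\|V_g f\|_Y \leq \| |V_h f| \ast |V_g h| \|_Y$, and one estimates the right side by $\| V_h f\|_Y \cdot \|V_g h\|$ using a control-weight convolution bound; finally run the reproducing formula once more in the form $V_h f = V_g f \ast_\sigma V_h g$ and use $\wienerL{Y} \ast \wienerSt{L^p_w} \hookrightarrow \wienerL{Y}$ (the defining relation \ref{enu:CompatibleConvolution} of $L^p_w$-compatibility) together with $V_h g \in \wienerSt{L^p_w}$ to conclude $\| V_h f\|_{\wienerL{Y}} = \|V_g f \ast_\sigma V_h g\|_{\wienerL{Y}} \lesssim \|V_g f\|_{\wienerL{Y}} \|V_h g\|_{\wienerSt{L^p_w}} \lesssim \|V_h f\|_Y$.

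The main obstacle is exactly the middle estimate $\big\| |V_h f| \ast |V_g h| \big\|_Y \lesssim \|V_h f\|_Y \, \|V_g h\|_{?}$: we want a convolution bound of the shape $Y \ast (\text{something containing } V_g h) \hookrightarrow Y$ that uses only that $w$ is a control weight (not a \emph{strong} one). The clean tool is \Cref{lem:SolidBanachSpaceConvolutionRelation}, which gives $\|F_1 \ast F_2\|_Y \lesssim \|F_1\|_Y \|F_2^\vee\|_{L^1_w}$ whenever $Y$ is a solid Banach space with the weak Fatou property and $\|R_x\|_{Y\to Y} \leq w(x)$; applied with $F_1 = |V_h f|$ and $F_2 = |V_g h|$ and noting $|V_g h|^\vee = |V_h g| \in \wienerSt{L^p_w} \hookrightarrow L^1_w$, this does the job for Banach $Y$. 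For general quasi-Banach $Y$ one instead stays inside Wiener amalgam spaces throughout, as sketched above, and never needs $Y \ast (\cdot) \hookrightarrow Y$ at all—only the $\wienerL{Y}$-level relations from $L^p_w$-compatibility—so I would present the argument in that form to keep it uniform. With the norm equivalence $\| V_h f\|_{\wienerL{Y}} \asymp \|V_h f\|_Y$ in hand, the four displayed equalities follow: $\Co_g(Y) = \{f \in \Reservoir_w(g): V_g f \in Y\}$ by \Cref{prop:selfimproving} applied to the vector $g$ (whose hypotheses are part of the assumptions), $\Co_g(Y) = \Co_h(Y)$ by \Cref{prop:coorbit_basic}(i), and $\Co_h(Y) = \{f \in \Reservoir_w(h): V_h f \in Y\}$ because $V_h f \in \wienerL{Y} \iff V_h f \in Y$ by the equivalence just proved.
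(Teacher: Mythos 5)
Your overall architecture matches the paper's: reduce everything, via \Cref{prop:coorbit_basic}, to the single estimate $\| V_g f \|_Y \lesssim \| V_h f \|_Y$, then feed this into the hypothesis $\|V_g f\|_{\wienerL{Y}} \leq C \|V_g f\|_Y$ and the compatibility relation $\wienerL{Y} \ast \wienerSt{L_w^p} \hookrightarrow \wienerL{Y}$ applied to $V_h f = V_g f \ast_\sigma V_h g$. However, your treatment of that single crucial estimate has a genuine gap in precisely the case the proposition is designed for. You correctly observe that estimating $\| \, |V_h f| \ast |V_g h| \, \|_Y$ requires a convolution bound of the form $Y \ast (\cdot) \hookrightarrow Y$ (the first factor is only known to lie in $Y$, so none of the $\wienerL{Y}$-level relations apply without circularity), and you propose \Cref{lem:SolidBanachSpaceConvolutionRelation} for this. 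But that lemma requires $Y$ to be a genuine \emph{Banach} function space with the weak Fatou property, neither of which is assumed here; and for quasi-Banach $Y$ (e.g.\ $L^p$ with $p<1$) such a relation $Y \ast (\cdot) \hookrightarrow Y$ simply fails, as the paper emphasizes in its introduction. Your fallback of ``staying inside Wiener amalgam spaces throughout'' does not escape this: every amalgam-level convolution relation in the paper needs its first factor in $\wienerL{Y}$, whereas the hypothesis only supplies $V_h f \in Y$, which is exactly the membership one is trying to upgrade. In the Banach case where $Y \ast \wienerSt{L_w^p} \hookrightarrow Y$ does hold, \Cref{prop:selfimproving} already gives the conclusion, so your argument only recovers the known case.

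The paper closes this gap by replacing the continuous convolution with a \emph{discrete} expansion: since $V_h g \in \wienerSt{L_w^p}$ gives $g \in \Co_h(L_w^p)$, \Cref{thm:frame_extension} applied to the $L_w^p$-compatible space $L_w^p$ produces $g = \sum_{i} c_i \, \pi(\lambda_i) h$ with $(c_i)_i \in \ell^p_v$, $v_i = w(\lambda_i)$, converging in $\Hw$. Then $V_g f = \sum_i \overline{c_i} \, \twisttranslationR{\lambda_i}[V_h f]$, and the $p$-triangle inequality of $\| \cdot \|_Y$ together with the control-weight bound $\| \translationR{x} \|_{Y \to Y} \leq w(x)$ yields
\[
  \| V_g f \|_Y^p
  \leq \sum_{i} |c_i|^p \, [w(\lambda_i)]^p \, \| V_h f \|_Y^p
  \lesssim \| (c_i)_i \|_{\ell^p_v}^p \, \| V_h f \|_Y^p .
\]
The point is that a countable $\ell^p$-summable combination of right translates is controlled by the $p$-norm property, whereas a continuous superposition (a convolution) is not; this discretization is the key idea your proposal is missing.
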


\begin{proof}
By assumption, the vectors $g, h \in \Bw^p$ are admissible and $V_h g \in \wienerSt{L^p_w}$.
By \Cref{prop:coorbit_basic}, it follows that $\Hw(g) = \Hw(h)$ and
$\Reservoir_w := \Reservoir_w (g) = \Reservoir_w (h)$, and furthermore
\[
  \| V_h f \|_{\wienerL{Y}} \asymp \| V_g f \|_{\wienerL{Y}}
\]
for all $f \in  \Reservoir_w$.
Since $\wienerL{Y} \hookrightarrow Y$ by definition
and since $\| V_g f \|_{\wienerL{Y}} \lesssim \| V_g f \|_Y$ by assumption,
it remains to show that $\| V_g f \|_Y \lesssim \|V_h f \|_Y$ for all $f \in \Reservoir_w$.

The condition $V_h g \in \wienerSt{L_w^p}$ implies that $g \in  \Co_h(L^p_w)$.
Since moreover $L_w^p$ is $L_w^p$-compatible by \Cref{cor:Lpw_nostrong} and since
$h \in \mathcal{B}_w^p$ is admissible, \Cref{thm:frame_extension} (applied to $L_w^p$)
shows that there exists a family $\Lambda = (\lambda_i)_{i \in I}$ in $G$
and a sequence $(c_i)_{i \in I} \in (L^p_w)_d(\Lambda) = \ell^p_v(I)$
(with $v_i = w(\lambda_i)$) such that
\[
  g = \sum_{i \in I} c_i \pi(\lambda_i) h,
\]
where the series converges in $\Co_h (L^p_w)$. This holds since $\| \cdot \|_{\Co_h (L^p_w)}$ is a $p$-norm by \Cref{prop:coorbit_basic} and
\(
  \| \pi(\lambda_i) h \|_{\Co_h (L_w^p)}
  = \| \twisttranslationL{\lambda_i} [V_h h] \|_{\wienerL{L_w^p}}
  \leq w(\lambda_i) \| V_h h \|_{\wienerL{L_w^p}}
  = v_i \, \| V_h h \|_{\wienerL{L_w^p}}
\),
by \Cref{lem:basic_Rw} and \Cref{max_commute}.
Combining \Cref{lem:AmalgamWeightedLInftyEmbedding} and \Cref{lem:coincidence}, it follows that $g = \sum_{i \in I} c_i \pi(\lambda_i) h$ also converges in $\Co_h (L^1_w) = \Hw(h) = \Hw(g)$.
Using this, a direct calculation gives
\[
  V_g f
  = \bigg\langle f, \pi(\cdot) \sum_{i \in I} c_i \pi(\lambda_i) h \bigg \rangle
  = \sum_{i \in I} \overline{c_i}  R_{\lambda_i}^{\sigma} [V_h f],
\]
where $R_{\lambda_i}^{\sigma}$ denotes the twisted right-translation operator (see also \Cref{lem:basic_cocycle}).
Therefore,
\[
  \| V_g f \|_Y^p
  = \bigg\| \sum_{i \in I} \overline{c_i}  R_{\lambda_i}^{\sigma} [V_h f] \bigg\|_Y^p
  \leq \sum_{i \in I} |c_i|^p \|R_{\lambda_i}^{\sigma} \|_{Y \to Y}^p \| V_h f \|_Y^p
  \lesssim \| (c_i)_{i \in I} \|^p_{\ell^p_v} \| V_h f \|^p_Y
  .
\]
Here, the last step used the assumption $\| \translationR{x} \|_{Y \to Y} \leq w(x)$ for all $x \in \group$.
This completes the proof.
\end{proof}

\Cref{thm:local_wavelet} is an adaptation of \cite[Theorem 6.2]{rauhut2007coorbit}
to the setting of the present paper.

\chapter{Main results for irreducible, square-integrable representations}
\label{sec:discrete_series}

This chapter provides an overview of the key results
of this paper for the case of irreducible, square-integrable representations.
Several of the results obtained in the main text
have simplified statements for such representations,
which recover known results for Banach spaces.
We include these statements here to allow for an easy reference.

In what follows, we will always assume that the following assumptions are met.

\begin{assumption}\label{assu:IrreducibleAssumption}
  Assume that the following hold:
  \begin{enumerate}[label=(a\arabic*)]
    \item $G$ is a second countable, locally compact group.

    \item $\pi : G \to \CalU(\Hpi)$ is an irreducible unitary representation
          of $G$ on a separable Hilbert space $\Hpi \neq \{ 0 \}$.

    \item  $w : G \to [1,\infty)$ is a $p$-weight for $p \in (0,1]$ (see \Cref{def:PWeight}).

    \item The Wiener amalgam spaces $W^L, W^R, W$ (see \Cref{sec:AmalgamPrelims})
          are defined relatively to a fixed open, symmetric, relatively compact
          unit neighborhood $Q \subseteq G$.

    \item With the coefficient transform $V_g f$ as defined in \Cref{sec:admissible},
          it is assumed that $\Bwp \neq \{ 0 \}$, where
          \[
            \Bwp
            := \big\{ g \in \Hpi \colon V_g g \in \wienerSt{L_w^p} \big\}
            ,
          \]
          and we fix some $g \in \Bwp \setminus \{ 0 \}$.

    \item The space $Y \subseteq L^0(G)$ is assumed to be a solid, translation-invariant quasi-Banach
          function space with $p$-norm $\| \cdot \|_Y$. In addition, $Y$ is assumed to be $L_w^p$-compatible
          (see \Cref{def:compatible}).
  \end{enumerate}
\end{assumption}

\section{Admissibility}%
\label{sub:IrreducibleAdmissibility}

Since $w$ is a $p$-weight (in particular, $w \geq 1$),
it follows from an application of \Cref{lem:AmalgamWeightedLInftyEmbedding} that
\(
  \wienerSt{L^p_w}
  \hookrightarrow \wienerL{L_w^p}
  \hookrightarrow L^1_w \cap L_w^{\infty}
  \hookrightarrow L^2
  ,
\)
so that every $h \in \Bwp$ satisfies 
\[ \int_G |\langle h, \pi(x) h \rangle|^2 \, d \mu_G(x) < \infty,\]
i.e., the representation $\pi$ is \emph{square-integrable}.
Then, by \Cref{thm:ortho}, there exists a unique, self-adjoint, positive operator
$C_{\pi} : \dom(C_\pi) \to \Hpi$ such that
\begin{align}\label{eq:ds_admissible}
  \int_G
    | \langle f, \pi(x) h \rangle |^2
  \; d\mu_G (x)
  = \| C_{\pi} h\|_{\Hpi}^2 \| f \|_{\Hpi}^2
\end{align}
for all vectors $f, h \in \Hpi$.
The domain of $C_{\pi}$ is given by $\dom(C_{\pi}) = \{ h \in \Hpi : V_h h \in L^2 (G) \}$.
This implies that $\Bwp \subseteq \dom(C_\pi)$.
Furthermore, \Cref{eq:ds_admissible} implies that any vector in $\dom(C_{\pi}) \setminus \{0\}$
can be normalized to obtain an admissible vector $h$ for $\pi$,
i.e., such that $V_h : \Hpi \to L^2 (G)$ is an isometry.

\section{Coorbit spaces}
With the fixed vector $g \in \Bwp \setminus \{0\}$ from \Cref{assu:IrreducibleAssumption}, define
\[
  \Hw
  := \Hw (g)
  := \bigl\{ f \in \Hpi \,\,\colon\,\, V_g f \in L^1_w (G) \bigr\}
\]
and equip it with the norm $\| f \|_{\Hw} = \| f \|_{\Hw (g)} = \|V_g f \|_{L^1_w}$.
Note that $W(L^p_w) \hookrightarrow L^1_w$ by \Cref{lem:AmalgamWeightedLInftyEmbedding},
so that $\Bwp \subseteq \Hw$; in particular, $g \in \Hw$.

Although the space $\Hw(g)$ is defined relative to the fixed vector $g \in \Bwp \setminus \{0\}$,
the following lemma shows that the space $\Hw$ is actually independent of the choice
of $g \in \Bwp \setminus \{ 0 \}$.
This crucially uses that $\pi$ is an \emph{irreducible} representation; see
also \Cref{rem:reducible_vgh}.

\begin{lemma}\label{lem:reservoir_ds}
Under \Cref{assu:IrreducibleAssumption}, the following hold:
\begin{enumerate}[label=(\roman*)]
    \item If $h \in \Bwp \setminus \{0\}$,
          then $\Hw(g) = \Hw(h)$  with $\| \cdot \|_{\Hw (g)} \asymp \| \cdot \|_{\Hw (h)}$.

    \item The space $\Hw$ is a $\pi$-invariant Banach space
          satisfying $\Hw \hookrightarrow \Hpi$.
\end{enumerate}
\end{lemma}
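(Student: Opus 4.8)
The key insight is that \Cref{lem:reservoir_ds} is essentially a special case of \Cref{lem:basic_Hw} and \Cref{lem:basic_Rw}, once we verify that the additional hypotheses needed there are automatic in the irreducible setting. The main work is therefore to invoke \Cref{lem:cross_irreducible} to remove the cross-terms assumption $V_g h, V_h g \in L_w^1(G)$ from Part~(i) of \Cref{lem:basic_Hw}.

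First I would prove (i). Recall from \Cref{sub:IrreducibleAdmissibility} that $\Bwp \subseteq \dom(C_\pi)$ and that any nonzero vector in $\dom(C_\pi)$ can be normalized to be admissible; thus, after replacing $g$ and $h$ by suitable scalar multiples (which changes neither $\Hw(g)$ nor $\Hw(h)$ as sets, and only changes the norms by a constant), we may assume $g$ and $h$ are admissible. Next, set $Y_0 := \wienerSt{L_w^p}$; this is a solid, translation-invariant quasi-Banach function space by the discussion in \Cref{sec:AmalgamPrelims}, it embeds into $L^1(G)$ by \Cref{lem:AmalgamWeightedLInftyEmbedding} (using $w \geq 1$), and it satisfies $Y_0 \ast Y_0 \hookrightarrow Y_0$ by \Cref{cor:UserFriendlyConvolutionBounds}. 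Since $g, h \in \Bwp = \mathcal{C}_{Y_0}$ and $\pi$ is irreducible, \Cref{lem:cross_irreducible} applies and yields $V_h g \in \wienerSt{L_w^p}$ (and by symmetry $V_g h \in \wienerSt{L_w^p}$, using $|V_g h| = |V_h g|^\vee$ and $[\wienerSt{L_w^p}]^\vee = \wienerSt{L_w^p}$ since $w$ is a $p$-weight). By \Cref{lem:AmalgamWeightedLInftyEmbedding} this gives $V_g h, V_h g \in L_w^1(G)$, so the hypotheses of Part~(i) of \Cref{lem:basic_Hw} are met, and that lemma yields $\Hw(g) = \Hw(h)$ with equivalent norms.

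For (ii), the statement is an immediate consequence of Part~(ii) of \Cref{lem:basic_Hw}: that lemma (with $w$ playing the role of the submultiplicative weight there, which is legitimate since a $p$-weight is in particular measurable and submultiplicative, and there exists an admissible $g \in \Aw$ because $\Bwp \subseteq \Aw$ and every nonzero element of $\dom(C_\pi) \supseteq \Bwp$ normalizes to an admissible vector) shows that $(\Hw, \|\cdot\|_{\Hw})$ is a $\pi$-invariant separable Banach space with $\Hw \hookrightarrow \Hpi$ and $\|\pi(x)\|_{\Hw \to \Hw} \leq w(x)$. I would simply cite this.

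\textbf{Main obstacle.} The only genuine subtlety is ensuring the applicability of \Cref{lem:cross_irreducible}: one must check that $\wienerSt{L_w^p}$ is a \emph{solid} quasi-Banach function space embedding into $L^1(G)$, is left- and right translation-invariant, and is closed under convolution with itself. All three facts are recorded earlier in the excerpt (the solidity and translation-invariance in \Cref{sec:AmalgamPrelims} via $\wienerSt{Y} = \wienerR{\wienerL{Y}}$, the embedding into $L^1$ in \Cref{lem:AmalgamWeightedLInftyEmbedding}, and the convolution relation $\wienerSt{L_w^p} \ast \wienerSt{L_w^p} \hookrightarrow \wienerStC{L_w^p} \hookrightarrow \wienerSt{L_w^p}$ in \Cref{cor:UserFriendlyConvolutionBounds}), so this amounts to careful bookkeeping rather than a new difficulty.
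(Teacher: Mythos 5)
Your proof is correct and follows essentially the same route as the paper: normalize $g,h$ to be admissible, invoke \Cref{lem:cross_irreducible} to obtain the cross-localization $V_g h, V_h g \in L_w^1$, and then apply \Cref{lem:basic_Hw}. The only (harmless) difference is that you apply \Cref{lem:cross_irreducible} with $Y = \wienerSt{L_w^p}$ (as the paper itself does in \Cref{prop:independence_irreducible}), whereas the paper's proof of this particular lemma uses the weaker choice $Y = L_w^1$ together with $L_w^1 \ast L_w^1 \hookrightarrow L_w^1$; both yield $V_g h, V_h g \in L_w^1(G)$, which is all that is needed.
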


\begin{proof}
As explained in \Cref{sub:IrreducibleAdmissibility}, we have $\Bwp \!\subset\! \dom(C_{\pi})$,
and the normalized vector $\widetilde{g} = g / \| C_{\pi} g \|_{\Hpi}$ is admissible
for $\pi$.
Since clearly $\Hw (g) = \Hw(\widetilde{g})$, we can assume that $g$ is admissible
and similarly (for proving Part~(i)) also that $h$ is admissible.

(i)
Since $L^1_w \ast L^1_w \hookrightarrow L^1_w$
(see, e.g., \cite[Section 3.7]{ReiterClassicalHarmonicAnalysis}),
it follows by an application of \Cref{lem:cross_irreducible}
that the collection $\mathcal{C}_{L^1_w} = \{ f \in \Hpi : V_f f \in L^1_w\}$ is a vector space
and that $V_{f_2} f_1 \in L^1_w$ for all $f_1, f_2 \in \mathcal{C}_{L^1_w}$.
In particular, this implies that $V_g h, V_h g \in L^1_w (G)$,
so that the conclusion follows from \Cref{lem:basic_Hw}.

Assertion~(ii) follows directly from \Cref{lem:basic_Hw}.
\end{proof}

Let $\Reservoir_w = (\Hw)^*$ be the \emph{anti}-dual space of $\Hw(g)$.
Denote the associated conjugate-linear pairing by
\[
  \langle f, h \rangle
  := f(h),
  \quad f \in \Reservoir_w, \; h \in \Hw.
\]
The action $\pi$ of $G$ on $\Hw$ can be naturally extended
to act on an element $f \in \Reservoir_w$ by
\[
  \pi (x) f : \quad
  \Hw \to \mathbb{C},
  \quad x \mapsto f([\pi(x)]^{\ast} h), \quad h \in \Hw,
\]
for $x \in G$.
The associated matrix coefficients are defined as
$V_h f : G \to \mathbb{C}, \; x \mapsto  \langle f, \pi(x) h \rangle$.

For $g \in \Bwp \setminus \{0\}$, the \emph{coorbit space} of $Y$ is the collection
\[
  \Co (Y)
  = \Co_g (Y)
  = \big\{ f \in \Reservoir_w : V_g f \in \wienerL{Y} \big\},
\]
where $\wienerL{Y}$ is the left-sided Wiener amalgam space of $Y$ (cf.\ \Cref{sec:AmalgamPrelims}).
The space is equipped with the norm $\| f \|_{\Co(Y)} = \|V_g f \|_{\wienerL{Y}}$.

As for the space $\Hw$ (cf. \Cref{lem:reservoir_ds}), also the coorbit spaces are independent of the choice of the
analyzing vector $g \in \Bwp \setminus \{ 0 \}$,
whenever $\pi$ is irreducible.

\begin{proposition} \label{prop:independence_irreducible}
Under \Cref{assu:IrreducibleAssumption}, the following assertions hold:
\begin{enumerate}[label=(\roman*)]
  \item If $h \in \Bwp \setminus \{0\}$, then $\Co_g (Y) = \Co_h (Y)$
        with $\| \cdot \|_{\Co_g (Y)} \asymp \| \cdot \|_{\Co_h (Y)}$.
  \item The space $\Co(Y)$ is a $\pi$-invariant quasi-Banach space
        with $p$-norm $\| \cdot \|_{\Co(Y)}$
        and satisfies $\Co(Y) \hookrightarrow \Reservoir_w$.
\end{enumerate}
\end{proposition}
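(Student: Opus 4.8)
The plan is to reduce Proposition~\ref{prop:independence_irreducible} to the general results already established for (possibly reducible) representations, by showing that in the irreducible case the extra hypothesis $V_h g \in \wienerSt{L^p_w}$ appearing in \Cref{prop:coorbit_basic} is automatic. The main tool for this is \Cref{lem:cross_irreducible}, applied to the solid quasi-Banach function space $Y_0 := \wienerSt{L^p_w}$, which I must first verify satisfies the standing assumptions of that lemma.

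First I would check that $Y_0 = \wienerSt{L^p_w}$ is a solid, left- and right-translation-invariant quasi-Banach function space with $Y_0 \hookrightarrow L^1(G)$ and $Y_0 \ast Y_0 \hookrightarrow Y_0$: all of these are recorded in \Cref{sec:AmalgamPrelims} together with \Cref{lem:AmalgamWeightedLInftyEmbedding} (giving $\wienerSt{L^p_w} \hookrightarrow \wienerL{L^p_w} \hookrightarrow L^1_w \hookrightarrow L^1$, using $w \geq 1$) and \Cref{cor:UserFriendlyConvolutionBounds} (giving the convolution relation, since $\wienerSt{L^p_w}\ast\wienerSt{L^p_w}\hookrightarrow\wienerStC{L^p_w}\subseteq\wienerSt{L^p_w}$). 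Then \Cref{lem:cross_irreducible} applies with $\mathcal{C}_{Y_0} = \{ h \in \Hpi : V_h h \in \wienerSt{L^p_w}\} = \Bwp$, which is nontrivial by Assumption~\ref{assu:IrreducibleAssumption}; hence $\Bwp$ is a $\pi$-invariant vector space and, crucially, $V_h g \in \wienerSt{L^p_w}$ for \emph{all} $g,h \in \Bwp$. This removes the obstruction, so Part~(i) follows immediately: given $h \in \Bwp \setminus \{0\}$, normalize it (as in \Cref{sub:IrreducibleAdmissibility}, using $\Bwp \subseteq \dom(C_\pi)$) to an admissible vector $\widetilde{h}$ with $\Co_h(Y) = \Co_{\widetilde h}(Y)$; since $V_{\widetilde h} g \in \wienerSt{L^p_w}$ by the above, \Cref{prop:coorbit_basic}(i) gives $\Co_g(Y) = \Co_{\widetilde h}(Y) = \Co_h(Y)$ with equivalent quasi-norms. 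Here I should also note that $g$ itself may be normalized to an admissible vector first, exactly as in the proof of \Cref{lem:reservoir_ds}.

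For Part~(ii), the statement that $\Co(Y)$ is a $\pi$-invariant quasi-Banach space with $p$-norm $\| \cdot \|_{\Co(Y)}$ satisfying $\Co(Y) \hookrightarrow \Reservoir_w$ is a direct restatement of \Cref{prop:coorbit_basic}(ii) (which does not require irreducibility), applied to the fixed admissible normalization of $g$, so this part requires essentially nothing new beyond citing that proposition.

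I do not expect any serious obstacle here; the only point demanding a little care is the bookkeeping around admissibility—$\Bwp$ consists of vectors in $\dom(C_\pi)$ that are only \emph{scalar multiples} of admissible vectors, so the argument must pass to normalized vectors before invoking \Cref{prop:coorbit_basic}, and one should observe that normalization changes neither the space $\Co_g(Y)$ (it rescales $V_g$ by a constant) nor membership in $\Bwp$. Beyond that, the proof is a short chain of citations.

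\begin{proof}
As in \Cref{sub:IrreducibleAdmissibility}, we have $\Bwp \subseteq \dom(C_\pi)$, and for $g \in \Bwp \setminus \{0\}$ the normalized vector $\widetilde{g} := g / \| C_\pi g \|_{\Hpi}$ is admissible. Since $V_{\widetilde g} f = \| C_\pi g \|_{\Hpi}^{-1} V_g f$ for all $f$, we have $\Co_g(Y) = \Co_{\widetilde g}(Y)$ with $\| \cdot \|_{\Co_g(Y)} \asymp \| \cdot \|_{\Co_{\widetilde g}(Y)}$, and moreover $\widetilde g \in \Bwp$. Hence we may assume throughout that $g$ is admissible.

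We first record that $Y_0 := \wienerSt{L^p_w}$ satisfies the hypotheses of \Cref{lem:cross_irreducible}. Indeed, $Y_0$ is a solid quasi-Banach function space that is left- and right-translation invariant (cf.\ \Cref{sec:AmalgamPrelims}), and by \Cref{lem:AmalgamWeightedLInftyEmbedding} (using $w \geq 1$ and $p \leq 1$) one has
\(
  \wienerSt{L^p_w} \hookrightarrow \wienerL{L^p_w} \hookrightarrow L^1_w \hookrightarrow L^1(G) .
\)
Furthermore, \Cref{cor:UserFriendlyConvolutionBounds} gives
$\wienerSt{L^p_w} \ast \wienerSt{L^p_w} \hookrightarrow \wienerStC{L^p_w} \subseteq \wienerSt{L^p_w}$, i.e.\ $Y_0 \ast Y_0 \hookrightarrow Y_0$. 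Since $\mathcal{C}_{Y_0} = \{ h \in \Hpi : V_h h \in \wienerSt{L^p_w} \} = \Bwp$ is nontrivial by \Cref{assu:IrreducibleAssumption}, \Cref{lem:cross_irreducible} applies and shows that $\Bwp$ is a $\pi$-invariant vector space with $V_h f \in \wienerSt{L^p_w}$ for all $f, h \in \Bwp$.

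(i) Let $h \in \Bwp \setminus \{0\}$, and as above replace $h$ by its admissible normalization (which again lies in $\Bwp$ and yields the same coorbit space up to norm equivalence). By the previous paragraph, $V_h g \in \wienerSt{L^p_w}$. Hence \Cref{prop:coorbit_basic}(i) applies and yields $\Hw(g) = \Hw(h)$, $\Reservoir_w(g) = \Reservoir_w(h)$, and $\Co_g(Y) = \Co_h(Y)$ with $\| \cdot \|_{\Co_g(Y)} \asymp \| \cdot \|_{\Co_h(Y)}$.

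(ii) This is a direct consequence of \Cref{prop:coorbit_basic}(ii) applied to the (admissible) vector $g$: the space $\Co_g(Y)$ is a $\pi$-invariant quasi-Banach space, $\| \cdot \|_{\Co_g(Y)}$ is a $p$-norm, and $\Co_g(Y) \hookrightarrow \Reservoir_w$.
\end{proof}
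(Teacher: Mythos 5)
Your proposal is correct and follows essentially the same route as the paper's own proof: normalize $g$ and $h$ to admissible vectors, use \Cref{lem:cross_irreducible} with $Y_0 = \wienerSt{L^p_w}$ (via the convolution relation from \Cref{cor:UserFriendlyConvolutionBounds}) to obtain $V_h g \in \wienerSt{L^p_w}$ automatically, and then invoke \Cref{prop:coorbit_basic}. The only difference is that you spell out the verification of the hypotheses of \Cref{lem:cross_irreducible} in slightly more detail, which is harmless.
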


\begin{proof}
As in the proof of \Cref{lem:reservoir_ds},
the normalized vector $\widetilde{g} := g / \| C_{\pi} g \|_{\Hpi}$ is admissible for $\pi$,
and (for proving Part~(i)) the same holds for the normalized version of $h$.
Furthermore, we clearly have $\Co_{g}(Y) = \Co_{\widetilde{g}}(Y)$
and similarly for $h$.
Hence, we can assume that $g$ and $h$ are admissible.

(i)
By \Cref{cor:UserFriendlyConvolutionBounds},
it holds that $\wienerSt{L^p_w} \ast \wienerSt{L^p_w} \hookrightarrow \wienerSt{L^p_w}$.
Since $\Bwp = \mathcal{C}_{\wienerSt{L^p_w}}$,
where $\mathcal{C}_{\wienerSt{L^p_w}}$ is as defined in \Cref{lem:cross_irreducible},
an application of that result yields that $V_h g \in \wienerSt{L^p_w}$.
The equivalence $\| \cdot \|_{\Co_g (Y)} \asymp \| \cdot \|_{\Co_h (Y)}$
follows therefore from \Cref{prop:coorbit_basic}.

Assertion~(ii) is a direct consequence of \Cref{prop:coorbit_basic}.
\end{proof}

\section{Molecular decompositions}

This subsection provides self-contained statements of the main results on molecular decompositions.

A family $(h_i)_{i \in I}$ of vector $h_i \in \Hpi$,
indexed by a relatively separated $\Lambda = (\lambda_i)_{i \in I} \subseteq G$,
is called a \emph{system of  $L_w^p$-molecules},
if there exists an envelope $\Phi \in \wienerStC{L_w^p}$ satisfying
\begin{equation}
  |V_g h_i (x)| \leq \Phi(\lambda_i^{-1} x)
  \label{eq:IrreducibleMoleculeDefinition}
\end{equation}
for all $i \in I$ and $x \in G$.

The following result summarizes the main properties of a system of molecules.

\begin{lemma}\label{lem:IrreducibleMoleculeProperties}
  Under \Cref{assu:IrreducibleAssumption}, the following hold:
  \begin{enumerate}[label=(\roman*)]
    \item The notion of a system of $L_w^p$-molecules is independent of the choice of analyzing vector
          $g \in \Bwp \setminus \{ 0 \}$, i.e., if $(h_i)_{i \in I} \subseteq \Hpi$
          satisfies condition \eqref{eq:IrreducibleMoleculeDefinition}
          for some $\Phi \in \wienerStC{L_w^p}$ and if $h \in \Bwp \setminus \{ 0 \}$ is arbitrary,
          then there exists $\Theta \in \wienerStC{L_w^p}$ satisfying
          \[
            |V_h h_i (x)| \leq \Theta (\lambda_i^{-1} x)
          \]
          for all $i \in I$ and $x \in G$.

    \item If $(h_i)_{i \in I} \subseteq \Hpi$ is a system of $L_w^p$-molecules
          indexed by the relatively separated family $\Lambda = (\lambda_i)_{i \in I}$,
          then $(h_i)_{i \in I} \subseteq \Hw$ and
          the associated coefficient  operator
          \[
            \quad \,\,\,\,
            \analysis : \,\,
            \Reservoir_w \to \ell_{1/w}^\infty (I), \,\,
            f  \mapsto \bigl(\langle f, h_i \rangle_{\Reservoir_w, \Hw}\bigr)_{i \in I}
            \]
          and reconstruction operator
          \[
            \synthesis : \,\,
            \ell_{1/w}^\infty (I) \to \Reservoir_w, \,\,
            (c_i)_{i \in I} \mapsto \sum_{i \in I} c_i h_i
          \]
          are well-defined and bounded, with unconditional convergence of the series
          in the weak-$\ast$-topology on $\Reservoir_w$.

    \item The following restrictions of the operators $\analysis$ and $\synthesis$ are
          well-defined and bounded:
          \begin{enumerate}[label=(\alph*)]
            \item $\analysis : \Hpi \to \ell^2(I)$,
                  $\analysis : \Hw \to \ell_w^1(I)$,
                  and $\analysis : \Co(Y) \to Y_d (\Lambda)$.

            \item $\synthesis : \ell^2(I) \to \Hpi$,
                  $\synthesis : \ell_w^1 (I) \to \Hw$,
                  and $\synthesis : Y_d (\Lambda) \to \Co(Y)$.
          \end{enumerate}
          Here, the space $Y_d(\Lambda) \hookrightarrow \ell_{1/w}^\infty (I)$
          is as defined in \Cref{sub:SequenceSpaces}.
  \end{enumerate}
\end{lemma}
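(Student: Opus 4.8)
\textbf{Proof plan for \Cref{lem:IrreducibleMoleculeProperties}.}
The strategy is to reduce everything to the general (possibly reducible) machinery already established, using only the extra fact that irreducibility forces $\Bwp = \mathcal{C}_{\wienerSt{L_w^p}}$ in the sense of \Cref{lem:cross_irreducible}, together with the admissibility normalization discussed in \Cref{sub:IrreducibleAdmissibility}. First I would fix, once and for all, the passage from $g \in \Bwp \setminus \{0\}$ to an admissible vector: since $\Bwp \subseteq \dom(C_\pi)$ and $C_\pi$ is injective, the normalized vector $\widetilde g := g / \|C_\pi g\|_{\Hpi}$ is admissible, and replacing $g$ by $\widetilde g$ changes none of the relevant spaces or molecule conditions (the envelope condition \eqref{eq:IrreducibleMoleculeDefinition} only rescales $\Phi$ by the constant $\|C_\pi g\|_{\Hpi}$). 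So throughout the proof one may assume $g$ is admissible.

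For part (i), the key observation is that $\Bwp$ is exactly the space $\mathcal{C}_{\wienerSt{L_w^p}}$ of \Cref{lem:cross_irreducible}, applied with $Y = \wienerSt{L_w^p}$: this space is a solid, translation-invariant quasi-Banach function space that embeds into $L^1(G)$ (by \Cref{lem:AmalgamWeightedLInftyEmbedding} and $w \geq 1$) and satisfies $\wienerSt{L_w^p} \ast \wienerSt{L_w^p} \hookrightarrow \wienerSt{L_w^p}$ by \Cref{cor:UserFriendlyConvolutionBounds}. Hence \Cref{lem:cross_irreducible} guarantees that $V_h g \in \wienerSt{L_w^p}$ whenever $g, h \in \Bwp$. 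Having this, part (i) is precisely \Cref{lem:MoleculeConditionIndependence}: since $V_h g \in \wienerSt{L_w^p}$, that lemma produces a symmetric $\Psi \in \wienerStC{L_w^p}$ with $|V_h h_i(x)| \leq \Psi(\lambda_i^{-1} x)$ for all $i, x$. So part (i) costs essentially nothing beyond citing the two earlier results.

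For parts (ii) and (iii), I would again invoke \Cref{lem:MoleculeConditionIndependence}/part (i) of the present lemma to know that $(h_i)_{i\in I}$ is a system of $(L_w^p, g)$-molecules in the sense of \Cref{def:CoorbitMolecules}; then everything follows from the results of \Cref{sec:molecules}. Specifically: $(h_i)_{i\in I} \subseteq \Hw$ follows from \eqref{eq:molecule_envelope} together with $\wienerSt{L_w^p} \hookrightarrow L^1_w$; the weak-$\ast$ boundedness and continuity of $\analysis: \Reservoir_w \to \ell^\infty_{1/w}(I)$ and $\synthesis: \ell^\infty_{1/w}(I) \to \Reservoir_w$ with unconditional weak-$\ast$ convergence is exactly \Cref{lem:wstar_continuity}. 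For part (iii), the bounded restrictions $\analysis: \Co(Y) \to Y_d(\Lambda)$ and $\synthesis: Y_d(\Lambda) \to \Co(Y)$ are \Cref{prop:molecule_bounded} applied to the $L_w^p$-compatible space $Y$; the special cases $\analysis: \Hw \to \ell^1_w(I)$, $\synthesis: \ell^1_w(I) \to \Hw$ come from the same proposition applied to $Y = L^1_w$, using $\Co_g(L^1_w) = \Hw$ and $(L^1_w)_d(\Lambda) = \ell^1_w(I)$ from \Cref{lem:coincidence}; and $\analysis: \Hpi \to \ell^2(I)$, $\synthesis: \ell^2(I) \to \Hpi$ follow from \Cref{lem:MoleculesFrameOperatorExpression}(ii) (via $V_g$ being unitary onto $\CalK_g$, so that $(h_i)$ corresponds to the $L_w^p$-molecule system $(V_g h_i)$ in $\CalK_g$), or alternatively from \Cref{prop:molecule_bounded} with $Y = L^2$ together with $\Co_g(L^2) = \Hpi$ and $(L^2)_d(\Lambda) = \ell^2(I)$.

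\textbf{Main obstacle.} There is no deep obstacle here: the lemma is essentially a bookkeeping corollary of the general theory. The one point requiring a little care is verifying that $\Bwp$ genuinely coincides with $\mathcal{C}_{\wienerSt{L_w^p}}$ and that $\wienerSt{L_w^p}$ meets all the hypotheses of \Cref{lem:cross_irreducible} (solidity, embedding into $L^1$, translation invariance, the convolution relation) --- but each of these is already recorded earlier in the excerpt, so the work is limited to assembling the citations in the right order. The only mild subtlety is ensuring the admissibility normalization is invoked before applying the $\Reservoir_w$- and coorbit-level results, since those are stated for admissible analyzing vectors; but this is handled uniformly at the start of the proof as described above.
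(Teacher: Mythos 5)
Your proposal is correct and follows essentially the same route as the paper's proof: normalize $g$ to an admissible vector, identify $\Bwp$ with $\mathcal{C}_{\wienerSt{L_w^p}}$ via \Cref{lem:cross_irreducible} and \Cref{cor:UserFriendlyConvolutionBounds} to get $V_h g \in \wienerSt{L_w^p}$, apply \Cref{lem:MoleculeConditionIndependence} for (i), and deduce (ii) and (iii) from \Cref{prop:molecule_bounded} combined with the identifications in \Cref{lem:coincidence}. The only cosmetic difference is your additional citation of \Cref{lem:wstar_continuity}, which the paper subsumes under \Cref{prop:molecule_bounded}; both yield the same conclusions.
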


\begin{proof}
(i)
If $(h_i)_{i \in I}$ is a family of $L_w^p$-molecules with respect to the window $g$,
then the same holds with respect to the normalized (admissible) window
$\widetilde{g} = g / \| C_\pi g \|_{\Hpi}$, and vice versa.
Therefore, we can assume that both $g,h \in \Bwp \setminus \{ 0 \}$ are admissible.
By \Cref{cor:UserFriendlyConvolutionBounds},
it holds that $\wienerSt{L^p_w} \ast \wienerSt{L^p_w} \hookrightarrow \wienerSt{L^p_w}$.
Since $\Bwp = \mathcal{C}_{\wienerSt{L^p_w}}$,
where $\mathcal{C}_{\wienerSt{L^p_w}}$ is as defined in \Cref{lem:cross_irreducible},
an application of that result yields that $V_h g, V_g h \in \wienerSt{L^p_w}$.
Therefore, \Cref{lem:MoleculeConditionIndependence} yields the claimed independence.

\medskip{}

Assertions (ii) and (iii) follow from \Cref{prop:molecule_bounded}. For this,
note that
\Cref{lem:AmalgamWeightedLInftyEmbedding} shows that
$\Phi \in \wienerSt{L_w^p} \hookrightarrow \wienerL{L_w^p} \hookrightarrow L_w^1$.
Because of \Cref{eq:IrreducibleMoleculeDefinition} and the translation-invariance of $L_w^1$,
this easily implies $V_g h_i \in L_w^1$ and hence $h_i \in \Hw$ for all $i \in I$.
 \Cref{lem:coincidence} shows that $L^2(G)$, $L_w^1 (G)$ and $L_{1/w}^\infty$
are all $L_w^p$-compatible and that, in addition, $\Co(L^2) = \Hpi$, $\Co(L_w^1) = \Hw$,
and $\Co(L_{1/w}^\infty) = \Reservoir_w$.
Therefore, \Cref{prop:molecule_bounded} implies all the stated properties.
\end{proof}

The next result shows that the orbit $\pi(G) g$ always contains discrete subsystems
that form a frame for $\Hpi$ and admit a dual system consisting of $L^p_w$-molecules.

\begin{theorem}
Under \Cref{assu:IrreducibleAssumption}, there exists a compact unit neighborhood
$U \subseteq G$
such that for every relatively separated, $U$-dense family $\Lambda = (\lambda_i)_{i \in I}$ in $G$
there exists a family $(h_i)_{i \in I}$ of vectors $h_i \in \Hw$ with the following properties:
\begin{enumerate}[label=(\roman*)]
  \item The system $(h_i)_{i \in I}$ forms a system of $L_w^p$-molecules.

  \item Any $f \in \Co(Y)$ can be represented as
        \[
          f
          = \sum_{i \in I}
              \langle f, h_i \rangle \, \pi(\lambda_i) g
          = \sum_{i \in I}
              \langle f, \pi(\lambda_i) g \rangle \, h_i,
        \]
  with unconditional convergence of the series in the weak-*-topology on $\Reservoir_w$.
\end{enumerate}
\end{theorem}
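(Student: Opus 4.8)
The plan is to derive this theorem directly from \Cref{thm:frame_extension} by specializing to the irreducible, square-integrable setting and combining it with \Cref{lem:IrreducibleMoleculeProperties}. Under \Cref{assu:IrreducibleAssumption}, the representation $\pi$ is square-integrable (as noted in \Cref{sub:IrreducibleAdmissibility}), and $Y$ is assumed $L_w^p$-compatible; moreover the fixed vector $g \in \Bwp \setminus \{0\}$ can be normalized to an admissible vector $\widetilde{g} = g / \| C_\pi g \|_{\Hpi}$ without changing $\Co_g(Y) = \Co_{\widetilde{g}}(Y)$, $\Hw(g) = \Hw(\widetilde{g})$, or the notion of a system of molecules (by Part~(i) of \Cref{lem:IrreducibleMoleculeProperties}). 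So all the hypotheses of \Cref{thm:frame_extension} are met with the admissible vector $\widetilde{g}$.

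First I would invoke \Cref{thm:frame_extension}(i): there exists a compact unit neighborhood $U \subseteq G$ such that for every relatively separated, $U$-dense family $\Lambda = (\lambda_i)_{i \in I}$ in $G$, there is a family $(h_i)_{i \in I}$ of $L^p_w$-localized molecules in $\Hpi$ indexed by $\Lambda$, with the two reproducing identities
\[
  f
  = \sum_{i \in I} \langle f, \pi(\lambda_i) g \rangle \, h_i
  = \sum_{i \in I} \langle f, h_i \rangle \, \pi(\lambda_i) g
\]
holding for all $f \in \Co_g(Y)$, with unconditional convergence in the weak-$\ast$-topology on $\Reservoir_w$. This is essentially verbatim the conclusion to be proven; the only additional assertions in the present statement are that $(h_i)_{i \in I}$ is a system of $L_w^p$-molecules \emph{in the sense of \Cref{sec:discrete_series}} (i.e., localized via $V_g$, not $V_{\widetilde g}$) and that $h_i \in \Hw$.

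The second point follows from Part~(ii) of \Cref{lem:IrreducibleMoleculeProperties}, which shows that any system of $L_w^p$-molecules is contained in $\Hw$; the first point follows from Part~(i) of the same lemma, which guarantees that the molecule property with respect to the admissible window $\widetilde g$ (as produced by \Cref{thm:frame_extension}, via $h_i = V_{\widetilde g}^{-1}[H_i]$ with $H_i$ as in \Cref{thm:dual_frame_RKHS}) transfers to the molecule property with respect to the original window $g$, using $V_g \widetilde g, V_{\widetilde g} g \in \wienerSt{L^p_w}$ coming from $\Bwp = \mathcal{C}_{\wienerSt{L^p_w}}$ and \Cref{cor:UserFriendlyConvolutionBounds}. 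I do not anticipate a genuine obstacle here: this chapter is explicitly an ``overview'' reformulation of earlier results, and the proof is a short bookkeeping argument. The only point requiring minor care is making sure the normalization $g \rightsquigarrow \widetilde g$ is applied consistently in all three objects ($\Hw$, $\Co_g(Y)$, and the molecule envelope condition), which is exactly what \Cref{lem:reservoir_ds}, \Cref{prop:independence_irreducible}, and \Cref{lem:IrreducibleMoleculeProperties} are set up to handle.
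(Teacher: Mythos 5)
Your proposal is correct and follows the same route as the paper, whose proof consists precisely of the remark that both claims follow from \Cref{thm:frame_extension} after replacing $g$ by the admissible vector $\widetilde{g} = g / \| C_\pi g \|_{\Hpi}$. Your additional bookkeeping via \Cref{lem:IrreducibleMoleculeProperties} (transferring the molecule property back to the window $g$ and noting $h_i \in \Hw$) just makes explicit what the paper leaves implicit.
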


\begin{proof}
  Both claims follow from \Cref{thm:frame_extension}, strictly speaking after replacing
  $g$ by the admissible vector $\widetilde{g} = g / \| C_\pi g \|_{\Hpi}$.
\end{proof}

A complementing result on dual Riesz sequences of molecules in $\Hpi$ is provided by the following.

\begin{theorem}
Under \Cref{assu:IrreducibleAssumption}, there exists a compact unit neighborhood $U \subseteq G$ such that for every $U$-separated family
$\Lambda = (\lambda_i)_{i \in I}$ in $G$ there exists a family $(h_i)_{i \in I} \subseteq \Hw$
of vectors $h_i \in \overline{\Span \{\pi(\lambda_i) g : i \in I \}} \subseteq \Hpi$
with the following properties:
\begin{enumerate}[label=(\roman*)]
  \item The system $(h_i)_{i \in I}$ forms a system of $L_w^p$-molecules.

  \item For any sequence $(c_i)_{i \in I} \in Y_d (\Lambda)$,
        the vector $f = \sum_{i \in I} c_i h_i$ is an element of $\Co(Y)$
        that solves the moment problem
        \[
          \langle f, \pi(\lambda_i) g \rangle = c_i, \quad i \in I.
        \]
\end{enumerate}
\end{theorem}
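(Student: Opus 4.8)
The plan is to reduce the statement to \Cref{cor:riesz_extension} applied to the admissible normalization of $g$. Since $g \in \Bwp \setminus \{0\} \subseteq \dom(C_\pi)$ by the discussion in \Cref{sub:IrreducibleAdmissibility}, the vector $\widetilde{g} := g / \| C_\pi g \|_{\Hpi}$ is admissible for $\pi$ by the orthogonality relations \eqref{eq:ds_admissible}. Moreover, since $\wienerSt{L^p_w}$ is solid and $|V_{\widetilde{g}} \widetilde{g}| = \| C_\pi g \|_{\Hpi}^{-2} |V_g g|$, we have $\widetilde{g} \in \Bwp$ as well. Under \Cref{assu:IrreducibleAssumption}, the space $Y$ is solid, translation-invariant, and $L^p_w$-compatible, so all hypotheses of \Cref{cor:riesz_extension} are satisfied with the admissible vector $\widetilde{g}$ in place of the generic admissible vector there.

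First I would invoke \Cref{cor:riesz_extension} with this admissible vector $\widetilde g$ to obtain a compact unit neighborhood $U \subseteq G$ such that for any $U$-separated family $\Lambda = (\lambda_i)_{i \in I}$ in $G$, there exists a family $(h_i)_{i \in I} \subseteq \overline{\Span}\{\pi(\lambda_i) \widetilde{g} : i \in I\}$ which is a system of $L^p_w$-localized molecules (with respect to $\widetilde g$) and such that the moment problem $\langle f, \pi(\lambda_i) \widetilde{g} \rangle = c_i$ is solved by $f = \sum_{i \in I} c_i h_i \in \Co_{\widetilde g}(Y)$ for every $(c_i)_{i \in I} \in Y_d(\Lambda)$. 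Then I would carry out the routine cleanup to pass from $\widetilde g$ back to $g$: since $\widetilde{g}$ is a positive scalar multiple of $g$, one has $\overline{\Span}\{\pi(\lambda_i) \widetilde{g}\} = \overline{\Span}\{\pi(\lambda_i) g\}$, and $\Co_{\widetilde g}(Y) = \Co_g(Y)$ trivially (the coefficient transforms differ only by a positive scalar), so $f \in \Co(Y)$ in the sense of \Cref{sec:discrete_series}. For the molecule property, note that by \Cref{lem:IrreducibleMoleculeProperties}(i) (or directly, since the envelope condition \eqref{eq:IrreducibleMoleculeDefinition} for $\widetilde g$ immediately rescales to one for $g$), the family $(h_i)_{i \in I}$ is also a system of $L^p_w$-molecules with respect to $g$, which establishes assertion (i). Finally, since $\langle f, \pi(\lambda_i) \widetilde g\rangle = \| C_\pi g\|_{\Hpi}^{-1} \langle f, \pi(\lambda_i) g \rangle$, a harmless rescaling of the input sequence $(c_i)_{i \in I}$ — which stays in $Y_d(\Lambda)$ by absolute homogeneity of the sequence-space quasi-norm — converts the moment problem for $\widetilde g$ into the moment problem $\langle f, \pi(\lambda_i) g\rangle = c_i$ for $g$, giving assertion (ii). By \Cref{lem:IrreducibleMoleculeProperties}(ii), the resulting $h_i$ indeed lie in $\Hw$.

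I do not anticipate any genuine obstacle here, as the theorem is essentially a restatement of \Cref{cor:riesz_extension} specialized to the irreducible, square-integrable setting; the only work is the bookkeeping of the normalization $g \leftrightarrow \widetilde g$ and checking that scalar multiples do not affect the molecule property, the span, or membership in $Y_d(\Lambda)$. The mild point worth stating carefully is why $\widetilde{g} \in \Bwp$ and why it is admissible — both follow from \Cref{sub:IrreducibleAdmissibility} and \Cref{thm:ortho} — and why the notion of $L^p_w$-molecule is insensitive to the choice of (admissible) analyzing vector, which is exactly \Cref{lem:IrreducibleMoleculeProperties}(i). With these observations in place, the proof is a two-line citation.
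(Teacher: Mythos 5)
Your proposal is correct and follows exactly the paper's own argument: the paper proves this theorem by citing \Cref{cor:riesz_extension} applied after replacing $g$ by the admissible normalization $\widetilde{g} = g / \| C_\pi g \|_{\Hpi}$. The additional bookkeeping you spell out (that $\widetilde{g} \in \Bwp$ is admissible, that scalar rescaling preserves the span, the molecule property, and membership in $Y_d(\Lambda)$) is precisely the ``strictly speaking'' content the paper leaves implicit.
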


\begin{proof}
  Both claims follow from \Cref{cor:riesz_extension}, strictly speaking after replacing
  $g$ by the admissible vector $\widetilde{g} = g / \| C_\pi g \|_{\Hpi}$.
\end{proof}

\appendix

\chapter{Miscellany on quasi-Banach spaces}
\label{sec:QuasiBanachFunctionSpaceAppendix}

\addtocontents{toc}{\protect\setcounter{tocdepth}{0}}

This appendix contains two auxiliary results on quasi-Banach spaces used throughout the main text.
As no reference could be found in the literature, their proofs are provided.

\begin{lemma}\label{lem:QuasiBanachAbsoluteConvergence}
  Let $Y$ be a quasi-normed vector space with $p$-norm $\| \cdot \|$ for some $p \in (0,1]$.
  If $Y$ is complete, then for every countable family $(f_i)_{i \in I}$
  in $Y$ satisfying $\sum_{i \in I} \| f_i \|^p < \infty$,
  the series $\sum_{i \in I} f_i$ is unconditionally convergent in $Y$.

  Conversely, if for every sequence $(f_i)_{i \in \N}$ with $\sum_{i \in \N} \| f_i \|^p < \infty$
  the series $\sum_{i=1}^\infty f_i$ converges in $Y$, then $Y$ is complete.
\end{lemma}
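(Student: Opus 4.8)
The plan is to prove the two directions of \Cref{lem:QuasiBanachAbsoluteConvergence} separately.

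For the first (``completeness implies absolute convergence yields unconditional convergence'') direction, I would first reduce the countable index set $I$ to $\N$ by an arbitrary enumeration, and observe that since the final goal is \emph{unconditional} convergence, it suffices to show that every reordering of the series converges (to a common limit, which then follows automatically from a standard argument once convergence of all reorderings is known, or can be obtained directly). So fix an arbitrary bijection $\varphi : \N \to I$ and set $S_n := \sum_{k=1}^n f_{\varphi(k)}$. For $m < n$ the $p$-norm inequality gives
\[
  \| S_n - S_m \|^p
  = \bigg\| \sum_{k=m+1}^n f_{\varphi(k)} \bigg\|^p
  \leq \sum_{k=m+1}^n \| f_{\varphi(k)} \|^p
  \leq \sum_{k > m} \| f_{\varphi(k)} \|^p .
\]
Because $\sum_{i \in I} \| f_i \|^p < \infty$ and $\varphi$ is a bijection, the tail $\sum_{k>m}\|f_{\varphi(k)}\|^p \to 0$ as $m \to \infty$; hence $(S_n)_n$ is Cauchy with respect to the metric $d(f,g) = \|f-g\|^p$ (note $\|\cdot\|^p$ is subadditive, so this genuinely is a metric), and completeness yields a limit $S_\varphi \in Y$. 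To see that $S_\varphi$ is independent of $\varphi$, compare two enumerations: the difference of their partial sums, suitably aligned, is again controlled by a tail of $\sum \|f_i\|^p$ via the $p$-norm inequality, forcing the two limits to coincide. This establishes unconditional convergence. The only mild subtlety is being careful that the metric $d(f,g)=\|f-g\|^p$ is the one with respect to which completeness is assumed (this matches the definition of quasi-Banach space given earlier in the excerpt via the Aoki–Rolewicz $p$-norm), so I would state that explicitly.

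For the converse direction, I would use the standard trick for proving completeness of a normed-type space from convergence of absolutely convergent series. Let $(g_n)_{n\in\N}$ be a Cauchy sequence in $(Y, d)$ where $d(f,g)=\|f-g\|^p$. Extract a subsequence $(g_{n_j})_j$ with $\| g_{n_{j+1}} - g_{n_j} \|^p \leq 2^{-j}$ for all $j$; this is possible by the Cauchy property. Set $f_1 := g_{n_1}$ and $f_{j+1} := g_{n_{j+1}} - g_{n_j}$ for $j \geq 1$, so that $\sum_{j=1}^N f_j = g_{n_N}$ and $\sum_{j} \| f_j \|^p \leq \|g_{n_1}\|^p + \sum_j 2^{-j} < \infty$. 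By hypothesis $\sum_j f_j$ converges in $Y$, i.e.\ $g_{n_N} \to g$ for some $g \in Y$. Finally, a Cauchy sequence with a convergent subsequence converges (to the same limit); this last step uses only the triangle-type inequality for $\|\cdot\|^p$, namely $\|g_n - g\|^p \leq \|g_n - g_{n_N}\|^p + \|g_{n_N} - g\|^p$, both terms of which can be made small. Hence $g_n \to g$ and $Y$ is complete.

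I do not expect a serious obstacle here: both directions are routine once one commits to working consistently with the metric $d = \|\cdot\|^p$ and exploits subadditivity of $t \mapsto t$ on the values $\|f_i\|^p$ (equivalently, the $p$-norm inequality). The one point requiring a little care is the independence-of-reordering argument in the first direction — I would handle it by noting that for any two bijections $\varphi,\psi$ and any $\eta>0$, there is a finite set $F \subseteq I$ with $\sum_{i\notin F}\|f_i\|^p < \eta$, and then choosing $N$ large enough that $\{\varphi(1),\dots,\varphi(N)\}$ and $\{\psi(1),\dots,\psi(N)\}$ both contain $F$, so that $\|S^\varphi_N - S^\psi_N\|^p < 2\eta$; letting $N\to\infty$ and then $\eta\to 0$ gives $S_\varphi = S_\psi$.
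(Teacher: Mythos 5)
Your proposal is correct and follows essentially the same route as the paper: both directions rest on the identical tail estimate $\sum_{i \notin F}\|f_i\|^p < \eta$ combined with the $p$-norm inequality, and the converse uses the same telescoping-subsequence trick. The only cosmetic difference is that you establish unconditionality by showing all rearrangements converge to a common limit, whereas the paper fixes one enumeration and then verifies directly that the net of finite partial sums $\sum_{j \in J} f_j$ over finite $J \supseteq \{1,\dots,N_0\}$ converges to the same sum; these are equivalent and your bridging argument is sound.
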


\begin{proof}
  Suppose first that $Y$ is complete.
  It clearly suffices to consider the case $I = \N$.
  Thus, assume that $\sum_{i \in \N} \| f_i \|^p < \infty$.
  Let $F_n := \sum_{i=1}^n f_i$.
  For $n \geq m \geq n_0$, the $p$-norm property gives
  \[
    \| F_n - F_m \|^p
    = \bigg\|
        \sum_{i=m+1}^n f_i
      \bigg\|^p
    \leq \sum_{i=m+1}^n
           \| f_i \|^p
    \leq \sum_{i=n_0+1}^\infty
           \| f_i \|^p
    \to 0 \quad \text{as} \quad n_0 \to \infty.
    .
  \]
  Hence, $(F_n)_{n \in \N} \subseteq Y$ is Cauchy.
  Since $Y$ is complete, it follows that $(F_n)_{n \in \N}$ converges
  to some $F \in Y$; i.e., $F = \sum_{i=1}^\infty f_i$.
 It remains to show that unconditional convergence of the defining series.
  For this, let $\eps > 0$ be arbitrary and choose $N_0 \in \N$
  such that $\sum_{i = N_0 + 1}^\infty \| f_i \|^p < \eps^p$.
  Let $J \subseteq \N$ be any finite set with $J \supseteq \{ 1,\dots,N_0 \}$.
  Since $\| \cdot \|_Y$ is a $p$-norm, $\| \cdot \|^p$ is a metric and hence continuous; therefore,
  \begin{align*}
    \bigg\|
      F - \sum_{j \in J} f_j
    \bigg\|^p
    & = \lim_{n \to \infty}
          \bigg\|
            F_n - \sum_{j \in J} f_j
          \bigg\|^p
      = \lim_{\substack{n \to \infty, \\ n \geq N_0}} \,\,
          \bigg\|
            \sum_{i \in \{ 1,\dots,n \} \setminus J}
              f_i
          \bigg\|^p \\
    & \leq \lim_{\substack{n \to \infty, \\ n \geq N_0}} \,\,
             \sum_{i \in \{ 1,\dots,n \} \setminus J}
               \| f_i \|^p
      \leq \sum_{i = N_0 + 1}^\infty
             \| f_i \|^p
      <    \eps^p .
  \end{align*}
  Since this holds for any finite set $J \subseteq \N$ with $J \supseteq \{ 1,\dots,N_0 \}$,
  the series converges unconditionally.

  \medskip{}

  For the remaining implication, let $(g_n)_{n \in \N}$ be a Cauchy sequence in $Y$.
  Choose a strictly increasing sequence $(n_i)_{i \in \N}$ such that
  $\| g_n - g_m \| \leq 2^{-i}$ for all $n,m \geq n_i$.
  Note that $n_i \geq i$ for all $i \in \N$.
  Define $f_i := g_{n_{i+1}} - g_{n_i}$ and note $\| f_i \| \leq 2^{-i}$,
  so that $\sum_{i \in \N} \| f_i \|^p < \infty$.
  By assumption, $F := \sum_{i=1}^\infty f_i \in Y$, and
  \[
    F
    = \lim_{N \to \infty}
        \sum_{i =1}^N
          f_i
    = \lim_{N \to \infty}
        \sum_{i=1}^N
          (g_{n_{i+1}} - g_{n_i})
    = \lim_{N \to \infty}
        (g_{n_{N+1}} - g_{n_1})
    .
  \]
 For fixed, but arbitrary $\eps > 0$, choose $N_0 \in \N$ such that
  $\| g_n - g_m \|^p < \frac{\eps^p}{2}$ for all $n,m \geq N_0$
  and such that $\| F - (g_{n_{N_0 + 1}} - g_{n_1}) \|^p < \frac{\eps^p}{2}$.
  Then, it holds for all $n \geq N_0$ that
  \begin{align*}
    \| g_n - (F + g_{n_1}) \|^p
    & \leq \| g_n - g_{n_{N_0+1}} \|^p
           + \| F - (g_{n_{N_0+1}} - g_{n_1}) \|^p
      <    \eps^p
      .
  \end{align*}
  Thus, $(g_n)_{n \in \N}$ has the limit
  $\lim_{n \to \infty} g_n = F + g_{n_1} \in Y$,
  showing that $Y$ is complete.
\end{proof}

\begin{lemma}\label{lem:QuasiBanachSolidAbsoluteConvergence}
  Let $(Y, \| \cdot \|_Y)$ be a solid quasi-Banach function space
  on a locally compact group $\group$.
  Suppose that $\| \cdot \|_Y$ is a $p$-norm, with $p \in (0,1]$.

  Let $I \neq \emptyset$ be countable and let $(F_i)_{i \in I} \subseteq Y$
  with $\sum_{i \in I} \| F_i \|_Y^p < \infty$.
  Then the series defining $F := \sum_{i \in I} F_i$ is almost everywhere
  absolutely convergent, and $F \in Y$
  with \[ \| F \|_Y \leq \biggl(\sum_{i \in I} \| F_i \|_Y^p\biggr)^{1/p}.\]
\end{lemma}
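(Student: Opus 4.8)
The plan is to reduce the claim to the scalar statement that $\sum_{i\in I}|F_i|\in Y$ together with the corresponding $p$-norm bound, and then to deduce the almost-everywhere absolute convergence from this. First I would reduce to the case $I=\mathbb{N}$ (relabel the countable index set; if $I$ is finite the statement is immediate from the $p$-norm property). For $n\in\mathbb{N}$ set $G_n:=\sum_{i=1}^{n}|F_i|$, which lies in $Y$ since $Y$ is a vector space and $|F_i|\in Y$ by solidity (because $|{}|F_i|{}|\le|F_i|$). The sequence $(G_n)_{n\in\mathbb{N}}$ is pointwise nondecreasing, so $G:=\sup_n G_n=\lim_n G_n=\sum_{i\in\mathbb{N}}|F_i|$ is a well-defined measurable function with values in $[0,\infty]$.

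The heart of the argument is to show $G\in Y$ with $\|G\|_Y\le\bigl(\sum_{i}\|F_i\|_Y^p\bigr)^{1/p}$. By the $p$-norm property, $\|G_n\|_Y^p\le\sum_{i=1}^{n}\||F_i|\|_Y^p=\sum_{i=1}^{n}\|F_i\|_Y^p\le S:=\sum_{i\in\mathbb{N}}\|F_i\|_Y^p<\infty$, so $(G_n)_n$ is norm-bounded in $Y$; moreover, again by the $p$-norm property applied to $G_n-G_m=\sum_{i=m+1}^{n}|F_i|$ for $n\ge m$, the sequence $(G_n)_n$ is Cauchy in $(Y,\|\cdot\|_Y)$. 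Since $Y$ is complete, $G_n\to\widetilde{G}$ in $Y$ for some $\widetilde{G}\in Y$. Passing to a subsequence along which convergence also holds $\mu_G$-a.e.\ (a standard consequence of convergence in the $p$-metric $d(f,g)=\|f-g\|_Y^p$ combined with solidity; this is where I would invoke that in a solid quasi-Banach function space norm convergence implies a.e.\ convergence of a subsequence, as is used implicitly elsewhere in the paper, e.g.\ in the proof of Lemma~\ref{lem:basic_Hw}), and using that $G_n\uparrow G$ pointwise, we identify $\widetilde{G}=G$ a.e. In particular $G<\infty$ a.e., which is exactly the assertion that $\sum_{i\in I}F_i$ converges absolutely almost everywhere, and $\|G\|_Y=\lim_n\|G_n\|_Y\le S^{1/p}$.

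Finally, on the (conull) set where $G(x)<\infty$, the series $\sum_{i\in I}F_i(x)$ converges absolutely to some value $F(x)$ with $|F(x)|\le G(x)$; setting $F:=0$ on the null complement gives a measurable function with $|F|\le G$ a.e., so solidity of $Y$ yields $F\in Y$ and $\|F\|_Y\le\|G\|_Y\le\bigl(\sum_{i\in I}\|F_i\|_Y^p\bigr)^{1/p}$, as claimed. I expect the only genuinely delicate point to be the justification that the $Y$-limit $\widetilde{G}$ of the increasing sequence $(G_n)$ coincides a.e.\ with the pointwise supremum $G$; everything else is bookkeeping with the $p$-norm inequality and solidity. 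If one prefers to avoid appealing to the ``subsequence converging a.e.'' fact, an alternative is to first establish the lemma for the concrete spaces via Fatou-type arguments, but the subsequence route is cleanest and is consistent with the level of generality (``minimal assumptions'') adopted for quasi-Banach function spaces in the paper.
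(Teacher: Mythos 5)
Your overall architecture matches the paper's: reduce to $I=\N$, show the partial sums $G_n=\sum_{i\le n}|F_i|$ are Cauchy via the $p$-norm inequality, pass to the $Y$-limit $\widetilde G$, identify $\widetilde G$ with the pointwise sum, and finish by solidity. The one step you yourself flag as delicate is, however, a genuine gap: you invoke the principle that in a solid quasi-Banach function space, norm convergence implies a.e.\ convergence of a subsequence. With the paper's deliberately minimal definition of a quasi-Banach function space (a solid quasi-Banach subspace of $L^0(G)$, with no Fatou property and no continuous embedding into $L^0$ or $L^1_{\loc}$), this principle is not available off the shelf. Its standard proofs either use a Chebyshev-type local embedding $\int_E|f|\,d\mu_G\lesssim_E\|f\|_Y$ (not assumed here), or run through ``absolutely $p$-summable series converge a.e.''\ --- which is precisely the lemma you are trying to prove, so the argument would be circular. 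The reference to the proof of \Cref{lem:basic_Hw} does not help: there the a.e.\ (in fact everywhere) convergence of $V_g f_n$ comes from convergence in $\Hpi$ and the definition of $V_g$, not from an abstract subsequence principle for function spaces.

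The fix is short and exploits the monotonicity you already noted, which is exactly what the paper does. Set $\Xi_n:=|\widetilde G-G_n|$ and $\Xi^{(n)}:=\inf_{i\ge n}\Xi_i$. Then $0\le\Xi^{(n)}\le\Xi_i$ for every $i\ge n$, so solidity gives $\|\Xi^{(n)}\|_Y\le\|\Xi_i\|_Y\to0$, whence $\Xi^{(n)}=0$ a.e., i.e.\ $\liminf_n|\widetilde G(x)-G_n(x)|=0$ for a.e.\ $x$. Since $G_n(x)$ increases to $G(x)\in[0,\infty]$, the full sequence $|\widetilde G(x)-G_n(x)|$ converges (to $|\widetilde G(x)-G(x)|$, interpreted as $+\infty$ where $G(x)=\infty$), so the vanishing of the liminf forces $G=\widetilde G$ a.e.; in particular $G<\infty$ a.e.\ because $\widetilde G\in Y\subseteq L^0(G)$ is $\CC$-valued. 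Note that this argument only yields ``$\liminf=0$ a.e.''\ and hence genuinely needs the monotonicity of $(G_n)_n$ to conclude; it does not establish the general subsequence principle you cited, which is a stronger (and here unproven) statement. With this replacement your proof is complete and coincides with the paper's.
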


\begin{proof}
  The claim is trivial if $I$ is finite; hence, we can assume that $I$
  is countably infinite and then without loss of generality that $I = \N$.
The proof consists of three steps.
  \medskip{}

  \textbf{Step~1.} Let $H_n := \sum_{i = 1}^n |F_i|$ for $n \in \N$.
  In this step, we show that $(H_n)_{n \in \N}$ converges to some $H \in Y$.
  For this, note for $N \in \N$ and $n \geq m \geq N$ that
  \[
    \| H_n - H_m \|_Y^p
    = \bigg\|
        \sum_{i=m+1}^n |F_i|
      \bigg\|_Y^p
    \leq \sum_{i=m+1}^n
           \big\| \, |F_i| \, \big\|_Y^p
    \leq \sum_{i=N}^\infty
           \| F_i \|_Y^p
  \]
  and hence
  \[
    \sup_{n,m \geq N}
      \| H_n - H_m \|_Y
    \leq \bigg(
           \sum_{i=N}^\infty
             \| F_i \|_Y^p
         \bigg)^{1/p}
    \to 0 \quad \text{as} \quad n \to \infty.
  \]
  Hence, $(H_n)_{n \in \N}$ is a Cauchy sequence in $Y$.
  Since $Y$ is complete, the claim follows.

  \medskip{}

  \textbf{Step~2.} We show that the function $H$ from Step~1
  satisfies $H = \sum_{i=1}^\infty |F_i|$ almost everywhere.
  For $n \in \N$, define
  \[
    \Xi_n
    := \bigg|
         H - \sum_{i=1}^n |F_i|
       \bigg|
    =  |H - H_n|
  \]
  Then $\| \Xi_n \|_Y = \| H - H_n \|_Y \to 0$ as $n \to \infty$.
  Setting $\Xi^{(n)} := \inf_{i \geq n} \Xi_i$ yields $0 \leq \Xi^{(n)}  \leq \Xi_i$
  for all $i \geq n$, and hence
  \(
    \| \Xi^{(n)}  \|_Y
    \leq \| \Xi_i \|_Y
    \to 0 \quad \text{as} \quad n \to \infty
  \)
  by solidity of $Y$. Therefore, $\Xi^{(n)}  = 0$ almost everywhere for all $n \in \N$,
  showing that there exists a null-set $N \subseteq \group$ satisfying
  $\Xi^{(n)}  (x) = 0$ for all $n \in \N$ and $x \in \group \setminus N$.
  For $x \in \group \setminus N$, it follows therefore that
  \[
    0
    = \lim_{n \to \infty} \Xi^{(n)}  (x)
    = \liminf_{n \to \infty}
        \Xi_n (x)
    = \liminf_{n \to \infty}
        \bigg|
          H(x)
          - \sum_{i=1}^n
              |F_i (x)|
        \bigg|
    ,
  \]
  so that there exists a subsequence $(n_\ell)_{\ell \in \N}$
  (possibly dependent on $x$) such that
  \[
    0
    = \lim_{\ell \to \infty}
        \bigg|
          H(x) - \sum_{i=1}^{n_\ell} |F_i (x)|
        \bigg|.
  \]
  Hence,
  \(
    H(x)
    = \lim_{\ell \to \infty}
        \sum_{i=1}^{n_\ell}
          |F_i(x)|
    = \sum_{i=1}^\infty
        |F_i (x)|
    .
  \)
  Since this holds for all $x \in \group \setminus N$, we conclude
  $H = \sum_{i=1}^\infty |F_i|$ almost everywhere.

  \medskip{}

  \textbf{Step~3.}
  First, note for $\Omega := \{ x \in \group \colon |H(x)| = \infty \}$ that
  $n \cdot \indicator_{\Omega} \leq |H|$ for arbitrary $n \in \N$. Hence
  $\| \indicator_{\Omega} \|_Y \leq \frac{1}{n} \| H \|_Y \to 0$ as $n\to\infty$,
  so that $\indicator_\Omega = 0$ almost everywhere and thus
  $|H(x)| < \infty$ almost everywhere.
  Combined with Step~2, this shows $\sum_{n=1}^\infty |F_n(x)| < \infty$ almost everywhere.
  Hence, the series defining $F := \sum_{n=1}^\infty F_n$ converges absolutely
  almost everywhere, with $|F(x)| \leq \sum_{n=1}^\infty |F_n(x)|  \leq |H(x)|$
  for a.e. $x \in G$.

  Second, note that $d(f,g) := \| f - g \|_Y^p$ is a metric
  and that $d(H_n, H) \to 0$ as $n \to \infty$. Therefore,
  \(
    \| H_n \|_Y^p
    = d(H_n, 0)
    \to d(H,0)
    = \| H \|_Y^p
    .
  \)
  This means
  \[
    \| F \|_Y^p
    \leq \| H \|_Y^p
    = \lim_{n \to \infty} \| H_n \|_Y^p
    = \lim_{n \to \infty}
        \bigg\|
          \sum_{i=1}^n
            |F_i|
        \bigg\|_Y^p
    \leq \lim_{n \to \infty}
           \sum_{i=1}^n
             \big\| \,|F_i|\, \big\|_Y^p
    = \sum_{i=1}^\infty
        \| F_i \|_Y^p
    ,
  \]
  which  implies $\| F \|_Y \leq \big( \sum_{i=1}^\infty \| F_i \|_Y^p \big)^{1/p}$,
  as claimed.
\end{proof}

\backmatter

\end{document}